\numberwithin{equation}{section}
\def\lort{{\mathrel{\hbox{\hglue .2ex
  \vrule \@height .07ex \@width 0.64ex
  \vrule \@height 1.28ex \@width .07ex
  \hglue .2ex}}}}
\def\rort{{\mathrel{\hbox{\hglue .4ex
  \vrule \@height 1.28ex \@width .07ex
  \vrule \@height .07ex \@width 0.64ex
  \hglue .2ex
}}}}
\def\lrort{{\mathrel{\hbox{\hglue .2ex
  \vrule \@height .07ex \@width 0.64ex
  \vrule \@height 1.28ex \@width .07ex\vrule \@height .07ex \@width 0.64ex
  \hglue .2ex}}}}
\newtheorem{theorem}{Theorem}[section]
\newtheorem{proposition}[theorem]{Proposition}
\newtheorem{conjecture}[theorem]{Conjecture}
\newtheorem{corollary}[theorem]{Corollary}
\newtheorem{lemma}[theorem]{Lemma}
\newtheorem{maintheorem}[theorem]{Main Theorem}
\theoremstyle{definition}
\newcommand{\Mat}[1]{M_{#1}(\kk)}
\def\QBin(#1,#2){\qbinom{#1}{#2}}
\DeclareMathOperator{\Ext}{Ext}
\DeclareMathOperator{\Hom}{Hom}
\DeclareMathOperator{\fin}{fin}
\DeclareMathOperator{\gr}{gr}
\DeclareMathOperator{\Iso}{Iso}
\DeclareMathOperator{\Ann}{Ann}
\DeclareMathOperator{\Aut}{Aut}
\DeclareMathOperator{\Ind}{Ind}
\DeclareMathOperator{\GL}{GL}
\DeclareMathOperator{\uExt}{\underline{Ext}}
\DeclareMathOperator{\Prim}{Prim}
\DeclareMathOperator{\QPrim}{QPrim}
\DeclareMathOperator{\Rep}{Rep}
\newcommand{\cat}[1]{{\mathscr #1}}
\newtheorem{remark}[theorem]{Remark}
\newtheorem{example}[theorem]{Example}
\newtheorem{definition}[theorem]{Definition}
\newcommand{\tensor}{\otimes}
\newcommand{\lr}[1]{\langle #1\rangle}
\def\FF{\mathbb{F}}
\def\ZZ{\mathbb{Z}}
\def\QQ{\mathbb{Q}}
\def\KK{\mathbb{K}}
\DeclareMathOperator{\Exp}{Exp}
\DeclareMathOperator{\End}{End}
\def\kk{\Bbbk}
\def\Im{\operatorname{Im}}
\newcommand{\qbinom}[3][q]{\genfrac[]{0pt}0{#2}{#3}_{#1}}
\begin{document}
\newgeometry{margin=2.5cm}
\author[A. Berenstein]{Arkady Berenstein}
\address{Department of Mathematics, University of Oregon,
Eugene, OR 97403, USA} \email{arkadiy@math.uoregon.edu}

\author[J. Greenstein]{Jacob Greenstein}
\address{Department of Mathematics, University of California, Riverside, CA 
92521.} 
 \email{jacob.greenstein@ucr.edu}

\date{\today}

\thanks{The authors were partially supported
by NSF grants DMS-0800247, DMS-1101507 (A.~B.) and DMS-0654421~(J.~G.)
and Simons foundation collaboration grant 245735~(J.~G.)}

\dedicatory{Dedicated to Professor Anthony Joseph on the occasion of his 
seventieth birthday}
\title{Primitively generated Hall algebras}

\begin{abstract} In the present paper we show that  Hall 
algebras of finitary exact categories behave like quantum 
groups in the sense that they are generated by indecomposable objects. Moreover,
for a large class of such categories, Hall algebras are generated by
their primitive elements, with respect to the natural comultiplication, even for 
non-hereditary categories. Finally, we introduce certain primitively
generated subalgebras of Hall algebras and conjecture an analogue of ``Lie 
correspondence''  for those finitary categories.
\end{abstract}

\maketitle

\tableofcontents

\section{Introduction}
\label{sect:intro}

It is well-known that quantum groups are not groups, but rather Hopf algebras 
which are similar to enveloping algebras of Lie algebras.
Hall-Ringel algebras $H_{\cat A}$ of finitary exact categories can be regarded, from many 
points of view, 
as generalizations of quantum groups. One aspect of this analogy is the following striking result which 
we failed to find in the literature
\begin{theorem}\label{th:PBW-property}
The Hall algebra~$H_{\cat A}$ of any finitary exact category~$\cat A$ is generated by
isomorphism classes of indecomposable objects in~$\cat A$.
\end{theorem}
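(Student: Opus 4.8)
The plan is to fix the subalgebra $\HH \subseteq H_{\cat A}$ generated by the classes $[I]$ of indecomposable objects and to prove that every basis element $[M]$ lies in $\HH$. By the Krull--Schmidt property of $\cat A$, write $M \cong M_1 \oplus \cdots \oplus M_n$ with each $M_i$ indecomposable. The idea is to recover $[M]$ as the ``leading term'' of the product $[M_1]\cdots[M_n]\in\HH$: expanding by the Hall multiplication,
\[
[M_1]\cdots[M_n] \;=\; \sum_{L} F_L\,[L],
\]
where $F_L\in\ZZ_{\ge 0}$ counts filtrations of $L$ with successive quotients $M_1,\dots,M_n$, and all $L$ occurring lie in the same Grothendieck class as $M$. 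The split filtration shows $F_M\ge 1$, so the task reduces to proving that every other $L$ with $F_L\ne 0$ already belongs to $\HH$, after which $[M] = F_M^{-1}\big([M_1]\cdots[M_n] - \sum_{L\ne M} F_L [L]\big)\in\HH$ (working over the ground field, so that the positive integer $F_M$ is invertible).

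To organise the induction I would introduce the preorder $[L]\preceq[L']$, defined to hold precisely when $|\Hom_{\cat A}(X,L)|\le |\Hom_{\cat A}(X,L')|$ for every $X\in\cat A$. Applying the left exact functor $\Hom_{\cat A}(X,-)$ to an admissible sequence $0\to B\to L\to A\to 0$ yields an exact sequence of finite groups $0\to\Hom_{\cat A}(X,B)\to\Hom_{\cat A}(X,L)\to\Hom_{\cat A}(X,A)$, whence $|\Hom_{\cat A}(X,L)|\le|\Hom_{\cat A}(X,A\oplus B)|$; iterating along the filtration gives $[L]\preceq[M]$ for every $L$ with $F_L\ne0$. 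The crucial point — and the step I expect to be the main obstacle — is the equality case: if $|\Hom_{\cat A}(X,L)|=|\Hom_{\cat A}(X,M)|$ for all $X$, then each connecting map $\Hom_{\cat A}(X,L)\to\Hom_{\cat A}(X,A)$ is surjective; taking $X=A$ lets one lift $1_A$, so the top extension of the filtration splits, and descending step by step gives $L\cong M$. Consequently every term $L\ne M$ in the product satisfies $[L]\prec[M]$ strictly.

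With this in hand I would run strong induction along $\prec$. Since all occurring $L$ share the Grothendieck class of $M$, and in the finitary setting each such class contains only finitely many isomorphism classes, the restriction of $\prec$ to that class is a finite, hence well-founded, strict order. For $\prec$-minimal $M$ (in particular for every indecomposable, where $n=1$) the correction sum is empty and the identity above places $[M]$ directly in $\HH$; for general $M$ the inductive hypothesis disposes of all $L\prec M$ occurring in the product, and the same identity then yields $[M]\in\HH$.

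It is worth flagging why the obvious simplification — inducting on the number $n$ of indecomposable summands and hoping every $L\ne M$ has strictly fewer summands — does not work: a non-split extension can have a middle term with exactly as many summands as $A\oplus B$ yet be non-isomorphic to it. For $\cat A=\Rep(1\to 2\to 3)$ over a finite field, for instance, there is an admissible sequence $0\to M_{23}\to M_{123}\oplus S_2\to M_{12}\to 0$ with two indecomposable summands on each side. This is exactly the phenomenon the finer $\Hom$-order is designed to detect, and establishing the equality-case splitting criterion is the technical heart of the argument.
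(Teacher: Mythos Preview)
Your overall strategy matches the paper's: isolate $[M]$ as the leading term of a product of its indecomposable summands and show every other term is strictly smaller in an order built from sizes of $\Hom$-sets (the paper uses $\Ext^1$ in the analogous Lemma~\ref{lem:ineq-Z-fin}, but your splitting criterion is the same idea). One terminological slip: a finitary exact category need \emph{not} be Krull--Schmidt (see the remark after Lemma~\ref{lem:fin-indec}); you only use \emph{existence} of a decomposition into indecomposables, which does hold in any $\Hom$-finite category (Lemma~\ref{lem:fin-indec}), so this is harmless.

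The genuine gap is your well-foundedness step. You assert that ``in the finitary setting each [Grothendieck] class contains only finitely many isomorphism classes'', but this is precisely the \emph{profinitary} hypothesis (Definition~\ref{def:prof-cof-bif}), strictly stronger than finitary; locally free sheaves on~$\mathbb P^1$ (\S\ref{subs:ex-sheaves}) give a finitary category with infinite Grothendieck classes, so your induction has no well-ordering to run on. The fix is to replace the Grothendieck-class argument by an integer invariant that actually drops. It suffices to use two factors: write $M=A\oplus B$ with $A,B\ne 0$ and induct on $|\End M|$. For each $L\not\cong M$ occurring in $[A]\cdot[B]$ there is a non-split conflation $B\rightarrowtail L\twoheadrightarrow A$; your covariant inequality gives $|\Hom(X,L)|\le|\Hom(X,M)|$ for all $X$, and since $1_A$ does not lift one gets the strict $|\Hom(A,L)|<|\Hom(A,M)|$, hence $|\Hom(M,L)|=|\Hom(A,L)|\cdot|\Hom(B,L)|<|\End M|$. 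The dual (contravariant) inequality applied with $Y=L$ gives $|\End L|\le|\Hom(M,L)|$, so $|\End L|<|\End M|$. Since also $|\End A|,\,|\End B|<|\End M|$ (as $|\End X|\ge 2$ for $X\ne 0$), the induction now terminates.
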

We prove a refinement of this theorem (Theorem~\ref{thm:PBW-prop-Hall}), which is an analogue of the Poincar\'e-Birkhoff-Witt property for~$H_{\cat A}$,
in~\S\ref{pf:PBW}.

However, isomorphism classes of indecomposable objects are not the most efficient as a generating set. 
For example, if~$\cat A$ is the representation category 
of a (valued) Dynkin quiver~$Q$, then indecomposables correspond to all positive roots of the simple Lie algebra associated 
with~$Q$,
while~$H_{\cat A}$ can be generated by simple objects (in other words, indecomposables corresponding to simple roots of the Lie algebra). 
Having this in mind, we
introduce minimal generating sets for~$H_{\cat A}$, namely, primitive elements, which generalize these simple root generators.  

More precisely, for any finitary exact category~$\cat A$,
$H_{\cat A}$ has 
a natural coproduct $\Delta:H_{\cat A}\to H_{\cat A}\widehat\otimes H_{\cat A}$ whose 
image may lie in a suitable completion of the tensor square of~$H_{\cat A}$. 
Note however, that the multiplication and $\Delta$ are not always compatible, that 
is, $\Delta$ need not be a homomorphism of algebras. 
The compatibility is guaranteed  by {\em Green's theorem} (see~\cite{Green}) for all {\em hereditary} {\em cofinitary}
(so that~$\Delta$ is an ``honest'' comultiplication rather than a topological 
one) {\em abelian} categories $\cat A$. 
In particular, this includes all representation categories $\Rep_\kk Q$ where 
$Q$ is an acyclic (valued) quiver and~$\kk$ is a finite field. Moreover, for 
$\cat A=\Rep_\kk Q$, 
it was proved in a remarkable paper \cite{SV} that the Hall algebra $H_{\cat 
A}$ is generated by its space of primitive elements
$$
V_{\cat A}=\{v\in H_{\cat A}\,|\,\Delta(v)=v\otimes 1+1\otimes v\}.
$$
Thus, $H_{\cat A}$ is the Nichols algebra ${\mathcal B}(V_{\cat A})$ in an appropriate 
braided tensor category (see~\S\ref{subs:br-tens-cat} for details). 

We extend this result to a much larger class of categories that we refer to as 
{\it profinitary} categories. 
We introduce profinitary categories in terms of their {\em Grothendieck 
monoids} (denoted $\Gamma_{\cat A}$ for an exact category~$\cat A$, see~\S\ref{subs:Grothendieck-monoid} for 
precise definitions) by requiring that groups of morphisms between any two objects and
all Grothendieck equivalence classes  are finite. 
By 
definition, $H_{\cat A}$ is naturally graded by~$\Gamma_{\cat A}$ and
if $\cat A$ is profinitary, all homogeneous components $(H_{\cat A})_\gamma$, 
$\gamma\in \Gamma_{\cat A}$ are finite dimensional. 

The class of profinitary categories is large enough. For instance, it includes 
the abelian category $R-\fin $ of all {\it finite} $R$-modules $M$ (i.e., 
finite abelian groups with $R$-action) for a {\em finitary} unital ring $R$, as defined in~\cite{Ringel}*{Section~1}.
This includes all finitely generated (over~$\mathbb Z$) unital rings. 
Moreover, if ${\cat A}$ is profinitary, then so is any full sub-category ${\cat 
B}\subset {\cat A}$ closed under extensions.
The following is the main result of the present work.
\begin{maintheorem} 
\label{th:profinitary primitives}
For any  
profinitary and cofinitary exact category $\cat A$ the Hall algebra $H_{\cat 
A}$ is generated by the space $V_{\cat A}$ of its primitive elements.
Moreover, $V_{\cat A}$ is minimal in the sense that a non-zero element 
of~$V_{\cat A}$ cannot be expressed as a sum of products of elements 
of~$V_{\cat A}$. 
\end{maintheorem}

We prove Theorem~\ref{th:profinitary primitives} in~\S\ref{pf:main theorem}.

Based on the second assertion of Theorem~\ref{th:profinitary primitives}, we 
can introduce {\em quasi-Nichols algebras} as both algebras 
and coalgebras minimally generated by their primitive elements (see 
Definition~\ref{defn:quasi-Nichols} for details). In particular,
it is easy to see (cf. Lemma~\ref{lem:nichols-quasi-nichols}) that any Nichols 
algebra is quasi-Nichols. It is noteworthy
that the minimality of~$V_{\cat A}$ has the following nice consequence for 
constructing primitive elements in~$H_{\cat A}$:
once we found a subspace~$U$ of~$V_{\cat A}$ such that~$U$ generates $H_{\cat 
A}$ as an algebra, we must stop because $U$ is the 
space of {\em all} primitive elements in~$H_{\cat A}$.

\begin{remark} 
Similarly to Grothendieck groups, exact functors induce canonical homomorphisms 
of Grothendieck monoids. However, even for full embeddings,
such homomorphisms need not be injective.
On the other hand, unlike the Grothendieck group, the Grothendieck monoid 
always separates simple objects of the category.
For instance, if ${\cat A}$ is the category of $\kk$-representations of the 
quiver $Q=1\to 2$ with dimension vectors $(n,2n)$, $n\in \ZZ_{\ge 0}$, then 
$K_0({\cat A})\cong \ZZ$, but 
$\Gamma_{\cat A}$ is an additive monoid generated by $\beta_1,\beta_2$ subject 
to the relations $\beta_1+\beta_2=2\beta_1=2\beta_2$. 
The canonical homomorphism $\Gamma_{\cat A}\to K_0({\cat A})$ is given by 
$\beta_1\mapsto 1$, $\beta_2\mapsto 1$ and thus is not injective 
(see~\S\ref{subs:ex-1,2}
for details).
It should also be noted that in this example $\Gamma_{\cat A}$ is not a 
submonoid of the Grothendieck monoid of the category~$\Rep_\kk Q$
since in $\Gamma_{\Rep_\kk Q}$ both simple objects of~$\cat A$ belong to the 
same class.  
\end{remark}

A nice property of profinitary categories is that their Hall algebras always 
contain primitive elements.
If~$\cat A$ is profinitary, then its Grothendieck monoid admits a natural 
partial order  and
is generated by its minimal elements with respect to that order
(Proposition~\ref{prop:profinitary-ordered}). Moreover, for~$\gamma$ minimal 
the corresponding homogeneous component 
$(H_{\cat A})_\gamma$ of~$H_{\cat A}$ is one-dimensional and primitive.

Quite surprisingly, for a profinitary category, cofinitarity is a simple property of its Grothendieck monoid. 
We say that a monoid $\Gamma$ is locally finite if for all~$\gamma\in\Gamma$, the set 
$\{ (\alpha,\beta)\in\Gamma\times\Gamma\,:\,\alpha+\beta=\gamma\}$ is finite.
\begin{theorem}
\label{th:prof-cof}
A profinitary exact category~$\cat A$ is cofinitary if and only if~$\Gamma_{\cat A}$ 
is locally finite.
\end{theorem}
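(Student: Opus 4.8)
The plan is to prove both implications by matching the non-zero terms of the coproduct $\Delta([M])$ of a basis element against the decompositions of its Grothendieck class. Denote by $\overline M\in\Gamma_{\cat A}$ the class of an object $M$, and recall that $\Delta([M])=\sum_{A,B}c^M_{A,B}\,[A]\otimes[B]$, where the structure constant $c^M_{A,B}$ is non-zero exactly when there is a short exact sequence $0\to B\to M\to A\to 0$, in which case $\overline A+\overline B=\overline M$. Since $\{[M]\}$ is a basis of $H_{\cat A}$, the category $\cat A$ is cofinitary if and only if each $\Delta([M])$ is a finite sum, i.e. for every $M$ only finitely many isomorphism-class pairs $([A],[B])$ admit such a sequence.

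For the direction ``locally finite $\Rightarrow$ cofinitary'', I would fix $M$ and put $\gamma=\overline M$. Any non-zero term $[A]\otimes[B]$ forces $\overline A+\overline B=\gamma$, so the pairs $(\overline A,\overline B)$ that can occur lie in the set $\{(\alpha,\beta):\alpha+\beta=\gamma\}$, which is finite by local finiteness. As $\cat A$ is profinitary, every class of $\Gamma_{\cat A}$ contains only finitely many isomorphism classes of objects, so each admissible $(\alpha,\beta)$ is realised by finitely many pairs $([A],[B])$. Hence $\Delta([M])$ has finitely many non-zero terms and $\cat A$ is cofinitary.

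For the converse I would argue contrapositively, building from a failure of local finiteness a single object with infinite coproduct. Suppose some $\gamma$ admits infinitely many distinct decompositions $\alpha_i+\beta_i=\gamma$. Choose $A_i,B_i$ with $\overline{A_i}=\alpha_i$ and $\overline{B_i}=\beta_i$; then $\overline{A_i\oplus B_i}=\gamma$ for all $i$. Profinitarity bounds the number of isomorphism classes of objects of class $\gamma$, so by the pigeonhole principle infinitely many of the $A_i\oplus B_i$ are isomorphic to one fixed object $M$. Transporting the split sequence $0\to B_i\to A_i\oplus B_i\to A_i\to 0$ along the isomorphism $A_i\oplus B_i\cong M$ yields $0\to B_i\to M\to A_i\to 0$, whence $c^M_{A_i,B_i}\neq 0$; and since the $(\alpha_i,\beta_i)$ are pairwise distinct, so are the classes $([A_i],[B_i])$. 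Thus $\Delta([M])$ is an infinite sum and $\cat A$ is not cofinitary.

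The crux is the converse: local finiteness concerns only the monoid $\Gamma_{\cat A}$, with no object in sight, while cofinitarity is a condition on individual coproducts. The bridge is that every monoid decomposition $\alpha+\beta=\gamma$ is witnessed by the \emph{split} sequence on $A\oplus B$, and that profinitarity forces infinitely many of these direct sums to collapse onto one isomorphism class $M$, so that the single coproduct $\Delta([M])$ already detects the failure. I expect the only points requiring care to be the bookkeeping that distinct $(\alpha_i,\beta_i)$ give genuinely distinct coproduct terms, and the verification that profinitarity is precisely what licenses both the passage from classes to isomorphism classes and the pigeonhole step.
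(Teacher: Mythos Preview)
Your proof is correct and follows essentially the same route as the paper's: both hinge on the observation that the split sequence $B\rightarrowtail A\oplus B\twoheadrightarrow A$ realizes every decomposition $\alpha+\beta=\gamma$ by an actual extension, and then use profinitarity to pass between finiteness of Grothendieck decompositions and finiteness of isomorphism-class pairs. The paper packages this slightly more compactly by introducing the set $\mathcal E_\gamma=\{([A],[B]):|A|+|B|=\gamma\}$ and noting that (under profinitarity) cofinitarity is equivalent to all $\mathcal E_\gamma$ being finite, which avoids the explicit pigeonhole step; but the content is the same.
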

We prove this theorem in~\S\ref{subs:pf profinitary bifinitary}. As a corollary, we obtain two classes of 
categories for which
profinitarity implies cofinitarity.
\begin{corollary}
\begin{enumerate}[{\rm(a)}]
 \item\label{cor:prof-cof.a} Any full exact subcategory of a profinitary abelian category is cofinitary;
 \item\label{cor:prof-cof.c} Any profinitary exact category whose Grothendieck monoid is finitely generated is cofinitary.
\end{enumerate}\label{cor:prof-cof}
\end{corollary}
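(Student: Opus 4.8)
Both parts reduce, via Theorem~\ref{th:prof-cof}, to proving that $\Gamma_{\cat A}$ (resp.\ $\Gamma_{\cat B}$) is locally finite; recall that a full extension-closed subcategory of a profinitary category is again profinitary, so in~(a) the category $\cat B$ is itself profinitary. By Proposition~\ref{prop:profinitary-ordered} the monoid $\Gamma=\Gamma_{\cat A}$ carries a partial order $\preceq$ (with $\alpha\preceq\gamma$ iff $\alpha+\beta=\gamma$ for some $\beta$) and is conical. Finiteness of the Grothendieck classes together with the Krull--Schmidt property (available since all endomorphism groups are finite, hence semiperfect rings) upgrades this to reducedness: if $\gamma+\xi=\xi$ with $\gamma=[G]$, $\xi=[X]$, then $[\,G^{\oplus n}\oplus X\,]=\xi$ for all $n$, so two of the objects $G^{\oplus n}\oplus X$ are isomorphic and cancellation forces $G=0$, i.e.\ $\gamma=0$. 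Since every decomposition $\alpha+\beta=\gamma$ has $\alpha,\beta\in[0,\gamma]:=\{\delta:\delta\preceq\gamma\}$, local finiteness is equivalent to finiteness of every interval $[0,\gamma]$.

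For part~(b) I would build a \emph{positive grading} $\ell\colon\Gamma\to\mathbb N$, i.e.\ a monoid homomorphism with $\ell^{-1}(0)=\{0\}$. Passing to the Grothendieck group $G=\Gamma^{\mathrm{gp}}$, reducedness and conicality imply that the image of $\Gamma$ meets the torsion of $G$ only in $0$ and is pointed; hence it projects to a pointed, finitely generated submonoid $M$ of the free part $\mathbb Z^{r}$ of $G$, with $\gamma\mapsto 0$ only for $\gamma=0$. A pointed finitely generated submonoid of $\mathbb Z^{r}$ admits an integral linear functional that is strictly positive on $M\setminus\{0\}$; pulling it back gives $\ell$. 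Because $\Gamma$ is finitely generated and each generator has $\ell>0$, every graded piece $\ell^{-1}(n)$ is finite, so $[0,\gamma]\subseteq\bigcup_{n\le\ell(\gamma)}\ell^{-1}(n)$ is finite and $\Gamma$ is locally finite.

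For part~(a) the key step is a finiteness lemma: \emph{every object of a profinitary abelian category has finite length}. Assuming not, $X$ admits an infinite strictly ascending (or descending) chain of subobjects $X_1\subsetneq X_2\subsetneq\cdots$; then $Z_n:=X_n\oplus X/X_n$ all lie in the single class $[X]=[X_n]+[X/X_n]$, which is finite, so infinitely many $Z_n$ are isomorphic, and by Krull--Schmidt the summand $X_n$ ranges over only finitely many isomorphism types. Two of them yield a proper inclusion $X_n\subsetneq X_{n'}$ with $X_n\cong X_{n'}$; composing with an isomorphism gives a monic self-map of $X_n$, which---$\End(X_n)$ being finite---has a power equal to the identity and is therefore an isomorphism, contradicting properness (the descending case is dual, using epic self-maps). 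Granting finite length, for $\gamma=[X]$ the interval $[0,\gamma]$ consists of classes of subquotients of $X$, all built from the finitely many composition factors of $X$; these span a finitely generated reduced conical submonoid on which composition length is a positive grading, so $[0,\gamma]$ is finite exactly as in~(b). For a full exact subcategory $\cat B$ the same argument applies, since admissible filtrations in $\cat B$ are filtrations in $\cat A$: objects of $\cat B$ again have finite length and the interval computation goes through in $\Gamma_{\cat B}$.

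The main obstacle is the finite-length lemma underlying~(a): conicality and even antisymmetry of $\preceq$ do not suffice, as the finitely generated conical monoid $\langle a,b\mid a+b=a\rangle$ is not locally finite, so profinitarity must be used beyond the monoid structure. The decisive inputs are that finiteness of the morphism groups forces monic (resp.\ epic) self-maps to be isomorphisms and yields Krull--Schmidt, while finiteness of the Grothendieck classes confines the objects $X_n\oplus X/X_n$ to a single class. I expect verifying these cancellation phenomena, rather than the subsequent combinatorics, to require the most care.
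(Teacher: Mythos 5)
The one genuine gap is the last step of part~(a), where you pass from the ambient profinitary abelian category $\cat A$ to an arbitrary full exact subcategory $\cat B$. You assert that ``objects of $\cat B$ again have finite length and the interval computation goes through in $\Gamma_{\cat B}$,'' but your interval computation rests on the fact that, by Jordan--H\"older, every class $\alpha\preceq\gamma$ is a sub-multiset of the composition factors of $\gamma$ --- i.e.\ on $\Gamma_{\cat A}$ being freely generated by classes of simple objects. None of this is inherited by $\Gamma_{\cat B}$: an object of $\cat B$ has finite length in $\cat A$, but its composition factors need not lie in $\cat B$, and $\Gamma_{\cat B}$ is \emph{not} a submonoid of $\Gamma_{\cat A}$; it is typically neither free nor cancellative. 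The paper's own example in \S\ref{subs:ex-1,2} (the subcategory $\cat B_r$ of $\Rep_\kk(1\to2)$, whose Grothendieck monoid satisfies $\beta_1+\beta_2=2\beta_1=2\beta_2$) is exactly a case where your phrase ``the interval computation goes through in $\Gamma_{\cat B}$'' fails as written. Two easy repairs: (i) the paper's route --- cofinitarity passes directly to full exact subcategories, because every conflation of $\cat B$ is a short exact sequence of $\cat A$ with terms in $\cat B$, so the finiteness set in the definition of cofinitary for $\cat B$ embeds into the one for $\cat A$; this makes any discussion of $\Gamma_{\cat B}$ unnecessary; or (ii) if you want to stay with Theorem~\ref{th:prof-cof}, observe that the canonical map $\Gamma_{\cat B}\to\Gamma_{\cat A}$ is finite-to-one ($\Iso\cat B_\gamma\subset\Iso\cat A_{\gamma'}$ for the image $\gamma'$ of $\gamma$, and the latter set is finite), so local finiteness of $\Gamma_{\cat A}$ pulls back to $\Gamma_{\cat B}$.

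Everything else is correct, and in two places genuinely different from the paper. Your finite-length lemma is proved by pigeonhole on the finite class $\Iso\cat A_{|X|}$ (infinitely many $X_n\oplus X/X_n$ forces a proper subobject isomorphic to itself, killed by finiteness of $\End$), whereas the paper builds composition series by induction along the partial order on $\Gamma^f_{\cat A}$ (Proposition~\ref{prop:comp-ser} and Theorem~\ref{th:prof-ab}); your argument is more self-contained but needs Krull--Schmidt, which is available in the abelian Hom-finite setting. For part~(b) your convex-geometric construction of a strictly positive grading $\ell:\Gamma\to\mathbb N$ works, but it is heavier than needed: the paper simply notes (Lemma~\ref{lem:monoid-gen-simple}) that every element of $\Gamma_{\cat A}$ is a sum of at most $s_\gamma=\max_{[X]\in\Iso\cat A_\gamma}|\End_{\cat A}X|$ minimal elements, which bounds the number of decompositions of $\gamma$ by $\binom{s_\gamma+n}{n}$ and gives local finiteness at once. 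One small caution: in your opening paragraph you invoke Krull--Schmidt for a general profinitary \emph{exact} category; the paper's remark after Lemma~\ref{lem:fin-indec} shows this can fail outside the abelian case. Fortunately your reducedness argument only uses the cancellation property of Lemma~\ref{lem:can-prop}, which does hold, so nothing is actually lost there.
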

This Corollary is proven in~\S\ref{subs:pf profinitary bifinitary}. 
Based on the above, we propose the following conjecture.
\begin{conjecture}\label{conj:prof-cof}
For any profinitary exact category~$\cat A$ its Grothedieck monoid~$\Gamma_{\cat A}$ is locally finite.
\end{conjecture}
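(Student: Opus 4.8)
The plan is to deduce the conjecture from Theorem~\ref{th:prof-cof} by reducing the local finiteness of~$\Gamma_{\cat A}$ to a finiteness property of the subobject structure of~$\cat A$. First I would record that, for the natural order of Proposition~\ref{prop:profinitary-ordered}, the monoid~$\Gamma_{\cat A}$ is locally finite if and only if every interval $[0,\gamma]=\{\alpha\in\Gamma_{\cat A}:\alpha\le\gamma\}$ is finite: projecting a decomposition $\alpha+\beta=\gamma$ onto~$\alpha$ gives one implication, while conversely both~$\alpha$ and~$\beta$ lie in~$[0,\gamma]$, so the number of such pairs is at most~$|[0,\gamma]|^2$. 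Thus it suffices to prove that $[0,\gamma]$ is finite for every $\gamma\in\Gamma_{\cat A}$.

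Next I would translate finiteness of $[0,\gamma]$ into a statement about objects. Since split conflations are conflations, one has $[X]+[Y]=[X\oplus Y]$, so every class is of the form~$[C]$ for a single object~$C$, and $\alpha\le\gamma$ holds precisely when some object~$C$ with $[C]=\gamma$ has an admissible subobject of class~$\alpha$ (take $C=A\oplus D$ for any $A,D$ with $[A]=\alpha$ and $[A]+[D]=\gamma$). Hence $[0,\gamma]$ equals the union, over the finitely many objects~$C$ of class~$\gamma$, of the sets of classes of admissible subobjects of~$C$. Moreover, for a fixed~$C$ these classes are finite in number if and only if~$C$ has finitely many admissible subobjects: a subobject of class~$\alpha$ is the image of an admissible monomorphism from one of the finitely many objects~$N$ with $[N]=\alpha$, and each such~$N$ admits at most $|\Hom(N,C)|<\infty$ monomorphisms into~$C$. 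Consequently the conjecture is equivalent to the clean statement that \emph{every object of a profinitary exact category has only finitely many admissible subobjects}, i.e.\ has finite length in the exact structure.

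The remaining, and genuinely hard, task is to establish this finite length property from the local data of profinitarity. I would attempt two routes, mirroring the two cases of Corollary~\ref{cor:prof-cof}. One is to use a Gabriel--Quillen type embedding of~$\cat A$ as a full, extension-closed subcategory of an abelian category~$\cat B$ in which conflations become short exact sequences; the admissible subobjects of~$C$ in~$\cat A$ then form a subposet of the lattice of all subobjects of~$C$ in~$\cat B$, so it would be enough to bound the length of~$C$ in~$\cat B$ and thereby reduce to the abelian case already handled in Corollary~\ref{cor:prof-cof}. The other is to exploit Proposition~\ref{prop:profinitary-ordered}, building~$\gamma$ from the minimal elements of~$\Gamma_{\cat A}$ (whose homogeneous components are one-dimensional and primitive) and inducting on a suitable length function, as in the finitely generated case. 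The main obstacle in both routes is that profinitarity is a purely \emph{local} condition --- finitely many objects and finite morphism groups in each fixed class --- whereas finite length is a \emph{global} constraint on the branching of the poset of admissible subobjects. An infinitely generated Grothendieck monoid arising from a non-abelian exact structure could a priori allow a single object to acquire admissible subobjects in infinitely many distinct classes; the abelian hull~$\cat B$ need not be profinitary, so bounding its lengths is not automatic, and no length function on~$\Gamma_{\cat A}$ is visibly available in general. Proving the required well-foundedness of the intervals $[0,\gamma]$ from local finiteness data is exactly the content of the conjecture, and is what forces the proven cases to assume either an ambient abelian category or a finitely generated monoid.
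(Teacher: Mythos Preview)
This statement is a \emph{conjecture} in the paper, not a theorem; the paper offers no proof, only the two special cases of Corollary~\ref{cor:prof-cof}, and it explicitly remarks that the general statement is non-trivial precisely because the Gabriel--Quillen ambient abelian category need not be profinitary and~$\Gamma_{\cat A}$ need not be finitely generated.

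Your proposal is not a proof either, and your third paragraph makes clear you know this. The reduction in your first two paragraphs is correct and worth recording: local finiteness of~$\Gamma_{\cat A}$ is equivalent to finiteness of every interval $[0,\gamma]$, and under the profinitarity and $\Hom$-finiteness hypotheses this is in turn equivalent to every object of~$\cat A$ having only finitely many admissible subobjects. Two corrections, however. First, a notational one: you write $[C]=\gamma$ for the Grothendieck class, but the paper reserves $[C]$ for the isomorphism class and writes $|C|$ for its image in~$\Gamma_{\cat A}$. Second, and more substantively, your gloss ``i.e.\ has finite length in the exact structure'' conflates two different conditions. Finite length (existence of a bounded composition series) is already established for every object of a profinitary exact category in Proposition~\ref{prop:comp-ser}, so it cannot be the content of an open conjecture; what your argument actually yields is the stronger condition that the admissible-subobject poset of each object is \emph{finite}, not merely of finite height. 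The obstacles you then identify for the two natural attack routes are exactly those the paper itself cites, so you have located the difficulty precisely but, like the authors, not resolved it.
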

By Theorem~\ref{th:prof-cof}, any category as in the above Conjecture is also cofinitary.

This conjecture is non-trivial since there exist profinitary exact categories 
${\cat A}$ for which any ambient abelian category $\overline {\cat A}$ (which 
always exists, see e.g.~\cites{Buh,Kel}) is not profinitary, and the monoid $\Gamma_{\cat A}$
need not be finitely generated.

Theorem \ref{th:profinitary primitives} and Corollary~\ref{cor:prof-cof}\eqref{cor:prof-cof.a}
imply the following
\begin{theorem}\label{cor:hereditary nichols} If ${\cat A}$ is a profinitary 
hereditary abelian category, then $H_{\cat A}$ is a Nichols algebra (see Definition~\ref{defn:nichols}) of the 
(braided)  space $V_{\cat A}$ of its primitive elements.
\end{theorem}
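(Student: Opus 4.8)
The plan is to combine two facts that are already available to us. Theorem~\ref{th:profinitary primitives} requires both profinitarity and cofinitarity to conclude that $H_{\cat A}$ is generated by its primitive space $V_{\cat A}$ with the minimality property; meanwhile Theorem~\ref{cor:hereditary nichols} hands us profinitarity only, so the first step is to recover cofinitarity. This is exactly what Corollary~\ref{cor:prof-cof}\eqref{cor:prof-cof.a} provides: since $\cat A$ is assumed to be a profinitary \emph{abelian} category, it is in particular a full exact subcategory of itself (an abelian category is profinitary as an exact category), so the Corollary immediately yields that $\cat A$ is cofinitary. Thus $\cat A$ satisfies the hypotheses of the Main Theorem.

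With cofinitarity secured, the second step is to invoke Theorem~\ref{th:profinitary primitives} to deduce that $H_{\cat A}$ is minimally generated, as an algebra, by $V_{\cat A}$, and that $V_{\cat A}$ is precisely the space of primitive elements with respect to $\Delta$. The third step is to upgrade this ``minimally generated by primitives'' conclusion to the stronger ``is a Nichols algebra'' statement. Here I would appeal to the hereditary hypothesis, which has not yet been used. By Green's theorem (cited in the introduction), hereditariness together with cofinitarity guarantees that $\Delta$ is a genuine algebra homomorphism, so that $H_{\cat A}$ is an honest bialgebra (indeed a braided Hopf algebra) in the appropriate braided tensor category described in~\S\ref{subs:br-tens-cat}. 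The braiding is the one induced by the Euler form on $\Gamma_{\cat A}$, and cofinitarity ensures $\Delta$ is an ordinary, rather than merely topological, comultiplication.

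It then remains to match the definition of a Nichols algebra. A Nichols algebra $\mathcal B(V)$ of a braided vector space $V$ is characterized as the graded braided Hopf algebra generated in degree one by $V=V_{\cat A}$ whose primitives are concentrated in degree one, or equivalently the quotient of the tensor algebra $T(V)$ by the maximal coideal contained in degrees $\ge 2$. We already know from the Main Theorem that $H_{\cat A}$ is generated by $V_{\cat A}$ and that $V_{\cat A}$ is the full space of primitives; the grading by $\Gamma_{\cat A}$ provides the requisite connected $\mathbb N$-type grading, with the degree-one piece being $V_{\cat A}$. The minimality clause of the Main Theorem is exactly the condition that there are no primitive elements outside $V_{\cat A}$, which is the defining property of a Nichols algebra once one has a braided Hopf algebra generated by $V_{\cat A}$. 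I would assemble these observations and conclude via the characterization recorded in Definition~\ref{defn:nichols}.

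The main obstacle I anticipate is purely bookkeeping: verifying that the braided-Hopf-algebra structure supplied by Green's theorem matches, on the nose, the braided category setup in which Definition~\ref{defn:nichols} of Nichols algebra is phrased, and checking that ``$V_{\cat A}$ equals all primitives'' plus ``$V_{\cat A}$ generates'' does in fact characterize the Nichols algebra (rather than some larger pre-Nichols algebra). The subtle point is ensuring the absence of higher-degree primitives; but this is furnished verbatim by the minimality half of Theorem~\ref{th:profinitary primitives}, so no new argument is needed — only a careful alignment of conventions.
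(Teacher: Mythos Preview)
Your outline is close to the paper's route, but there is a genuine gap at the last step.  To match Definition~\ref{defn:nichols} you need a $\mathbb Z_{\ge 0}$-grading $H_{\cat A}=\bigoplus_{n\ge 0} B_n$ with $B_1=V_{\cat A}$; the only candidate is $B_n=V_{\cat A}^n$, and for this to be a grading the sum $\sum_{n\ge 0} V_{\cat A}^n$ must be \emph{direct}.  The Main Theorem (quasi-Nichols) gives only $H_{\cat A}=\QQ\oplus V_{\cat A}\oplus\big(\sum_{r\ge 2}V_{\cat A}^r\big)$; it does \emph{not} assert that $V_{\cat A}^2,V_{\cat A}^3,\ldots$ are pairwise in direct sum.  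By Lemma~\ref{lem:nichols-quasi-nichols}, this internal directness is precisely the gap between quasi-Nichols and Nichols, so the minimality clause you invoke is not sufficient.  Your sentence ``the grading by $\Gamma_{\cat A}$ provides the requisite connected $\mathbb N$-type grading, with the degree-one piece being $V_{\cat A}$'' is also incorrect: $V_{\cat A}$ is spread over many $\Gamma_{\cat A}$-degrees (every simple root, real or imaginary), so the $\Gamma_{\cat A}$-grading cannot play the role of the $\mathbb Z_{\ge 0}$-grading required by Definition~\ref{defn:nichols}.

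The paper closes this gap with an additional orthogonality argument (Proposition~\ref{prop:graded-alg} inside the proof of Theorem~\ref{prop:nichols-sub}): using that $\Delta$ is an algebra homomorphism (Green's theorem), one has $\Delta(V_{\cat A}^k)\subset\sum_{i=0}^k V_{\cat A}^{k-i}\otimes V_{\cat A}^i$, and then an induction with the compatible Green pairing shows $\langle V_{\cat A}^\ell,V_{\cat A}^k\rangle=0$ for all $k<\ell$.  Since the pairing is positive definite, this forces $\sum_k V_{\cat A}^k$ to be direct.  Note that this step genuinely uses the hereditary hypothesis (through Green's theorem) beyond merely providing a bialgebra structure --- it is what makes the inductive computation of $\langle V^\ell,V^k\rangle$ go through.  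Your proposal should either invoke Lemma~\ref{lem:nichols-quasi-nichols} and then supply this orthogonality argument, or appeal directly to Theorem~\ref{prop:nichols-sub}.
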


We prove a refined version of this statement (Theorem~\ref{th:hereditary nichols}) in Section~\S\ref{pf:hereditary nichols}.

The case when $\cat A$ is the category of representations of an acyclic (valued) quiver over 
a finite field~$\kk$ is
established in~\cite{SV}*{Theorem~1.1}, which inspired the present work. If $\cat A$
is the category of nilpotent representations of~$\kk[x]$ for a finite field~$\kk$, then 
Theorem~\ref{cor:hereditary nichols} recovers the classical result of Zelevinsky (\cite{Zel})
that the Hall-Steinitz algebra is a Hopf algebra (see e.g.~\S\ref{subs:ex-jordan} for details).
More generally, since the category of finite dimensional $\kk$-representations of {\em any} finite quiver
is hereditary (see~\cites{Gab,Hub3}),
Theorem~\ref{cor:hereditary nichols} is applicable to such a category as well. In particular,
one can consider finite dimensional representations of a free algebra in~$n$ generators over~$\kk$.

Furthermore, by definition, $V_{\cat A}$ is 
graded 
$\Gamma_{\cat A}$, that is, 
$V_{\cat A}=\bigoplus_{\gamma\in \Gamma} (V_{\cat A})_\gamma$,  
so $\gamma\in\Gamma_{\cat A}$ with $(V_{\cat A})_\gamma\ne 0$ can be thought of as ``simple 
roots'' of ${\cat A}$. 
Given~$\gamma\in\Gamma_{\cat A}^+$, define its {\em multiplicity} $m_\gamma$ by
\begin{equation}\label{eq:root-mult}
m_\gamma:=\#\Ind\cat A_\gamma-\dim_\QQ(V_{\cat A})_\gamma.
\end{equation}
where $\Ind\cat A_\gamma=\Ind\cat A\cap \Iso\cat A_\gamma$. This definition is justified by the following
\begin{proposition}\label{prop:PBW}
Let~$\cat A$ be a profinitary cofinitary exact category. Then 
$m_\gamma\ge 0$
for all~$\gamma\in\Gamma_{\cat A}^+$.
\end{proposition}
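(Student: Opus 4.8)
The plan is to sandwich $\dim_\QQ(V_{\cat A})_\gamma$ between two natural invariants of the augmentation ideal of $H_{\cat A}$. Since $\cat A$ is profinitary, $H_{\cat A}=\bigoplus_{\gamma\in\Gamma_{\cat A}}(H_{\cat A})_\gamma$ is a $\Gamma_{\cat A}$-graded algebra with finite-dimensional homogeneous components and $(H_{\cat A})_0=\QQ\cdot[0]$; its Hall multiplication is $\Gamma_{\cat A}$-graded. Write $H_{\cat A}^+=\bigoplus_{\gamma\in\Gamma_{\cat A}^+}(H_{\cat A})_\gamma$ for the augmentation ideal and $Q_{\cat A}:=H_{\cat A}^+/(H_{\cat A}^+)^2$ for the graded space of algebra indecomposables. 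All maps below are $\Gamma_{\cat A}$-graded, so it suffices to argue in a fixed degree $\gamma\in\Gamma_{\cat A}^+$.

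First I would bound $\dim_\QQ(Q_{\cat A})_\gamma$ from above by $\#\Ind\cat A_\gamma$. By Theorem~\ref{th:PBW-property} the classes of indecomposable objects generate $H_{\cat A}$ as an algebra, so their images span $Q_{\cat A}$; since the projection $H_{\cat A}^+\twoheadrightarrow Q_{\cat A}$ is graded, the images of the finitely many classes in $\Ind\cat A_\gamma=\Ind\cat A\cap\Iso\cat A_\gamma$ already span $(Q_{\cat A})_\gamma$. Hence $\dim_\QQ(Q_{\cat A})_\gamma\le\#\Ind\cat A_\gamma$. This step is comparatively soft, relying only on the fact that indecomposables generate $H_{\cat A}$.

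Next I would identify $(V_{\cat A})_\gamma$ with $(Q_{\cat A})_\gamma$ using both halves of the Main Theorem (Theorem~\ref{th:profinitary primitives}), which applies because $\cat A$ is profinitary and cofinitary. Consider the graded composite $V_{\cat A}\hookrightarrow H_{\cat A}^+\twoheadrightarrow Q_{\cat A}$. Because $V_{\cat A}$ generates $H_{\cat A}$, every element of $H_{\cat A}^+$ is a sum of products of at least one primitive element; the length-one terms lie in $V_{\cat A}$ and the terms of length $\ge 2$ lie in $(H_{\cat A}^+)^2$, so $H_{\cat A}^+=V_{\cat A}+(H_{\cat A}^+)^2$ and, comparing lengths, $(H_{\cat A}^+)^2$ coincides with the span $\lr{V_{\cat A}}_{\ge 2}$ of products of at least two primitive elements. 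Surjectivity of $V_{\cat A}\to Q_{\cat A}$ follows from the first identity. For injectivity, any element of the kernel lies in $V_{\cat A}\cap(H_{\cat A}^+)^2=V_{\cat A}\cap\lr{V_{\cat A}}_{\ge 2}$, i.e.\ it is a primitive element expressible as a sum of products of primitive elements, hence zero by the minimality assertion of Theorem~\ref{th:profinitary primitives}. Thus $V_{\cat A}\to Q_{\cat A}$ is an isomorphism, and in particular $\dim_\QQ(V_{\cat A})_\gamma=\dim_\QQ(Q_{\cat A})_\gamma$.

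Combining the two steps yields $\dim_\QQ(V_{\cat A})_\gamma=\dim_\QQ(Q_{\cat A})_\gamma\le\#\Ind\cat A_\gamma$, which is exactly $m_\gamma\ge 0$. The only real content is the middle step, and its crux is the injectivity of $V_{\cat A}\to Q_{\cat A}$: this is where I must invoke the minimality of $V_{\cat A}$, and it is essential that generation by primitives forces $(H_{\cat A}^+)^2=\lr{V_{\cat A}}_{\ge 2}$, so that minimality (a statement about products of primitive elements) upgrades to the vanishing of $V_{\cat A}\cap(H_{\cat A}^+)^2$ (a statement about products of arbitrary positive-degree elements). Without the generation statement one would only control $V_{\cat A}\cap\lr{V_{\cat A}}_{\ge 2}$, which would be insufficient.
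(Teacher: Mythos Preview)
Your proof is correct and follows essentially the same route as the paper. The paper's proof (via Corollary~\ref{cor:prim-gen} and Proposition~\ref{prop:PBW-prec}) establishes the direct sum decomposition $(H_{\cat A})_\gamma=\Prim(H_{\cat A})_\gamma\oplus P_\gamma$ with $P=(H_{\cat A}^+)^2$, which is exactly your isomorphism $V_{\cat A}\xrightarrow{\sim}Q_{\cat A}$ phrased without the quotient; both arguments hinge on the same two ingredients from the Main Theorem (generation and minimality) together with generation by indecomposables, and the paper simply pushes one step further to obtain the exact formula $m_\gamma=\dim_\QQ(P_\gamma\cap\QQ\Ind\cat A_\gamma)$.
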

We prove a more precise version of this result (Proposition~\ref{prop:PBW-prec}) in~\S\ref{subs:estimate}. In particular, 
Proposition~\ref{prop:PBW} implies that 
if $\Ind\cat A_\gamma=\emptyset$ then $(V_{\cat A})_\gamma=0$, that is, we should look for primitive 
elements only in those graded components where  indecomposables live. Moreover, if~$\Ind\cat A$ is finite,
then obviously $(V_{\cat A})$ is finite dimensional and we have an efficient procedure for computing it
(see Section~\ref{sec:examples}).

The term ``multiplicity'' is justified by the following 
result which is an immediate consequence of 
reformulations~\cite{Hua}*{Theorem~4.1} and~\cite{DX}*{\S4.1} of the famous Kac conjecture (\cite{Kac}), proved in~\cite{Haus}.
\begin{theorem}\label{thm:Kac-Hua-Haus}
Let $Q$ be an acyclic quiver,  
$\mathfrak g_Q$ be the corresponding Kac-Moody algebra
and $\cat A=\Rep_\kk(Q)$ where $\kk$ is a 
finite field with~$q$ elements.  
Then for any $\gamma\in\Gamma_{\cat A}$ 
one has
\begin{enumerate}[{\rm(a)}]
 \item\label{thm:KHH.a} $m_\gamma>0$ if and only if $\gamma$ is a non-simple positive root of~$\mathfrak g_Q$; in that case, 
 $m_\gamma=\dim(\mathfrak g_Q)_\gamma$, that is~$m_\gamma$ is the multiplicity of the root~$\gamma$ in~$\mathfrak g_Q$.
\item\label{thm:KHH.b} $(V_{\cat A})_\gamma=0$ unless $\gamma$ is simple or imaginary.
 \item\label{thm:KHH.c}  For any imaginary root~$\gamma$ of~$\mathfrak g_Q$, $\dim_\QQ(V_{\cat A})_\gamma=p_\gamma(q)$
 where $p_\gamma\in x\QQ[x]$.
 \end{enumerate}
\end{theorem}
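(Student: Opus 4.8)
The plan is to extract all three assertions from two product expansions of the graded Hilbert series of $H_{\cat A}$, combined with the Hall-algebraic reformulation of Kac's theorem. Throughout I would identify $\Gamma_{\cat A}$ with the free commutative monoid on the vertices of $Q$ via the dimension vector, so that the $t^\gamma$ are honest monomials.

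First I would record the two factorizations. Since $\cat A=\Rep_\kk Q$ with $Q$ acyclic is profinitary, cofinitary and hereditary, the refined PBW property (Theorem~\ref{thm:PBW-prop-Hall}) produces a basis of $H_{\cat A}$ consisting of ordered monomials in the classes of indecomposables; grouping these by dimension vector yields
\[
\sum_{\gamma}\dim_\QQ(H_{\cat A})_\gamma\,t^\gamma=\prod_{\gamma}(1-t^\gamma)^{-\#\Ind\cat A_\gamma}.
\]
On the other hand, by Theorem~\ref{cor:hereditary nichols} the same algebra is the Nichols algebra of $V_{\cat A}$; following~\cite{SV} this exhibits $H_{\cat A}$ as $U_q^+(\widetilde{\mathfrak g})$ of a generalized Kac--Moody algebra $\widetilde{\mathfrak g}$ whose simple-root multiplicity in degree $\gamma$ is $\dim_\QQ(V_{\cat A})_\gamma$, and whose PBW basis gives $\prod_\gamma(1-t^\gamma)^{-\dim\widetilde{\mathfrak g}_\gamma}$. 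By uniqueness of the exponents in such a product over the free monoid $\Gamma_{\cat A}$ (seen by taking logarithms and applying M\"obius inversion over divisibility), I would conclude $\dim\widetilde{\mathfrak g}_\gamma=\#\Ind\cat A_\gamma$, so that $m_\gamma=\dim\widetilde{\mathfrak g}_\gamma-\dim_\QQ(V_{\cat A})_\gamma$ measures exactly the part of the root space of $\widetilde{\mathfrak g}$ in degree $\gamma$ not contributed by simple roots.

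Next I would feed in Kac's theorem. The reformulations~\cite{Hua}*{Theorem~4.1} and~\cite{DX}*{\S4.1} express $\#\Ind\cat A_\gamma$ as the value at $q$ of a polynomial and identify $\dim_\QQ(V_{\cat A})_\gamma=\#\Ind\cat A_\gamma-\dim(\mathfrak g_Q)_\gamma$ for every non-simple $\gamma$ (while $\dim_\QQ(V_{\cat A})_{\alpha_i}=1$ on each vertex $\alpha_i$), Hausel's proof of the Kac conjecture~\cite{Haus} being precisely the identification of the constant term of $\#\Ind\cat A_\gamma$ with the root multiplicity $\dim(\mathfrak g_Q)_\gamma$. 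Substituting into~\eqref{eq:root-mult} gives $m_\gamma=\dim(\mathfrak g_Q)_\gamma$ for non-simple $\gamma$ and $m_{\alpha_i}=1-1=0$ on vertices; since $\#\Ind\cat A_\gamma=0=\dim(\mathfrak g_Q)_\gamma$ off the positive roots and $\dim(\mathfrak g_Q)_\gamma=1$ exactly on the real roots, assertion~(a) follows. For~(b), a non-simple real root has $\#\Ind\cat A_\gamma=1=\dim(\mathfrak g_Q)_\gamma$, hence $\dim_\QQ(V_{\cat A})_\gamma=0$, and $V_{\cat A}$ also vanishes off the positive roots, so primitives survive only in simple or imaginary degrees. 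For~(c), on an imaginary root the same identity exhibits $\dim_\QQ(V_{\cat A})_\gamma$ as the polynomial $p_\gamma:=\#\Ind\cat A_\gamma-\dim(\mathfrak g_Q)_\gamma$, which lies in $x\QQ[x]$ because its constant term is, by~\cite{Haus}, exactly $\dim(\mathfrak g_Q)_\gamma-\dim(\mathfrak g_Q)_\gamma=0$.

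The step carrying all the weight, and the \emph{main obstacle}, is the identity $\dim_\QQ(V_{\cat A})_\gamma=\#\Ind\cat A_\gamma-\dim(\mathfrak g_Q)_\gamma$, together with the passage between the $\kk$-indecomposables counted by $\#\Ind\cat A_\gamma$ and the absolutely indecomposables governed by the Kac polynomial. The Galois-descent corrections relating these two counts are polynomials in $q$ divisible by $q$, so they do not disturb the constant term; the delicate point is to verify that the reformulations of~\cite{Hua} and~\cite{DX} genuinely compute the primitive space of the comultiplication $\Delta$ used here, rather than a superficially similar $q$-deformation of it. Once that matching is in place, the constant-term statement of~\cite{Haus} and the bookkeeping above deliver the theorem.
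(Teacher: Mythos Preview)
The paper does not supply its own proof of this theorem: it is stated in the introduction as ``an immediate consequence of reformulations~\cite{Hua}*{Theorem~4.1} and~\cite{DX}*{\S4.1} of the famous Kac conjecture (\cite{Kac}), proved in~\cite{Haus}'', and no further argument appears anywhere in the text. Your proposal is therefore not competing with an argument in the paper but rather unpacking what the cited references contain.

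The outline you give is the right unpacking. Comparing the product expansion of the Hilbert series coming from the Krull--Schmidt PBW basis (Theorem~\ref{thm:PBW-prop-Hall}) with the one coming from the Borcherds--Kac--Moody structure established in~\cite{SV}, and reading off $\dim\widetilde{\mathfrak g}_\gamma=\#\Ind\cat A_\gamma$, is exactly the mechanism behind~\cite{Hua}*{Theorem~4.1} and~\cite{DX}*{\S4.1}; feeding in Hausel's constant-term identification then yields~(a)--(c) as you indicate. The concern you raise about the coproduct is not an issue here: the $\Delta$ in this paper is the same Green coproduct used in~\cite{SV}, so the primitives match on the nose. The Galois-descent passage between $\kk$-indecomposables and absolute indecomposables is genuinely needed and is precisely what~\cite{Hua} handles; you are right that the correction terms are divisible by~$q$ and do not affect the constant term.
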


In view of Theorem~\ref{thm:Kac-Hua-Haus}\eqref{thm:KHH.c} and results of~\cite{SV} 
we define {\em real simple roots} of ${\cat 
A}$ to be elements~$\gamma\in\Gamma_{\cat A}$ for which  $\dim_\QQ (V_{\cat A})_\gamma=1$ and 
{\it imaginary simple roots} of ${\cat A}$ to be those $\gamma\in \Gamma_{\cat A}$ with 
$\dim_\QQ (V_{\cat A})_\gamma\ge 2$. 
For a profinitary category~$\cat A$ we show (Lemma~\ref{lem:min-indec}) that
all minimal elements of~$\Gamma_{\cat A}\setminus\{0\}$ are real simple roots. 

In fact, the consideration of examples suggests that a 
stronger version of this statement holds.
\begin{conjecture}\label{conj:im-ineq} 
Let~$\cat A$ be a profinitary and cofinitary exact category.
Then each simple imaginary root of ${\cat A}$ has non-zero multiplicity.
\end{conjecture}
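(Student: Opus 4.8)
The plan is to split the estimate $m_\gamma\ge 0$ of Proposition~\ref{prop:PBW} into two separate inequalities and to show that for an imaginary simple root at least one of them is strict. Write $H=H_{\cat A}$, let $H^{+}$ be its augmentation ideal and let $Q=H^{+}/(H^{+})^{2}$ be the space of (algebra) indecomposables of $H$. The minimality assertion of Theorem~\ref{th:profinitary primitives}, combined with the fact that $V_{\cat A}$ generates $H$, says that $V_{\cat A}\cap(H^{+})^{2}=0$, so the natural linear map $V_{\cat A}\to Q$ is injective; and generation by indecomposables (Theorem~\ref{th:PBW-property}) says that the classes $[M]$, $M\in\Ind\cat A$, span $Q$. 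In each degree this gives
\begin{equation*}
\dim_\QQ (V_{\cat A})_\gamma\ \le\ \dim_\QQ Q_\gamma\ \le\ \#\Ind\cat A_\gamma ,
\end{equation*}
so that $m_\gamma=\bigl(\#\Ind\cat A_\gamma-\dim_\QQ Q_\gamma\bigr)+\bigl(\dim_\QQ Q_\gamma-\dim_\QQ (V_{\cat A})_\gamma\bigr)$ is a sum of two nonnegative integers. It therefore suffices to prove that, whenever $\dim_\QQ(V_{\cat A})_\gamma\ge 2$, at least one of these defects is positive, which I would do by contradiction: assume $m_\gamma=0$, so that both inequalities are equalities.

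More invariantly, writing $\bar\Delta$ for the reduced coproduct and using rank--nullity together with $(V_{\cat A})_\gamma=\ker(\bar\Delta|_{H_\gamma})$, one has the identity $m_\gamma=\operatorname{rank}(\bar\Delta|_{H_\gamma})-\#\{\text{decomposable }L:[L]=\gamma\}$, so the target becomes the bound $\operatorname{rank}(\bar\Delta|_{H_\gamma})>\#\{\text{decomposables of class }\gamma\}$. Under the assumption $m_\gamma=0$ the classes of $\Ind\cat A_\gamma$ form a basis of $Q_\gamma$ and $V_{\cat A}\to Q$ is onto in degree $\gamma$; in particular $H_\gamma=\operatorname{span}(\Ind\cat A_\gamma)\oplus(H^{+})^{2}_\gamma$, and for every $M\in\Ind\cat A_\gamma$ there is a primitive $v_M\equiv[M]\pmod{(H^{+})^{2}}$, the $v_M$ forming a basis of $(V_{\cat A})_\gamma$. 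By Lemma~\ref{lem:min-indec} an imaginary simple root is never minimal in $\Gamma_{\cat A}\setminus\{0\}$, so $\gamma=\alpha+\beta$ with $\alpha,\beta$ nonzero and $(H^{+})^{2}_\gamma\neq 0$. The heart of the matter is that $\bar\Delta([M])$ is governed by the proper nonzero subobjects of the indecomposable $M$, while $\bar\Delta(v_M)=0$ forces this to be cancelled entirely by $\bar\Delta$ of the product part $v_M-[M]\in(H^{+})^{2}_\gamma$; I would try to turn the simultaneous existence of two independent such primitives into a contradiction, presumably through the Hall-algebra analogue of the quadratic (Serre-type) relations among primitives and a careful bookkeeping of self-extensions $\Ext^{1}(M,M)$.

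The structurally cleanest route, which I would pursue in parallel, is through the ``Lie correspondence'' alluded to in the introduction. The aim is to attach to $\cat A$ a $\Gamma_{\cat A}$-graded ``Hall Lie algebra'' $\mathfrak g_{\cat A}$ for which $H_{\cat A}$ is a (quantum) enveloping algebra and $m_\gamma=\dim(\mathfrak g_{\cat A})_\gamma$; the conjecture then becomes the statement that every imaginary root space of $\mathfrak g_{\cat A}$ is nonzero. This is precisely the pattern realised for $\cat A=\Rep_\kk Q$ by Theorem~\ref{thm:Kac-Hua-Haus}: there $m_\gamma=\dim(\mathfrak g_Q)_\gamma$, the simple roots of $Q$ give the real simple roots of $\cat A$ with $m_\gamma=0$, the real non-simple roots carry no primitives, and the imaginary roots give the imaginary simple roots, for which $m_\gamma>0$ by the positivity of imaginary root multiplicities in the Kac--Moody algebra $\mathfrak g_Q$. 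Thus the conjecture already holds for representation categories of acyclic quivers, and I would first isolate this case as a model before attempting to transport the dichotomy to a general profinitary cofinitary $\cat A$.

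The main obstacle, and the reason the statement is only a conjecture, is the complete absence of root-system or Weyl-group input in the general setting: there is no ambient Kac--Moody algebra whose imaginary roots one may invoke, so the positivity of $m_\gamma$ must be extracted purely from the interplay between multiplication, controlled by indecomposables via Theorem~\ref{th:PBW-property}, and comultiplication, which controls the primitives. This interplay is exactly what is hardest to manage when $\cat A$ is not hereditary, since Green's theorem is unavailable, $\Delta$ is only a topological comultiplication, and the multiplication and comultiplication need not be compatible; in that regime even the quiver proof has no evident analogue, as the known positivity of imaginary root multiplicities ultimately rests on point counts over finite fields (\cite{Haus}) rather than on any formal property of the comultiplication.
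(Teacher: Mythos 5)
This statement is Conjecture~\ref{conj:im-ineq}; the paper offers no proof of it, only supporting evidence (the case $\cat A=\Rep_\kk Q$ via Theorem~\ref{thm:Kac-Hua-Haus}, which ultimately rests on Hausel's proof of the Kac conjecture, together with the worked examples of Section~\ref{sec:examples}). Your proposal is likewise not a proof: it correctly isolates the known quiver case and honestly records that the general case is open, so you do not disagree with the paper about what is actually established --- but as a proof attempt it has a genuine gap, namely the entire general argument.

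One concrete structural problem with your first route: the two ``defects'' into which you split $m_\gamma$ cannot both contribute. By Corollary~\ref{cor:prim-gen} one has $(H_{\cat A})_\gamma=\Prim(H_{\cat A})_\gamma\oplus P_\gamma$ with $P=(H^{+})^{2}$, so for $\gamma\neq 0$ the injection $(V_{\cat A})_\gamma\to Q_\gamma=H^{+}_\gamma/(H^{+})^{2}_\gamma$ is in fact an isomorphism, i.e. $\dim_\QQ (V_{\cat A})_\gamma=\dim_\QQ Q_\gamma$ always. Hence your decomposition degenerates to $m_\gamma=\#\Ind\cat A_\gamma-\dim_\QQ Q_\gamma=\dim_\QQ\bigl(P_\gamma\cap\QQ\Ind\cat A_\gamma\bigr)$, which is exactly Proposition~\ref{prop:PBW-prec}; proving that ``one of the two defects is strict'' is verbatim the original claim that some nontrivial linear combination of indecomposables of class $\gamma$ lies in $(H^{+})^{2}$, and no progress has been made. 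Your second route (an abstract graded Lie-type object $\mathfrak g_{\cat A}$ with $m_\gamma=\dim(\mathfrak g_{\cat A})_\gamma$) is the paper's own heuristic, but no such object is constructed, here or in the paper, for a general profinitary cofinitary $\cat A$; and, as you yourself note, even in the quiver case the positivity of imaginary root multiplicities is not a formal consequence of the bialgebra structure but of point counts over finite fields. The conjecture therefore remains open, and your write-up should be presented as supporting evidence plus a reformulation, not as a proof.
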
   

Clearly, Theorem~\ref{thm:Kac-Hua-Haus} verifies this conjecture when~$\cat A=\Rep_\kk(Q)$ for any
finite acyclic quiver~$Q$.
We provide more supporting evidence in 
Section~\ref{sec:examples}. In those cases $m_\gamma=1$ quite frequently
(see~\S\ref{subs:ex-tube},
\S\ref{subs:ex-non-simply-laced} and
\S\ref{subs:equidim}). 

Simple real roots are of special interest. Denote by $U_{\cat A}$ the 
subalgebra of $H_{\cat A}$ generated by 
all $V_\alpha$, where $\alpha$ runs over all real simple roots of ${\cat A}$ 
and refer to it as the {\it quantum enveloping algebra} of ${\cat A}$. 
The following well-known fact justifies this definition.

\begin{theorem}[\cite{Ringel1}] If $Q$ is an acyclic valued quiver, then $U_{\Rep_\kk Q}$ is 
isomorphic to a quantized enveloping algebra of the nilpotent part of 
${\mathfrak  g}_Q$.
\end{theorem}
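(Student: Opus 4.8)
The plan is to reduce the assertion to the Ringel--Green description of the composition subalgebra, by first identifying the real simple roots of $\cat A=\Rep_\kk Q$ together with the primitive spaces attached to them. First I would record that for each vertex $i$ of $Q$ the simple representation $S_i$ is, up to isomorphism, the unique object of its class $\alpha_i\in\Gamma_{\cat A}$; consequently the homogeneous component $(H_{\cat A})_{\alpha_i}$ is one-dimensional, spanned by $[S_i]$. Since $S_i$ is simple it has no proper nonzero subobject, so $\Delta([S_i])=[S_i]\otimes 1+1\otimes[S_i]$ and $[S_i]$ is primitive. Hence $(V_{\cat A})_{\alpha_i}=\QQ[S_i]$ is one-dimensional, so each simple root $\alpha_i$ is a real simple root; this is consistent with Lemma~\ref{lem:min-indec}, as the $\alpha_i$ are exactly the minimal elements of $\Gamma_{\cat A}\setminus\{0\}$.

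The key step is to check that the $\alpha_i$ exhaust the real simple roots, so that $U_{\cat A}$ is precisely the subalgebra generated by $\{[S_i]\}$, i.e.\ Ringel's composition subalgebra. By Theorem~\ref{thm:Kac-Hua-Haus}\eqref{thm:KHH.b} one has $(V_{\cat A})_\gamma=0$ unless $\gamma$ is simple or imaginary. For a non-simple positive real root $\gamma$, part~\eqref{thm:KHH.a} gives $m_\gamma=\dim(\mathfrak g_Q)_\gamma=1$, while $\#\Ind\cat A_\gamma=1$ since a real root carries a unique indecomposable (\cite{Kac}); thus $\dim_\QQ(V_{\cat A})_\gamma=0$ by~\eqref{eq:root-mult}. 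For an imaginary root $\gamma$, part~\eqref{thm:KHH.c} should force $\dim_\QQ(V_{\cat A})_\gamma=p_\gamma(q)\ge 2$, so that $\gamma$ is an imaginary simple root rather than a real one. Granting this, no $\gamma\ne\alpha_i$ satisfies $\dim_\QQ(V_{\cat A})_\gamma=1$, and the real simple roots are exactly the simple roots of $\mathfrak g_Q$.

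It then remains to invoke Ringel's theorem~\cite{Ringel1}. After passing to the twist of $H_{\cat A}$ by the Euler form of $\cat A$ and specializing the parameter to $v=\sqrt q$, the classes $[S_i]$ satisfy the quantum Serre relations of $\mathfrak g_Q$, and Ringel's argument---with Green's comultiplication~\cite{Green} supplying the bialgebra compatibility that upgrades the Dynkin case to an arbitrary acyclic $Q$---shows these to be a complete set of relations. Thus the composition subalgebra, hence $U_{\cat A}$, is isomorphic to the positive part $U_v^+(\mathfrak g_Q)$ of the quantized enveloping algebra of the associated symmetrizable Kac--Moody algebra, whose nilpotent part carries the imaginary root spaces as well.

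The main obstacle is the middle step: ruling out that an imaginary root be a real simple root. Because a nonzero primitive element cannot be written as a sum of products of primitives (the minimality in Theorem~\ref{th:profinitary primitives}), any real simple root $\gamma\ne\alpha_i$ would contribute a genuinely new generator and enlarge $U_{\cat A}$ beyond $U_v^+(\mathfrak g_Q)$; the statement therefore truly depends on the inequality $p_\gamma(q)\ge 2$ for every imaginary $\gamma$. This holds in the cases one can compute directly---for the Kronecker quiver one finds $\#\Ind\cat A_\delta=q+1$ and $m_\delta=\dim(\mathfrak g_Q)_\delta=1$, whence $\dim_\QQ(V_{\cat A})_\delta=q\ge 2$---but securing the behaviour for very small $q$ in higher rank from the explicit form of $p_\gamma$ in Theorem~\ref{thm:Kac-Hua-Haus}\eqref{thm:KHH.c} is the delicate point.
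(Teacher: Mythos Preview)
The paper does not prove this theorem; it simply quotes it as Ringel's result \cite{Ringel1}, and the only additional content needed to phrase it in terms of $U_{\cat A}$ rather than the composition algebra $C_{\cat A}$ is the equality $U_{\Rep_\kk Q}=C_{\Rep_\kk Q}$, which the paper asserts immediately after the theorem without argument. Your proposal is therefore doing more than the paper does: you are attempting to justify that equality and then invoke Ringel, which is exactly the right decomposition.

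Your reduction is correct up to the point you yourself flag. Steps 1 and 3 are fine: the $\alpha_i$ are real simple roots by Lemma~\ref{lem:min-indec}, and once $U_{\cat A}=C_{\cat A}$ is known, Ringel--Green gives the identification with $U_v^+(\mathfrak g_Q)$. The genuine issue is the middle step. Theorem~\ref{thm:Kac-Hua-Haus}\eqref{thm:KHH.c} only tells you $\dim_\QQ(V_{\cat A})_\gamma=p_\gamma(q)$ with $p_\gamma\in x\QQ[x]$; it does not by itself exclude $p_\gamma(q)=1$ for some imaginary $\gamma$ and some prime power $q$. What you need is contained in the results of \cite{SV} (and their reformulation in \cite{DX}) that the paper alludes to just before its definition of real and imaginary simple roots: in the Sevenhant--Van den Bergh description of $H_{\Rep_\kk Q}$ as the positive part of a (generalized, Borcherds-type) quantum Kac--Moody algebra, the one-dimensional primitive spaces are shown to occur precisely at the simple roots $\alpha_i$, while the primitive spaces at imaginary roots have dimension given by a polynomial in $q$ with no constant term and positive leading term, hence value $\ge q\ge 2$. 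Citing that explicitly closes your gap; trying to extract the inequality directly from the bare statement of Theorem~\ref{thm:Kac-Hua-Haus} will not succeed.
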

Since~$[X]\in\Iso\cat A$ is primitive if and only if it is almost 
simple (see~Definition~\ref{def:almost-simple}), the algebra 
$U_{\cat A}$ contains the subalgebra~$C_{\cat A}$ of~$H_{\cat A}$ 
generated by isomorphism classes of all almost simple 
objects. We call $C_{\cat A}$ the {\em composition algebra}
of~$\cat A$ since it generalizes the composition algebra of~$\Rep_\kk Q$, which 
is the subalgebra of $H_{\Rep_\kk Q}$
generated by isomorphism classes of simple objects.
In fact, in the assumptions of the above Theorem, $U_{\Rep_\kk Q}=C_{\Rep_\kk 
Q}$. However, it frequently happens that $C_{\cat A}\subsetneq U_{\cat A}$
(see Section~\ref{sec:examples} for examples).
Note the following Corollary of Theorem~\ref{cor:hereditary nichols} and~\cite{AS}*{Corollary~2.3} (see Lemma~\ref{lem:incl-nichols}).
\begin{corollary}\label{cor:C and U Nichols}
If~$\cat A$ is a profinitary hereditary abelian category then both $C_{\cat A}$ and~$U_{\cat A}$ are
Nichols algebras.
\end{corollary}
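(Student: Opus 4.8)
The plan is to exhibit $C_{\cat A}$ and $U_{\cat A}$ as Nichols algebras of suitable braided subspaces of $V_{\cat A}$, invoking the characterization of sub-Nichols algebras recorded in Lemma~\ref{lem:incl-nichols}, which itself rests on \cite{AS}*{Corollary~2.3}. By Theorem~\ref{cor:hereditary nichols}, the hereditary assumption gives $H_{\cat A}=\mathcal B(V_{\cat A})$, so $H_{\cat A}$ is an $\mathbb N$-graded braided Hopf algebra, generated in degree one by $V_{\cat A}=H_{\cat A}(1)$, whose space of primitives is concentrated in degree one and equals $V_{\cat A}$. Moreover, since $\cat A$ is a full, extension-closed exact subcategory of a profinitary abelian category, Corollary~\ref{cor:prof-cof}\eqref{cor:prof-cof.a} shows it is cofinitary, so $\Delta$ is an honest algebra homomorphism and all of this Hopf structure is available without passing to a completion.

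First I would pin down the generating subspaces. For $U_{\cat A}$ this is built into the definition: it is generated by $W_U:=\bigoplus_\alpha (V_{\cat A})_\alpha$, the sum over real simple roots $\alpha$, which is a $\Gamma_{\cat A}$-graded subspace of $V_{\cat A}$. For $C_{\cat A}$ I would use the stated equivalence that $[X]\in\Iso\cat A$ is primitive exactly when $X$ is almost simple (Definition~\ref{def:almost-simple}); thus $C_{\cat A}$ is generated by $W_C:=\operatorname{span}_\QQ\{[X]:X\text{ almost simple}\}\subseteq V_{\cat A}$, again $\Gamma_{\cat A}$-graded. Because the braiding on $V_{\cat A}$ is governed by the $\Gamma_{\cat A}$-grading, every $\Gamma_{\cat A}$-graded subspace is a subobject of the ambient braided category; hence both $W_C$ and $W_U$ are braided subspaces, and the Nichols algebras $\mathcal B(W_C),\mathcal B(W_U)$ make sense.

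It then remains to check that, for a braided subspace $W\subseteq V_{\cat A}$, the subalgebra $B\subseteq H_{\cat A}$ it generates meets the three conditions characterizing $\mathcal B(W)$. By construction $B$ is $\mathbb N$-graded, generated in degree one, with $B(0)=\kk$ and $B(1)=B\cap V_{\cat A}=W$. That $B$ is a braided Hopf subalgebra, i.e.\ $\Delta(B)\subseteq B\otimes B$, follows since $\Delta$ is multiplicative and $\Delta(W)\subseteq W\otimes 1+1\otimes W\subseteq B\otimes B$. Finally, writing $P(B)$ for the primitives of $B$, the restriction $\Delta|_B$ is the comultiplication of $B$, so $P(B)\subseteq P(H_{\cat A})=V_{\cat A}=H_{\cat A}(1)$; intersecting with $B$ gives $P(B)\subseteq B(1)=W$, while $W\subseteq P(B)$ is clear, whence $P(B)=W=B(1)$. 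Lemma~\ref{lem:incl-nichols} now identifies $B$ with $\mathcal B(W)$, and applying this to $W=W_C$ and $W=W_U$ proves the Corollary. The main obstacle I anticipate is the coalgebra bookkeeping of this last step --- verifying that $B$ is genuinely a braided Hopf subalgebra in the Yetter--Drinfeld sense, and that $W_C,W_U$ are honest subobjects rather than mere subspaces, so that \cite{AS}*{Corollary~2.3} applies verbatim. Once these categorical points are secured, the degree count $P(B)=B(1)$ and the identification $B=\mathcal B(W)$ are immediate.
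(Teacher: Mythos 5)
Your proposal is correct and follows the same route the paper intends: the paper derives this Corollary directly from Theorem~\ref{cor:hereditary nichols} (giving $H_{\cat A}=\mathcal B(V_{\cat A})$) together with Lemma~\ref{lem:incl-nichols}, applied to the inclusions of the $\Gamma_{\cat A}$-graded subobjects $W_C$ (span of almost simple classes) and $W_U$ (sum of the one-dimensional components $(V_{\cat A})_\alpha$) into $V_{\cat A}$. Your extra verification that the subalgebra generated by a graded subspace $W\subseteq V_{\cat A}$ is a graded braided bialgebra with $\Prim=W$ is sound and simply unwinds the content of \cite{AS}*{Corollary~2.3} that the paper cites without proof.
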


It turns out that there is another algebra $E_{\cat A}$, which (yet 
conjecturally) ``squeezes'' between these two. That is, $E_{\cat A}$ is 
generated by elements $e_\gamma\in H_{\cat A}$, where $e_\gamma$ is the sum of 
all isomorphism classes of objects of ${\cat A}$ whose image in~$\Gamma$ is 
$\gamma$. Since 
$$\Exp_{\cat A}:=\sum_{\gamma\in \Gamma_A}e_\gamma$$ 
is a group-like element in the completion of~$H_{\cat A}$
with respect to a slightly different coproduct (see~\cite{BG}*{Lemma~A.1}), we 
referred to $\Exp_{\cat A}$ in~\cite{BG}
as {\it the exponential} of ${\cat A}$. Hence we sometimes refer to $E_{\cat 
A}$ as the exponential algebra of ${\cat A}$. 
By definition, $C_{\cat A}\subset E_{\cat A}$. 
\begin{conjecture} 
\label{conj:Lie}
For any profinitary category ${\cat A}$ one has
$$E_{\cat A}= U_{\cat A}\ ,$$
in particular, $\Exp_{\cat A}$ belongs to the completion of $U_{\cat A}$.
\end{conjecture}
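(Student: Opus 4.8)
The plan is to pass to the completion $\widehat{H}_{\cat A}$ with respect to the $\Gamma_{\cat A}$-grading and exploit the group-like element $\Exp_{\cat A}$. By \cite{BG}*{Lemma~A.1} the element $\Exp_{\cat A}$ is group-like for the auxiliary coproduct $\Delta^{\flat}$, so its logarithm $\ell:=\log\Exp_{\cat A}$ is $\Delta^{\flat}$-primitive; since $\Delta^{\flat}$ is graded and $1\in(H_{\cat A})_0$, each homogeneous component $\ell_\gamma\in(H_{\cat A})_\gamma$ is again $\Delta^{\flat}$-primitive and $\Exp_{\cat A}=\exp\ell$. Writing $e_\gamma=\ell_\gamma+P_\gamma(\ell_{\gamma'}:\gamma'\prec\gamma)$, where $P_\gamma$ is the degree-$\gamma$ part of $\exp\ell$ and involves only strictly smaller classes, an induction on the natural partial order of $\Gamma_{\cat A}$ shows that $E_{\cat A}=\langle \ell_\gamma:\gamma\in\Gamma_{\cat A}\rangle$. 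Thus the conjecture reduces to identifying which $\ell_\gamma$ are nonzero and showing that those generate exactly $U_{\cat A}$.

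First I would show that $\Delta^{\flat}$-primitivity coincides with ordinary $\Delta$-primitivity. The two coproducts differ, up to the Euler-form bicharacter twist, only by nonzero bigraded scalars on each component $(H_{\cat A})_\alpha\otimes(H_{\cat A})_\beta$; hence the ``middle'' part of $\Delta^{\flat}(x)$ vanishes if and only if that of $\Delta(x)$ does, so $\ell_\gamma\in(V_{\cat A})_\gamma$ for every $\gamma$. In particular $\ell_\gamma=0$ whenever $(V_{\cat A})_\gamma=0$, so only simple roots can contribute. For minimal classes the base case is immediate from Proposition~\ref{prop:profinitary-ordered}: $(H_{\cat A})_\gamma$ is one-dimensional and primitive, so $\ell_\gamma=e_\gamma$ spans $(V_{\cat A})_\gamma$ and $\gamma$ is real simple. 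For a general real simple root $\alpha$ one has $\dim_\QQ(V_{\cat A})_\alpha=1$, and I would argue that $\ell_\alpha\neq0$---equivalently, that the primitive projection of $e_\alpha$ survives after subtracting the decomposable corrections $P_\alpha$---so that $\ell_\alpha$ spans $V_\alpha$ and hence $V_\alpha\subseteq E_{\cat A}$, giving $U_{\cat A}\subseteq E_{\cat A}$.

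The main obstacle is the reverse inclusion, which is concentrated entirely at the imaginary simple roots. Because $V_{\cat A}$ is minimal (Theorem~\ref{th:profinitary primitives}), a nonzero primitive element in an imaginary degree $\gamma$ can never be written as a sum of nontrivial products of primitive elements, so it can never lie in $U_{\cat A}$; since $\ell_\gamma\in(V_{\cat A})_\gamma$, the equality $E_{\cat A}=U_{\cat A}$ \emph{forces} $\ell_\gamma=0$ for every imaginary simple root. The crux of the proof is therefore to establish this vanishing: the connected (logarithmic) primitive part of the counting series $\Exp_{\cat A}$ must vanish in all imaginary degrees, even though $(V_{\cat A})_\gamma\neq0$ there. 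I expect this to be the genuinely hard input, amounting to a cancellation among the Hall numbers and automorphism counts of the objects of class $\gamma$; the quiver case of Theorem~\ref{thm:Kac-Hua-Haus}, where $\dim_\QQ(V_{\cat A})_\gamma=p_\gamma(q)$ has vanishing constant term, should serve both as a guide and as a consistency check. Once $\ell_\gamma=0$ is known for all imaginary $\gamma$, one obtains $\Exp_{\cat A}=\exp\bigl(\sum_{\alpha\text{ real simple}}\ell_\alpha\bigr)\in\widehat{U}_{\cat A}$, whence $E_{\cat A}=\langle\ell_\alpha\rangle=\langle V_\alpha:\alpha\text{ real simple}\rangle=U_{\cat A}$, completing the argument.
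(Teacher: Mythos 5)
This statement is Conjecture~\ref{conj:Lie}, which the paper leaves open: no proof is given anywhere in the text, only supporting examples (\S\ref{subs:ex-jordan}--\S\ref{subs:ex-square} and the cyclic-quiver examples), so there is nothing in the paper to compare your argument against. More importantly, your proposal is not a proof either, and you say so yourself: the entire content of the conjecture gets concentrated into the two claims that the homogeneous components $\ell_\gamma$ of $\log\Exp_{\cat A}$ are nonzero at every real simple root and vanish at every imaginary simple root, and you establish neither. The second of these is precisely where you write ``I expect this to be the genuinely hard input,'' and indeed it is: nothing in the paper's machinery (Green's pairing, the coradical filtration, the minimality of $V_{\cat A}$) produces the required cancellation among Hall numbers and automorphism counts in an imaginary degree. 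Your reformulation is a reasonable and possibly useful equivalent statement of the conjecture, but it does not advance beyond it.

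Two further points in the reduction itself need care. First, you repeatedly use tools that require cofinitarity --- Theorem~\ref{th:profinitary primitives} for the minimality of $V_{\cat A}$, and the degreewise convergence of $\log$ and $\exp$, which needs each $\gamma$ to admit only finitely many decompositions into elements of $\Gamma_{\cat A}^+$ --- whereas Conjecture~\ref{conj:Lie} is stated for profinitary categories only; that profinitary implies locally finite (hence cofinitary, by Theorem~\ref{th:prof-cof}) is itself Conjecture~\ref{conj:prof-cof}, so your argument silently assumes one open conjecture to reformulate another. Second, the step identifying $\Delta^{\flat}$-primitives with $\Delta$-primitives is asserted on the grounds that the two coproducts ``differ only by nonzero bigraded scalars,'' but the paper only says $\Exp_{\cat A}$ is group-like for a ``slightly different coproduct'' and refers to \cite{BG}*{Lemma~A.1}; if that coproduct differs from \eqref{eq:coproductDelta} by more than a bicharacter twist (e.g.\ by the $|\Aut|$-normalization entering $F^{A,B}_C$ versus the unnormalized $e_\gamma=\sum_{[M]\in\Iso\cat A_\gamma}[M]$), the two notions of primitivity need not coincide and this step would have to be reworked.
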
 

In Section~\ref{sec:examples} we provide several supporting examples of 
profinitary categories ${\cat A}$ together with the explicit presentations of 
$H_{\cat A}$, $U_{\cat A}$, and $E_{\cat A}$.

The significance of the conjecture is that it paves the ground for the ``Lie 
correspondence'' between the enveloping algebra 
$U_{\cat A}$ and the quantum Chevalley group $G_{\cat A}$ that we introduced 
in~\cite{BG} as an analogue of the corresponding Lie group. That is, the 
Conjecture \ref{conj:Lie} implies that the ``tame'' part of $G_{\cat A}$ 
belongs to the completion of~$U_{\cat A}$.

\subsection*{Acknowledgments}
We are grateful to Bernhard Keller, Dylan Ruppel and 
Vadim Vologodsky for stimulating discussions. An important part of this work 
was 
done during the first author's visit to the MSRI in the framework of
the ``Cluster algebras'' program and he thanks the Institute and the organizers 
for 
their hospitality and support. The second author is indebted to Department of 
Mathematics
of Uppsala University and especially Volodymyr Mazorchuk for their hospitality 
and support. On the final stage of work on this paper we benefited from
the hospitality of Institut des Hautes \'Etudes Scientiques (IHES), which we
gratefully acknowledge.

\section{Definitions and main results}

\subsection{Exact categories and Hall algebras}
All categories are assumed to be essentially small. For such a category~$\cat 
A$ we denote by $\Iso\cat A$ the set of isomorphism classes of 
objects in~$\cat A$.
We say that a category $\cat A$ is $\Hom$-finite if $\Hom_{\cat A}(X,Y)$ is a 
finite set for all $X,Y\in\cat A$.

Let~$\cat A$ be an exact category, in the sense of~\cites{Qu} (see 
also~\cites{Kel,Buh}). We denote by $\uExt^1_{\cat A}(A,B)$
the set of all isomorphism classes $[X]\in\Iso\cat A$ such that there exists a  
short exact sequence
\begin{equation}\label{eq:shex}
\xymatrix{B\ar@{>->}[r]^f &X\ar@{>>}[r]^g & A}
\end{equation}
(here
$f$ is a {\em monomorphism}, $g$ is an {\em epimorphism}, $f$ is a kernel 
of~$g$ and~$g$ is a cokernel of~$f$).
We say 
that $\cat A$ is {\em finitary} if it is $\Hom$-finite and $\uExt^1_{\cat 
A}(A,B)$ is finite for every $A,B\in\cat A$.

Following~\cite{Hub} we define Hall numbers for finitary exact categories as 
follows. For $A,B,X\in\cat A$ fixed, denote by
$\mathcal E(A,B)_X$ the set of all short exact sequences~\eqref{eq:shex}.
The group $\Aut_{\cat A}A\times\Aut_{\cat A}B$ acts freely on~$\mathcal 
E(A,B)_X$ by
$$
(\varphi,\psi).(f,g)=(f\varphi^{-1},\psi g),\qquad \varphi\in\Aut_{\cat 
A}B,\,\psi\in\Aut_{\cat A}A.
$$ 
The Hall number~$F_{AB}^X$ is the number of $\Aut_{\cat A}A\times\Aut_{\cat 
A}B$-orbits in~$\mathcal E(A,B)_X$ and equals to
$$
F_{AB}^X=\frac{\#\mathcal E(A,B)_X}{\#(\Aut_{\cat A}A\times\Aut_{\cat A}B)}.
$$
Denote
$$
H_{\cat A}=\mathbb Q\Iso\cat A=\bigoplus_{[X]\in\Iso\cat A}\mathbb Q\cdot [X].
$$
\begin{proposition}[Hall algebra, 
\cites{Ringel,Hub}]\label{P:multiplicationHall} 
For any finitary exact category~$\cat A$ the space~$H_{\cat A}$
is an associative unital $\mathbb Q$-algebra with the product given by:
\begin{equation}
\label{E:multiplicationHall}
[A]\cdot [B]=\sum_{[C]\in {\Iso\cat A}}  F_{A,B}^C [C].
\end{equation}
The unity $1\in  H_{\cat A}$ is the class $[0]$ of the zero object of ${\cat 
A}$.
\end{proposition}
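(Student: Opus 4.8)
The plan is to check three things in turn: that the product~\eqref{E:multiplicationHall} is well defined, that $[0]$ is a two-sided unit, and that the product is associative; only the last point is substantial. For well-definedness, I would first note that if $F_{A,B}^C\ne 0$ then there is a short exact sequence $B\rightarrowtail C\twoheadrightarrow A$, so $[C]\in\uExt^1_{\cat A}(A,B)$, a finite set since $\cat A$ is finitary; hence the sum has finitely many nonzero terms. Each $F_{A,B}^C$ is a nonnegative integer because $\mathcal E(A,B)_C\subseteq\Hom_{\cat A}(B,C)\times\Hom_{\cat A}(C,A)$ is finite by $\Hom$-finiteness and, as recorded above, $\Aut_{\cat A}A\times\Aut_{\cat A}B$ acts freely on it, so $F_{A,B}^C$ counts orbits. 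The product then extends $\QQ$-bilinearly to $H_{\cat A}$. For the unit, a sequence $B\rightarrowtail C\twoheadrightarrow 0$ forces the monomorphism to be an isomorphism (its cokernel is $0$), so $\mathcal E(0,B)_C$ is empty unless $C\cong B$ and is an $\Aut_{\cat A}B$-torsor otherwise; thus $F_{0,B}^C=\delta_{[C],[B]}$ and $[0]\cdot[B]=[B]$, the opposite identity being symmetric.

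For associativity, I would compare coefficients of $[Z]$ on the two sides of $([A]\cdot[B])\cdot[C]=[A]\cdot([B]\cdot[C])$, reducing the claim to the structure-constant identity
\[
\sum_{[X]\in\Iso\cat A}F_{A,B}^X\,F_{X,C}^Z=\sum_{[Y]\in\Iso\cat A}F_{B,C}^Y\,F_{A,Y}^Z
\]
for all $A,B,C,Z\in\cat A$, all sums being finite by the previous step. The device that makes this transparent is the reformulation of the Hall number as a count of subobjects: I claim $F_{A,B}^X$ equals the number $g_{A,B}^X$ of admissible subobjects $U\rightarrowtail X$ with $U\cong B$ and $X/U\cong A$. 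This follows by sending $(f,g)\in\mathcal E(A,B)_X$ to the subobject represented by $f$: the fibres of this map are $\Aut_{\cat A}A\times\Aut_{\cat A}B$-torsors, since the admissible monomorphisms with a given image form an $\Aut_{\cat A}B$-torsor, and for each such $f$ the admissible epimorphism $g$, being a cokernel of $f$, is determined up to $\Aut_{\cat A}A$.

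Granting $F=g$, I would prove the identity by a bijection: both sides count chains $U\rightarrowtail W\rightarrowtail Z$ of admissible monomorphisms with $U\cong C$, $W/U\cong B$ and $Z/W\cong A$. On the left, $g_{X,C}^Z$ chooses $U\rightarrowtail Z$ with $U\cong C$ and $Z/U\cong X$, and then $g_{A,B}^X$ chooses an admissible subobject of $X\cong Z/U$ with sub $\cong B$ and quotient $\cong A$, which by the correspondence theorem for admissible subobjects is exactly a $W$ with $U\rightarrowtail W\rightarrowtail Z$, $W/U\cong B$, $Z/W\cong A$. On the right one sums first over $W$ (with $W\cong Y$, $Z/W\cong A$) and then over $U\rightarrowtail W$; this produces the same set of chains. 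Hence both sums equal a common quantity $g_{A,B,C}^Z$, giving the identity and thus associativity.

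The hard part is the correspondence statement used in the last paragraph: because $\cat A$ is only exact and not abelian, the bijection between admissible subobjects $W$ with $U\le W\le Z$ and admissible subobjects of $Z/U$, together with the isomorphism $Z/W\cong (Z/U)/(W/U)$, cannot be obtained by naive diagram chasing. Instead I would invoke the Noether isomorphism theorems and the closure of inflations under composition from the theory of exact categories (see~\cite{Buh}), which is precisely the framework in which these counts are justified at this level of generality in~\cite{Hub}. Once this input is in place, the remaining verifications are the routine torsor computations sketched above.
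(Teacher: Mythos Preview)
Your argument is correct and is precisely the standard proof: reinterpret $F_{A,B}^C$ as the number of admissible subobjects of $C$ of prescribed type, then count two-step admissible filtrations of $Z$ in two ways, invoking the Noether isomorphism theorems for exact categories from~\cite{Buh} to identify admissible subobjects of $Z/U$ with intermediate admissible subobjects $U\rightarrowtail W\rightarrowtail Z$. The paper does not give its own proof of this proposition at all; it is quoted as a known result with attribution to~\cites{Ringel,Hub}, and the argument you have sketched is essentially the one found in those references (particularly~\cite{Hub} for the exact-category generality).
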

It is well-known (see e.g.~\cites{Buh,Kel}) that each exact category~$\cat A$ 
can be realized as a full subcategory closed under extensions 
of an abelian category~$\overline{\cat A}$. However, even if~$\cat A$ was 
finitary, it might be impossible to find an ambient
abelian category which is also finitary. On the other hand, any full 
subcategory of a finitary abelian category closed under extensions is also
finitary. 

\subsection{Ordered monoids and PBW property of Hall algebras}\label{subs:ord-mon-PBW-prop}
Let~$\Lambda$ be an abelian monoid. 
We say that~$\Lambda$ is ordered if there exists 
a partial order~$\lhd$ on~$\Lambda$ such that 
\begin{enumerate}[$1^\circ.$]
 \item $0\lhd \lambda$ for all~$\lambda\in\Lambda^+:=\Lambda\setminus\{0\}$;
 \item $\mu\lhd \nu$, $\mu,\nu\in\Lambda^+\implies\lambda+\mu\lhd\lambda+\nu$ for all
$\lambda\in\Lambda$. 
\end{enumerate}

Let~$\cat A$ be a finitary exact category. 
The set~$\Iso\cat A$ is naturally an abelian monoid with the addition operation 
defined by~$[X]+[Y]=[X\oplus Y]$. The following property
is immediate.
\begin{lemma}[Cancellation property]\label{lem:can-prop}
Let $X,X',Y\in\cat A$. If $[X\oplus Y]=[X'\oplus Y]$ then~$[X]=[X']$.
\end{lemma}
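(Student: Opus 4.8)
The plan is to reduce the cancellation property to the fact that $\Iso\cat A$ is a \emph{free} abelian monoid, which in turn rests on the Krull--Schmidt property of $\cat A$. First I would record that $\cat A$, being exact, is additive, and that $\Hom$-finiteness forces every endomorphism ring $\End_{\cat A}(X)$ to be a \emph{finite} ring. Two consequences follow. On the one hand, a finite ring is Artinian, so it contains no infinite family of orthogonal nonzero idempotents; hence every object of $\cat A$ is a finite direct sum of indecomposables. On the other hand, an indecomposable object $X$ has an endomorphism ring with no nontrivial idempotents (idempotents split in $\cat A$), and a finite ring with only trivial idempotents is local: $\End_{\cat A}(X)/\operatorname{rad}\End_{\cat A}(X)$ is a finite semisimple ring with no nontrivial idempotents, hence a finite field.

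With local endomorphism rings on indecomposables in place, the Krull--Schmidt--Azumaya theorem applies and shows that the decomposition of any object into indecomposables is unique up to permutation and isomorphism. Equivalently, recording for each $[X]$ the multiplicities of its indecomposable summands identifies the monoid $(\Iso\cat A,\oplus)$ with the free abelian monoid $\mathbb{N}^{(\Ind\cat A)}$ on the set $\Ind\cat A$ of isomorphism classes of indecomposables.

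The lemma is then immediate. Writing the indecomposable decompositions of $X$, $X'$ and $Y$, the hypothesis $[X\oplus Y]=[X'\oplus Y]$ says that the multisets of indecomposable summands of $X\oplus Y$ and of $X'\oplus Y$ coincide; subtracting the (finite) multiset of summands of $Y$ from both sides---legitimate because $\mathbb{N}^{(\Ind\cat A)}$ is cancellative---yields equal multisets for $X$ and $X'$, that is, $[X]=[X']$.

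The only point requiring care, and hence the main obstacle, is the validity of Krull--Schmidt, i.e.\ that the finite endomorphism rings of indecomposables are genuinely local; this uses that indecomposability can be detected by the absence of nontrivial idempotents, which in turn needs idempotents to split in $\cat A$. Should one prefer not to assume idempotent completeness, the same conclusion can be reached directly: $\End_{\cat A}(Y)$ is a finite, hence semilocal, ring, and an object with semilocal endomorphism ring is cancellable in direct sums (a theorem of Evans), giving $[X]=[X']$ at once.
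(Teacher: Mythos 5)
Your main line of argument does not work at the level of generality of the lemma, which is asserted for an arbitrary finitary exact category. An exact category is not assumed to be idempotent complete, and a typical source of examples here is a full extension-closed subcategory $\cat A$ of an abelian category that is \emph{not} closed under direct summands. For such $\cat A$ an object can be indecomposable in $\cat A$ while $\End_{\cat A}(X)$ has nontrivial idempotents (they simply do not split inside $\cat A$), so $\End_{\cat A}(X)$ need not be local, Krull--Schmidt--Azumaya does not apply, and $(\Iso\cat A,\oplus)$ need not be free on $\Ind\cat A$. The paper is explicit about this: the remark following Lemma~\ref{lem:fin-indec} exhibits the full subcategory of $\Rep_\kk(1\to 0\leftarrow 2)$ cut out by $\dim_\kk V_1=\dim_\kk V_2$, in which
$(P_1\oplus S_2)\oplus(P_2\oplus S_1)\cong(P_1\oplus P_2)\oplus(S_1\oplus S_2)$
gives two genuinely different decompositions into indecomposables of the subcategory. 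So the reduction to ``$\Iso\cat A\cong\mathbb N^{(\Ind\cat A)}$ is cancellative because it is free'' is unavailable; freeness is precisely the Krull--Schmidt condition, which the paper singles out as an \emph{extra} hypothesis elsewhere (e.g.\ in Theorem~\ref{thm:PBW-prop-Hall}).

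Your final sentence, however, is the correct proof and should be the whole proof, not a fallback: $\Hom$-finiteness makes $\End_{\cat A}(Y)$ a finite ring, hence semilocal (indeed semiperfect), and an object with semilocal endomorphism ring cancels from direct sums (Evans/Bass: semilocal rings have $1$ in their stable range). If you want to avoid invoking Evans' theorem outside module categories, apply the functor $\Hom_{\cat A}(M,-)$ with $M=X\oplus X'\oplus Y$: it identifies $X$, $X'$, $Y$ with finitely generated projective modules over the finite, hence semiperfect, ring $\End_{\cat A}(M)$, where cancellation of projectives is classical, and full faithfulness of this functor on $\add M$ transports the resulting isomorphism back to $X\cong X'$. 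The paper itself offers no argument (it calls the property ``immediate''), but note that whatever argument is intended must rest on finiteness of endomorphism rings rather than on uniqueness of indecomposable decompositions.
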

Every object in~$\cat A$ is a 
finite direct sum of indecomposable objects (see Lemma~\ref{lem:fin-indec}). 
Thus, in particular, $\Ind\cat A$ generates~$\Iso\cat A$ as a monoid. The category~$\cat A$ is 
said to be Krull-Schmidt if~$\Iso\cat A$ is freely generated by~$\Ind\cat A$.

Define a relation~$\lhd$ on~$(\Iso\cat A)^+$ by $[M]\lhd[N]$ if~$[N]=[M^+\oplus M^-]$ and
there exists a non-split
short exact sequence $\xymatrix@C=2ex{M^-\ar@{>->}[r] &M\ar@{>>}[r] & M^+}$. By abuse of notation,
we denote by~$\lhd$ the transitive closure of this relation. We 
extend~$\lhd$ to~$\Iso\cat A$ by requiring that $[0]\lhd[M]$ for all~$[M]\in(\Iso\cat A)^+$.
\begin{proposition}\label{prop:Iso-ordered}
$(\Iso\cat A,\lhd)$ is an ordered monoid.
\end{proposition}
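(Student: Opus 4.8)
The plan is to verify the three ingredients of ``ordered monoid'' in turn. Transitivity of~$\lhd$ is built into the definition, since~$\lhd$ was declared to be the transitive closure of the generating relation; and axiom~$1^\circ$ holds by fiat, as we imposed $[0]\lhd[M]$ for every $[M]\in(\Iso\cat A)^+$ and no relation of the form $[M]\lhd[0]$. Thus the two substantive points are that~$\lhd$ is antisymmetric (so that it is a genuine strict partial order, with no cycles produced by the transitive closure) and that it satisfies the additivity axiom~$2^\circ$. I expect antisymmetry to be the main obstacle, and I would handle it by exhibiting a strictly monotone monovariant.

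The monovariant I propose is $c\colon\Iso\cat A\to\ZZ_{>0}$, $c(X)=\#\End_{\cat A}(X)$, which is finite because~$\cat A$ is $\Hom$-finite. It suffices to show that $c(M)<c(N)$ for each generating step $[M]\lhd[N]$: then $c$ strictly increases along every $\lhd$-chain, so a cycle $[M]\lhd[N]\lhd[M]$ is impossible and no element is $\lhd$-related to itself. A generating step means $[N]=[M^+\oplus M^-]$ for a \emph{non-split} short exact sequence $M^-\rightarrowtail M\twoheadrightarrow M^+$, so everything reduces to the following inequality: for any non-split short exact sequence $A\rightarrowtail E\twoheadrightarrow B$ one has $\#\End_{\cat A}(E)<\#\End_{\cat A}(A\oplus B)$. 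I would prove this by feeding $E,A,B$ into the standard exact sequences $0\to\Hom_{\cat A}(X,A)\to\Hom_{\cat A}(X,E)\to\Hom_{\cat A}(X,B)\to\Ext^1_{\cat A}(X,A)$ and its contravariant analogue $0\to\Hom_{\cat A}(B,Y)\to\Hom_{\cat A}(E,Y)\to\Hom_{\cat A}(A,Y)\to\Ext^1_{\cat A}(B,Y)$, and then counting elements of these finite abelian groups. Combining the covariant sequence at $X=E$ with the contravariant one at $Y=A$ and $Y=B$ yields
$$
\#\End_{\cat A}(E)\le\#\Hom_{\cat A}(E,A)\cdot\#\Hom_{\cat A}(E,B)\le\#\Hom_{\cat A}(A\oplus B,A)\cdot\#\Hom_{\cat A}(A\oplus B,B)=\#\End_{\cat A}(A\oplus B).
$$
The strictness is the crux: in the contravariant sequence at $Y=A$ the connecting map sends $\mathrm{id}_A$ to the class of the extension, which is nonzero precisely because the sequence is non-split; hence $\mathrm{id}_A$ does not lie in the image of $\Hom_{\cat A}(E,A)\to\Hom_{\cat A}(A,A)$, that image is proper, and the middle inequality above is strict for the $A$-factor. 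A useful by-product is that $c(M)<c(M^+\oplus M^-)$ forces $M\not\cong M^+\oplus M^-$, so each generating step genuinely has $[M]\ne[N]$ and the base relation is irreflexive.

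For axiom~$2^\circ$ I would argue first for a single generating step and then extend by transitivity. Given $[M]\lhd[N]$ witnessed by a non-split $M^-\rightarrowtail M\twoheadrightarrow M^+$ and given any $P\in\cat A$, form the direct sum of this sequence with the split sequence $0\rightarrowtail P\twoheadrightarrow P$, obtaining a short exact sequence $M^-\rightarrowtail M\oplus P\twoheadrightarrow M^+\oplus P$. This sequence is again non-split: if it split, its middle term would satisfy $M\oplus P\cong M^-\oplus M^+\oplus P$, and cancelling~$P$ via the cancellation property (Lemma~\ref{lem:can-prop}) would give $M\cong M^-\oplus M^+$, contradicting the by-product above. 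Reading off $(M\oplus P)^-=M^-$ and $(M\oplus P)^+=M^+\oplus P$ shows $[M\oplus P]\lhd[N\oplus P]$, i.e. $\lambda+\mu\lhd\lambda+\nu$ for $\lambda=[P]$. For a multi-step relation $[M]\lhd[N]$ one applies this to each link of the chain and concatenates, using transitivity; the case $\lambda=[0]$ is trivial. Together with the antisymmetry established above, this shows $(\Iso\cat A,\lhd)$ is an ordered monoid.
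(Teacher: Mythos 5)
Your proof is correct, but it reaches the crucial antisymmetry statement by a genuinely different route from the paper. The paper invokes Lemma~\ref{lem:order-by-func} with the chain-dependent function $[M_i]\mapsto|\Ext^1_{\cat A}(M_0^+\oplus\cdots\oplus M_n^+,M_i)|$ valued in the Grothendieck monoid $\Gamma_{\ZZ-\fin}$ of finite abelian groups, and the strict increase along each link is extracted from an Euler-characteristic computation (Lemma~\ref{lem:ineq-Z-fin}) whose equality case forces a splitting. You instead use the single integer-valued monovariant $c(X)=\#\End_{\cat A}(X)$ and prove $\#\End_{\cat A}(E)<\#\End_{\cat A}(A\oplus B)$ for every non-split conflation $A\rightarrowtail E\twoheadrightarrow B$ by left-exactness of $\Hom$ in each variable: the strictness comes from $\mathrm{id}_A$ not lying in the image of $\Hom_{\cat A}(E,A)\to\Hom_{\cat A}(A,A)$, and in fact you do not even need the connecting map into $\Ext^1$ here, since a preimage of $\mathrm{id}_A$ is by definition a retraction. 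Your argument is more elementary (no auxiliary Grothendieck monoid, no dependence of the test object on the chain) and has the pleasant by-product that $E\cong A\oplus B$ forces the sequence to split in a $\Hom$-finite exact category, which you then exploit for axiom $2^\circ$; the paper's version buys the finer $\Ext$-inequality of Lemma~\ref{lem:ineq-Z-fin}, which is of independent interest. For axiom $2^\circ$ both arguments add a split summand to one end of the conflation (you to the quotient, the paper to the sub-object); your detour through Lemma~\ref{lem:can-prop} and the Miyata-type by-product is valid, though the more direct observation that a retraction of the enlarged sequence restricts to one of the original would do as well.
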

We prove this Proposition in~\S\ref{subs:part-ord-iso}. It is used as the key ingredient in 
a proof of the following theorem, which generalizes~\cite{GP}*{Theorem~3.1}.
\begin{theorem}\label{thm:PBW-prop-Hall}
Let~$\cat A$ be a finitary exact category. Then for any total order on the set~$\Ind\cat A$ of isomorphism 
classes of indecomposable objects in~$\cat A$, 
$H_{\cat A}$ is spanned, as a $\QQ$-vector space, by ordered monomials on~$\Ind\cat A$. Moreover,
if~$\cat A$ is Krull-Schmidt, then such monomials form a basis of~$H_{\cat A}$.
\end{theorem}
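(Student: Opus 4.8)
The plan is to read off from the Hall product a triangularity with respect to the order~$\lhd$ of Proposition~\ref{prop:Iso-ordered} and then invert it. First I would record the basic product estimate: for $A,B\in\cat A$ the split conflation shows $F_{AB}^{A\oplus B}\ge 1$, while every other $[C]$ with $F_{AB}^{C}\neq0$ arises from a non-split conflation $B\rightarrowtail C\twoheadrightarrow A$ and therefore satisfies $[C]\lhd[A\oplus B]$ by the very definition of~$\lhd$. Hence
\[
[A]\cdot[B]=F_{AB}^{A\oplus B}\,[A\oplus B]+\sum_{[C]\lhd[A\oplus B]}F_{AB}^{C}\,[C],\qquad F_{AB}^{A\oplus B}>0 .
\]
Given indecomposables $I_1\le\dots\le I_k$ and $M=I_1\oplus\dots\oplus I_k$, an induction on~$k$ that feeds this estimate through property~$2^\circ$ of the ordered monoid $(\Iso\cat A,\lhd)$ (so that $[C]\lhd[M']$ forces $[C\oplus I_k]\lhd[M'\oplus I_k]$) then yields the triangular expansion $[I_1]\cdots[I_k]=c_M\,[M]+\sum_{[C]\lhd[M]}d_{C}\,[C]$ with leading coefficient $c_M>0$ a product of split Hall numbers.

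To prove spanning I would invert this system, writing $[M]=c_M^{-1}\big([I_1]\cdots[I_k]-\sum_{[C]\lhd[M]}d_{C}\,[C]\big)$, where the decomposition $M=\bigoplus I_j$ exists by Lemma~\ref{lem:fin-indec}; since the $[M]$ form a $\QQ$-basis of $H_{\cat A}$, this shows that ordered monomials span $H_{\cat A}$. What makes this recursion legitimate, and the step I expect to be the main obstacle, is the \emph{well-foundedness} of~$\lhd$, namely that every strictly descending $\lhd$-chain terminates. This is not part of the ``ordered monoid'' axioms, and it need not follow from finiteness of the $\Gamma_{\cat A}$-graded pieces, since $\cat A$ is only assumed finitary (the fibres of $\Iso\cat A\to\Gamma_{\cat A}$ may be infinite).

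I would settle well-foundedness by exhibiting a $\lhd$-monotone $\ZZ_{\ge1}$-valued invariant, the natural candidate being $\phi(M)=|\End_{\cat A}(M)|$ (or $\dim_\kk\End_{\cat A}M$ in the $\kk$-linear case), which is finite by $\Hom$-finiteness. For a non-split conflation $B\rightarrowtail C\twoheadrightarrow A$ with class $\xi\neq0$, applying $\Hom_{\cat A}(A,-)$ produces an exact sequence whose connecting map sends $\mathrm{id}_A$ to~$\xi$, so its image is nontrivial and $|\Hom(A,C)|<|\Hom(A,A)|\cdot|\Hom(A,B)|$. Combining this with the left-exactness estimates $|\End C|\le|\Hom(A,C)|\,|\Hom(B,C)|$ and $|\Hom(B,C)|\le|\Hom(B,A)|\,|\Hom(B,B)|$ gives $\phi(C)<\phi(A\oplus B)$, using that $\End(A\oplus B)$ decomposes into the four corner $\Hom$-groups. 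Thus $\phi$ strictly increases along each generating step of~$\lhd$, hence along~$\lhd$, so descending chains are finite and the induction above is valid; the base case $\phi=1$ forces $[M]$ to be $\lhd$-minimal. I prefer this purely homological invariant to the number of indecomposable summands, whose strict drop along~$\lhd$ is plausible but seems to need more than the above bookkeeping.

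Finally, for the Krull-Schmidt case I would upgrade spanning to a basis. Here $\Iso\cat A$ is free on $\Ind\cat A$, so $m\mapsto[M(m)]$ is a bijection from ordered monomials onto the monomial basis $\{[M]\}$, and the triangular expansion says each~$m$ equals $c_m[M(m)]$ plus a $\QQ$-combination of strictly $\lhd$-smaller basis elements. Given a relation $\sum_m\lambda_m m=0$, choosing $M(m_0)$ maximal for~$\lhd$ among the finitely many~$m$ with $\lambda_m\neq0$ isolates the coefficient of $[M(m_0)]$ as $\lambda_{m_0}c_{m_0}\neq0$, a contradiction; hence the ordered monomials are linearly independent and, together with spanning, form a basis of~$H_{\cat A}$.
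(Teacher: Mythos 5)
Your proposal is correct, and it reaches the theorem by a route that is genuinely leaner than the paper's. The triangularity you start from, $[A]\cdot[B]\in\QQ_{>0}[A\oplus B]+\sum_{[C]\lhd[A\oplus B]}\QQ[C]$, is exactly the hypothesis of the paper's Corollary~\ref{cor:pre-PBW}; but where you then invert the triangular system directly by Noetherian induction on~$\lhd$, the paper routes the argument through the general machinery of $\Lambda$-filtered rings and their associated graded (\S\ref{subs:filt-gr-ring}, Lemma~\ref{lem:filtered-graded} and Proposition~\ref{prop:filtered-graded}), and the Krull--Schmidt basis statement is likewise extracted there from the free generation of~$\Iso\cat A$ rather than by your explicit leading-term extraction. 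The more substantive divergence is the invariant controlling~$\lhd$. The paper (proof of Proposition~\ref{prop:Iso-ordered} in \S\ref{subs:part-ord-iso}) uses the chain-dependent function $[M_i]\mapsto|\Ext^1_{\cat A}(M_{\mathbf x},M_i)|$ valued in the Grothendieck monoid $\Gamma_{\mathbb Z-\fin}$, together with Lemma~\ref{lem:ineq-Z-fin}, and this only delivers antisymmetry via Lemma~\ref{lem:order-by-func}; the subsequent ``induction on $(\Lambda^+,\unlhd)$'' in Lemma~\ref{lem:filtered-graded} tacitly assumes the well-foundedness that you correctly isolate as the real issue (and which, as you note, is not automatic when the fibres of $\Iso\cat A\to\Gamma_{\cat A}$ are infinite). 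Your single chain-independent invariant $\phi(M)=|\End_{\cat A}M|$ does strictly increase along each generating step --- the connecting map $\Hom_{\cat A}(A,A)\to\Ext^1_{\cat A}(A,B)$ hits the nonzero class of the extension, so $|\Hom_{\cat A}(A,C)|<|\Hom_{\cat A}(A,A)|\,|\Hom_{\cat A}(A,B)|$, and the left-exactness bounds you list combine against the four-corner decomposition of $\End_{\cat A}(A\oplus B)$ --- and it therefore yields both antisymmetry and the descending chain condition in one stroke. What the paper's approach buys is reusable general machinery and the $\Gamma_{\mathbb Z-\fin}$-valued refinement of Lemma~\ref{lem:ineq-Z-fin} (whose equality case is what detects splitting there); what yours buys is a self-contained proof that actually justifies the induction it performs. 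The remaining ingredients (finiteness of each product by finitarity, existence of the decomposition into indecomposables via Lemma~\ref{lem:fin-indec}, compatibility of~$\lhd$ with $\oplus$ from Proposition~\ref{prop:Iso-ordered}) are all in place.
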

We prove this Theorem in~\S\ref{pf:PBW}.
After~\cites{Joyce,Rie}, this further extends an analogy between Hall algebras of finitary categories and universal
enveloping algebras.

\subsection{Grothendieck monoid and grading}\label{subs:Grothendieck-monoid}
Define the relation~$\,\equiv\,$ on the monoid~$\Iso\cat A$ by 
$$
[X]\equiv [Y]\iff \text{$[X],[Y]\in\uExt^1_{\cat A}(M,N)$ for some $M,N\in\cat 
A$}.
$$
This relation is clearly symmetric and reflexive, hence its transitive closure 
is an equivalence relation
on~$\Iso\cat A$
which we also denote by $\equiv$. The additivity of~$\Ext^1_{\cat A}(A,B):=
\bigcup_{X} \mathcal E(A,B)_X/\Aut_{\cat A}X$
in both~$A$ and~$B$ yields the following
\begin{lemma}
The relation $\,\equiv\,$ is a congruence relation on~$\Iso\cat A$, that is
$[X]\equiv[Y]$, $[X']\equiv[Y']$ implies that $[X\oplus X']\equiv [Y\oplus 
Y']$. 
\end{lemma}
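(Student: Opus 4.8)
The plan is to separate what is automatic from what must be proved. Since the transitive closure of a symmetric reflexive relation is always an equivalence relation, the assertion that $\equiv$ is an equivalence relation carries no content, and the entire burden is to establish its compatibility with $\oplus$. I will first prove this compatibility for the \emph{generating} relation---write $[X]\sim[Y]$ when $[X],[Y]\in\uExt^1_{\cat A}(A,B)$ for some $A,B\in\cat A$, so that $\equiv$ is by definition the transitive closure of $\sim$---and then propagate it to $\equiv$.

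The key step is the biadditivity of $\uExt^1_{\cat A}$ referred to in the sentence preceding the statement. In an exact category the class of conflations is closed under finite direct sums (see e.g.\ \cite{Buh}): if $B\rightarrowtail X\twoheadrightarrow A$ and $B'\rightarrowtail X'\twoheadrightarrow A'$ are short exact sequences, then so is their direct sum $B\oplus B'\rightarrowtail X\oplus X'\twoheadrightarrow A\oplus A'$. Hence $[X]\in\uExt^1_{\cat A}(A,B)$ and $[X']\in\uExt^1_{\cat A}(A',B')$ force $[X\oplus X']\in\uExt^1_{\cat A}(A\oplus A',B\oplus B')$. This immediately yields the biadditivity of the generating relation: if $[X]\sim[Y]$ is witnessed by $[X],[Y]\in\uExt^1_{\cat A}(A,B)$ and $[X']\sim[Y']$ by $[X'],[Y']\in\uExt^1_{\cat A}(A',B')$, then both $[X\oplus X']$ and $[Y\oplus Y']$ lie in the single set $\uExt^1_{\cat A}(A\oplus A',B\oplus B')$, and therefore $[X\oplus X']\sim[Y\oplus Y']$.

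It remains to pass to the transitive closure. Suppose $[X]\equiv[Y]$ and $[X']\equiv[Y']$, and choose $\sim$-chains $[X]=[X_0]\sim\cdots\sim[X_m]=[Y]$ and $[X']=[X'_0]\sim\cdots\sim[X'_n]=[Y']$. Using the biadditivity of $\sim$ together with the reflexivity $[X'_0]\sim[X'_0]$, each link gives $[X_i\oplus X'_0]\sim[X_{i+1}\oplus X'_0]$, so that $[X\oplus X']\equiv[Y\oplus X']$; symmetrically, holding the class $[Y]=[X_m]$ fixed and running along the second chain gives $[Y\oplus X']\equiv[Y\oplus Y']$. Transitivity of $\equiv$ then delivers $[X\oplus X']\equiv[Y\oplus Y']$, which is exactly the asserted congruence property.

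The essential input is the closure of conflations under direct sums, which is where the additivity of $\uExt^1_{\cat A}$ in both arguments is used; everything else is formal. The only point requiring a little care is the last paragraph: because $\sim$ is not itself transitive it cannot be called a congruence directly, so the compatibility must be transported along chains rather than asserted in one stroke.
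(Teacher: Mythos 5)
Your proof is correct and follows exactly the route the paper intends: the paper states the lemma without proof, attributing it to the additivity of $\Ext^1_{\cat A}$ in both arguments, and your argument is precisely the elaboration of that fact (direct sums of conflations are conflations) together with the routine propagation through the transitive closure. Nothing is missing.
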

\begin{definition}
The Grothendieck monoid $\Gamma_{\cat A}$ of~$\cat A$ is the quotient of 
$\Iso\cat A$ by the congruence~$\equiv$.
\end{definition}
Given an object~$M$ in~$\cat A$, we denote its image 
in~$\Gamma_{\cat A}$ by~$|M|$.  For all~$\gamma\in\Gamma_{\cat A}$, set
$$
\Iso\cat A_\gamma=\{ [X]\in\Iso\cat A\,:\, |X|=\gamma\}.
$$
We will refer to $\Iso\cat A_\gamma$ as a Grothendieck class in~$\cat A$. We denote~$\Ind\cat A_\gamma=
\Ind\cat A\cap \Iso\cat A_\gamma$.

The following fact is obvious.
\begin{lemma}
For any finitary exact category~$\cat A$,
the assignment $[M]\mapsto |M|$ defines a grading of the Hall algebra~$H_{\cat 
A}$ of~$\cat A$ by the 
Grothendieck monoid~$\Gamma_{\cat A}$.
\end{lemma}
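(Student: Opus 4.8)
The plan is to verify directly the two defining properties of a monoid grading. Set $(H_{\cat A})_\gamma=\bigoplus_{[X]\in\Iso\cat A_\gamma}\QQ\cdot[X]$ for each $\gamma\in\Gamma_{\cat A}$. First I would observe that the Grothendieck classes $\Iso\cat A_\gamma$, being exactly the $\equiv$-equivalence classes, partition $\Iso\cat A$; hence the basis $\Iso\cat A$ of $H_{\cat A}$ splits accordingly and $H_{\cat A}=\bigoplus_{\gamma\in\Gamma_{\cat A}}(H_{\cat A})_\gamma$ as $\QQ$-vector spaces. Since $|0|=0$, the unit $[0]$ lies in $(H_{\cat A})_0$.

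The substance of the statement is the multiplicativity $(H_{\cat A})_\alpha\cdot(H_{\cat A})_\beta\subseteq(H_{\cat A})_{\alpha+\beta}$. By bilinearity it suffices to take basis elements $[A],[B]$ with $|A|=\alpha$ and $|B|=\beta$ and to show that every class $[C]$ appearing in the product $[A]\cdot[B]$ satisfies $|C|=\alpha+\beta$. Such a $[C]$ is precisely one with $F_{AB}^C\ne 0$, which by the definition of the Hall numbers means there is a short exact sequence $B\rightarrowtail C\twoheadrightarrow A$, that is, $[C]\in\uExt^1_{\cat A}(A,B)$.

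The key observation is then that the split short exact sequence $B\rightarrowtail A\oplus B\twoheadrightarrow A$ also places $[A\oplus B]$ in $\uExt^1_{\cat A}(A,B)$. Thus $[C]$ and $[A\oplus B]$ lie in the same set $\uExt^1_{\cat A}(A,B)$, whence $[C]\equiv[A\oplus B]$ directly from the definition of the congruence (the transitive closure is not even needed here). Passing to $\Gamma_{\cat A}$ and using that the quotient map is a monoid homomorphism, so that $|A\oplus B|=|A|+|B|$, yields $|C|=\alpha+\beta$, as required. I do not anticipate a genuine obstacle: the entire content reduces to the presence of the split extension in $\uExt^1_{\cat A}(A,B)$, which is precisely why the statement is flagged as obvious.
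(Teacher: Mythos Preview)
Your proof is correct and is precisely the argument the paper has in mind; the paper itself offers no proof, merely declaring the fact obvious, and your write-up faithfully unpacks that obviousness by noting that any $[C]$ with $F_{AB}^C\ne 0$ lies in the same $\uExt^1_{\cat A}(A,B)$ as the split extension $[A\oplus B]$.
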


\begin{remark}
After Grothendieck, one defines the Grothedieck group~$K_0(\cat A)$ of~$\cat A$ as the 
universal abelian group generated by~$\Gamma_{\cat A}$.
Note that the canonical homomorphism of monoids $\Gamma_{\cat A}\to K_0(\cat 
A)$ can be very far from being injective.
One
example was already provided in the Introduction. Perhaps, the most extreme example is 
the following.
Let $\cat A=\operatorname{Vect}_\kk$ be the category of {\em all} $\kk$-vector spaces 
over some field~$\kk$. Then~$\Gamma_{\cat A}$ identifies with the monoid 
of cardinal numbers. In particular, if~$V$ is infinite dimensional and 
$W$ is finite dimensional then $|V|=|V|+|V|=|W|+|V|$.
This implies that in~$K_0(\operatorname{Vect}_\kk)$, $|U|=0$ for every object~$U$ of $\operatorname{Vect}_\kk$, that is $K_0(\operatorname{Vect}_\kk)=0$.
\end{remark}

Also, while $K_0(\cat A)$ can contain elements of finite 
order, this never occurs in~$\Gamma_{\cat A}$.
Indeed, since $[0]\in\uExt^1_{\cat A}(A,B)$ implies that $A=B=0$ and the direct 
sum of two non-zero objects is clearly non-zero, we immediately obtain the 
following
\begin{lemma}\label{lem:unique inv}
For any exact category~$\cat A$, zero is the only invertible element of the 
Grothendieck monoid~$\Gamma_{\cat A}$. 
\end{lemma}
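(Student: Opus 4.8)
The plan is to reduce the statement about the monoid $\Gamma_{\cat A}$ to a concrete assertion about objects of $\cat A$, and then exploit the two elementary observations recorded just before the lemma. Recall that an element $\gamma\in\Gamma_{\cat A}$ is invertible precisely when there exists $\delta\in\Gamma_{\cat A}$ with $\gamma+\delta=0$. Writing $\gamma=|X|$ and $\delta=|Y|$ for objects $X,Y$ of $\cat A$, invertibility of $\gamma$ translates into $|X\oplus Y|=|0|$ in $\Gamma_{\cat A}$, that is, $[X\oplus Y]\equiv[0]$ in $\Iso\cat A$. Thus it suffices to show that $[Z]\equiv[0]$ forces $Z\cong 0$; applied to $Z=X\oplus Y$ this will give $X\cong 0$ and hence $\gamma=0$.

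Next I would unwind the congruence $\equiv$. Since $\equiv$ is by definition the transitive closure of the elementary relation ``$[U]$ and $[V]$ both lie in $\uExt^1_{\cat A}(M,N)$ for some $M,N\in\cat A$'' (which I denote by $\sim$), the relation $[Z]\equiv[0]$ yields a finite chain $[Z]=[Z_0]\sim[Z_1]\sim\dots\sim[Z_n]=[0]$ of elementary steps. The heart of the argument is the following collapsing phenomenon. The first observation says that $[0]\in\uExt^1_{\cat A}(M,N)$ forces $M=N=0$, since a short exact sequence $N\rightarrowtail 0\twoheadrightarrow M$ can only exist when both outer terms vanish. Consequently $\uExt^1_{\cat A}(0,0)$ consists of the classes of objects $W$ fitting into $0\rightarrowtail W\twoheadrightarrow 0$, whence $\uExt^1_{\cat A}(0,0)=\{[0]\}$. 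Therefore, whenever $[0]$ shares a common $\uExt^1$-class with a class $[U]$, one must have $[U]=[0]$ as well.

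I would then run a backward induction along the chain: the last elementary step places $[Z_{n-1}]$ and $[0]$ in a common $\uExt^1_{\cat A}(M,N)$, forcing $M=N=0$ and hence $[Z_{n-1}]=[0]$; iterating, every $[Z_i]=[0]$, so in particular $[Z]=[Z_0]=[0]$, i.e.\ $Z\cong 0$. Finally, applying this to $Z=X\oplus Y$ and invoking the second observation --- that a direct sum of two non-zero objects is non-zero --- forces $X\cong 0$ (if $X\neq 0$ then $X\oplus Y\neq 0$ regardless of whether $Y$ vanishes), so $\gamma=|X|=0$, as claimed.

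The only point requiring care --- and the main obstacle --- is propagating the collapse along the transitive closure rather than across a single elementary step: one step is immediate, but one must verify that ``collapse to $[0]$'' is preserved under chaining. This is exactly what the identity $\uExt^1_{\cat A}(0,0)=\{[0]\}$ guarantees, turning the backward induction into a routine verification.
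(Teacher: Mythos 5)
Your proof is correct and takes essentially the same route as the paper: the paper derives the lemma ``immediately'' from exactly the two observations you use, namely that $[0]\in\uExt^1_{\cat A}(A,B)$ forces $A=B=0$ and that a direct sum of two non-zero objects is non-zero. Your backward induction along the chain of elementary steps simply makes explicit the propagation through the transitive closure that the paper leaves implicit, and it is carried out correctly.
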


\subsection{Profinitary and cofinitary categories}\label{subs:order-gr-mon}
Let~$\Gamma$ be an abelian monoid. Define a relation $\preceq$ on~$\Gamma$ by 
$\alpha\preceq \beta$ if $\beta=\alpha+\gamma$ for some~$\gamma\in\Gamma$.
This relation is clearly an additive preorder and $0\preceq \gamma$ for any 
$\gamma\in\Gamma$. 
The following Lemma is obvious.
\begin{lemma}\label{lem:nat-ord}
The preorder $\preceq$ is a partial order on~$\Gamma$ if and only if 
the equality $\alpha+\beta+\gamma=\alpha$, $\alpha,\beta,\gamma\in\Gamma$, 
implies that
$\alpha=\alpha+\beta=\alpha+\gamma$. In that case, $0$ is the only invertible 
element of~$\Gamma$.
\end{lemma}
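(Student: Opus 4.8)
The plan is to reduce the whole equivalence to the \emph{antisymmetry} of $\preceq$. Reflexivity and transitivity of $\preceq$ have already been recorded (the relation is an additive preorder with $0\preceq\gamma$ for all $\gamma$), so $\preceq$ is a partial order precisely when it is antisymmetric, i.e. when $\alpha\preceq\beta$ together with $\beta\preceq\alpha$ forces $\alpha=\beta$. The first step is to unpack antisymmetry in additive terms: by definition $\alpha\preceq\beta$ means $\beta=\alpha+\delta$ for some $\delta\in\Gamma$, and $\beta\preceq\alpha$ means $\alpha=\beta+\varepsilon$ for some $\varepsilon\in\Gamma$; substituting the first into the second gives $\alpha=\alpha+\delta+\varepsilon$, which is exactly the three-term relation appearing in the statement.

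For the forward implication I would assume $\preceq$ is a partial order and take any $\alpha,\beta,\gamma$ with $\alpha+\beta+\gamma=\alpha$. Then $\alpha\preceq\alpha+\beta$ holds trivially, while $\alpha+\beta\preceq\alpha$ holds because $\alpha=(\alpha+\beta)+\gamma$; antisymmetry yields $\alpha=\alpha+\beta$. Using commutativity of $\Gamma$ I would repeat the argument with the roles of $\beta$ and $\gamma$ interchanged to obtain $\alpha=\alpha+\gamma$, which is the desired conclusion.

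For the converse I would assume the additive condition and verify antisymmetry directly from the first step: from $\alpha=\alpha+\delta+\varepsilon$ the hypothesis, applied with $\beta:=\delta$ and $\gamma:=\varepsilon$, gives $\alpha=\alpha+\delta=\beta$, hence $\alpha=\beta$. Finally, for the last assertion, assuming $\preceq$ is a partial order, let $u$ be invertible with inverse $v$, so $u+v=0$; then $0\preceq u$ always holds and $u\preceq 0$ holds because $0=u+v$, so antisymmetry forces $u=0$.

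The whole argument is a routine translation between the order-theoretic and the additive formulations, so I do not expect a genuine obstacle. The one point requiring a little care is matching the single three-term identity $\alpha=\alpha+\beta+\gamma$ against the two separate witnesses $\delta,\varepsilon$ produced by the two inequalities, so that both directions of the equivalence are seen to rest on exactly the same relation.
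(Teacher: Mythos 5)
Your proof is correct; the paper declares this lemma obvious and gives no argument, and your verification (reducing the equivalence to antisymmetry of $\preceq$ and matching the two witnesses $\delta,\varepsilon$ to the three-term identity) is exactly the routine check the authors intend the reader to supply.
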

We say that $\Gamma$ is {\em naturally ordered} if $\,\preceq\,$ is a partial 
order.

\begin{definition}\label{def:prof-cof-bif}
We say that a $\Hom$-finite exact category~$\cat A$ is 
\begin{enumerate}[{(i)}]
\item {\em profinitary}  if $\Iso\cat A_\gamma$ is a finite set for 
all~$\gamma\in\Gamma_{\cat A}$.
\item {\em cofinitary} (cf.~\cite{KSV}) if for every $[X]\in\Iso\cat A$, the 
set 
$$
\{([A],[B])\in\Iso\cat A\times\Iso\cat A\,:\, [X]\in\uExt^1_{\cat A}([A],[B])\}
$$
is finite.
\end{enumerate}
\end{definition}
Since~$\mathcal E(M,N)_X$ identifies with a subset of~$\Hom_{\cat A}(N,X)\times\Hom_{\cat A}(X,M)$,
any profinitary category is necessarily finitary.
\begin{proposition}\label{prop:profinitary-ordered}
Let~$\cat A$ be a profinitary category.
Then $\Gamma_{\cat A}$ is naturally ordered and is generated by its minimal 
elements.
\end{proposition}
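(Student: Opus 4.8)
The plan is to verify the two assertions separately: that the natural preorder $\preceq$ on $\Gamma_{\cat A}$ is in fact a partial order, and that every nonzero element of $\Gamma_{\cat A}$ is a finite sum of minimal elements. Throughout I use that an element $\gamma$ is minimal in $\Gamma_{\cat A}\setminus\{0\}$ precisely when it cannot be written as $\alpha+\beta$ with $\alpha,\beta\in\Gamma_{\cat A}\setminus\{0\}$, so that ``generated by minimal elements'' means exactly that $\Gamma_{\cat A}$ is atomic with atoms the minimal elements.

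For the first assertion I would apply the criterion of Lemma~\ref{lem:nat-ord}. Suppose $\alpha+\beta+\gamma=\alpha$ and put $\delta=\beta+\gamma$, so that $\alpha+\delta=\alpha$ and hence $\alpha+n\delta=\alpha$ for every $n\ge 0$ by an immediate induction. Choosing objects $X,W$ with $|X|=\alpha$ and $|W|=\delta$, each object $X\oplus W^{\oplus n}$ lies in the single Grothendieck class $\Iso\cat A_\alpha$. If $\delta\ne 0$ then $W\ne 0$, and the cancellation property (Lemma~\ref{lem:can-prop}) shows that the classes $[X\oplus W^{\oplus n}]$ are pairwise distinct: from $[X\oplus W^{\oplus m}]=[X\oplus W^{\oplus n}]$ with $m<n$ one cancels $X$ and then $m$ copies of $W$ to obtain $[W^{\oplus(n-m)}]=[0]$, forcing $W=0$. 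This produces infinitely many elements of $\Iso\cat A_\alpha$, contradicting profinitarity. Hence $\delta=0$, and by Lemma~\ref{lem:unique inv} both $\beta$ and $\gamma$ vanish; thus $\alpha=\alpha+\beta=\alpha+\gamma$, and $\preceq$ is a partial order by Lemma~\ref{lem:nat-ord}.

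For the second assertion the key quantity is, for $\gamma\in\Gamma_{\cat A}$, the supremum $\ell(\gamma)$ of all $k$ for which $\gamma=\delta_1+\dots+\delta_k$ with each $\delta_i\ne 0$; I claim $\ell(\gamma)<\infty$. Given such a decomposition, choose nonzero objects $C_i$ with $|C_i|=\delta_i$, so that $X:=C_1\oplus\dots\oplus C_k$ lies in the finite set $\Iso\cat A_\gamma$ and is a direct sum of $k$ nonzero objects. A decomposition of $X$ into $k$ nonzero summands yields $k$ nonzero orthogonal idempotents summing to $1$ in the endomorphism ring $\End_{\cat A}X$, which is finite since $\cat A$ is $\Hom$-finite; hence $k$ is bounded by the length of that ring. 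Taking the maximum of these bounds over the finitely many members of $\Iso\cat A_\gamma$ gives $\ell(\gamma)<\infty$. (Here each object is genuinely a finite direct sum of indecomposables by Lemma~\ref{lem:fin-indec}, which also guarantees nonzero $C_i$ exist.)

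Finally I would combine these. Finiteness of $\ell(\gamma)$ forces the descending chain condition for $\preceq$: an infinite strictly descending chain $\gamma=\tau_0\succ\tau_1\succ\cdots$ would write $\gamma=\tau_m+\delta_0+\dots+\delta_{m-1}$ with all $\delta_i\ne 0$, exhibiting $\gamma$ as a sum of arbitrarily many nonzero elements. With $\preceq$ a well-founded partial order I argue by Noetherian induction: a minimal $\gamma$ is its own decomposition, while if $\gamma=\alpha+\beta$ with $\alpha,\beta\ne 0$ then $\alpha,\beta\prec\gamma$ strictly (by the first assertion $\alpha=\gamma$ would force $\beta=0$), so the inductive hypothesis expresses each of $\alpha,\beta$, and therefore $\gamma$, as a sum of minimal elements. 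The main obstacle is the finiteness $\ell(\gamma)<\infty$: it is exactly here that profinitarity (finiteness of $\Iso\cat A_\gamma$) enters, and one must control direct-sum decomposition lengths, which relies on $\Hom$-finiteness to rule out objects admitting unboundedly many nonzero summands.
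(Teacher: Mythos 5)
Your proof is correct and follows essentially the same route as the paper: both assertions rest on the observation that profinitarity together with $\Hom$-finiteness bounds the number of nonzero summands in any decomposition of $\gamma\in\Gamma_{\cat A}^+$ (your $\ell(\gamma)<\infty$ is exactly the paper's Lemma~\ref{lem:monoid-gen-simple}, with the bound $s_\gamma=\max_{[X]\in\Iso\cat A_\gamma}|\End_{\cat A}X|$ obtained from Lemma~\ref{lem:fin-indec} in place of your orthogonal-idempotent count). The only cosmetic differences are that you verify the antisymmetry criterion of Lemma~\ref{lem:nat-ord} directly via the cancellation property, producing infinitely many distinct classes $[X\oplus W^{\oplus n}]$ inside $\Iso\cat A_\alpha$, whereas the paper deduces $\alpha+\beta=\alpha\Rightarrow\beta=0$ from the same bound on summand counts, and that you close with a Noetherian induction where the paper simply takes a decomposition of maximal length and observes that maximality forces every summand to be minimal.
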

A proof of this proposition is given in~\S\ref{pf:prop-profinitary-ordered}.
\begin{remark}
One can characterize profinitary categories as follows. If~$\cat A$ is $\Hom$-finite and its Grothendieck monoid 
is locally finite, as defined before Theorem~\ref{th:prof-cof}, and $\Ind\cat A_\gamma$ is finite for all~$\gamma\in\Gamma_{\cat A}$
then $\cat A$ is profinitary.
\end{remark}

\begin{theorem}\label{th:prof-ab}
Any profinitary abelian category has the finite length property hence is Krull-Schmidt. 
\end{theorem}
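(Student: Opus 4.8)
The plan is to treat the finite length property as the real content and then obtain Krull--Schmidt by standard ring theory. The reduction is as follows. In an abelian category every short exact sequence $0\to N\to M\to M/N\to 0$ yields $|M|=|N|+|M/N|$ in $\Gamma_{\cat A}$, by the very definition of the congruence generating $\Gamma_{\cat A}$. Moreover the opposite category $\cat A^{\mathrm{op}}$ is again abelian and $\Hom$-finite, and it has the same Grothendieck classes as $\cat A$ (since $\uExt^1_{\cat A^{\mathrm{op}}}(A,B)=\uExt^1_{\cat A}(B,A)$), hence is again profinitary. Consequently it suffices to prove the ascending chain condition on subobjects in an arbitrary profinitary abelian $\cat A$: the descending chain condition then follows by applying this to $\cat A^{\mathrm{op}}$, and ACC together with DCC on subobjects is exactly the finite length property.

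The key elementary tool will be a bound on the number of direct summands extracted from $\Hom$-finiteness. If $Y\cong Z_1\oplus\cdots\oplus Z_r$ with every $Z_i\neq 0$, then the projections give $r$ nonzero orthogonal idempotents $e_1,\dots,e_r\in\End_{\cat A}(Y)$, and the $2^r$ partial sums $\sum_{i\in S}e_i$, $S\subseteq\{1,\dots,r\}$, are pairwise distinct; hence $2^r\le\#\End_{\cat A}(Y)$. Since $\End_{\cat A}(Y)$ is a finite set, a \emph{fixed} object $Y$ admits at most $\log_2\#\End_{\cat A}(Y)<\infty$ nonzero direct summands in any decomposition.

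The main step, and the point I expect to be the real obstacle, is to forbid an infinite strictly ascending chain $N_1\subsetneq N_2\subsetneq\cdots$ of subobjects of a fixed $M$ \emph{without} invoking local finiteness of $\Gamma_{\cat A}$ (which is only conjectural in general). Setting $C_i=N_{i+1}/N_i\neq 0$ and $Q_n=M/N_n$, the additivity above gives $|M|=|N_1|+\sum_{i=1}^{n-1}|C_i|+|Q_n|$, so all the objects $X_n:=N_1\oplus C_1\oplus\cdots\oplus C_{n-1}\oplus Q_n$ lie in the single Grothendieck class $\Iso\cat A_\gamma$, where $\gamma=|M|$. This is exactly where profinitarity enters: $\Iso\cat A_\gamma$ is finite, so by pigeonhole some isomorphism class is realized for infinitely many indices $n_1<n_2<\cdots$. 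For each $l$ the isomorphism $X_{n_1}\cong X_{n_l}$ has common direct summand $P=N_1\oplus C_1\oplus\cdots\oplus C_{n_1-1}$, and cancelling it via Lemma~\ref{lem:can-prop} gives $Q_{n_1}\cong C_{n_1}\oplus\cdots\oplus C_{n_l-1}\oplus Q_{n_l}$. The right-hand side has at least $n_l-n_1\ge l-1$ nonzero summands, so the fixed object $Q_{n_1}$ would admit arbitrarily many nonzero direct summands as $l\to\infty$, contradicting the bound of the previous paragraph. This establishes ACC, hence (via $\cat A^{\mathrm{op}}$) DCC, so every object of $\cat A$ has finite length.

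Finally I would deduce Krull--Schmidt. Existence of a decomposition into indecomposables is already Lemma~\ref{lem:fin-indec} (and is in any case forced by the summand bound), so only uniqueness remains. For an indecomposable $X$ the finite ring $\End_{\cat A}(X)$ has no nontrivial idempotents, so it is local (its semisimple quotient is then a single matrix algebra over a finite field with no nontrivial idempotents, i.e.\ a division ring); finite length also makes Fitting's lemma available for the same conclusion. The classical Krull--Schmidt--Azumaya theorem then yields that $\Iso\cat A$ is freely generated by $\Ind\cat A$, which is precisely the Krull--Schmidt property in the sense of~\S\ref{subs:ord-mon-PBW-prop}.
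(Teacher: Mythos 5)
Your proof is correct, but it takes a genuinely different route from the paper's. The paper deduces the theorem from Proposition~\ref{prop:comp-ser}: by induction on the naturally ordered set $(\Gamma^f_{\cat A},\preceq)$ of Proposition~\ref{prop:profinitary-ordered-1}, every object $X$ admits a composition series with almost simple subquotients whose length is bounded by $s_{|X|}=\max_{[Y]\in\Iso\cat A_{|X|}}|\End_{\cat A}Y|$; in the abelian case almost simple means simple, and the existence of a bounded-length composition series gives finite length at once. You instead attack the chain conditions head-on, the decisive step being the pigeonhole argument that places all the objects $N_1\oplus C_1\oplus\cdots\oplus C_{n-1}\oplus Q_n$ in the single finite class $\Iso\cat A_{|M|}$, followed by cancellation (Lemma~\ref{lem:can-prop}) and the $\Hom$-finiteness bound on the number of nonzero summands. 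Both arguments consume profinitarity through the finiteness of one Grothendieck class and both lean on the cancellation lemma (the paper through Lemma~\ref{lem:lower-set}); note that Lemma~\ref{lem:can-prop}, though stated as immediate, is a genuine finiteness statement about $\End_{\cat A}(Y)$ (stable range one for finite rings), so it carries real weight in your argument. What the paper's route buys is the explicit bound $s_\gamma$ and a statement valid for arbitrary $\Hom$-finite exact categories, which is reused in the proof of Corollary~\ref{cor:prof-cof}; what yours buys is independence from Propositions~\ref{prop:profinitary-ordered-1} and~\ref{prop:comp-ser}. Two minor remarks: the passage to $\cat A^{\mathrm{op}}$ is sound (the congruence defining $\Gamma_{\cat A}$ is self-dual) but unnecessary, since the same pigeonhole applied to $M/N_1\oplus(N_1/N_2)\oplus\cdots\oplus N_n$ kills descending chains directly; and your observation that a finite ring without nontrivial idempotents is local already yields the Krull--Schmidt property for any $\Hom$-finite abelian (hence idempotent-complete) category without finite length --- the failure recorded in the remark after Lemma~\ref{lem:fin-indec} occurs only for exact subcategories that are not idempotent-complete.
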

We prove this theorem in~\S\ref{subs:pf profinitary bifinitary}. This result, 
together with Theorem~\ref{th:prof-cof}, yields Corollary~\ref{cor:prof-cof}\eqref{cor:prof-cof.a}.
\begin{remark}
The finite length property in an abelian category~$\cat A$ is much stronger 
than the Krull-Schmidt property. For instance,
the Grothendieck monoid of an abelian category with the finite length property 
is freely generated by classes of simple 
objects and the canonical homomorphism $\Gamma_{\cat A}\to K_0(\cat A)$ is 
injective.
On the other hand, the category of coherent sheaves on~$\mathbb P^1$ is 
Krull-Schmidt, but lacks the
finite length property and each Grothendieck class $\Iso\cat A_\gamma$, 
$\gamma\not=0$ is infinite.
\end{remark}

\subsection{Comultiplication and primitive generation}
Define a linear map $\Delta:H_{\cat A} \to H_{\cat A}  \widehat\otimes H_{\cat 
A}$
\begin{equation}
\label{eq:coproductDelta}
\Delta([C])= \sum_{[A],[B]\in {\Iso\cat A}}  F^{A,B}_C\cdot  [A]\otimes [B]\ ,
\end{equation}
where $H_{\cat A}\widehat\tensor H_{\cat A}$ is the completion of the usual 
tensor product with respect to the~$\Gamma_{\cat A}$-grading and
$F^{A,B}_C$ is the dual Hall number given by 
$$
F^{A,B}_C=\frac{\#(\Aut_{\cat A}A\times\Aut_{\cat A}B)}{\#\Aut_{\cat A}C} \, 
F_{B,A}^C.
$$
It follows from Riedtmann's formula~\cite{Rie} that
$$
F^{A,B}_C=\frac{\#\Ext^1_{\cat A}(B,A)_C}{\#\Hom_{\cat A}(B,A)},
$$
where $\Ext^1_{\cat A}(B,A)_C=\mathcal E(B,A)_C/\Aut_{\cat A}C$.
Also define a linear map $\varepsilon:H_{\cat A}\to \QQ$  by
\begin{equation}
\label{eq:counit}
\varepsilon([C])= \delta_{[0],[C]}.
\end{equation}
The following fact is obvious. 
\begin{lemma}\label{lem:coprod}
\begin{enumerate}[{\rm(a)}]
 \item $H_{\cat A}$ is a topological coalgebra with respect to the above 
comultiplication and counit;
\item If~$\cat A$ is cofinitary then $H_{\cat A}$ is an ordinary coalgebra,
that is, the image of the comultiplication $\Delta$ lies in~$H_{\cat A}\tensor 
H_{\cat A}$.
\end{enumerate}
\end{lemma}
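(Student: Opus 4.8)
The plan is to verify the counit and coassociativity axioms of part~(a) by a direct manipulation of Hall numbers, and then to read off part~(b) from the cofinitarity hypothesis. Throughout I write $a_X:=\#\Aut_{\cat A}X$ and use the two defining identities $F^{A,B}_C=\frac{a_Aa_B}{a_C}F^C_{B,A}$ and $F^{A,B}_C=\frac{\#\Ext^1_{\cat A}(B,A)_C}{\#\Hom_{\cat A}(B,A)}$ that relate the dual Hall numbers occurring in~$\Delta$ to the ordinary ones occurring in the product~\eqref{E:multiplicationHall}.

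First I would dispose of the counit. Since $\varepsilon([B])=\delta_{[0],[B]}$, applying $\varepsilon$ to the second (respectively first) tensor factor of $\Delta([C])$ retains only the terms with $[B]=[0]$ (respectively $[A]=[0]$). As $a_0=1$, and $F^C_{0,A}$ (respectively $F^C_{B,0}$) counts short exact sequences with zero quotient (respectively zero subobject), one has $F^C_{0,A}=\delta_{[A],[C]}$ and $F^C_{B,0}=\delta_{[B],[C]}$, whence $F^{A,0}_C=\delta_{[A],[C]}$ and $F^{0,B}_C=\delta_{[B],[C]}$. Therefore $(\mathrm{id}\otimes\varepsilon)\Delta([C])=[C]=(\varepsilon\otimes\mathrm{id})\Delta([C])$, which is the counit axiom.

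The core is coassociativity. I would expand both $(\Delta\otimes\mathrm{id})\Delta([C])$ and $(\mathrm{id}\otimes\Delta)\Delta([C])$ and compare the coefficient of a fixed monomial $[X_1]\otimes[X_2]\otimes[X_3]$. The former coefficient is $\sum_{[A]}F^{A,X_3}_C F^{X_1,X_2}_A$; converting with the first identity and cancelling $a_A$, it becomes $\frac{a_{X_1}a_{X_2}a_{X_3}}{a_C}\sum_{[A]}F^C_{X_3,A}F^A_{X_2,X_1}$, and the remaining sum is exactly the coefficient of $[C]$ in the product $[X_3]([X_2][X_1])$. The latter coefficient is $\sum_{[B]}F^{X_1,B}_C F^{X_2,X_3}_B$, which by the same conversion becomes $\frac{a_{X_1}a_{X_2}a_{X_3}}{a_C}\sum_{[B]}F^C_{B,X_1}F^B_{X_3,X_2}$, the coefficient of $[C]$ in $([X_3][X_2])[X_1]$. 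The two triple products agree by the associativity of the Hall product (Proposition~\ref{P:multiplicationHall}), so the two coefficients coincide and coassociativity follows. Conceptually, this is the statement that $\Delta$ is adjoint to the Hall product for the nondegenerate pairing $\langle[X],[Y]\rangle=\delta_{[X],[Y]}/a_X$, so that coassociativity of $\Delta$ is literally dual to associativity of the multiplication.

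Part~(b) is then immediate: by the second identity $F^{A,B}_C\ne 0$ forces $[C]\in\uExt^1_{\cat A}(B,A)$, so the pairs $([A],[B])$ that actually occur in $\Delta([C])$ form, after transposing the two entries, precisely the set appearing in the cofinitarity condition for $[X]=[C]$; cofinitarity makes this set finite, hence $\Delta([C])\in H_{\cat A}\otimes H_{\cat A}$ and $H_{\cat A}$ is an honest coalgebra. The one point that requires care, and the only real obstacle in part~(a), is the topological bookkeeping: because $\Delta([C])$ may be an infinite sum, I must ensure that the iterated comultiplications are well defined in the appropriate completion of $H_{\cat A}^{\otimes 3}$ and that comparing coefficients monomial by monomial is legitimate. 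This is harmless here, since the coefficient of each fixed $[X_1]\otimes[X_2]\otimes[X_3]$ is a single finite Hall number, so both identities of part~(a) already hold inside each graded component of the completion.
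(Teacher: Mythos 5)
The paper gives no proof of this lemma at all --- it is introduced with ``The following fact is obvious'' --- so there is nothing to compare step by step; your write-up is the intended verification and it is correct. The conversion $F^{A,B}_C=\frac{a_Aa_B}{a_C}F^C_{B,A}$ does reduce coassociativity of $\Delta$ to associativity of the Hall product (equivalently, adjointness under the pairing $\langle[X],[Y]\rangle=\delta_{[X],[Y]}/a_X$, which is exactly Green's pairing the paper introduces later in \S\ref{pf:main theorem}), the counit computation is right, and part (b) is a direct transcription of Definition~\ref{def:prof-cof-bif}(ii) via the equivalence $F^{A,B}_C\ne 0\iff [C]\in\uExt^1_{\cat A}(B,A)$. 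One small imprecision in your closing remark: the coefficient of a fixed $[X_1]\otimes[X_2]\otimes[X_3]$ is not ``a single finite Hall number'' but the sum $\sum_{[A]}F^{A,X_3}_C F^{X_1,X_2}_A$ (respectively $\sum_{[B]}F^{X_1,B}_C F^{X_2,X_3}_B$), and its finiteness is the one thing that deserves an explicit reason: $F^{X_1,X_2}_A\ne 0$ forces $[A]\in\uExt^1_{\cat A}(X_1,X_2)$, which is a finite set because $\cat A$ is finitary. With that observation the topological bookkeeping you flag at the end is complete.
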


For any coalgebra $C$  with unity denote by $\Prim(C)$ the set of all primitive 
elements, i.e., 
$$\Prim(C)=\{c\in C:\Delta(c)=c\otimes 1+1\otimes c\}\ .$$ 
\begin{definition}\label{defn:quasi-Nichols}
Let $A$ be both a unital algebra and a coalgebra over a field $\mathbb F$. We 
say that $A$ is a {\em quasi-Nichols algebra}
if $A$ decomposes as $\mathbb F\oplus V\oplus \big(\sum_{r>1} V^r\big)$ where 
$V=\Prim(A)$.
\end{definition}

The following is the main result of the paper (Theorem~\ref{th:profinitary 
primitives}) and is proven 
in~\S\ref{pf:main theorem}.
\begin{theorem} 
\label{th:prim generation}
Let~$\cat A$ be a profinitary and cofinitary exact category. Then 
the Hall algebra $H_{\cat A}$ is quasi-Nichols.
\end{theorem}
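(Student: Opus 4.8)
The plan is to establish the two defining features of a quasi-Nichols algebra --- that $V_{\cat A}=\Prim(H_{\cat A})$ generates $H_{\cat A}$ and that the decomposition into $\QQ$, $V_{\cat A}$ and $\sum_{r>1}V_{\cat A}^r$ is direct --- simultaneously and one graded degree at a time, using the $\Gamma_{\cat A}$-grading together with the standard Hall inner product. Since $\cat A$ is profinitary, every homogeneous component $(H_{\cat A})_\gamma$ is finite dimensional and $(H_{\cat A})_0=\QQ\cdot[0]$ (the only object of Grothendieck degree $0$ is the zero object); since $\cat A$ is cofinitary, $\Delta$ is an honest comultiplication by Lemma~\ref{lem:coprod}, and the reduced coproduct $\bar\Delta$, given by $\bar\Delta(x)=\Delta(x)-x\otimes 1-1\otimes x$, restricts on each component to a finite map $\bar\Delta\colon(H_{\cat A})_\gamma\to\bigoplus_{\alpha+\beta=\gamma,\ \alpha,\beta\neq0}(H_{\cat A})_\alpha\otimes(H_{\cat A})_\beta$ whose kernel is exactly $(V_{\cat A})_\gamma$. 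Writing $W_\gamma:=\sum_{\alpha+\beta=\gamma,\ \alpha,\beta\neq0}(H_{\cat A})_\alpha\cdot(H_{\cat A})_\beta$ for the decomposable part in degree $\gamma$, the whole theorem reduces to the single degreewise identity $(H_{\cat A})_\gamma=(V_{\cat A})_\gamma\oplus W_\gamma$ for every $\gamma\in\Gamma_{\cat A}^+$.

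The engine is the Hall inner product $\langle[X],[Y]\rangle=\delta_{[X],[Y]}/\#\Aut_{\cat A}X$, which is symmetric, makes distinct homogeneous components orthogonal, and --- crucially --- is \emph{positive definite} over $\QQ$, being diagonal with strictly positive entries. Endowing $H_{\cat A}\otimes H_{\cat A}$ with the tensor-product form, the relation $F^{A,B}_C=\frac{\#(\Aut_{\cat A}A\times\Aut_{\cat A}B)}{\#\Aut_{\cat A}C}F_{B,A}^C$ recalled above is precisely the statement that multiplication and comultiplication are adjoint up to the flip $\tau(a\otimes b)=b\otimes a$, namely $\langle ab,c\rangle=\langle\tau(a\otimes b),\Delta(c)\rangle$; I want to stress that this uses only the Riedtmann relation and no heredity, which is exactly what makes the non-hereditary case accessible. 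Restricting to $a\in(H_{\cat A})_\alpha$, $b\in(H_{\cat A})_\beta$ with $\alpha,\beta\neq0$ and $\alpha+\beta=\gamma$, the unit and counit terms of $\Delta(c)$ drop out by graded orthogonality, leaving $\langle ab,c\rangle=\langle\tau(a\otimes b),\bar\Delta(c)\rangle$. Hence, inside $(H_{\cat A})_\gamma$, the orthogonal complement of the decomposable subspace is exactly the kernel of $\bar\Delta$, that is $W_\gamma^{\perp}=(V_{\cat A})_\gamma$. Because the form is positive definite it is nondegenerate on every subspace, so $(H_{\cat A})_\gamma=W_\gamma\oplus W_\gamma^{\perp}=W_\gamma\oplus(V_{\cat A})_\gamma$, and this one identity delivers both generation (in degree $\gamma$) and minimality $(V_{\cat A})_\gamma\cap W_\gamma=0$ at once.

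To globalize the degreewise splitting I will run a well-founded induction on the natural partial order $\preceq$ on $\Gamma_{\cat A}$ (Proposition~\ref{prop:profinitary-ordered}). The point that legitimizes this --- replacing the connectedness one would have for a genuine bialgebra --- is a finiteness property of the Grothendieck monoid forced by profinitarity: since $\Iso\cat A_\gamma$ is finite and every object is a finite direct sum of indecomposables (Lemma~\ref{lem:fin-indec}), there is a uniform bound $N(\gamma)$ on the number of nonzero summands in any expression $\gamma=\gamma_1+\dots+\gamma_n$. In particular $\Gamma_{\cat A}$ admits no absorption ($\gamma+\delta=\gamma$ with $\delta\neq0$ is impossible, as it would produce factorizations of unbounded length), so any $\alpha+\beta=\gamma$ with $\alpha,\beta\neq0$ satisfies $\alpha,\beta\prec\gamma$ strictly and $\prec$ is well founded. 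Assuming the quasi-Nichols decomposition in all degrees $\delta\prec\gamma$, each factor $(H_{\cat A})_\alpha$, $(H_{\cat A})_\beta$ entering $W_\gamma$ lies in $V_{\cat A}\oplus\sum_{r>1}V_{\cat A}^r$, whence $W_\gamma\subseteq\sum_{r>1}V_{\cat A}^r$; together with $(V_{\cat A})_\gamma\subseteq V_{\cat A}$ and the orthogonal splitting this gives $(H_{\cat A})_\gamma\subseteq V_{\cat A}\oplus\sum_{r>1}V_{\cat A}^r$ with the sum direct in degree $\gamma$. Summing over $\gamma$ (the bound $N(\gamma)$ being the graded-Nakayama ingredient that guarantees products of primitives exhaust $(H_{\cat A})_\gamma$) yields $H_{\cat A}=\QQ\oplus V_{\cat A}\oplus\big(\sum_{r>1}V_{\cat A}^r\big)$, as required.

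The main obstacle is conceptual rather than computational. Because $H_{\cat A}$ need not be a bialgebra outside the hereditary case, the Milnor--Moore and Nichols-algebra machinery is unavailable, and there is a priori no reason for the coalgebra kernel $\ker\bar\Delta$ to be complementary to the algebraic decomposables $W_\gamma$. The crux is to recognize that the positive-definite Hall form supplies exactly this missing link: the adjunction identifies $(V_{\cat A})_\gamma$ with $W_\gamma^{\perp}$, and positive definiteness --- a feature special to Hall algebras over $\QQ$ --- then forces the orthogonal splitting with \emph{no} compatibility hypothesis between product and coproduct. I expect the more delicate point to be the monoid-theoretic input: checking that profinitarity genuinely rules out absorption and bounds factorization lengths, so that the induction is well founded and the decomposable subspaces really are assembled from lower-degree primitives. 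This is where the finiteness of Grothendieck classes, rather than any homological hypothesis, does the essential work.
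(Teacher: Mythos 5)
Your argument is correct, and it reaches the conclusion by a genuinely different route from the paper's. Both proofs run on the same engine --- Green's pairing $\langle[X],[Y]\rangle=\delta_{[X],[Y]}/\#\Aut_{\cat A}X$, its compatibility with $(\cdot,\Delta)$ (which indeed needs only the relation between $F^{A,B}_C$ and $F_{B,A}^C$, no heredity), and positive definiteness over $\QQ$. But the paper deploys this through abstract coalgebra machinery: it proves a general statement (Theorem~\ref{th:Nichols characterization}) that any algebra-and-coalgebra in $\cat C^f$ with a compatible anisotropic pairing is generated by its quasi-primitives, via the coradical filtration, the fact that a nonzero coideal meets $\QPrim$ (Proposition~\ref{pr:coid-intersects-qprim}), and an analysis of invariant pairings between simple comodules (Proposition~\ref{prop:simple-orth}); it then uses Lemma~\ref{lem:subcoalg} to identify the coradical with $\QQ$ and Lemma~\ref{lem:key-lem-quasi-Nichols} for minimality. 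You instead prove the single degreewise identity $(H_{\cat A})_\gamma=W_\gamma\oplus W_\gamma^{\perp}$ with $W_\gamma^{\perp}=(V_{\cat A})_\gamma$ read off from the adjunction, and assemble the quasi-Nichols decomposition by well-founded induction on $(\Gamma_{\cat A},\preceq)$ --- the same monoid-theoretic boundedness (Lemma~\ref{lem:monoid-gen-simple}) that the paper uses inside Lemma~\ref{lem:subcoalg}. Your route is shorter and avoids the comodule-theoretic input entirely; the paper's buys a reusable general theorem that is invoked again for Theorem~\ref{th:hereditary nichols}. Two small points you should make explicit: that $(H_{\cat A})_0=\QQ\cdot[0]$ (because $[0]\in\uExt^1_{\cat A}(A,B)$ forces $A=B=0$, so no nonzero object is Grothendieck-equivalent to $0$); and that the finiteness of $\{(\alpha,\beta):\alpha+\beta=\gamma\}$, which you use when treating $\bar\Delta(c)$ as an element of a finite direct sum of bigraded components, is exactly the local finiteness of $\Gamma_{\cat A}$, equivalent to cofinitarity for profinitary $\cat A$ by Theorem~\ref{th:prof-cof}.
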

This Theorem has the following useful corollary, which we prove in~\S\ref{subs:estimate}.
\begin{corollary}\label{cor:prim-gen}Let
\begin{equation}\label{eq:P-defn}
P=\ker\varepsilon\cdot\ker\varepsilon=\QQ\{ [M][N]\,:\, [M],[N]\in(\Iso\cat A)^+\},\qquad P_\gamma:=P\cap (H_{\cat A})_\gamma.
\end{equation}
Then $P=\sum_{k\ge 2} \Prim(H_{\cat A})^k=\sum_{k\ge 2} (\QQ\Ind\cat A)^k$ and
$(H_{\cat A})_\gamma=\Prim(H_{\cat A})_\gamma\oplus P_\gamma$ for all~$\gamma\in\Gamma_{\cat A}^+$.
\end{corollary}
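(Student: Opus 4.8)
The plan is to derive everything from the quasi-Nichols property (Theorem~\ref{th:prim generation}) and the generation of $H_{\cat A}$ by indecomposables (Theorem~\ref{th:PBW-property}), treating the corollary as bookkeeping about the augmentation ideal $\ker\varepsilon$ and the $\Gamma_{\cat A}$-grading. Write $V=\Prim(H_{\cat A})$ and $W=\QQ\Ind\cat A$. The first thing I would record is that $\varepsilon$ is an algebra homomorphism: since the coefficient $F_{A,B}^0$ of $[0]$ in $[A][B]$ is nonzero only for $A=B=0$, one has $\varepsilon([A][B])=\varepsilon([A])\varepsilon([B])$, so $\ker\varepsilon$ is a two-sided ideal and $(\ker\varepsilon)^k\subseteq(\ker\varepsilon)^2=P$ for $k\ge 2$. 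Because the only object with $|X|=0$ is $X=0$ (Lemma~\ref{lem:unique inv} together with the fact, recalled before it, that $[0]\in\uExt^1_{\cat A}(A,B)$ forces $A=B=0$), the degree-zero component of $H_{\cat A}$ is exactly $\QQ\cdot[0]$, whence $\ker\varepsilon=\bigoplus_{\gamma\in\Gamma_{\cat A}^+}(H_{\cat A})_\gamma$. Finally, since $\Delta$ is $\Gamma_{\cat A}$-graded, $V$ is a graded subspace, and by the counit axiom $V\subseteq\ker\varepsilon$.

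For the identity $P=\sum_{k\ge 2}V^k$ I would feed the quasi-Nichols decomposition $H_{\cat A}=\QQ\oplus V\oplus\big(\sum_{r>1}V^r\big)$ into the above. As $\varepsilon$ is multiplicative and kills $V$, it kills every $V^r$ with $r\ge 1$, so $\sum_{r\ge 1}V^r\subseteq\ker\varepsilon$; comparing with $H_{\cat A}=\QQ\cdot[0]\oplus\ker\varepsilon$ forces $\ker\varepsilon=\sum_{r\ge 1}V^r$. Squaring the ideal then gives $P=(\ker\varepsilon)^2=\big(\sum_{r\ge 1}V^r\big)^2=\sum_{k\ge 2}V^k$, where one inclusion is $V^rV^s\subseteq V^{r+s}$ and the other uses $V^k=V\cdot V^{k-1}$ for $k\ge 2$.

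For $P=\sum_{k\ge 2}W^k$ I would invoke Theorem~\ref{th:PBW-property}: since $H_{\cat A}$ is generated as a unital algebra by $\Ind\cat A$, one has $H_{\cat A}=\QQ\cdot[0]+\sum_{k\ge 1}W^k$; as every indecomposable is nonzero, $W\subseteq\ker\varepsilon$, so $\sum_{k\ge 1}W^k\subseteq\ker\varepsilon$ and hence $\ker\varepsilon=\sum_{k\ge 1}W^k$ by the same comparison as before. Squaring again yields $P=\sum_{k\ge 2}W^k$.

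The graded splitting then falls out: combining the previous steps, $H_{\cat A}=\QQ\cdot[0]\oplus V\oplus P$ is a decomposition into $\Gamma_{\cat A}$-homogeneous subspaces ($\QQ\cdot[0]$ in degree $0$, and $V$ and $P=\sum_{r>1}V^r$ both graded), and its directness is precisely the quasi-Nichols condition $V\cap\sum_{r>1}V^r=0$. Intersecting with $(H_{\cat A})_\gamma$ for $\gamma\in\Gamma_{\cat A}^+$ and using $(\QQ\cdot[0])_\gamma=0$ gives $(H_{\cat A})_\gamma=V_\gamma\oplus P_\gamma$, as required. The only genuinely delicate point, and the one I expect to guard most carefully, is the compatibility with the grading: verifying that $\Prim(H_{\cat A})$ is homogeneous, so that the global direct sum descends to each $\gamma$, and that the coalgebraic summand $\sum_{r>1}V^r$ really coincides with the ideal-theoretic $P=(\ker\varepsilon)^2$. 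Everything else is formal once Theorems~\ref{th:PBW-property} and~\ref{th:prim generation} are in hand, so I anticipate no serious obstacle beyond this bookkeeping.
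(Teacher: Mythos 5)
Your proposal is correct and follows essentially the same route as the paper: the paper's proof likewise observes that for any generating subspace $R\subset\ker\varepsilon$ one has $(\ker\varepsilon)^{\ell}=\sum_{k\ge\ell}R^{k}$, applies this to $R=\Prim(H_{\cat A})$ and $R=\QQ\Ind\cat A$, and obtains the directness $\Prim(H_{\cat A})\cap P=\{0\}$ from Lemma~\ref{lem:key-lem-quasi-Nichols} (the pairing orthogonality underlying the quasi-Nichols property you invoke), declaring the graded version ``immediate.'' Your extra bookkeeping --- multiplicativity of $\varepsilon$, $(H_{\cat A})_0=\QQ[0]$, and homogeneity of $\Prim(H_{\cat A})$ --- just makes explicit what the paper leaves implicit.
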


A natural question is to compute dimensions of~$\Prim(H_{\cat A})_\gamma$, $\gamma\in\Gamma^+_{\cat A}$.
The following is a refinement of Proposition~\ref{prop:PBW}.
\begin{proposition}\label{prop:PBW-prec}
In the notation~\eqref{eq:root-mult} we have  
$$
m_\gamma=\dim_\QQ (P_\gamma\cap \QQ\Ind\cat A_\gamma)
$$
for all~$\gamma\in\Gamma_{\cat A}^+$.
In particular, if~$\Ind\cat A_\gamma\subset P_\gamma$ then
$\Prim(H_{\cat A})_\gamma=0$.
\end{proposition}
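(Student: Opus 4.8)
The plan is to convert the claimed equality into a linear-algebra count inside the finite-dimensional space $(H_{\cat A})_\gamma$ and then to establish that count using the triangularity of the Hall product with respect to the order~$\lhd$ of Proposition~\ref{prop:Iso-ordered}.

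First I would collect the inputs. Since $\cat A$ is profinitary and cofinitary, Corollary~\ref{cor:prim-gen} supplies the direct sum $(H_{\cat A})_\gamma=\Prim(H_{\cat A})_\gamma\oplus P_\gamma$, while profinitarity makes $\Iso\cat A_\gamma$ a finite $\QQ$-basis of $(H_{\cat A})_\gamma$. Splitting this basis as $\Iso\cat A_\gamma=\Ind\cat A_\gamma\sqcup D_\gamma$ (indecomposable versus decomposable classes; every object of degree $\gamma\neq 0$ is nonzero), I obtain $\dim_\QQ(H_{\cat A})_\gamma=\#\Ind\cat A_\gamma+\#D_\gamma$ and that $\QQ\Ind\cat A_\gamma$ is a complement to $\QQ D_\gamma$. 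Substituting $\dim_\QQ\Prim(H_{\cat A})_\gamma=\dim_\QQ(H_{\cat A})_\gamma-\dim_\QQ P_\gamma$ into the definition $m_\gamma=\#\Ind\cat A_\gamma-\dim_\QQ\Prim(H_{\cat A})_\gamma$ reduces the problem to the single identity $\dim_\QQ P_\gamma=\dim_\QQ(P_\gamma\cap\QQ\Ind\cat A_\gamma)+\#D_\gamma$, which in turn follows from $(H_{\cat A})_\gamma=P_\gamma+\QQ\Ind\cat A_\gamma$: the projection $(H_{\cat A})_\gamma\to(H_{\cat A})_\gamma/\QQ\Ind\cat A_\gamma\cong\QQ D_\gamma$ then restricts to a surjection $P_\gamma\to\QQ D_\gamma$ with kernel $P_\gamma\cap\QQ\Ind\cat A_\gamma$.

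The decisive step, which I expect to be the main obstacle, is the identity $(H_{\cat A})_\gamma=P_\gamma+\QQ\Ind\cat A_\gamma$; it suffices to place every decomposable class $[M]$ of degree $\gamma$ in $P_\gamma+\QQ\Ind\cat A_\gamma$. For a nontrivial decomposition $M=M_1\oplus M_2$ with $M_1,M_2\neq 0$, I would use that the product is triangular for $\lhd$:
$$[M_1]\cdot[M_2]=F_{M_1,M_2}^{M}\,[M]+\sum_{[C]\lhd[M]}F_{M_1,M_2}^{C}\,[C],$$
where $F_{M_1,M_2}^{M}\geq 1$ because the split extension contributes, and every other $C$ occurring satisfies $[C]\lhd[M]$ since a non-split extension of $M_1$ by $M_2$ is by definition strictly below $[M_1\oplus M_2]$. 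As $[M_1]\cdot[M_2]\in P_\gamma$ and every such $C$ again lies in $\Iso\cat A_\gamma$, I would induct on the finite, hence well-founded, poset $(\Iso\cat A_\gamma,\lhd)$: each lower term $[C]$ is indecomposable, so in $\QQ\Ind\cat A_\gamma$, or decomposable, so in $P_\gamma+\QQ\Ind\cat A_\gamma$ by the inductive hypothesis; solving for $[M]$ and dividing by $F_{M_1,M_2}^{M}$ over $\QQ$ finishes the step. The point demanding care is exactly the triangularity claim — that $[M]$ is the only class of degree $\gamma$ occurring in $[M_1]\cdot[M_2]$ that is not strictly $\lhd$-below $[M]$ — which rests on the definition of $\lhd$ and Proposition~\ref{prop:Iso-ordered}.

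The final assertion is then immediate: if $\Ind\cat A_\gamma\subseteq P_\gamma$ then $\QQ\Ind\cat A_\gamma\subseteq P_\gamma$, so $P_\gamma\cap\QQ\Ind\cat A_\gamma=\QQ\Ind\cat A_\gamma$ has dimension $\#\Ind\cat A_\gamma$; the established formula gives $m_\gamma=\#\Ind\cat A_\gamma$, and comparing with $m_\gamma=\#\Ind\cat A_\gamma-\dim_\QQ\Prim(H_{\cat A})_\gamma$ forces $\Prim(H_{\cat A})_\gamma=(V_{\cat A})_\gamma=0$.
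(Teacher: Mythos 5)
Your proof is correct, and its final dimension count is exactly the one the paper uses: the paper isolates the same linear-algebra fact as a lemma (if $U=U_1+U_2=U_1'+U_2$ with $U_1\cap U_2=\{0\}$ then $\dim U_1=\dim U_1'-\dim(U_1'\cap U_2)$) and applies it with $U_1=\Prim(H_{\cat A})_\gamma$, $U_1'=\QQ\Ind\cat A_\gamma$, $U_2=P_\gamma$, citing Corollary~\ref{cor:prim-gen}. The one place you genuinely diverge is the step you yourself flag as decisive, namely $(H_{\cat A})_\gamma=P_\gamma+\QQ\Ind\cat A_\gamma$. The paper reads this off from Corollary~\ref{cor:prim-gen}, which records $P=\sum_{k\ge 2}(\QQ\Ind\cat A)^k$ and, together with generation by indecomposables (Theorem~\ref{th:PBW-property}), gives $\ker\varepsilon=\QQ\Ind\cat A+P$ degree by degree. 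You instead reprove this containment directly by induction on the finite poset $(\Iso\cat A_\gamma,\lhd)$, using the triangularity $[M_1]\cdot[M_2]\in F^{M}_{M_1,M_2}[M]+\sum_{[C]\lhd [M]}\QQ[C]$ with $F^{M}_{M_1,M_2}\ge 1$; that argument is sound (antisymmetry of $\lhd$ via Proposition~\ref{prop:Iso-ordered} plus finiteness of $\Iso\cat A_\gamma$ makes the induction well-founded, and any $C\not\cong M_1\oplus M_2$ occurring in the product admits only non-split extensions, so $[C]\lhd[M]$), and it is exactly the mechanism underlying the proof of Theorem~\ref{thm:PBW-prop-Hall}. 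In effect you have inlined a degree-restricted version of the PBW theorem rather than quoting it: the cost is duplication of an argument already available, the benefit is a proof self-contained modulo the direct-sum statement of Corollary~\ref{cor:prim-gen} and Proposition~\ref{prop:Iso-ordered}. Your deduction of the final assertion coincides with the paper's.
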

We prove Proposition~\ref{prop:PBW-prec} in~\S\ref{subs:estimate}, as well as the following observation, which is useful 
for computing primitive elements.
\begin{lemma}\label{lem:prim}
Each primitive element contains at least one~$[X]\in\Ind\cat A$ in its decomposition with respect to the basis~$\Iso\cat A$
of~$H_{\cat A}$. In other words, $\Prim(H_{\cat A})\cap \QQ(\Iso\cat A\setminus\Ind\cat A)=\{0\}$.
\end{lemma}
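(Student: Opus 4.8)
The plan is to argue by contradiction: suppose $v=\sum_{[C]\in\Iso\cat A}c_C[C]$ is a nonzero primitive element all of whose support $S:=\{[C]:c_C\ne 0\}$ consists of decomposable objects, and derive $v=0$. The starting observation is that $v\in H_{\cat A}$ is a \emph{finite} $\QQ$-linear combination, so $S$ is a finite nonempty subset of $(\Iso\cat A)^+$. The mechanism I would exploit is the interaction between the coproduct $\Delta$ and the partial order $\lhd$ on $\Iso\cat A$ furnished by Proposition~\ref{prop:Iso-ordered}. Primitivity $\Delta(v)=v\otimes 1+1\otimes v$ means that for every pair $([A],[B])$ with $A,B\ne 0$ the coefficient of $[A]\otimes[B]$ in $\Delta(v)$ vanishes, i.e.
\[
\sum_{[C]\in\Iso\cat A}c_C\,F^{A,B}_C=0\qquad\text{whenever }A,B\ne 0.
\]

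First I would pick a $\lhd$-\emph{minimal} element $[C_1]\in S$ (it exists since $S$ is finite, nonempty, and $\lhd$ is a genuine partial order by Proposition~\ref{prop:Iso-ordered}). Because $C_1$ is decomposable, I can split off a nonzero summand and write $C_1=A_1\oplus B_1$ with $A_1,B_1\in(\Iso\cat A)^+$. I would then specialize the vanishing relation above to the pair $([A_1],[B_1])$. The key triangularity input is the description of the dual Hall numbers: $F^{A_1,B_1}_C\ne 0$ forces the existence of a short exact sequence $A_1\rightarrowtail C\twoheadrightarrow B_1$, so either this sequence splits, whence $[C]=[A_1\oplus B_1]=[C_1]$, or it is non-split, whence $[C]\lhd[A_1\oplus B_1]=[C_1]$ directly from the definition of $\lhd$ (take $M=C$, $M^-=A_1$, $M^+=B_1$). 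Thus every $[C]$ contributing to the relation is either $[C_1]$ itself or lies strictly below $[C_1]$ in $\lhd$.

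The contradiction then comes out immediately: by minimality of $[C_1]$ in $S$, every $[C]\lhd[C_1]$ has $c_C=0$, so the relation collapses to $c_{C_1}\,F^{A_1,B_1}_{C_1}=0$. Since the split sequence $A_1\rightarrowtail A_1\oplus B_1\twoheadrightarrow B_1$ always exists, the class $[A_1\oplus B_1]$ contributes to $\mathcal E(B_1,A_1)_{C_1}$ and hence $F^{A_1,B_1}_{C_1}=\#\Ext^1_{\cat A}(B_1,A_1)_{C_1}/\#\Hom_{\cat A}(B_1,A_1)>0$; therefore $c_{C_1}=0$, contradicting $[C_1]\in S$. I expect the main obstacle to be a clean justification of the triangularity step, namely that every non-split extension of $B_1$ by $A_1$ is \emph{strictly} $\lhd$-below the split object $[C_1]$ while the split class is the unique top term, together with the positivity $F^{A_1,B_1}_{C_1}>0$; once these are pinned down (using Proposition~\ref{prop:Iso-ordered} to guarantee $\lhd$ is antisymmetric, so $[C_1]$ cannot also appear among the strictly-smaller terms), the choice of a \emph{minimal} rather than maximal support element is exactly what makes the lower-order terms drop out and the argument close.
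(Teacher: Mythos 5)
Your argument is correct, but it is genuinely different from the one in the paper. The paper proves Lemma~\ref{lem:prim} in \S\ref{subs:estimate} by a three-line orthogonality computation with Green's pairing~\eqref{eq:hopf pairing}: since $\Iso\cat A$ is an orthogonal basis, $\QQ(\Iso\cat A\setminus\Ind\cat A)=(\QQ\Ind\cat A)^\perp$; by Lemma~\ref{lem:key-lem-quasi-Nichols} primitive elements lie in $P^\perp$; and by Corollary~\ref{cor:prim-gen} one has $(H_{\cat A})_\gamma=\QQ\Ind\cat A_\gamma+P_\gamma$, so the intersection lands in $(H_{\cat A})_\gamma^\perp=0$. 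That route is short but leans on Corollary~\ref{cor:prim-gen}, hence on the profinitary and cofinitary hypotheses. Your route instead exploits the triangularity of $\Delta$ with respect to the partial order $\lhd$ of Proposition~\ref{prop:Iso-ordered}: a $\lhd$-minimal decomposable $[C_1]=[A_1\oplus B_1]$ in the support, the observation that $F^{A_1,B_1}_C\ne0$ forces $[C]\unlhd[C_1]$ (with $[C]=[C_1]$ only via the split sequence, since antisymmetry of $\lhd$ rules out a non-split extension with middle term isomorphic to $A_1\oplus B_1$), and the positivity $F^{A_1,B_1}_{C_1}>0$. All the ingredients are sound, and what your argument buys is generality: it uses nothing beyond Proposition~\ref{prop:Iso-ordered} and the definition of $\Delta$, so it proves the lemma for an arbitrary finitary exact category (with $\Delta$ valued in the completion), not just a profinitary cofinitary one. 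Two small points to pin down: (i) $[0]\in\Iso\cat A\setminus\Ind\cat A$, so before choosing $[C_1]$ you must note that a primitive element has no $[0]$-component (compare coefficients of $[0]\otimes[0]$ in $\Delta(v)$ and in $v\otimes1+1\otimes v$), otherwise the $\lhd$-minimal support element could be $[0]$ and would not decompose; (ii) for a fixed $[C]$ there may exist both split and non-split sequences $A_1\rightarrowtail C\twoheadrightarrow B_1$, so the clean statement of the triangularity is that every $[C]$ with $F^{A_1,B_1}_C\ne0$ satisfies $[C]\unlhd[C_1]$, with strict inequality unless $[C]=[C_1]$ --- which is all the minimality argument needs.
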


\subsection{Hereditary categories and Nichols algebras}\label{subs:br-tens-cat}
Let $\Gamma$ be an abelian monoid and let ${\cat C}_\Gamma$ be the tensor 
category of $\Gamma$-graded vector spaces $V=\bigoplus_{\gamma\in \Gamma} 
V_\gamma$.
The following fact can be easily checked.

\begin{lemma}\label{lem:bichar-category} For each bicharacter  
$\chi:\Gamma\times\Gamma\to \FF^\times$  the category ${\cat C}_\Gamma$ is a 
braided tensor category~$(\cat C_\Gamma,\Psi)$ with the braiding  $\Psi_{U,V}:U\otimes V\to V\otimes 
U$ for objects $U,V$ in ${\cat C}_\Gamma$ given by
$$\Psi_{U,V}(u\otimes v)=\chi(\gamma,\delta) \, v\otimes u,$$
for any $u\in U_\gamma$, $v\in V_\delta$, $\gamma,\delta\in\Gamma$.   
\end{lemma}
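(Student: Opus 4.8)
The plan is to verify in turn the three ingredients making $(\cat C_\Gamma,\Psi)$ braided: the monoidal structure on $\cat C_\Gamma$, the assertion that each $\Psi_{U,V}$ is a natural isomorphism, and the two hexagon identities. Only the last of these uses the defining property of $\chi$, so that is where the content lies.

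First I would record the standard monoidal structure on $\cat C_\Gamma$: for $U=\bigoplus_\gamma U_\gamma$ and $V=\bigoplus_\delta V_\delta$ the tensor product is the usual one graded by $(U\otimes V)_\gamma=\bigoplus_{\alpha+\beta=\gamma}U_\alpha\otimes V_\beta$, the unit object is $\FF$ placed in degree $0$, and the associativity and unit constraints are those of $\FF$-vector spaces. These constraints are manifestly degree-preserving, hence are morphisms in $\cat C_\Gamma$, and the pentagon and triangle axioms are inherited from the category of $\FF$-vector spaces; this step is entirely routine.

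Next I would check that each $\Psi_{U,V}$ lies in $\cat C_\Gamma$ and is a natural isomorphism. For homogeneous $u\in U_\gamma$, $v\in V_\delta$ the source $u\otimes v$ has degree $\gamma+\delta$ and the target $v\otimes u$ has degree $\delta+\gamma$; since $\Gamma$ is abelian these agree, so $\Psi_{U,V}$ is degree-preserving. It is invertible because $\chi(\gamma,\delta)\in\FF^\times$, with $\Psi_{U,V}^{-1}(v\otimes u)=\chi(\gamma,\delta)^{-1}u\otimes v$. Naturality, i.e. $(g\otimes f)\circ\Psi_{U,V}=\Psi_{U',V'}\circ(f\otimes g)$ for graded maps $f\colon U\to U'$ and $g\colon V\to V'$, holds because $f$ and $g$ preserve degrees, so the scalar $\chi(\gamma,\delta)$ is unchanged on passing across the maps. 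I would also note that the bicharacter property forces $\chi(0,\delta)=\chi(0,\delta)^2$, whence $\chi(0,-)=\chi(-,0)=1$, so that $\Psi$ behaves correctly on the unit object $\FF$.

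The substance is the two hexagon axioms, which I would verify on homogeneous elements $u\in U_\gamma$, $v\in V_\delta$, $w\in W_\epsilon$ (the associators being the canonical isomorphisms of $\FF$-vector spaces, which act as the identity on homogeneous tensors and so contribute no extra scalars). For the first hexagon, $\Psi_{U,V\otimes W}$ produces the scalar $\chi(\gamma,\delta+\epsilon)$, while the composite $(\mathrm{id}_V\otimes\Psi_{U,W})\circ(\Psi_{U,V}\otimes\mathrm{id}_W)$ produces $\chi(\gamma,\delta)\chi(\gamma,\epsilon)$; these coincide precisely because $\chi$ is multiplicative in its second argument. The second hexagon is symmetric and uses multiplicativity in the first argument, $\chi(\gamma+\delta,\epsilon)=\chi(\gamma,\epsilon)\chi(\delta,\epsilon)$. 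The only point requiring care---and hence the main, if modest, obstacle---is the bookkeeping of the associativity constraints in the hexagon diagrams; once one observes that on homogeneous tensors they reduce to the identity, both hexagons follow immediately from the bicharacter identities, and $(\cat C_\Gamma,\Psi)$ is braided.
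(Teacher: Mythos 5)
Your verification is correct and is precisely the routine check the paper has in mind: the paper offers no proof, stating only that the fact ``can be easily checked,'' and your argument (gradedness and naturality of $\Psi$, invertibility from $\chi(\gamma,\delta)\in\FF^\times$, triviality on the unit from $\chi(0,\cdot)=\chi(\cdot,0)=1$, and the two hexagons reducing to bimultiplicativity of $\chi$) is the standard one. Nothing is missing.
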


By a slight abuse of notation, given a bicharacter $\chi:\Gamma\times\Gamma\to 
\FF^\times$  we denote this braided tensor category ${\cat C}_\Gamma$  by  
${\cat C}_\chi$.

Now let ${\cat A}$ be a finitary {\em hereditary}  category, i.e., 
$\Ext^i_{\cat A}(M,N)=0$ for $i>1$ and all $M,N\in {\cat A}$. 
Let $\chi_{\cat A}: \Gamma\times \Gamma\to \QQ^\times$ be the bi-character given by
$$\chi_{\cat A}(|M|,|N|)=\frac{\#\Ext^1_{\cat A}(M,N)} {\#\Hom_{\cat A}(M,N)}.$$
The bicharacter $\chi_{\cat A}$ is easily seen to be well-defined because 
it is just the (multiplicative) Euler form.

Nichols algebras were formally defined in~\cite{AS}. 
\begin{definition}[\cite{AS}*{Definition~2.1}]\label{defn:nichols}
Let~$(\cat C,\Psi)$ be a braided $\FF$-linear tensor category with a braiding~$\Psi$.
Let~$V$ be an object in~$(\cat C,\Psi)$. A graded bialgebra with unity $B=\bigoplus_{n\ge 0} B_n$
in~$(\cat C,\Psi)$ is called a 
{\em Nichols algebra} of~$V$ if $B_0=\FF$, $B_1=V$ and $B$ is generated, as an algebra, by $B_1=\Prim(B)$.
\end{definition}

For each object $V$ of a braided tensor category $(\cat C,\Psi)$ the tensor algebra 
$T(V)$ is a graded bialgebra (even a Hopf algebra) in $(\cat C,\Psi)$ 
with the coproduct determined by requiring each $v\in V$ to be primitive and the grading 
defined by assigning degree~$1$ to elements of~$V$.
It is well-known (\cite{AS}*{Proposition~2.2}) that the Nichols algebra of~$V$ is 
unique up to an isomorphism and is   
 the quotient of $T(V)$ by the maximal graded bi-ideal $\mathfrak I$ of~$T(V)$ which is an object in $(\cat C,\Psi)$ 
and satisfies $\mathfrak I\cap V=\{0\}$. Henceforth we denote the Nichols algebra of~$V$ by~$\mathcal B(V)$. 

The following is proved in~\cite{AS}*{Corollary~2.3}
\begin{lemma}\label{lem:incl-nichols}
The assignment $V\mapsto \mathcal B(V)$ defines a functor from~$(\cat C,\Psi)$ to the category of bialgebras in~$(\cat C,\Psi)$.
Moreover, for any morphism $f:U\to V$ in~$(\cat C,\Psi)$, the kernel of the corresponding homomorphism $\mathcal B(f)$
is the (bi)ideal in~$\mathcal B(U)$ generated by~$\ker f\subset U$.
\end{lemma}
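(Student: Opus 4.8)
The plan is to deduce both assertions from the presentation $\mathcal B(V)=T(V)/\mathfrak I_V$ recalled above, where $\mathfrak I_V$ is the maximal graded bi-ideal of $T(V)$ that is an object of $(\cat C,\Psi)$ and meets $V$ trivially. First I would record the functoriality of the tensor algebra: a morphism $f\colon U\to V$ in $(\cat C,\Psi)$ is in particular a degree-preserving, braiding-compatible linear map, so the free construction yields a graded bialgebra homomorphism $T(f)\colon T(U)\to T(V)$ in $(\cat C,\Psi)$ sending each $u\in U$ to $f(u)\in V$, with $T(g\circ f)=T(g)\circ T(f)$ and $T(\mathrm{id})=\mathrm{id}$. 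It is convenient to realize $\mathcal B(V)$ as the image of the total braided symmetrizer $\mathfrak S_V=\bigoplus_{n\ge 0}\mathfrak S^V_n$, with $\mathfrak I_V=\ker\mathfrak S_V$; since each $\mathfrak S^V_n$ is assembled from the braiding $\Psi$ alone, the compatibility of $f$ with $\Psi$ yields the naturality relation $\mathfrak S^V_n\circ T^n(f)=T^n(f)\circ\mathfrak S^U_n$.

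Next I would establish the descent. The naturality relation immediately gives $T(f)(\ker\mathfrak S_U)\subseteq\ker\mathfrak S_V$, that is $T(f)(\mathfrak I_U)\subseteq\mathfrak I_V$, so $T(f)$ induces $\mathcal B(f)\colon\mathcal B(U)\to\mathcal B(V)$ compatible with the canonical projections $\pi_U\colon T(U)\to\mathcal B(U)$ and $\pi_V\colon T(V)\to\mathcal B(V)$, i.e.\ $\mathcal B(f)\circ\pi_U=\pi_V\circ T(f)$. Functoriality of $\mathcal B$ is then inherited from that of $T$. The inclusion $\ker f\subseteq\ker\mathcal B(f)$ is clear, since $\mathcal B(f)$ restricts to $f$ in degree $1$; as $\ker\mathcal B(f)$ is a two-sided ideal it contains the ideal $\langle\ker f\rangle$ generated by $\ker f$ in $\mathcal B(U)$. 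The real content is the reverse containment $\ker\mathcal B(f)\subseteq\langle\ker f\rangle$.

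To isolate the difficulty I would factor $f=\iota\circ p$ in the abelian category $\cat C_\Gamma$, with $p\colon U\twoheadrightarrow W:=U/\ker f$ an epimorphism and $\iota\colon W\hookrightarrow V$ a monomorphism, so that $\mathcal B(f)=\mathcal B(\iota)\circ\mathcal B(p)$. For the monomorphism $\iota$ the map $T(\iota)$ is injective and intertwines the symmetrizers; hence if $\mathcal B(\iota)(\pi_W x)=0$ then $T(\iota)(\mathfrak S_W x)=\mathfrak S_V(T(\iota)x)=0$, whence $\mathfrak S_W x=0$ and $\pi_W x=0$. Thus $\mathcal B(\iota)$ is injective and $\ker\mathcal B(f)=\ker\mathcal B(p)$. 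It therefore suffices to prove, for the epimorphism $p$ with kernel $N=\ker f$, that $\ker\mathcal B(p)=\langle N\rangle$, equivalently that $T(p)(\mathfrak I_U)=\mathfrak I_W$: indeed $T(p)$ is a surjective bialgebra morphism with $\ker T(p)=\langle N\rangle_{T(U)}$ and $T(U)/\langle N\rangle_{T(U)}\cong T(W)$, so $\mathcal B(U)/\langle N\rangle\cong T(W)/T(p)(\mathfrak I_U)$, and this coincides with $\mathcal B(W)$ exactly when $T(p)(\mathfrak I_U)=\mathfrak I_W$.

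One inclusion is formal: $T(p)(\mathfrak I_U)$ is a graded bi-ideal of $T(W)$, an object of $(\cat C,\Psi)$, whose degree-one part is $p(\mathfrak I_U\cap U)=0$, so by maximality of $\mathfrak I_W$ we get $T(p)(\mathfrak I_U)\subseteq\mathfrak I_W$. \textbf{The main obstacle is the reverse inclusion} $\mathfrak I_W\subseteq T(p)(\mathfrak I_U)$. Phrased intrinsically, it asserts that the connected graded braided bialgebra $R:=\mathcal B(U)/\langle N\rangle$, which is generated in degree $1$ by $R_1=W$, has no primitives beyond degree $1$, so that $R$ satisfies the defining property of $\mathcal B(W)$; equivalently, in the image description, one must show $\ker T^n(p)\cap\Im\mathfrak S^U_n=\mathfrak S^U_n(\ker T^n(p))$, an exactness statement that does \emph{not} follow from naturality alone. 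This is the step where the finer structure theory of Nichols algebras enters, namely the characterization of $\mathcal B(V)$ by $\Prim(\mathcal B(V))=\mathcal B(V)_1$ together with the representation-theoretic properties of the braided symmetrizers; it is precisely the content supplied by~\cite{AS}*{Corollary~2.3}, on which we rely.
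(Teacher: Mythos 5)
The paper offers no proof of this lemma at all: it is stated verbatim as ``proved in \cite{AS}*{Corollary~2.3}'', so there is nothing internal to compare your argument against. Your proposal is a sound and rather more informative version of that citation. The reductions you carry out are all correct: functoriality and naturality of the quantum symmetrizer give the descent $T(f)(\mathfrak I_U)\subseteq\mathfrak I_V$; the epi--mono factorization together with injectivity of $T(\iota)$ and $\mathfrak S_V\circ T(\iota)=T(\iota)\circ\mathfrak S_W$ correctly reduces everything to the surjective case; and the identification $\ker T(p)=\langle\ker f\rangle_{T(U)}$ plus maximality of $\mathfrak I_W$ gives one inclusion formally. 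You are also right that the remaining inclusion $\mathfrak I_W\subseteq T(p)(\mathfrak I_U)$ --- equivalently $\Im\mathfrak S^U_n\cap\ker T^n(p)=\mathfrak S^U_n(\ker T^n(p))$, or that $\mathcal B(U)/\langle\ker f\rangle$ has no primitives above degree one --- is the genuine content, and deferring it to \cite{AS} is exactly what the authors do for the whole statement. One remark: in every application made in this paper the ambient category is $\cat C_\chi$, the category of $\Gamma$-graded vector spaces with a diagonal braiding, which is semisimple; there the epimorphism $p$ splits as $U=\ker f\oplus U'$, the complement $U'^{\otimes n}$ is stable under $\mathfrak S^U_n$ and complements $\ker T^n(p)$, and your exactness statement follows at once. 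So in the setting actually used you could close the gap yourself; for a general braided category one does need the argument of \cite{AS}.
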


The following fact is immediate from the definitions.
 \begin{lemma}\label{lem:nichols-quasi-nichols}
Let $B$ be a bialgebra in~$(\cat C,\Psi)$ which is a quasi-Nichols algebra. Then $B$ is Nichols if 
and only if $\sum_{r\ge2} (\Prim(B))^r$ is direct.
\end{lemma}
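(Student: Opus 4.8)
The plan is to recognize that the extra hypothesis is exactly what upgrades the quasi-Nichols decomposition of $B$ to a genuine $\ZZ_{\ge 0}$-grading, and then to check that this grading turns $B$ into a graded bialgebra generated in degree one, which is precisely the content of being Nichols. Throughout write $V=\Prim(B)$ and, for $n\ge 0$, let $B_n=V^n$ be the image of the $n$-fold (braided) multiplication $V^{\otimes n}\to B$, so that $B_0=\FF$ and $B_1=V$. By the definition of a quasi-Nichols algebra, $B=\FF\oplus V\oplus\big(\sum_{r>1}V^r\big)$, so $B_0$, $B_1$ and $\sum_{r\ge 2}B_r$ are already in direct sum; the only question is whether the summands $B_2,B_3,\dots$ inside $\sum_{r\ge 2}V^r$ meet one another.

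First I would prove the implication ``$\Leftarrow$''. Assuming $\sum_{r\ge 2}V^r$ is direct and combining this with the quasi-Nichols decomposition gives $B=\bigoplus_{n\ge 0}B_n$ as an object of $(\cat C,\Psi)$. It then remains to check that $\{B_n\}$ is a bialgebra grading. Compatibility with the multiplication is immediate, since $V^i\cdot V^j\subseteq V^{i+j}$ by the definition of $B_n$. For the comultiplication I would use that every $v\in V=B_1$ is primitive, so $\Delta(v)=v\otimes 1+1\otimes v\in (B_1\otimes B_0)\oplus(B_0\otimes B_1)$; because $\Delta$ is an algebra homomorphism into the braided tensor square and, by naturality of the braiding, $\Psi(B_i\otimes B_j)\subseteq B_j\otimes B_i$, multiplicativity of $\Delta$ forces $\Delta(B_n)=\Delta(V^n)\subseteq\sum_{i+j=n}B_i\otimes B_j$. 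Hence $B=\bigoplus_n B_n$ is a graded bialgebra with $B_0=\FF$, $B_1=V=\Prim(B)$, generated as an algebra by $B_1$, so $B=\mathcal B(V)$ is Nichols by Definition~\ref{defn:nichols}.

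For the converse ``$\Rightarrow$'', suppose $B$ is Nichols, so $B=\bigoplus_{n\ge 0}B_n$ with $B_0=\FF$, $B_1=V$ and $B$ generated by $B_1$. Being generated in degree one forces $B_n=V^n$: the inclusion $V^n\subseteq B_n$ holds by the grading, while $B=\sum_n V^n$ gives the reverse inclusion after projecting onto the homogeneous component $B_n$. Therefore $B=\bigoplus_n V^n$, and in particular the partial sum $\sum_{r\ge 2}V^r$ is direct.

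The only step that is not a pure bookkeeping translation between the two definitions is the verification that the $\{V^n\}$-grading is respected by $\Delta$; this is where primitivity of $V$ together with naturality of $\Psi$ enter, and it is what guarantees that the decomposition $B=\bigoplus_n V^n$ is one of bialgebras and not merely of vector spaces. I expect this to be the only potential obstacle, and even there the argument is routine, in line with the assertion that the lemma is immediate from the definitions.
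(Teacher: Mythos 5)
Your proof is correct, and it is exactly the routine unpacking of Definitions~\ref{defn:quasi-Nichols} and~\ref{defn:nichols} that the paper has in mind when it calls the lemma ``immediate from the definitions'' (no proof is given there). The one point genuinely worth recording --- that $\Delta(V^n)\subseteq\sum_{i+j=n}V^i\otimes V^j$ because $V=\Prim(B)$ and hence each $V^n$ is a subobject in $(\cat C,\Psi)$ preserved by the braiding --- is the point you correctly single out and justify.
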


The following extends the main result of~\cite{SV}.
\begin{theorem} 
\label{th:hereditary nichols}
For any profinitary hereditary abelian category ${\cat A}$ the Hall algebra 
$H_{\cat A}$ is isomorphic to the Nichols algebra $\mathcal B(V_{\cat A})$ 
in the category~$\cat C_{\chi_{\cat A}}$ where $V_{\cat A}=\Prim(H_{\cat A})$. 
\end{theorem}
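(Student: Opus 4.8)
The plan is to reduce the statement, via the quasi-Nichols property from Theorem~\ref{th:prim generation}, to a single directness assertion and then to establish that assertion by induction on length using the braided bialgebra structure. First I would assemble the structural inputs. Since $\cat A$ is profinitary and abelian, Corollary~\ref{cor:prof-cof}\eqref{cor:prof-cof.a} shows it is cofinitary, so by Lemma~\ref{lem:coprod} the comultiplication $\Delta$ is honest (it lands in $H_{\cat A}\tensor H_{\cat A}$); and as $\cat A$ is hereditary, Green's theorem~\cite{Green} makes $\Delta$ an algebra homomorphism once $H_{\cat A}\tensor H_{\cat A}$ is given the $\Psi$-twisted product attached to $\chi_{\cat A}$. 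Thus $H_{\cat A}$ is a bialgebra in the braided category $\cat C_{\chi_{\cat A}}$, which by Theorem~\ref{th:prim generation} is moreover quasi-Nichols with $V:=V_{\cat A}=\Prim(H_{\cat A})$. By Lemma~\ref{lem:nichols-quasi-nichols} the theorem is then equivalent to showing that the sum $W:=\sum_{r\ge 2}V^r$ is \emph{direct}; this is the only point at which heredity must be exploited beyond Theorem~\ref{th:prim generation}.

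For the directness I would argue by induction on the length of the Grothendieck degree. By Theorem~\ref{th:prof-ab} the category has finite length, so $\Gamma_{\cat A}$ is free on the classes of simple objects and carries a length function $\ell$, with $H_{\cat A}=\bigoplus_\gamma(H_{\cat A})_\gamma$ connected. Two observations drive the induction. First, writing $\bar\Delta(x)=\Delta(x)-x\tensor 1-1\tensor x$ on the augmentation ideal $H_{\cat A}^+=\ker\varepsilon$, one has $\ker(\bar\Delta|_{H^+_{\cat A}})=\Prim(H_{\cat A})=V$; since the quasi-Nichols decomposition gives $V\cap W=0$, the restriction $\bar\Delta|_W$ is \emph{injective}. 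Second, because $\Delta$ is a braided algebra map and every element of $V$ is primitive, $\bar\Delta(V^r)\subseteq\bigoplus_{a+b=r,\ a,b\ge 1}V^a\tensor V^b$, and both tensor factors land in Grothendieck degrees strictly smaller in $\ell$ than that of the input. Fixing $\gamma$ with $\ell(\gamma)=n$ and a relation $\sum_{r\ge 2}x_r=0$ with $x_r\in(V^r)_\gamma$, I apply $\bar\Delta$; by the inductive hypothesis the subspaces $(V^a)_\alpha\tensor(V^b)_\beta$ with $\alpha+\beta=\gamma$, $\ell(\alpha),\ell(\beta)<n$ are independent in $H_{\cat A}\tensor H_{\cat A}$, so the vanishing of $\sum_r\bar\Delta(x_r)$ forces $\bar\Delta(x_r)=0$ for each $r$ separately. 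Injectivity of $\bar\Delta|_W$ then yields $x_r=0$, completing the induction; the base cases $\ell(\gamma)\le 1$ are immediate since then only $r=1$ can contribute.

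The main obstacle is not the induction itself but the verification that $H_{\cat A}$ really is a bialgebra in $\cat C_{\chi_{\cat A}}$ with $\bar\Delta$ behaving as above: one must check that Green's theorem applies in the present generality (hereditary, profinitary, hence cofinitary, abelian), that the twisting by $\chi_{\cat A}$ is exactly the braiding $\Psi$ of Lemma~\ref{lem:bichar-category}, and that $\bar\Delta$ strictly lowers $\ell$ on each factor so that the induction is well founded. Once directness is in hand, the resulting grading $B_n=V^n$ satisfies $B_0=\QQ$, $B_1=V=\Prim(H_{\cat A})$ and is generated by $B_1$, so $H_{\cat A}$ is a Nichols algebra of $V_{\cat A}$; by the uniqueness of Nichols algebras recalled after Definition~\ref{defn:nichols} it is therefore isomorphic to $\mathcal B(V_{\cat A})$ in $\cat C_{\chi_{\cat A}}$.
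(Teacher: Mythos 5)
Your argument is correct and reaches the theorem, but the crucial step --- directness of the sum of powers of $V_{\cat A}$ --- is handled by a genuinely different mechanism than in the paper. The paper isolates a general statement (Theorem~\ref{prop:nichols-sub}) about bialgebras in a braided comodule category equipped with a compatible pairing that is anisotropic: using the positive definite Green pairing \eqref{eq:hopf pairing}, Proposition~\ref{prop:graded-alg} proves $\lr{V^\ell,V^k}=0$ for all $k<\ell$ by a lexicographic induction on the pairs $(k,\ell)$ driven by the adjunction $\lr{ab,c}=\lr{a\tensor b,\Delta(c)}$, and directness of $\bigoplus_k V^k$ then falls out of $\lr{x,x}\ne 0$ for $x\ne 0$. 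You instead reduce, via Lemma~\ref{lem:nichols-quasi-nichols}, to directness of $\sum_{r\ge 2}V^r$ and prove it by induction on an additive length function on $\Gamma_{\cat A}$, using that $\ker\bar\Delta\cap\ker\varepsilon=\Prim(H_{\cat A})$ meets $\sum_{r\ge2}V^r$ trivially (Theorem~\ref{th:prim generation}) and that $\bar\Delta$ strictly lowers length in each tensor factor. Both routes are valid; the paper's pairing argument is self-contained at the level of compatible pairings and never needs the finite-length property, whereas yours is more elementary once the quasi-Nichols property is granted but leans on Theorem~\ref{th:prof-ab} (freeness of $\Gamma_{\cat A}$ on classes of simples, hence a well-founded length grading), which is available exactly because $\cat A$ is profinitary abelian and would not transfer to non-abelian exact settings. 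One point to make explicit when you write this up: the independence of the subspaces $(V^a)_\alpha\tensor(V^b)_\beta$ over distinct $(a,b)$ requires directness of the \emph{full} sum $\bigoplus_{a\ge 1}(V^a)_\alpha$ in degrees of smaller length, so your inductive hypothesis (which only concerns $r\ge 2$) must be combined with the quasi-Nichols splitting $V_\alpha\cap\sum_{r\ge2}(V^r)_\alpha=\{0\}$ to separate the $a=1$ piece; as phrased you invoke only the inductive hypothesis.
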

We prove this Theorem in~\S\ref{pf:hereditary nichols}.

\begin{remark}
In fact, the original result of~\cite{SV} (Theorem~1.1) follows from 
Theorem~\ref{th:hereditary nichols}. The classification of
diagonally braided Nichols algebras was obtained in~\cite{AS}*{Section~5} and, 
in particular, generalizes
some results of~\cite{SV}.
\end{remark}

\section{Examples}\label{sec:examples}

In this section we construct primitive elements in several Hall algebras and 
provide supporting evidence for 
Conjectures~\ref{conj:Lie} and~\ref{conj:im-ineq}.  
Throughout this section we write $\bar\Delta(x)=\Delta(x)-x\tensor 1-1\tensor 
x$ (thus, $x$ is primitive if and only if~$x\in\ker\bar\Delta$).
Needless to say, every (almost) simple object~$S$ satisfies $\bar\Delta([S])=0$ so we 
focus only on non-simple primitive elements.
In this section, $\kk$ always denotes a finite field with~$q$ elements and all 
categories will be assumed to be $\kk$-linear.

\subsection{Classical Hall-Steinitz algebra}\label{subs:ex-jordan}
Let $R$ be a principal ideal domain such that $R/\mathfrak m$ is a finite field 
for any maximal ideal~$\mathfrak m$ of~$R$. Let $\cat A=\cat A(\mathfrak m)$
be the full subcategory of 
finite length $R$-modules~$M$ satisfying $\mathfrak m^r M=0$ for some~$r\ge 0$. 
Then for each~$r>0$,
there exists a unique, up to an isomorphism, indecomposable object $\mathcal 
I_r=R/\mathfrak m^r\in\cat A$.
More generally, given a partition~$\lambda=(\lambda_1\ge \cdots\ge\lambda_k>0)$, 
set $\mathcal I_{\lambda}=\mathcal I_{\lambda_1}\oplus \cdots\oplus \mathcal I_{\lambda_k}$
and write~$\ell(\lambda)=k$.

Since the Euler form of~$\cat A$ is identically zero and~$\cat A$ is 
hereditary, $H_{\cat A}$
is an ordinary Hopf algebra (the braiding is trivial). The Grothendieck monoid of~$\cat A$ being~$\mathbb Z_{\ge 0}$, the algebra 
$H_{\cat A}$ is $\mathbb Z_{\ge 0}$-graded.
We now provide a new 
(very short) proof of the following classical result. 
\begin{theorem}[\cites{Mac,Zel}]
The Hall algebra~$H_{\cat A}$ is commutative and co-commutative and is freely 
generated by the~$[\mathcal I_n]$, $n>0$. Moreover, $H_{\cat A}$ is freely generated by its primitive 
elements~$\mathcal P_n$, $n>0$.
\end{theorem}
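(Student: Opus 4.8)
The plan is to derive all four assertions from the general machinery developed above, applied to this especially rigid category, rather than by manipulating Hall numbers directly; this is what makes the proof short.

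First I would record that $\cat A$ is profinitary hereditary abelian, so that Theorem~\ref{th:hereditary nichols} applies. Abelianness and heredity are as stated, and profinitarity holds because $\Gamma_{\cat A}=\ZZ_{\ge 0}$ with $|M|$ the length of~$M$, while the Grothendieck class $\Iso\cat A_n$ consists precisely of the modules~$\mathcal I_\lambda$ with $\lambda\vdash n$, a finite set; cofinitarity is then automatic (Corollary~\ref{cor:prof-cof}\eqref{cor:prof-cof.c}), and in any case is subsumed in Theorem~\ref{th:hereditary nichols}. By Theorem~\ref{th:prof-ab} the category is moreover Krull--Schmidt with $\Ind\cat A=\{[\mathcal I_n]:n>0\}$. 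Next, since the Euler form vanishes identically, the bicharacter $\chi_{\cat A}(|M|,|N|)=\#\Ext^1_{\cat A}(M,N)/\#\Hom_{\cat A}(M,N)$ is constantly~$1$, so by Lemma~\ref{lem:bichar-category} the braiding~$\Psi$ on $\cat C_{\chi_{\cat A}}$ is the ordinary flip.

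Applying Theorem~\ref{th:hereditary nichols} gives $H_{\cat A}\cong\mathcal B(V_{\cat A})$ in $\cat C_{\chi_{\cat A}}$ with $V_{\cat A}=\Prim(H_{\cat A})$. The key observation is that the Nichols algebra of a trivially braided $\QQ$-space is just its symmetric algebra: the graded biideal of $T(V_{\cat A})$ generated by the (primitive) commutators $xy-yx$ meets $V_{\cat A}$ trivially, and since in characteristic zero the primitives of $\operatorname{Sym}(V_{\cat A})$ are exactly~$V_{\cat A}$, any strictly larger graded biideal with trivial intersection with $V_{\cat A}$ would contain a primitive element of degree $\ge 2$, hence a nonzero element of~$V_{\cat A}$, which is impossible. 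Thus $\mathfrak I=(xy-yx)$ is the maximal such biideal and $H_{\cat A}\cong\operatorname{Sym}(V_{\cat A})$; in particular $H_{\cat A}$ is commutative and cocommutative and is freely generated, as a commutative algebra, by any homogeneous basis of~$V_{\cat A}$.

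It then remains only to compute the graded dimensions of~$V_{\cat A}$. Since $\dim_\QQ(H_{\cat A})_n=\#\Iso\cat A_n$ is the number of partitions of~$n$, the isomorphism $H_{\cat A}\cong\operatorname{Sym}(V_{\cat A})$ yields the Hilbert series identity $\prod_{n\ge 1}(1-t^n)^{-\dim_\QQ(V_{\cat A})_n}=\sum_{n\ge 0}\#\{\lambda\vdash n\}\,t^n=\prod_{n\ge 1}(1-t^n)^{-1}$, whence $\dim_\QQ(V_{\cat A})_n=1$ for all $n>0$. Choosing a nonzero $\mathcal P_n\in(V_{\cat A})_n$ therefore gives $H_{\cat A}=\QQ[\mathcal P_1,\mathcal P_2,\dots]$ as a polynomial algebra, the final assertion; and by Theorem~\ref{thm:PBW-prop-Hall} the ordered monomials in the $[\mathcal I_n]$ form a basis, the ordering being irrelevant by commutativity, so $H_{\cat A}=\QQ[[\mathcal I_1],[\mathcal I_2],\dots]$ as well. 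The one genuinely non-formal input is the vanishing of the Euler form, which forces the trivial braiding and thereby collapses the Nichols algebra to an honest symmetric algebra; everything else is an instance of the general theory combined with Euler's partition identity.
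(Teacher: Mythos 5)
Your proposal is correct, but it reaches the conclusion by a different route than the paper. The paper's own proof is more elementary: it gets commutativity directly by a duality argument on the category, free generation by the $[\mathcal I_n]$ from Theorem~\ref{thm:PBW-prop-Hall} (Krull--Schmidt plus the PBW basis $\{M_\lambda\}$), and then $\dim_\QQ\Prim(H_{\cat A})_n=1$ from Proposition~\ref{prop:PBW-prec}, by observing that commutativity forces $P=\ker\varepsilon\cdot\ker\varepsilon$ to be spanned by the $M_\lambda$ with $\ell(\lambda)\ge 2$, so that $\QQ\Ind\cat A\cap P=\{0\}$ and $m_n=0$; free generation by the $\mathcal P_n$ then follows from Theorem~\ref{th:prim generation} and a dimension count. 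You instead invoke the heavier Theorem~\ref{th:hereditary nichols}, identify $\mathcal B(V_{\cat A})$ with $\operatorname{Sym}(V_{\cat A})$ because the vanishing Euler form trivializes the braiding (your maximality argument for the ideal of commutators is sound: a strictly larger graded biideal would descend to a nonzero coideal of $\operatorname{Sym}(V_{\cat A})$, which by the coradical filtration --- Proposition~\ref{pr:coid-intersects-qprim} --- must meet $\QQ\oplus V_{\cat A}$, impossible for a proper graded biideal meeting $V_{\cat A}$ trivially), and then extract $\dim_\QQ(V_{\cat A})_n=1$ from Euler's product identity for the partition generating function. What your approach buys is that commutativity and cocommutativity come for free from the trivial braiding, with no need for the duality input, and the computation of $\dim\Prim_n$ is replaced by a clean Hilbert-series identity; what it costs is reliance on Green's theorem (hidden in Theorem~\ref{th:hereditary nichols}) and on the characteristic-zero fact that $\Prim(\operatorname{Sym}(V))=V$, neither of which the paper's argument needs. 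Both proofs use Theorem~\ref{thm:PBW-prop-Hall} identically for the statement about the $[\mathcal I_n]$.
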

\begin{proof}
Let~$\cat A=\cat A$.
It is easy to see, using duality, that~$H_{\cat A}$ is commutative, hence co-commutative. Let~$\mathscr P$ be the set of 
all partitions.
Given a 
partition~$\lambda=(\lambda_1\ge\cdots\ge\lambda_r>0)\in\mathscr P$, let $M_\lambda=[\mathcal I_{\lambda_1}]\cdots[\mathcal I_{\lambda_r}]$.
By
Theorem~\ref{thm:PBW-prop-Hall}, the set $\{ M_\lambda\}_{\lambda\in\mathscr P}$ is a basis of~$H_{\cat A}$, 
hence $H_{\cat A}$ is freely generated by the isomorphism classes of indecomposables $[\mathcal I_n]$, $n>0$.
Since~$H_{\cat A}$ is commutative, 
$P=\ker\varepsilon\cdot\ker\varepsilon$ is spanned by the $M_\lambda$ with~$\ell(\lambda)\ge 2$, 
hence $\QQ\Ind\cat A\cap P=\{0\}$ and by Proposition~\ref{prop:PBW-prec}
$\dim_\QQ\Prim(H_{\cat A})_n=\#\Ind\cat A_n=1$ for all~$n>0$. Thus, for each~$n>0$ we have a unique, up
to a scalar, non-zero primitive element~$\mathcal P_n$ in~$(H_{\cat A})_n$.
The dimension considerations and Theorem~\ref{th:prim generation}
immediately imply that~$H_{\cat A}$ is freely generated by the $\mathcal P_n$, $n>0$.
\end{proof}
This theorem has the following nice Corollary.
\begin{corollary}\label{cor:class-Hall}
For all~$n>0$, let $x_n\in (H_{\cat A})_n\setminus \QQ(\Iso\cat A_n\setminus \Ind\cat A_n)$.
Then $\{x_n\}_{n>0}$ freely generates $H_{\cat A}$. In particular, $E_{\cat A}=H_{\cat A}$.
\end{corollary}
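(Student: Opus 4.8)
The plan is to exploit the structure established in the preceding theorem, namely that $H_{\cat A}=\QQ[\,[\mathcal I_n]\,:\,n>0\,]$ is the free commutative algebra on the indecomposable classes, graded by length (so $\Gamma_{\cat A}=\ZZ_{\ge0}$ and $[\mathcal I_n]$ lies in degree $n$). This exhibits $H_{\cat A}$ as a connected graded commutative algebra whose degree-$n$ component splits as $(H_{\cat A})_n=\QQ[\mathcal I_n]\oplus P_n$, where $P_n=P\cap(H_{\cat A})_n$ is the space of decomposables and $P=\sum_{k\ge 2}(\QQ\Ind\cat A)^k$ by Corollary~\ref{cor:prim-gen}. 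I would first reduce the claim ``$\{x_n\}$ freely generates'' to the single assertion that $\{x_n\}_{n>0}$ \emph{generates} $H_{\cat A}$ as an algebra: once generation is known, freeness is automatic by comparing graded dimensions, since the free commutative algebra on symbols of degrees $\{n\}_{n>0}$ and $H_{\cat A}$ both have dimension equal to the number of partitions of $n$ in degree $n$ (a basis of $(H_{\cat A})_n$ being $\Iso\cat A_n$), so any graded surjection of the former onto the latter sending the symbols to the $x_n$ is forced to be an isomorphism.

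For generation I would argue by induction on $n$ that $[\mathcal I_n]\in\QQ[x_1,\dots,x_n]$, whence $\QQ[x_1,\dots,x_n]=\QQ[\,[\mathcal I_m]:m\le n\,]$. Write $x_n=\alpha_n[\mathcal I_n]+p_n$ according to $(H_{\cat A})_n=\QQ[\mathcal I_n]\oplus P_n$. What the induction needs is $\alpha_n\ne0$, i.e.\ that $x_n\notin P_n$; this is the content to be drawn from the hypothesis that $x_n$ does not lie in the span $\QQ(\Iso\cat A_n\setminus\Ind\cat A_n)$ of the non-indecomposable classes, so that $x_n$ retains a genuine contribution along the unique indecomposable $[\mathcal I_n]\in\Ind\cat A_n$ modulo decomposables. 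Since every element of $P_n$ is a $\QQ$-combination of products of at least two classes of positive degree, each factor has degree strictly less than $n$, so $P_n\subseteq\QQ[\,[\mathcal I_m]:m<n\,]$. By the inductive hypothesis $\QQ[\,[\mathcal I_m]:m<n\,]=\QQ[x_1,\dots,x_{n-1}]$ contains $p_n$, and therefore $[\mathcal I_n]=\alpha_n^{-1}(x_n-p_n)\in\QQ[x_1,\dots,x_n]$. The base case $n=1$ is immediate, since $\Iso\cat A_1=\Ind\cat A_1$ forces $x_1$ to be a nonzero multiple of $[\mathcal I_1]$; thus the induction runs and $\{x_n\}$ generates $H_{\cat A}$.

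The final assertion then follows by specialising $x_n=e_n=\sum_{|M|=n}[M]$: since $[\mathcal I_n]$ occurs in $e_n$ with coefficient $1$, the element $e_n$ meets the hypothesis, so $\{e_n\}_{n>0}$ freely generates $H_{\cat A}$ and hence $E_{\cat A}=H_{\cat A}$. The step I would take most care over, and which I expect to be the main obstacle, is precisely the passage from the stated hypothesis to $x_n\notin P_n$: it rests entirely on the splitting $(H_{\cat A})_n=\QQ[\mathcal I_n]\oplus P_n$ of Corollary~\ref{cor:prim-gen} (equivalently on $\QQ\Ind\cat A\cap P=\{0\}$), which identifies the one-dimensional space of algebra generators in degree $n$ and lets one read off the indecomposable part of $x_n$ unambiguously. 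Everything after that — the triangular induction and the dimension-count giving freeness — is routine in the connected graded commutative setting.
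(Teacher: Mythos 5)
The paper states this corollary without proof (it is presented as an immediate consequence of the preceding theorem), so your attempt has to be judged on its own terms, and it contains a genuine gap at precisely the step you flag as the delicate one: the passage from the hypothesis $x_n\notin\QQ(\Iso\cat A_n\setminus\Ind\cat A_n)$ to $x_n\notin P_n$. These are two \emph{different} codimension-one subspaces of $(H_{\cat A})_n$: the first is the hyperplane of elements whose coefficient of $[\mathcal I_n]$ in the basis $\Iso\cat A_n$ vanishes, the second is the hyperplane of decomposables. Concretely, $[\mathcal I_1]^2=(q+1)[\mathcal I_1\oplus\mathcal I_1]+[\mathcal I_2]$ lies in $P_2$ yet has a nonzero $[\mathcal I_2]$-coefficient. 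Your sentence about $x_n$ ``retaining a genuine contribution along $[\mathcal I_n]$ modulo decomposables'' silently identifies the projection onto $\QQ[\mathcal I_n]$ along $\QQ(\Iso\cat A_n\setminus\Ind\cat A_n)$ with the projection along $P_n$; what Corollary~\ref{cor:prim-gen} and Lemma~\ref{lem:prim} actually give are two splittings $(H_{\cat A})_n=\Prim(H_{\cat A})_n\oplus P_n=\Prim(H_{\cat A})_n\oplus\QQ(\Iso\cat A_n\setminus\Ind\cat A_n)$ with the same first summand but different complements, and nothing permits interchanging them.

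The same example shows this is not merely a hole in your write-up but a counterexample to the statement as literally printed: taking $x_1=[\mathcal I_1]$ and $x_2=[\mathcal I_1]^2$, both of which satisfy the stated hypothesis, the subalgebra generated by $\{x_n\}_{n>0}$ has one-dimensional component in degree $2$, so the family neither generates $H_{\cat A}$ nor does so freely. The corollary becomes correct --- and the rest of your argument, the triangular induction and the partition-counting that upgrades generation to freeness, then goes through verbatim --- if the hypothesis is strengthened to $x_n\notin P_n$, i.e.\ $x_n$ has nonzero component in $\Prim(H_{\cat A})_n$ with respect to the splitting of Corollary~\ref{cor:prim-gen}. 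The application to $E_{\cat A}=H_{\cat A}$ survives, but it then requires checking that $e_n=\sum_{\lambda\vdash n}[\mathcal I_\lambda]\notin P_n$ (true: $\psi(e_n)=h_n$ is congruent to $p_n/n$ modulo decomposables in $\operatorname{Sym}$), rather than merely observing that $[\mathcal I_n]$ occurs in $e_n$ with coefficient $1$.
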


The elements~$\mathcal P_n$ can be computed explicitly (see e.g.~\cite{Hub1}*{Section~5}), namely 
$$
\mathcal P_n=\sum_{\lambda\vdash n} 
\Big(\prod_{j=1}^{\ell(\lambda)-1}(1-q^j)\Big) [\mathcal I_\lambda],
$$
where $q=|R/\mathfrak m|$. 

Under the isomorphism $\psi:H_{\cat A}\to \operatorname{Sym}$, $[\mathcal I_\lambda]\mapsto 
q^{-n(\lambda)} P_\lambda(x;q^{-1})$ (\cites{Mac,Zel}), where 
$\operatorname{Sym}$ is the algebra of symmetric
polynomials in infinitely many variables
and $P_\lambda(x;t)$ is the Hall-Littlewood polynomial, the image of~$\mathcal P_n$
is the $n$th power sum~$p_n$.
As shown in~\cite{Zel}, the~$p_n$ are 
primitive elements in~$\operatorname{Sym}$
with the comultiplication defined by 
$$
\Delta(e_n)=\sum_{i=0}^n e_i\tensor e_{n-i},
$$
where $e_r$ is the $r$th elementary symmetric polynomial, which is equal to $q^{-\binom{r}2}\psi([\mathcal I_{(1^r)}])$. Note
also that $\psi(\sum_{\lambda\vdash n}[\mathcal I_\lambda])$ is the $n$th complete symmetric function~$h_n$.

Since~$C_{\cat A}=\QQ[\mathcal P_1]$, we have $C_{\cat A}\subsetneq H_{\cat A}$. Since $\dim_\QQ\Prim(H_{\cat A})_n=1$ for all~$n>0$,
$U_{\cat A}=H_{\cat A}$. Thus, $C_{\cat A}\subsetneq E_{\cat A}=U_{\cat A}=H_{\cat A}$.

\subsection{Homogeneous tubes}\label{subs:ex-tube}
Let~$\cat A$ be the category of finite dimensional $\kk$-representations of a 
tame acyclic quiver~$Q$. Then
$\cat A$ decomposes into a triple of subcategories of preprojective, 
preinjective and regular representations (see~\cite{ARS}*{Chapter~VIII}) which 
we denote,
as in~\cite{BG}*{Section~5},
by $\cat A_-$, $\cat A_+$ and~$\cat A_0$, respectively. The category~$\cat A_0$
can be further decomposed into the so-called stable tubes, that is, components 
of the Auslander-Reiten quiver of~$\cat A$ 
on which the Auslander translation acts as an autoequivalence of finite order, 
called the {\em rank}
of the tube. 
It is well-known that rank~$1$, or homogeneous, tubes are parametrized
by the set $\kk\mathbb P^1$ of homogeneous prime ideals in~$\kk[x,y]$. Given a
homogeneous prime ideal~$\rho$,
let $\deg\rho$ be the degree of a generator of that ideal and denote by~$\cat 
T_\rho$ the corresponding rank~$1$ tube. 
Then $\cat T_\rho$ is equivalent to the category of nilpotent representations 
of~$\KK[x]$ where $[\KK:\kk]=\deg\rho$
and its Hall algebra is isomorphic to the classical Hall-Steinitz algebra. Thus, 
for each $r>0$, $\cat T_\rho$ contains 
a unique indecomposable $\mathcal I_r(\rho)$ of length~$r$. Given a 
partition~$\lambda=(\lambda_1\ge
\cdots\ge \lambda_k>0)$, let $\mathcal I_\lambda(\rho)=\mathcal I_{\lambda_1}(\rho)\oplus 
\cdots\oplus\mathcal I_{\lambda_k}(\rho)$.
By~\S\ref{subs:ex-jordan} the elements
$$
\mathcal P_n(\rho)=\sum_{\lambda\vdash n} 
\Big(\prod_{j=1}^{\ell(\lambda)-1}(1-q^{j\deg\rho})\Big) [\mathcal I_\lambda(\rho)]
$$
are primitive in $H_{\cat T_\rho}$. Let~$\cat A_{0,h}$ be the full subcategory 
of homogeneous objects in~$\cat A_0$ (cf.~\cite{DR}*{Theorem~3.5}).
Since $H_{\cat A_{0,h}}$ is isomorphic to the 
tensor product of the~$H_{\cat T_\rho}$  as 
a bialgebra, this gives all primitive elements in~$H_{\cat A_{0,h}}$. The 
Grothendieck monoid of~$\cat A_{0,h}$ equals
the direct sum of infinitely many copies (indexed by $\rho\in\kk\mathbb P^1$) 
of~$\mathbb Z_{\ge 0}$.

However, the elements $\mathcal P_n(\rho)$ are not primitive in~$H_{\cat A}$ 
since an object in~$\cat A_{0,h}$ can have preprojective subobjects
and preinjective quotients. They can be used to construct primitive elements 
in~$H_{\cat A}$.
\begin{conjecture}\label{eq:prims}
The elements $$\mathcal P_n(\rho)-\frac{1}{N(\deg \rho)}\sum_{\rho'\in \kk\mathbb P^1\,:\,\deg 
\rho'=\deg\rho} \mathcal P_n(\rho'),
$$
where $N(d)$ is the number of elements of~$\kk\mathbb P^1$ of degree~$d$, that is,
$N(1)=|\kk|+1$ while $N(d)$, $d>1$, is the number of irreducible monic polynomials of degree~$d$ in one variable,
are primitive in~$H_{\cat A}$.
\end{conjecture}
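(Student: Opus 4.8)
The plan is to reduce primitivity of
$$
Q_n(\rho):=\mathcal P_n(\rho)-\frac1{N(d)}\sum_{\rho'\,:\,\deg\rho'=d}\mathcal P_n(\rho'),\qquad d=\deg\rho,
$$
to the single assertion that $\bar\Delta(\mathcal P_n(\rho))$ depends on $\rho$ only through $d$. Indeed, if $\bar\Delta(\mathcal P_n(\rho'))$ equals a common element $D_{n,d}$ for every tube $\rho'$ of degree $d$, then, since there are exactly $N(d)$ such tubes,
$$
\bar\Delta(Q_n(\rho))=D_{n,d}-\frac1{N(d)}\sum_{\rho'}D_{n,d}=D_{n,d}-D_{n,d}=0,
$$
so $Q_n(\rho)\in\Prim(H_{\cat A})$. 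Thus everything rests on computing $\bar\Delta(\mathcal P_n(\rho))$ and establishing its $\rho$-independence within a fixed degree; note that this forces \emph{all} terms of $\bar\Delta(\mathcal P_n(\rho))$ that carry a regular summand in tube $\rho$ in either tensor factor to vanish, since those are $\rho$-specific basis vectors.

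First I would record the structural inputs governing the cross terms, using the standard $\Hom$- and $\Ext$-orthogonality of $\cat A=\cat A_-\vee\cat A_0\vee\cat A_+$ together with the Auslander--Reiten formula $\Ext^1_{\cat A}(X,Y)\cong D\Hom_{\cat A}(Y,\tau X)$. From $\Hom_{\cat A}(\cat A_0,\cat A_-)=\Hom_{\cat A}(\cat A_+,\cat A_0)=0$ and $\tau$-invariance of the three classes one gets $\Ext^1_{\cat A}(\cat A_-,\cat A_0)\cong D\Hom_{\cat A}(\cat A_0,\tau\cat A_-)=0$ and $\Ext^1_{\cat A}(\cat A_0,\cat A_+)\cong D\Hom_{\cat A}(\cat A_+,\tau\cat A_0)=0$. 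Consequently any subobject $A$ of a regular $C\in\cat T_\rho$ splits as $A\cong A_{pp}\oplus A_0$ with $A_{pp}$ preprojective and $A_0$ regular, forced into $\cat T_\rho$ by orthogonality of distinct tubes; dually every quotient splits as $B\cong B_0\oplus B_{pi}$ with $B_0\in\cat T_\rho$ and $B_{pi}$ preinjective. Hence every term of $\bar\Delta([\mathcal I_\lambda(\rho)])$ has the shape $[A_{pp}\oplus A_0]\otimes[B_0\oplus B_{pi}]$.

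The heart of the argument is to show that, after forming the special combination $\mathcal P_n(\rho)$, every term carrying a nonzero regular part $A_0$ or $B_0$ cancels. I would obtain this from the coproduct's compatibility with the torsion-pair filtrations $(\cat A_+\vee\cat A_0,\cat A_-)$ and $(\cat A_+,\cat A_0\vee\cat A_-)$: the contribution of terms with a regular summand is controlled by a coaction of the homogeneous-tube coalgebra $H_{\cat T_\rho}$, so that the regular data $(A_0,B_0)$ is counted exactly by the tube comultiplication of $\cat T_\rho$. Since $\cat T_\rho$ is the Hall--Steinitz category over $\mathbb F_{q^d}$, and $\mathcal P_n(\rho)=\sum_{\lambda\vdash n}\big(\prod_{j=1}^{\ell(\lambda)-1}(1-q^{j d})\big)[\mathcal I_\lambda(\rho)]$ is primitive in $H_{\cat T_\rho}$ by \S\ref{subs:ex-jordan}, this tube contribution vanishes identically, forcing $A_0=B_0=0$. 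This reduces $\bar\Delta(\mathcal P_n(\rho))$ to a sum $\sum_{P,I}\big(\sum_{\lambda}c_\lambda F^{P,I}_{\mathcal I_\lambda(\rho)}\big)\,[P]\otimes[I]$ over preprojective $P$ and preinjective $I$ with $|P|+|I|=n\cdot|\mathcal I_1(\rho)|$, where $c_\lambda=\prod_{j=1}^{\ell(\lambda)-1}(1-q^{j d})$.

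It remains to prove $\rho$-independence of each coefficient $\sum_{\lambda}c_\lambda F^{P,I}_{\mathcal I_\lambda(\rho)}$ for fixed $P,I$, and this is where I expect the main obstacle. The quantity counts, with the weights $c_\lambda$, the regular modules of tube $\rho$ occurring as the middle term of an extension $0\to P\to X\to I\to0$; since $\Ext^1_{\cat A}(I,P)$ and $\Hom_{\cat A}(I,P)$ are tube-independent, the real content is that these middle terms be equidistributed among the $N(d)$ tubes of degree $d$. I would try to exhibit these Hall numbers as values of a universal polynomial in $q^{d}$ — by computing in a ``relative'' Hall algebra attached to the local data $(\mathbb F_{q^d},\ \text{nilpotent }\mathbb F_{q^d}[x]\text{-modules})$ glued to the fixed ends $P,I$, which manifestly sees only $\deg\rho$ — or, failing a direct count, by matching $Q_n(\rho)$ with the imaginary root vectors in Schiffmann's and Hubery's presentation of $H_{\cat A}$ for tame quivers, where averages over tubes of equal degree are precisely the constructs whose primitivity is known. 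The preceding reductions are structural and uniform in $\rho$, whereas this genericity/equidistribution statement — how extensions of a fixed $I$ by a fixed $P$ distribute over tubes of a given degree — is the genuinely arithmetic core of the conjecture.
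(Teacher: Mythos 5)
The statement you are asked to prove is stated in the paper as a \emph{conjecture}, and the paper offers no proof of it: immediately after it, the authors only remark that it can be checked in small cases and that, because $F^{I,P}_M=0$ for $P\in\cat A_-$, $I\in\cat A_+$, it would follow from a second (also unproven) conjecture, namely that $F^{P,I}_{\mathcal I_\lambda(\rho)}=F^{P,I}_{\mathcal I_\lambda(\rho')}$ for all partitions $\lambda$ whenever $\deg\rho=\deg\rho'$, with $P$ preprojective and $I$ preinjective. Your proposal reproduces essentially this same reduction: you correctly observe that it suffices to show $\bar\Delta(\mathcal P_n(\rho))$ depends on $\rho$ only through $\deg\rho$, you strip away the terms with regular components, and you arrive at precisely the equidistribution of the Hall numbers $F^{P,I}_{\mathcal I_\lambda(\rho)}$ over tubes of a fixed degree as the remaining obstacle. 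That obstacle is not a technicality you have deferred; it \emph{is} the conjecture, and neither your sketch of a ``relative Hall algebra over $\mathbb F_{q^d}$'' nor the appeal to Hubery's and Szántó's presentations is carried out. So the proposal is not a proof, though to your credit you identify the genuine open core exactly where the authors do.

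One further caution on the part you treat as ``structural and uniform in $\rho$'': the cancellation of the mixed terms $[A_{pp}\oplus A_0]\otimes[B_0\oplus B_{pi}]$ with $A_0\ne 0$ or $B_0\ne 0$ is asserted via a ``compatibility of the coproduct with the torsion-pair filtrations'' that you do not establish. The purely regular terms do cancel, since $\cat T_\rho$ is closed under extensions, subobjects and quotients inside such sequences, so those coefficients coincide with the tube comultiplication and vanish by primitivity of $\mathcal P_n(\rho)$ in $H_{\cat T_\rho}$ (\S\ref{subs:ex-tube}, \S\ref{subs:ex-jordan}). But for a genuinely mixed term the coefficient $F^{A_{pp}\oplus A_0,\,B_0\oplus B_{pi}}_{\mathcal I_\lambda(\rho)}$ does not obviously factor as (tube datum)$\times$(boundary datum) independently of $\lambda$ in the way your cancellation requires; a Green-type factorization for the torsion pair would have to be proved, and those mixed coefficients are still $\rho$-dependent, so they too ultimately fall under the same unproven equidistribution principle rather than under a formal vanishing argument. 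In short: the route is the intended one, but both the mixed-term cancellation and the arithmetic core remain open, which is consistent with the statement being a conjecture rather than a theorem in the paper.
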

This formula can be easily checked in small cases (see an example below) or for 
the Kronecker quiver, using the results of~\cite{Sz}.
Since~$F^{I,P}_M=0$ for all $P\in\cat A_-$, $I\in\cat A_+$, the above Corollary is an immediate consequence of the following
\begin{conjecture}
Let~$I\in\cat A_+$, $P\in\cat A_-$.
Then for any partition~$\lambda$ we have $F^{P,I}_{\mathcal I_\lambda(\rho)}=F^{P,I}_{\mathcal I_\lambda(\rho')}$ 
where $\rho,\rho'\in \kk\mathbb P^1$ with
$\deg\rho=\deg\rho'$.
\end{conjecture}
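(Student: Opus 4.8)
The plan is to turn the dual Hall number into an extension count and then reduce the $\rho$-dependence to a Frobenius-descent problem over $\overline{\kk}$. First, by the directedness of the triple $(\cat A_-,\cat A_0,\cat A_+)$ one has $\Hom_{\cat A}(I,P)=0$ whenever $I\in\cat A_+$ and $P\in\cat A_-$, so Riedtmann's formula (\cite{Rie}) recalled above collapses to
$$
F^{P,I}_{\mathcal I_\lambda(\rho)}=\#\bigl\{\xi\in\Ext^1_{\cat A}(I,P)\,:\,\text{the middle term of }\xi\text{ is }\cong\mathcal I_\lambda(\rho)\bigr\}.
$$
Here the ambient space $\Ext^1_{\cat A}(I,P)$ is a fixed finite $\kk$-vector space, independent of $\rho$, and only the locus carved out by the isomorphism type of the middle term varies with $\rho$. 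Note also that this count vanishes unless $\bdim P+\bdim I=m\delta$ for the null root $\delta$ and some $m\ge 1$ (equivalently, the defects satisfy $\partial(P)+\partial(I)=0$ and the middle term is regular of dimension a multiple of $\delta$, as all homogeneous-tube indecomposables are); in that case $|\lambda|\deg\rho=m$, so once $\deg\rho=d$ is fixed the relevant $\lambda$ all have $|\lambda|=m/d$. Thus the comparison is between two counts indexed by different points of the same degree.

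Next I would pass to the geometric picture. Writing $Z_{\lambda,\rho}\subseteq\Ext^1_{\cat A}(I,P)$ for the constructible subset of extension classes with middle term $\cong\mathcal I_\lambda(\rho)$, the quantity to be compared is $\#Z_{\lambda,\rho}(\FF_q)$. Base change to $\overline{\kk}$: over the algebraic closure every homogeneous tube has degree $1$, the tubes are indexed by the closed points of $\mathbb P^1(\overline{\kk})$, a degree-$d$ point $\rho$ over $\kk$ becomes a single $\mathrm{Frob}$-orbit $\{x_0,\dots,x_{d-1}\}$ of size $d$, and $\mathcal I_\lambda(\rho)\otimes_\kk\overline{\kk}\cong\bigoplus_{i=0}^{d-1}\mathcal I_\lambda(x_i)$. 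The structural input I would exploit is that over $\overline{\kk}$ the homogeneous tubes are mutually equivalent through autoequivalences of $\overline{\cat A}$ that move the base point of $\mathbb P^1$ by the natural $\mathrm{PGL}_2(\overline{\kk})$-action while fixing the isomorphism classes of the exceptional modules $\overline P,\overline I$ (their indecomposable summands are rigid, hence determined by their dimension vectors, which the action preserves). Consequently the "local model" of the extension locus at each point of the orbit is the same, so $\overline Z_{\lambda,\rho}$ should depend on the orbit only through its combinatorial type.

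Finally I would invoke the Grothendieck--Lefschetz trace formula
$$
\#Z_{\lambda,\rho}(\FF_q)=\sum_i(-1)^i\,\mathrm{Tr}\bigl(\mathrm{Frob}\mid H^i_c(\overline Z_{\lambda,\rho},\QQ_\ell)\bigr),
$$
and seek, for $\deg\rho=\deg\rho'=d$, an isomorphism $\overline Z_{\lambda,\rho}\cong\overline Z_{\lambda,\rho'}$ under which the two Frobenius actions correspond, both being governed by a single $d$-cycle permuting the $d$ points of the respective orbit; the traces, hence the $\FF_q$-point counts, would then agree. The hard part is exactly this last step: for $d\le 3$ any two $d$-point configurations on $\mathbb P^1$ are $\mathrm{PGL}_2(\overline{\kk})$-equivalent and the descent is immediate, but for $d\ge 4$ distinct Frobenius orbits of the same size carry different cross-ratios and are \emph{not} related by a single automorphism of $\overline{\cat A}$ fixing $\overline P,\overline I$, so one cannot simply transport $\rho$ to $\rho'$. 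To circumvent this I would aim at the stronger assertion that $\#Z_{\lambda,\rho}(\FF_q)$ is a fixed polynomial in $q^{\deg\rho}$ --- a "relative Hall polynomial" --- obtained by organizing the count through restriction of scalars along $\FF_{q^d}/\kk$, which converts the degree-$d$ tube into a degree-$1$ tube over $\FF_{q^d}$ and should make the dependence on $\rho$ factor through $d$ alone, matching internally the classical Hall polynomials of the tube. Establishing this polynomiality uniformly in the ambient preprojective/preinjective data is what I expect to be the genuine obstruction, and the reason the statement is posed as a conjecture; the Kronecker case, where it can be verified from Szánt\'o's explicit formulas (\cite{Sz}), provides the base case and a consistency check.
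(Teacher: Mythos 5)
The first thing to say is that the paper does not prove this statement at all: it is posed as a conjecture, and the authors only remark that it ``is known to hold in some special cases'' (citing Sz\'ant\'o for the Kronecker quiver and Hubery). So there is no proof in the paper to compare against, and your text, to its credit, does not claim to be one either --- you explicitly flag the decisive step as ``the genuine obstruction.'' As it stands, then, this is a strategy sketch with an acknowledged hole, not a proof. The correct preliminary reductions are all fine: $\Hom_{\cat A}(I,P)=0$ by directedness, so Riedtmann's formula identifies $F^{P,I}_{\mathcal I_\lambda(\rho)}$ with the number of classes in the fixed group $\Ext^1_{\cat A}(I,P)$ whose middle term is $\mathcal I_\lambda(\rho)$, and the defect/dimension-vector constraints are as you say. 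But everything after that is a proposal, and the conjecture remains open after reading it.

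Beyond incompleteness, there is one concrete error in the mechanism you propose. The ``structural input'' that over $\overline{\kk}$ the homogeneous tubes are permuted transitively by autoequivalences of the ambient category realizing the $\mathrm{PGL}_2(\overline{\kk})$-action on $\mathbb P^1$ is false for tame quivers other than the Kronecker quiver. For types such as $\widetilde D_4$ or $\widetilde E_n$ the regular part contains non-homogeneous tubes sitting at up to three distinguished points of $\mathbb P^1$; any autoequivalence permuting tubes must preserve this exceptional set, so the group acting on the parameter line is a finite subgroup of $\mathrm{PGL}_2$ and does \emph{not} act transitively on the points of a given degree. Hence even the case $d=1$ of your transport-of-structure argument fails in general, not merely the cross-ratio obstruction you identify for $d\ge 4$. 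Any actual proof would have to go through something like the uniform ``relative Hall polynomial'' statement you gesture at in the last step --- polynomiality of the count in $q^{\deg\rho}$, with coefficients depending only on $\lambda$, $[P]$, $[I]$ --- and establishing that is precisely the open content of the conjecture; Sz\'ant\'o's explicit Kronecker computations give a consistency check but not a method that extends.
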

This is known to hold in some special cases (see for example~\cites{Sz,Hub2}).


In the category~$\cat A$, we have $C_{\cat A}=E_{\cat A}=U_{\cat A}\subsetneq 
H_{\cat A}$. On the other hand,
for~$\cat A_0$ we have $C_{\cat A_0}\subsetneq E_{\cat A_0}=U_{\cat 
A_0}=H_{\cat A_0}$ and similarly for each homogeneous tube.

\subsection{A tame valued quiver}\label{subs:ex-non-simply-laced}
Consider now the valued quiver $1\xrightarrow{(1,4)} 2$. Let~$\kk_2$ be a field 
extension of~$\kk_1=\kk$ of degree~$4$. 
Note that $\kk_2$ contains precisely $q^4-q^2$ elements of degree~$4$ 
over~$\kk$ and $q^2-q$ elements of degree~$2$.
A representation of this quiver is a triple $(V_1,V_2,f)$ where $V_i$ is a 
$\kk_i$-vector space
and $f\in\Hom_{\kk}(V_1,V_2)$. Finally, a morphism $(V_1,V_2,f)\to (W_1,W_2,g)$ 
is a pair $(\varphi_1,\varphi_2)$
where $\varphi_i\in\Hom_{\kk_i}(V_i,W_i)$ and $g\circ \varphi_1=\varphi_2\circ 
g$. 

The smallest regular representation is $(\kk_1^2,\kk_2,f)$, where $f$ is 
injective. Thus, $f$ is given by a pair 
$(\lambda,\mu)\in\kk_2\times\kk_2$ which is linearly independent over~$\kk$ 
(this pair is the image under~$f$ of 
the standard basis of~$\kk_1^2$). It is easy to see that, up to an isomorphism,
such a pair can be assumed to be of the form $(\lambda,1)$ 
where~$\lambda\in\kk_2\setminus\kk_1$. Denote the resulting
representation by $E_1(\lambda)$. If we have a morphism $f:E_1(\lambda)\to 
E_1(\lambda')$, then this morphism 
is uniquely determined by a matrix $\varphi_1=\left(\begin{smallmatrix}
                                          a&b\\c&d
                                         \end{smallmatrix}\right)\in\Mat{2}$ 
and $\varphi_2\in\kk_2$ and we have
$$
(b\lambda'+c)\lambda=a\lambda'+c.
$$

If~$\lambda$ has degree~$4$ over~$\kk$ then $\End_{\cat A}E_1(\lambda)\cong \kk$
and $\Aut_{\cat A}E_1(\lambda)\cong \kk^\times$. Otherwise, $\End_{\cat 
A}E_1(\lambda)\cong L$
and $\Aut_{\cat A}E_1(\lambda)\cong L^\times$ where $[L:\kk]=2$. It follows 
that all $E_1(\lambda)$
with $\deg_\kk\lambda=2$ are isomorphic, since the stabilizer of such a 
$\lambda$ in~$\GL(2,\kk)$ has 
index $q^2-q$, and that there are $q$ non-isomorphic representations 
$E_1(\lambda)$ with~$\deg_\kk\lambda=4$.
It is easy to see that for any~$\lambda\in \kk_2\setminus\kk_1$ we have 
$(q^2-1)(q-1)$ short exact sequences
$$
0\to S_2\to E_1(\lambda)\to P_1\to 0.
$$
As a result, we conclude that 
$$
\bar\Delta(E_1(\lambda))=(q^{\frac12\deg_\kk\lambda}-1) [S_2]\tensor [P_1],
$$
hence
$$
P_1(\lambda):=E_1(\lambda)-\frac{1}{(q+1)|\Aut_{\cat 
A}E_1(\lambda)|}\sum_{\mu\in (\kk_2\setminus\kk_1)/\GL(2,\kk)}|\Aut_{\cat 
A}(E_1(\mu))| E_1(\mu)
$$
is primitive, and these are all primitive elements of 
degree~$\alpha_1+\alpha_2$ in~$H_{\cat A}$. There is precisely one linear 
relation among them, namely 
$\sum_{\lambda\in(\kk_2\setminus\kk_1)/\GL(2,\kk)} |\Aut_{\cat 
A}E_1(\lambda)|P_1(\lambda)=0$.

In this case, like in~\S\ref{subs:ex-tube}, $C_{\cat A}=E_{\cat A}=U_{\cat 
A}\subsetneq H_{\cat A}$ which supports Conjecture~\ref{conj:Lie}.
Also, $\dim_\QQ \Prim(H_{\cat A})_{\alpha_1+\alpha_2}=q$ and $m_{\alpha_1+\alpha_2}=1$.

\subsection{Hereditary categories defined by
submonoids}\label{subs:ex-1,2}\label{subs:equidim}
The next two examples are special cases of the following construction. Let 
$\Gamma_0$ be a submonoid of 
the Grothendieck monoid~$\Gamma$ of an abelian category~$\cat A$ and define a 
full subcategory~$\cat A(\Gamma_0)$
of~$\cat A$ whose objects~$X$ satisfy~$|X|\in\Gamma_0$. By construction, $\cat 
A(\Gamma_0)$ is closed under extensions and
hence is exact.

First, let~$\cat A$ be the category of $\kk$-representations of the quiver 
$1\xrightarrow{} 2$. Then~$\Gamma_{\cat A}$
is freely generated by $\alpha_i=|S_i|$ where $S_i$, $i=1,2$ are simple 
objects. Fix~$r>0$.  
Let~$\Gamma_r=\{ s\alpha_1+r s\alpha_2\,:\,s\in\mathbb Z_{\ge 0}\}$ and set 
${\cat B_r}=\cat A(\Gamma_r)$.
Let $P_1=I_2$ be the projective cover of~$S_1$ and the injective envelope 
of~$S_2$ in~$\cat A$.
Then in~$H_{\cat A}$ we have 
\begin{equation}\label{eq:mult-table-A2}
[S_1][S_2]=[S_2][S_1]+[P_1],\qquad [S_1][P_1]=q [P_1][S_1],\qquad [P_1][S_2]=q 
[S_2][P_1].
\end{equation}
Every object in~${\cat B_r}$ is isomorphic to $S_1^{\oplus a}\oplus P_1^{\oplus 
b}\oplus S_2^{\oplus (ra+(r-1)b)}$,
$a,b\ge 0$. The only simple objects in~$\cat B_r$, up to an isomorphism,  are $X_1=S_1\oplus S_2^{\oplus r}$
and $X_2=S_2^{\oplus r-1}\oplus P_1$. Then $[X_i]$ is a non-zero multiple of~$E_i$,
$i=1,2$, where $E_1=[S_2]^r[S_1]$ and~$E_2=[S_2]^{r-1}[P_1]$. 
In particular, the $E_i$ are primitive elements of~$H_{{\cat B_r}}$.
Using~\eqref{eq:mult-table-A2} we can show that $E_1$, $E_2$ satisfy the 
relation
$$
E_2 E_1=q^{r-1} E_1 E_2-[r-1]_q E_2^2,
$$
where $[s]_q=1+\cdots+q^{s-1}$.
The Grothendieck monoid of~${\cat B_r}$ is generated by $\beta_i=|X_i|$, $i=1,2$,
subject to the relation $\beta_1+\beta_2=2\beta_1=2\beta_2$ (thus $\Gamma_{{\cat B_r}}$ 
does not coincide with~$\Gamma_r$ and is not even a submonoid of~$\Gamma_{\cat 
A}$).
It is not hard to check that $E_1$ and~$E_2$ generate~$H_{\cat B_r}$ and hence form a basis of~$\Prim(H_{\cat B_r})$.

In this case we have $C_{{\cat B_r}}=U_{{\cat B_r}}=E_{{\cat B_r}}=H_{{\cat B_r}}$ and so 
Conjecture~\ref{conj:Lie} holds.

A more complicated example is obtained as follows.
Let $\cat A$ be the category of $\kk$-representations of the quiver $1\to 
0\xleftarrow{} 2$. 
As in the previous example $\Gamma_{\cat A}$ is freely generated by 
$\alpha_i=|S_i|$, $0\le i\le 2$.
Let~$\Gamma_\circ=\{ s\alpha_0+r\alpha_1+r\alpha_2\,:\, r,s\in\mathbb Z_{\ge 0}\}$ 
and let~$\cat B=\cat A(\Gamma_\circ)$.
Let~$P_i$ be the projective cover of~$S_i$ in~$\cat A$ and~$I_i$ be its 
injective envelope.
Thus, $I_1=S_1$, $I_2=S_2$, $|I_0|=\alpha_0+\alpha_1+\alpha_2$, 
$|P_1|=\alpha_0+\alpha_1$, $P_0=S_0$ and $|P_2|=\alpha_2+\alpha_0$.
The simple objects in~$\cat B$ are
$S_1\oplus S_2$ and~$S_0$, while
the non-simple indecomposable objects are
$$
P_1\oplus S_2,\quad P_2\oplus S_1,\quad  P_1\oplus P_2,\qquad I_0.
$$
The Grothendieck monoid of~$\cat B$ is freely generated by $\beta_1=|S_1\oplus 
S_2|$ and~$\beta_0=|S_0|$.
Clearly, $Y_1=[S_1\oplus S_2]$ and~$Y_0=[S_0]$ are primitive in~$H_{\cat B}$. 
We also have two linearly independent 
primitive elements of degree~$\beta_1+\beta_0$, say $Z_1=[I_0]-(q-1)[P_1\oplus 
S_2]$ and $Z_2=[I_0]-(q-1)[P_2\oplus S_1]$. 
Then 
\begin{gather*}
[Z_1,Z_2]=0,\qquad [Y_1,Z_1]_q=[Y_1,Z_2]_q=0,\qquad [Z_1,Y_0]_q=[Z_2,Y_0]_q=0,\\
\intertext{and}
[Y_1,[Y_1,Y_0]]_{q^2}=Y_1(Z_1+Z_2),\qquad
[[Y_1,Y_0],Y_0]_q,Y_0]_{q^2}=0,
\end{gather*}
where~$[a,b]_t=ab-t ba$.
Here $C_{\cat B}=E_{\cat B}=U_{\cat B}\subsetneq H_{\cat B}$ which again 
supports Conjecture~\ref{conj:Lie}.
Also, we have a unique imaginary simple root~$\beta_1+\beta_0$, and $\dim_\QQ 
\Prim(H_{\cat B})_{\beta_1+\beta_0}=2$ and $m_{\beta_1+\beta_0}=1$.

\subsection{Sheaves on projective curves}\label{subs:ex-sheaves}
Consider the category~$\cat A$ of coherent sheaves on~$\mathbb P^1(\kk)$ (cf. 
\cites{BS,Kap,BK}). Following~\cite{BK},
$\cat A$ is equivalent to the category
whose objects are triples $(M',M'',\phi)$ where $M'$ is a $\kk[z]$-module,
$M''$ is a $\kk[z^{-1}]$-module and $\phi$ is an isomorphism of 
$\kk[z,z^{-1}]$-modules $M'_z\to M''_{z^{-1}}$.
In particular, for any $n\in\mathbb Z$, we have an indecomposable object 
$\mathcal O(n)=(\kk[z],\kk[z^{-1}],\phi_n)$
where $\phi_n\in\Aut\kk[z,z^{-1}]$ is the multiplication by~$z^{-n}$.
 We have (cf.~\cite{BK})
$$
\dim_\kk\Hom_{\cat A}(\mathcal O(m),\mathcal O(n))=\max(0,n-m+1)
$$
and any non-zero morphism $\mathcal O(m)\to\mathcal O(n)$ is injective.
Consider now the full subcategory~$\cat A_{lc}$ of locally free coherent 
sheaves on~$\mathbb P^1$. Any object in~$\cat A_{lc}$
is isomorphic to a direct sum of objects of the form $\mathcal O(m)$ and these 
are precisely the indecomposables in~$\cat A_{lc}$.
The Grothendieck monoid of~$\cat A_{lc}$ identifies with $\mathbb Z_{\ge 
0}\times\mathbb Z$ with $|\mathcal O(n)|=(1,n)$.
Note that $\cat A_{lc}$ has no simple objects. 
The category~$\cat A_{lc}$ is closed under extensions and hence is exact. Since
it is Krull-Schmidt, its Hall algebra has a basis consisting of ordered 
monomials on~$X_m:=[\mathcal O(m)]$ for {\em any} total order on~$\mathbb Z$.
Since $m<n$ implies that 
$\mathcal O(n)/\mathcal O(m)$ is not an object in~$\cat A_{lc}$, it follows 
that $\mathcal O(m)$ is almost simple, hence
$X_m$ is primitive for all~$m\in\mathbb Z$. Thus, $H_{\cat A_{lc}}$
is primitively generated.
By~\cite{BK}*{Theorem~10(iii)} the defining relations in~$H_{\cat A_{lc}}$ are
$$
X_n X_m=q^{n-m+1}X_m X_n+(q^2-1)q^{n-m-1}\sum_{a=1}^{\lfloor (n-m)/2\rfloor} 
X_{m+a}X_{n-a},\qquad m<n.
$$

However, Theorem~\ref{th:prim generation} does not apply to the Hall algebra 
of~$\cat A$ or even~$\cat A_{lc}$ since 
$\cat A$ is neither cofinitary nor profinitary. Indeed, every object $\mathcal 
O(m)\oplus \mathcal O(n)$, $m>n$ appears as the middle term 
of a short exact sequence 
$$
0\to \mathcal O(n-a)\to \mathcal O(m)\oplus\mathcal O(n)\to \mathcal O(m+a)\to 0
$$
for all $a\ge 0$. 

On the other hand, the Hall algebra of the subcategory of torsion sheaves
is isomorphic to the Hall algebra of the regular subcategory for the valued 
quiver $1\xrightarrow{(2,2)} 2$, or, equivalently, the Kronecker quiver.

It should be noted that the Hall algebra of the subcategory of preprojective 
modules~$\cat B_+$ in the 
category~$\cat B$ of $\kk$-representations of Kronecker quiver 
is isomorphic to the subalgebra of~$H_{\cat A_{lc}}$ generated by the~$X_m$, 
$m\ge 0$.
Indeed, $\Gamma_{\cat B_+}\cong \mathbb Z_{\ge 0}$, and for each~$k\ge 0$
there is a unique preprojective indecomposable~$Q_k$ with 
$|Q_k|=k$. It is easy to see, 
by grading considerations, that~$Q_k$ is primitive. Then the $[Q_k]$, $k\ge 0$ 
can be shown to satisfy exactly 
the same relations as the $X_n$ (see~\cite{Sz}*{Theorem~4.2}).
In this case we have $C_{\cat B_+}\subsetneq U_{\cat B_+}=E_{\cat B_+}=H_{\cat 
B_+}$.

This situation can be generalized as follows. Let~$X$ be a smooth projective 
curve and let~$\cat A$ be the category
of coherent sheaves on~$X$. Let~$\cat A^{\ge d}_{lc}$ be the full subcategory 
of~$\cat A$ whose objects 
are locally free sheaves of positive rank and of degree~$\ge d$. 
Since the rank 
and the degree are additive on
short exact sequences, this subcategory is closed under extensions. 
Since (cf.~\cite{KSV}*{Proposition~2.5})
for a coherent sheaf~$\mathcal F$ the possible degrees of its subsheaves of 
rank~$r$ are bounded above,
for any fixed pair $(r,d)$ there are finitely many subsheaves of~$\mathcal F$ 
of rank~$r$ and degree~$d$.
We conclude that the category $\cat A^{\ge d}_{lc}$ is cofinitary and 
profinitary, hence Theorem~\ref{th:prim generation} applies and the Hall 
algebra 
of~$\cat A^{\ge d}_{lc}$ is generated by its primitive elements. Results on 
primitive elements in this algebra can be found in~\cite{KSV}*{\S3.2}. Note that 
$\cat A_{lc}$ is Krull-Schmidt, hence its Hall algebra is PBW on indecomposables.

\subsection{Non-hereditary categories of finite type}\label{subs:ex-square}

Let $\cat A$ be the category of $\kk$-representations of the quiver
$$
\let\objectstyle\scriptstyle
\xymatrix@=3ex{&1\ar[ld]_{a_{12}}\ar[rd]^{a_{13}}\\2\ar[rd]_{a_{24}}&&3\ar[ld]^{
a_{34}}\\&4}
$$
satisfying the relation $a_{24}a_{12}=0$. This category has 14 isomorphism 
classes of 
indecomposable objects, 12 of them having different images 
in~$\Gamma_{\cat A}$ and
the two remaining ones, namely the projective cover~$P_1$ of~$S_1$ and the 
injective envelope 
$I_4$ of~$S_4$, having the same image $\alpha_1+\alpha_2+\alpha_3+\alpha_4$ (as 
before,
$\alpha_i=|S_i|$).

Let~$S_{ij}$, $S_{ijk}$ be the unique, up to an isomorphism, indecomposables
with $|X|=\alpha_i+\alpha_j$ (respectively, $|X|=\alpha_i+\alpha_j+\alpha_k$).
Then one can easily check that $[S_{ij}],[S_{ijk}]\in P$, hence $\Prim(H_{\cat A})_{\alpha_i+\alpha_j}=
0=\Prim(H_{\cat A})_{\alpha_i+\alpha_j+\alpha_k}$ by Proposition~\ref{prop:PBW-prec}. Let us show
that
$\Prim(H_{\cat A})_{\alpha_1+\alpha_2+\alpha_3+\alpha_4}=0$; then the only primitive elements 
are those in $\Prim(H_{\cat A})_{\alpha_i}$, $1\le i\le 4$.

For every object $M$ with $|M|=\alpha_1+\alpha_2+\alpha_3+\alpha_4$, except 
$P_1$, $I_4$ and $S_2\oplus S_{134}$,
there exists a pair of objects $A$, $B$ such that $F^{A,B}_N=0$ unless 
$[N]=[M]$. This implies that 
$\Prim(H_{\cat A})_{\alpha_1+\alpha_2+\alpha_3+\alpha_4}$ is spanned by 
$[P_1]$, $[I_4]$ and $[S_2\oplus S_{134}]$.
We have (with $h=|\kk^\times|=q-1$)
\begin{align*}
\begin{split}
\bar\Delta([&S_2\oplus S_{134}])=
                    [S_{134} ]\tensor [ S_2 ]+  [S_2 ]\tensor [ S_{134} 
]\\&\qquad\qquad+
                 h( [S_2\oplus S_{34} ]\tensor [ S_1 ]+
                 [S_2\oplus S_4 ]\tensor [ S_{13} ]+
                 [S_{34} ]\tensor [ S_1\oplus S_2 ]+
                  [S_4 ]\tensor [ S_2\oplus S_{13} ])
                  \end{split}\\
\begin{split}
\bar\Delta([&I_4])=                    h([S_{134} ] \tensor [ S_2 ]+
                    [S_{234} ] \tensor [ S_1 ]+
                    [S_{24} ] \tensor [ S_{13}])+
                    h^2([S_{34} ] \tensor [ S_1\oplus S_2 ]+
                    [S_4 ] \tensor [ S_2\oplus S_{13}])
\end{split}\\
\begin{split}
 \bar\Delta([&P_1])=   h([S_{34} ] \tensor [ S_{12} ]+
                    [S_2 ] \tensor [ S_{134} ]+
                    [S_4 ] \tensor [ S_{123}])+            
h^2([S_2\oplus S_{34} ] \tensor [ S_1 ]+
                    [S_2\oplus S_4 ] \tensor [ S_{13}]).
\end{split}
\end{align*}
It is now clear that $\Prim(H_{\cat 
A})_{\alpha_1+\alpha_2+\alpha_3+\alpha_4}=0$.

Let $E_i=[S_i]$, $1\le i\le 4$. 
To write a presentation of $H_{\cat A}$, it is useful to 
introduce~$Z=[P_1]+[I_4]-(q-1)[S_2\oplus S_{134}]$. We obtain
\begin{equation}\label{eq:common}
\begin{split}
&[E_i,[E_i,E_j]]_q=0=[[E_i,E_j],E_j]_q,\quad 
(i,j)\in\{(1,2),(1,3),(2,4),(3,4)\},\\
&[E_2,E_3]=0=[E_1,E_4],
\end{split}
\end{equation}
and also
\begin{gather*}
[E_4,[E_1,E_2]]=0,\qquad [E_1,Z]_q=0=[Z,E_4]_q,\qquad [E_2,Z]=0=[E_3,Z]
\end{gather*}
where
$$
Z=[E_1,[E_2,[E_3,E_4]]_q]-[E_4,[E_3,[E_2,E_1]]_q].
$$

If we consider the category of representations of the same quiver satisfying 
the relation $a_{24}a_{12}=a_{34}a_{13}$,
its Hall algebra's subspace of primitive elements is spanned by the $E_i$, 
$1\le i\le 4$ which satisfy~\eqref{eq:common},
as well as 
\begin{gather*}
[E_4,[E_1,E_2]]=0=[E_4,[E_1,E_3]]\\
[E_1,[E_2,[E_3,E_4]]]=[E_4,[E_3,[E_1,E_2]]]=[E_4,[E_2,[E_1,E_3]]]=[E_1,[E_3,[E_2
,E_4]]].
\end{gather*}

In both cases $C_{\cat A}=E_{\cat A}=U_{\cat A}=H_{\cat A}$.

\subsection{Special pairs of objects and primitive elements}\label{sec:grp-ex}
Before we consider the next group of examples, we make the following 
observation.
Suppose that we have a pair of indecomposable objects $X\not\cong Y$ in~$\cat 
A$ satisfying $\Hom_{\cat A}(X,Y)=0=\Hom_{\cat A}(Y,X)$,
$\End_{\cat A}X\cong\End_{\cat A}Y\cong \kk$ is a field
and $\dim_\kk\Ext^1_{\cat A}(X,Y)=\dim_\kk\Ext^1_{\cat A}(Y,X)=1$. Then there 
exist
unique $[Z_{YX}],[Z_{XY}]\in\Iso\cat A$ such that $\{[Z_{YX}]\}=\uExt^1_{\cat 
A}(Y,X)$ 
and $\{[Z_{XY}]\}=\uExt^1_{\cat A}(X,Y)$.
Let~$\cat B=\cat A(X,Y)$ be the minimal full subcategory of~$\cat A$ containing 
$X$ and~$Y$ and closed under extensions.
Then in~$H_{\cat B}$ we have
$$
\bar\Delta([Z_{YX}])=(q-1)[X]\tensor [Y],\quad 
\bar\Delta([Z_{XY}])=(q-1)[Y]\tensor [X],\quad \bar\Delta([X\oplus 
Y])=[X]\tensor [Y]+[Y]\tensor [X],
$$
and so
$$
[Z_{XY}]+[Z_{YX}]-(q-1)[X\oplus Y]
$$
is primitive in~$H_{\cat B}$. Indeed, $|\Ext_{\cat 
A}^1(Y,X)_{Z_{YX}}|=q-1=|\Ext^1_{\cat A}(X,Y)_{Z_{XY}}|$ and so by Riedtmann's 
formula
$$
F^{X,Y}_{Z_{YX}}=q-1=F^{Y,X}_{Z_{XY}},\qquad F^{X,Y}_{X\oplus 
Y}=F^{Y,X}_{X\oplus Y}=1.
$$
This element need not be primitive in~$H_{\cat A}$ but is often useful for 
computations.

\subsection{A rank~2 tube}
Let~$\cat A=\Rep_\kk(Q)$ where $Q$ is a valued acyclic quiver of tame type.
Let~$\tau$ be the Auslander-Reiten translation and consider a regular component 
of the Auslander-Reiten quiver
which is a tube of rank~$2$ (that is, for every indecomposable object~$M$ in 
that component we have
$\tau^2(M)\cong M$). 
The smallest example is provided by
the quiver
$$\let\objectstyle\scriptstyle
\xymatrix@C=1ex@R=2ex{&2\ar[rd]\\
1\ar[ru]\ar[rr]&&3}
$$
and the Auslander-Reiten component containing~$S_2$.

Let $X$ be a simple object in our tube. Then $\tau(X)$ is also simple and both
satisfy $\End_{\cat A}X\cong\End_{\cat A}\tau(X)\cong\kk$. Furthermore,
$$
\Ext^1_{\cat A}(X,\tau(X))\cong\Hom_{\cat A}(\tau(X),\tau(X)),\qquad 
\Ext^1_{\cat A}(\tau(X),X)\cong\Hom_{\cat A}(X,X),
$$
and so $X$, $\tau(X)$ satisfy the assumptions of~\S\ref{sec:grp-ex}. Thus, we 
obtain a 
primitive element of degree~$|X|+|\tau(X)|$ in the Hall algebra of our tube 
given by $Z_{X,\tau(X)}+Z_{\tau(X),X}-(q-1)[X\oplus Y]$.
For the quiver shown above, with $X=S_2$ we have
$$
Y=\tau(X)=\vcenter{\xymatrix@C=1ex@R=2ex{&0\ar[rd]^0\\
\kk\ar[ru]^0\ar[rr]^1&&\kk}}
$$ 
while
$$
Z_{YX}=\vcenter{\xymatrix@C=1ex@R=2ex{&\kk\ar[rd]^0\\
\kk\ar[ru]^1\ar[rr]^1&&\kk}},\qquad 
Z_{XY}=\vcenter{\xymatrix@C=1ex@R=2ex{&\kk\ar[rd]^1\\
\kk\ar[ru]^0\ar[rr]^1&&\kk}}.
$$
However, in~$H_{\cat A}$ we have 
$$
\bar\Delta_{\cat A}(Z_{YX}+Z_{XY}-(q-1)[X\oplus Y])=(q-1) ([S_3]\tensor 
[I_2]+[P_2]\tensor [S_1])
$$
Other indecomposable objects with the same image in~$\Gamma_{\cat A}$ are, up 
to an isomorphism
$$
E_1(\lambda)=\vcenter{\xymatrix@C=1ex@R=2ex{&\kk\ar[rd]^1\\
\kk\ar[ru]^1\ar[rr]^\lambda&&\kk}},\qquad \lambda\in\kk.
$$
and we have
$$
\bar\Delta(E_1(\lambda))=(q-1)([S_3]\tensor [I_2]+[P_2]\tensor [S_1])
$$
where~$I_2$ is the injective envelope of~$S_2$ and $P_2$ is its projective 
cover. This gives $q-1$ linearly independent
primitive elements 
$$
\mathcal P_1(\lambda)=E_1(\lambda)-\frac{1}{q}\sum_{\mu\in\kk} E_1(\mu)
$$
and one more primitive element
$$
[Z_{YX}]+[Z_{XY}]-(q-1)[X\oplus Y]-\frac1 q\sum_{\lambda\in\kk} E_1(\lambda).
$$
Thus, in this case $m_{\alpha_1+\alpha_2+\alpha_3}=2$ and
$
\dim \Prim(H_{\cat A})_{\alpha_1+\alpha_2+\alpha_3}=q
$.

In general, primitive elements in Hall algebras corresponding to non-homogeneous tubes were computed in~\cite{Hub1}.
It should be noted that they are not primitive in~$H_{\cat A}$ but, conjecturally, can be used to 
construct primitive elements in a way similar to shown above.

\subsection{Cyclic quivers with relations}
Let $\cat A$ be the category of representations of the quiver 
$$
\let\objectstyle\scriptstyle
\xymatrix{1\ar@<+.5ex>[r]^{a_{12}} & 2\ar@<+.5ex>[l]^{a_{21}}}
$$
satisfying the relation $a_{21}a_{12}=0$. The three non-simple indecomposable 
objects are, up to an isomorphism 
$$
S_{12}:\xymatrix{\kk\ar@<+.5ex>[r]^{1} & \kk\ar@<+.5ex>[l]^{0}}
\qquad S_{21}:\xymatrix{\kk\ar@<+.5ex>[r]^{0} & \kk\ar@<+.5ex>[l]^{1}}\qquad 
S_{212}: \xymatrix{\kk\ar@<+.5ex>[r]^{\binom{1}{0}} & 
\kk^2\ar@<+.5ex>[l]^{(0\,1)}}.
$$
The object $S_{12}$ is the projective cover of~$S_1$ while~$S_{21}$ is its 
injective envelope. Thus,
$$
\bar\Delta([S_{12}])=(q-1)[S_2]\tensor [S_1],\qquad \bar\Delta([S_{21}])=(q-1) 
[S_1]\tensor [S_2]
$$
and so 
\begin{equation}\label{eq:prim-Z}
Z=[S_{12}]+[S_{21}]-(q-1)[ S_1\oplus S_2]
\end{equation}
is the unique, up to a scalar, primitive element, apart from the simple ones. 
Let $E_1=[S_1]$, $E_2=[S_2]$. Then 
$$
[E_1,Z]=[E_2,Z]=0
$$
and 
$$
[E_1,[E_1,E_2]_q]_{q^{-1}}=(1-q^{-1}) E_1 Z,\qquad 
[E_2,[E_2,[E_2,E_1]]_q]_{q^{-1}}=0
$$
is a presentation of~$H_{\cat A}$.

Now let~$\cat A$ be the category of representations of the same quiver 
satisfying the relations
$a_{21}a_{12}=0=a_{12}a_{21}$. In this case, we have four indecomposable 
objects $S_1$, $S_2$, $S_{12}$ and~$S_{21}$ which 
coincide with the ones listed above.
Thus, $S_{ij}$ is the injective envelope of~$S_i$ and the projective cover 
of~$S_j$, $\{i,j\}=\{1,2\}$.
As before, we have a unique non-simple primitive element given by the same 
formula~\eqref{eq:prim-Z}. The following 
provides a presentation for~$H_{\cat A}$
$$
[E_1,[E_1,E_2]_q]_{q^{-1}}=(1-q^{-1})E_1 Z,\qquad 
[E_2,[E_2,E_1]_q]_{q^{-1}}=(1-q^{-1})E_2 Z,\qquad [E_1,Z]=[E_2,Z]=0.
$$

In both examples, we have $C_{\cat A}\subsetneq U_{\cat A}=E_{\cat A}=H_{\cat 
A}$ which contributes supporting evidence
for Conjecture~\ref{conj:Lie}. Note also that in this case~$m_\gamma>0$
for~$\gamma$ imaginary.

\section{PBW property of Hall algebras and proof of Theorem~\ref{thm:PBW-prop-Hall}}\label{sec:PBW}

\subsection{Rings filtered and graded by ordered monoids}\label{subs:filt-gr-ring}
Let~$(\Lambda,\lhd)$ be an ordered abelian monoid, as defined in~\S\ref{subs:ord-mon-PBW-prop}.
We write~$\mu\unlhd\nu$ if either $\mu=\nu$ or $\mu\not=\nu$ and $\mu\lhd\nu$. 
\begin{definition}\label{def:filtered-ring}
We say that a unital ring $\mathcal H$ is $\Lambda$-filtered if 
$\mathcal H$ contains a family of abelian subgroups $\mathcal H^{\unlhd\lambda}$, $\lambda\in\Lambda$, such that
\begin{enumerate}[(i)]
\item\label{def:filtered-ring.0} $1_{\mathcal H}\in\mathcal H^0:=\mathcal H^{\unlhd 0}$ for all~$\lambda\in \Lambda$;
 \item\label{def:filtered-ring.i} $\lambda\unlhd\mu\implies \mathcal H^{\unlhd\lambda}\subset \mathcal H^{\unlhd\mu}$;
 \item\label{def:filtered-ring.ii} $\mathcal H=\bigcup_{\lambda\in\Lambda}\mathcal H^{\unlhd \lambda}$;
 \item\label{def:filtered-ring.iv} 
$\mathcal H^{\unlhd \lambda}\cdot\mathcal H^{\unlhd \mu}
\subset \mathcal H^{\unlhd (\lambda+\mu)}$ for all $\lambda,\mu\in\Lambda$.
\end{enumerate}
\end{definition}
\noindent
In particular, $\mathcal H^0\subset\bigcap_{\lambda\in\Lambda}\mathcal H^{\unlhd \lambda}$ and is a subring of~$\mathcal H$.
This definition is similar to that in~\cite{P-P}*{\S4.7}; however, we do not require the ring~$\mathcal H$ to admit 
a $\mathbb Z_{\ge 0}$-grading compatible with~$\Lambda$.

Given~$\lambda\in\Lambda$, let $\mathcal H^{\lhd\lambda}=\sum_{\mu\lhd\lambda} \mathcal H^{\unlhd \mu}$. Note that 
\begin{equation}\label{eq:leq}
\mathcal H^{\lhd\lambda}\cdot\mathcal H^{\unlhd\mu}\subset 
\mathcal H^{\lhd(\lambda+\mu)},\qquad \mathcal H^{\unlhd\lambda}\cdot\mathcal H^{\lhd\mu}\subset 
\mathcal H^{\lhd(\lambda+\mu)}.
\end{equation}
Define
$$
\gr_\Lambda \mathcal H=\bigoplus_{\lambda\in \Lambda} \mathcal H^{\unlhd \lambda}/\mathcal H^{\lhd\lambda}.
$$
\begin{lemma}
$\gr_\Lambda \mathcal H$ is a $\Lambda$-graded unital ring with the multiplication given by
$$
(x+\mathcal H^{\lhd \lambda})\bullet(y+\mathcal H^{\lhd\mu})=x\cdot y+\mathcal H^{\lhd(\mu+\nu)},\qquad 
x\in\mathcal H^{\unlhd\lambda},\, y\in\mathcal H^{\unlhd \mu}.
$$
\end{lemma}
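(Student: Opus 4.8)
The plan is to check that $\bullet$ is well defined and then to inherit associativity, distributivity and the unit directly from $\mathcal H$. First I would observe that for each $\lambda$ the group $\mathcal H^{\lhd\lambda}=\sum_{\mu\lhd\lambda}\mathcal H^{\unlhd\mu}$ is contained in $\mathcal H^{\unlhd\lambda}$: indeed $\mu\lhd\lambda$ gives $\mu\unlhd\lambda$, so $\mathcal H^{\unlhd\mu}\subset\mathcal H^{\unlhd\lambda}$ by~\eqref{def:filtered-ring.i}, and summing yields the claim. Hence each quotient $\mathcal H^{\unlhd\lambda}/\mathcal H^{\lhd\lambda}$ is a genuine abelian group and $\gr_\Lambda\mathcal H$ is a $\Lambda$-graded abelian group.

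The only real point is the well-definedness of $\bullet$. For $x\in\mathcal H^{\unlhd\lambda}$ and $y\in\mathcal H^{\unlhd\mu}$, property~\eqref{def:filtered-ring.iv} gives $x\cdot y\in\mathcal H^{\unlhd(\lambda+\mu)}$, so the prescribed value lands in the $(\lambda+\mu)$-component, as it must. To see that it is independent of the chosen representatives I would replace $x$ by $x+x'$ and $y$ by $y+y'$ with $x'\in\mathcal H^{\lhd\lambda}$ and $y'\in\mathcal H^{\lhd\mu}$, and expand $(x+x')(y+y')-xy=xy'+x'y+x'y'$. By the containments~\eqref{eq:leq} each of $xy'$ and $x'y$ lies in $\mathcal H^{\lhd(\lambda+\mu)}$, and $x'y'\in\mathcal H^{\lhd\lambda}\cdot\mathcal H^{\unlhd\mu}\subset\mathcal H^{\lhd(\lambda+\mu)}$ as well (using $\mathcal H^{\lhd\mu}\subset\mathcal H^{\unlhd\mu}$). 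Thus the difference lies in $\mathcal H^{\lhd(\lambda+\mu)}$, so $\bullet$ is well defined on homogeneous classes; extending it biadditively over the direct sum makes it a biadditive operation which, by construction, sends the $\lambda$- and $\mu$-components into the $(\lambda+\mu)$-component. This last statement is exactly the assertion that $\gr_\Lambda\mathcal H$ is $\Lambda$-graded.

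It remains to transcribe the ring axioms. For associativity, both $(\bar x\bullet\bar y)\bullet\bar z$ and $\bar x\bullet(\bar y\bullet\bar z)$ are represented by $(xy)z=x(yz)$ in the single component indexed by $(\lambda+\mu)+\nu=\lambda+(\mu+\nu)$, the two indices agreeing because $\Lambda$ is a monoid; distributivity follows likewise from distributivity in $\mathcal H$ applied to representatives of a fixed degree. For the unit I would take the class of $1_{\mathcal H}\in\mathcal H^0=\mathcal H^{\unlhd 0}$, which exists by~\eqref{def:filtered-ring.0}. Here axiom $1^\circ$ of an ordered monoid (\S\ref{subs:ord-mon-PBW-prop}) makes the set $\{\mu:\mu\lhd 0\}$ empty, so $\mathcal H^{\lhd 0}=0$ and the degree-$0$ component is $\mathcal H^0$ itself; then $(1+\mathcal H^{\lhd 0})\bullet(y+\mathcal H^{\lhd\mu})=y+\mathcal H^{\lhd\mu}$ and its right-handed analogue are immediate from $1\cdot y=y=y\cdot 1$ and $0+\mu=\mu=\mu+0$.

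In short, the single nontrivial step is the coset computation in the second paragraph; its only inputs beyond the filtration axioms are the containments~\eqref{eq:leq}, which themselves rest on compatibility~\eqref{def:filtered-ring.iv} of the filtration with the product. I anticipate no further obstacle.
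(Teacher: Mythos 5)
Your proof is correct and follows essentially the same route as the paper: the well-definedness of $\bullet$ via the containments~\eqref{eq:leq} (your explicit coset expansion $xy'+x'y+x'y'$ is just the paper's set-theoretic computation written out with representatives), with associativity, distributivity and the grading inherited from $\mathcal H$, and the unit being the class of $1_{\mathcal H}$ in degree $0$ where $\mathcal H^{\lhd 0}=\{0\}$. No gaps.
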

\begin{proof}
By construction, $\gr_\Lambda\mathcal H$ is an abelian group.
Using~\eqref{eq:leq}, we obtain, for all $x\in\mathcal H^{\unlhd\lambda}$, $y\in\mathcal H^{\unlhd\lambda}$ 
$$
(x+\mathcal H^{\lhd\lambda})\cdot (y+\mathcal H^{\lhd\mu}) \subset x\cdot y+
\mathcal H^{\unlhd \lambda}\cdot \mathcal H^{\lhd\mu}+
\mathcal H^{\lhd\lambda}\cdot\mathcal H^{\unlhd \mu}+\mathcal H^{\lhd\lambda}\cdot\mathcal H^{\lhd\mu}
\subset x\cdot y+\mathcal H^{\lhd(\lambda+\mu)}.
$$
Thus, $\,\bullet\,$ is well-defined. The distributivity and the associativity follow from those in~$\mathcal H$. 
Then the ring~$\mathcal H$ is graded by~$\Lambda$ by construction. Finally, observe that $1_{\mathcal H}
+\mathcal H^{\lhd\lambda}=0$
for all~$\lambda\in\Lambda^+$ and it is easy to see that~$1_{\mathcal H}+\mathcal H^{\lhd 0}$ is the unity of~$\gr_\Lambda\mathcal H$.
\end{proof}
Let~$\bar{\mathcal H}^\lambda=\mathcal H^{\unlhd\lambda}/\mathcal H^{\lhd \lambda}$. Note that~$\bar{\mathcal H}^0$ identifies with~$\mathcal H^0$,
since~$\mathcal H^{\lhd 0}=\{0\}$.

Let~$\Lambda_{\min}$ be the set of minimal, with respect to the partial order~$\unlhd$, elements of~$\Lambda^+$. We say 
that $\Lambda$ is optimal if it is generated by~$\Lambda_{\min}$. 
\begin{lemma}\label{lem:filtered-graded}
Let $(\Lambda,\unlhd)$ be an optimal monoid and let~$\mathcal H$ be a $\Lambda$-filtered ring. 
Then for any subset $\Lambda_{\circ}$ of~$\Lambda_{\min}$ which generates~$\Lambda$ as a monoid
the following are equivalent
\begin{enumerate}[{\rm(i)}]
 \item\label{lem:filtered-graded.a} $\mathcal H$ is generated over~$\mathcal H^0$ by the $\mathcal H^{\unlhd \lambda}$,
 $\lambda\in\Lambda_\circ$;
\item\label{lem:filtered-graded.b} $\gr_\Lambda \mathcal H$ is generated over~$\mathcal H^0=\bar{\mathcal H^0}$
by the~$\bar{\mathcal H}^{\lambda}$, $\lambda\in\Lambda_\circ$. 
\end{enumerate}
\end{lemma}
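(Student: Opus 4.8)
The plan is to move generation back and forth between $\mathcal H$ and $\gr_\Lambda\mathcal H$ through the principal-symbol maps, running the transfer as an induction along $\unlhd$. Write $\sigma_\gamma\colon\mathcal H^{\unlhd\gamma}\to\bar{\mathcal H}^\gamma$ for the canonical projection. By the definition of the product $\bullet$ these maps are multiplicative, $\sigma_\lambda(a)\bullet\sigma_\mu(b)=\sigma_{\lambda+\mu}(a\cdot b)$ for $a\in\mathcal H^{\unlhd\lambda}$, $b\in\mathcal H^{\unlhd\mu}$, and $\ker\sigma_\gamma=\mathcal H^{\lhd\gamma}=\sum_{\mu\lhd\gamma}\mathcal H^{\unlhd\mu}$. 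Let $\mathcal H'$ be the subring of $\mathcal H$ generated over $\mathcal H^0$ by the $\mathcal H^{\unlhd\lambda}$, $\lambda\in\Lambda_\circ$, and let $G'$ be the subring of $\gr_\Lambda\mathcal H$ generated over $\bar{\mathcal H}^0$ by the $\bar{\mathcal H}^\lambda$, $\lambda\in\Lambda_\circ$. Since its generators are homogeneous, $G'$ is $\Lambda$-graded, and multiplicativity of the $\sigma_\gamma$ identifies its component $G'_\gamma$ with $\sigma_\gamma(\mathcal H'_\gamma)$, where $\mathcal H'_\gamma$ denotes the span of those products of generators whose total degree equals $\gamma$. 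As $\sigma_\gamma$ is surjective, assertion~(ii) is then exactly the statement that $\mathcal H^{\unlhd\gamma}=\mathcal H'_\gamma+\mathcal H^{\lhd\gamma}$ for all $\gamma$.

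For the implication (ii)$\Rightarrow$(i) I would prove $\mathcal H^{\unlhd\gamma}\subseteq\mathcal H'$ by induction along $\unlhd$, the base case $\gamma=0$ being $\mathcal H^0\subseteq\mathcal H'$. Given $x\in\mathcal H^{\unlhd\gamma}$, assertion~(ii) supplies $x'\in\mathcal H'_\gamma\subseteq\mathcal H'$ with $\sigma_\gamma(x')=\sigma_\gamma(x)$, so that $x-x'\in\ker\sigma_\gamma=\sum_{\mu\lhd\gamma}\mathcal H^{\unlhd\mu}$; writing $x-x'$ as a finite sum of elements of the $\mathcal H^{\unlhd\mu}$ with $\mu\lhd\gamma$ and invoking the inductive hypothesis at each such $\mu$ yields $x-x'\in\mathcal H'$, hence $x\in\mathcal H'$. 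This is the direction that will be used for the PBW property: once $\gr_\Lambda H_{\cat A}$ has been computed and seen to be generated by the symbols of indecomposables, generation of $H_{\cat A}$ itself follows.

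For the converse (i)$\Rightarrow$(ii) I would establish $\mathcal H^{\unlhd\gamma}\subseteq\mathcal H'_\gamma+\mathcal H^{\lhd\gamma}$, again by induction along $\unlhd$. Using (i), write $x\in\mathcal H^{\unlhd\gamma}$ as a finite sum $\sum_m p_m$ of products of generators, $p_m\in\mathcal H^{\unlhd\delta_m}$ of total degree $\delta_m$. The terms with $\delta_m\unlhd\gamma$ already lie in $\mathcal H'_\gamma+\mathcal H^{\lhd\gamma}$, so the content is to rewrite the remaining part $b=\sum_{\delta_m\not\unlhd\gamma}p_m\in\mathcal H^{\unlhd\gamma}$ in lower degree. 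Here I would strip off one maximal occurring degree $\delta$ at a time: since $b\in\mathcal H^{\unlhd\gamma}$ and $\delta\not\unlhd\gamma$, the degree-$\delta$ portion of the expression has trivial symbol in $\bar{\mathcal H}^\delta$, hence lies in $\mathcal H^{\lhd\delta}$ and can be re-expressed in strictly smaller degree; iterating removes all degrees not below $\gamma$ and lets the inductive hypothesis close the argument.

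The main obstacle is not the symbol calculus but the legitimacy of these two inductions, that is, the descending chain condition for $\unlhd$ on $\Lambda^+$: nothing in the axioms of an ordered monoid forbids infinite descending chains, and each argument above terminates only if such chains are absent. This is precisely where optimality must enter. Since every element of $\Lambda^+$ is a finite sum of minimal elements, and a minimal $\lambda$ has only $0$ strictly below it (so that $\mathcal H^{\lhd\lambda}=\mathcal H^0$), I would attach a rank to minimal-generator expressions and verify that every descending step strictly decreases it, thereby forcing termination. In the Hall-algebra application this rank is furnished by the combinatorics of the defining relation of $\lhd$ on $\Iso\cat A$, where each step replaces a direct summand by a non-split extension and the process cannot continue indefinitely; securing this termination, rather than the formal manipulations, is the step demanding genuine care, and handling several incomparable maximal degrees in the converse direction is its one remaining delicate point.
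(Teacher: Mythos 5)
Your treatment of the substantive direction (ii)$\Rightarrow$(i) is precisely the paper's proof: the paper encodes assertion~(ii), via what you call the symbol maps, as the containment $(\mathcal H_{\circ})^{r}\cap \mathcal H^{\unlhd \lambda}\subset\sum_{\lambda_1+\cdots+\lambda_r=\lambda}\mathcal H^{\unlhd\lambda_1}\cdots\mathcal H^{\unlhd\lambda_r}+\mathcal H^{\lhd\lambda}$ and then runs the same induction on $(\Lambda^+,\unlhd)$, writing $x=x'+x''$ with $x'$ a sum of products of generators of total degree $\lambda$ and $x''\in\mathcal H^{\lhd\lambda}$. (For the converse the paper says only that (i)$\Rightarrow$(ii) is ``immediate''; your stripping argument for that direction stalls exactly where you admit it does --- when several pairwise incomparable degrees $\delta_m\not\unlhd\gamma$ occur, the subgroups $\mathcal H^{\unlhd\delta_m}$ are not in direct sum, so $b\in\mathcal H^{\unlhd\gamma}$ does not force the degree-$\delta$ portion into $\mathcal H^{\lhd\delta}$. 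That direction is never used in the paper, but as written your argument for it is incomplete.)

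The one genuine problem is your proposed resolution of the termination issue. You are right that the induction needs the descending chain condition for $\unlhd$ on $\Lambda^+$, and right that the ordered-monoid axioms do not supply it; but your plan to extract it from optimality via a rank on minimal-generator expressions cannot succeed, because optimality does not imply the descending chain condition. On the free abelian monoid on $e_1,e_2,\dots$ take the smallest ordered-monoid partial order containing $e_{2i+1}+e_{2i+2}\lhd e_{2i-1}+e_{2i}$ for all $i\ge1$: each $e_i$ is still minimal (nothing of generator-count one is below it), so the monoid is optimal, yet $e_1+e_2\rhd e_3+e_4\rhd e_5+e_6\rhd\cdots$ descends forever, and the number of minimal generators is constant along the chain, so no rank of the kind you describe can strictly decrease. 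The paper itself simply invokes ``induction on $(\Lambda^+,\unlhd)$'' with no further justification, so on this point your proposal reproduces rather than repairs the source; but the repair you sketch is a dead end, and in the Hall-algebra application any well-foundedness has to be extracted from the specific order on $\Iso\cat A$ (non-split extensions versus direct sums), not from optimality of the monoid.
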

\begin{proof}
Let~$\mathcal H_{\circ}=\sum_{\lambda\in\Lambda_\circ}\mathcal H^{\unlhd \lambda}$ and $\bar{\mathcal H}_{\circ}
=\bigoplus_{\lambda\in\Lambda_\circ} \bar{\mathcal H}^\lambda=\mathcal H_\circ/\mathcal H^0$. Clearly,
the assertion of~\eqref{lem:filtered-graded.a} (respectively, of~\eqref{lem:filtered-graded.b})
means that $\mathcal H_{\circ}^k$ (respectively, $(\bar{\mathcal H}^0\oplus\bar{\mathcal H}_{\circ})^{\bullet k}$) is an increasing filtration on
$\mathcal H$ (respectively, on~$\gr_\Lambda\mathcal H$).
The implication~\eqref{lem:filtered-graded.a}$\implies$\eqref{lem:filtered-graded.b} is immediate. 
For the converse, it suffices to show that for all~$x\in \mathcal H$, 
there exists~$k=k(x)\ge 0$ such that 
$x\in \mathcal H_{\circ}^k$. We argue by
induction on~$(\Lambda^+,\unlhd)$.

The induction base is obvious since for all~$x\in\mathcal H_{\circ}$ we can take~$k=1$. 
Suppose that~$x\in\mathcal H\setminus\mathcal H_{\circ}$. Then~$x\in \mathcal H^{\unlhd\lambda}$ for some~$\lambda\in\Lambda^+\setminus\Lambda_\circ$. 
If~$x\in\mathcal H^{\unlhd\mu}$
for some~$\mu\lhd\lambda$ then we are done by the induction hypothesis. Therefore, we may assume that $x\in\mathcal H^{\unlhd \lambda}\setminus 
\mathcal H^{\lhd\lambda}$ hence $x+\mathcal H^{\lhd\lambda}\not=0$ in~$\gr_\Lambda\mathcal H$. By assumption of~\eqref{lem:filtered-graded.b},
$$
x+\mathcal H^{\lhd\lambda}\in 
\bar{\mathcal H}^\lambda\cap(\bar{\mathcal H}^0\oplus \bar{\mathcal H}_{\circ})^{\bullet k'} 
$$
for some~$k'>0$.
Note that, by definition of~$\gr_\Lambda\mathcal H$
\begin{equation}\label{eq:generators-1}
(\bar{\mathcal H}^0\oplus\bar{\mathcal H}_{\circ})^{\bullet r}\cap \bar{\mathcal H}^\lambda\subset
\sum_{(\lambda_1,\dots,\lambda_{r })\in(\Lambda_\circ\cup\{0\})^{r }\,:\, \lambda_1+\cdots+\lambda_{r }=\lambda} \bar{\mathcal H}^{\lambda_1}
\bullet\cdots\bullet \bar{\mathcal H}^{\lambda_{r }}
\end{equation}
for all~$r>0$, hence 
\begin{equation}\label{eq:generators-2}
(\mathcal H_{\circ})^{ r}\cap \mathcal H^{\unlhd \lambda}\subset
\sum_{(\lambda_1,\dots,\lambda_{r })\in(\Lambda_\circ\cup\{0\})^{r }\,:\, \lambda_1+\cdots+\lambda_{r }=\lambda} (\mathcal H^{\unlhd\lambda_1})
\cdots(\mathcal H^{\unlhd\lambda_{r }})+\mathcal H^{\lhd \lambda}.
\end{equation}
Therefore, $x=x'+x''$ where 
$$
x'\in\sum_{(\lambda_1,\dots,\lambda_{k'})\in(\Lambda_\circ\cup\{0\})^{k'}\,:\,\lambda_1+\cdots+\lambda_{k'}=\lambda} 
(\mathcal H^{\unlhd\lambda_1})\cdots (\mathcal H^{\unlhd\lambda_{k'}})\subset (\mathcal H_{\circ})^{k'},\qquad x''\in\mathcal H^{\lhd\lambda}.
$$
By the induction hypothesis, $x''\in (\mathcal H_{\circ})^{k''}$ for some~$k''>0$, hence~$x\in(\mathcal H_{\circ})^{k}$ where~$k=\max(k',k'')$.
\end{proof}
\begin{proposition}[weak PBW property]\label{prop:filtered-graded}
Let $(\Lambda,\unlhd)$ be an optimal monoid and let~$\mathcal H$ be a $\Lambda$-filtered ring. 
Let $\Lambda_{\circ}\subset\Lambda_{\min}$ be a subset which generates~$\Lambda$ as a monoid.
Suppose that there exists a total order~$\le$ on~$\Lambda_{\circ}$ such that 
$$
\gr_\Lambda\mathcal H=\sum_{k\ge 0}\,\sum_{\lambda_1\le\cdots\le\lambda_k\in\Lambda_{\circ}^k} \bar{\mathcal H}^{\lambda_1}
\bullet\cdots \bullet\bar{\mathcal H}^{\lambda_k}.
$$
Then 
$$
\mathcal H=\sum_{k\ge 0}\,\sum_{\lambda_1\le\cdots\le\lambda_k\in\Lambda_{\circ}^k} \mathcal H^{\unlhd\lambda_1}
\cdots \mathcal H^{\unlhd\lambda_k}.
$$
\end{proposition}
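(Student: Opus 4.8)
The plan is to mirror the Noetherian induction used in the proof of Lemma~\ref{lem:filtered-graded}, but to keep track of the ordering at every step. Write $M=\sum_{k\ge 0}\sum_{\lambda_1\le\cdots\le\lambda_k\in\Lambda_\circ^k}\mathcal H^{\unlhd\lambda_1}\cdots\mathcal H^{\unlhd\lambda_k}$ for the claimed right-hand side. This is an additive subgroup of $\mathcal H$ containing $\mathcal H^0$ (its $k=0$ term), and the inclusion $M\subset\mathcal H$ is automatic from axiom~\eqref{def:filtered-ring.iv} of Definition~\ref{def:filtered-ring}. Since $\mathcal H=\bigcup_{\lambda\in\Lambda}\mathcal H^{\unlhd\lambda}$, it suffices to prove $\mathcal H^{\unlhd\lambda}\subset M$ for every $\lambda\in\Lambda$, and I would establish this by induction on $(\Lambda^+,\unlhd)$, the base case $\lambda=0$ being the inclusion $\mathcal H^0\subset M$.

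For the inductive step, fix $\lambda\in\Lambda^+$ and $x\in\mathcal H^{\unlhd\lambda}$. If $x\in\mathcal H^{\lhd\lambda}=\sum_{\mu\lhd\lambda}\mathcal H^{\unlhd\mu}$, then $x\in M$ directly by the induction hypothesis. Otherwise $\bar x:=x+\mathcal H^{\lhd\lambda}$ is a nonzero \emph{homogeneous} element of degree~$\lambda$ in $\gr_\Lambda\mathcal H$, i.e.\ $\bar x\in\bar{\mathcal H}^{\lambda}$. The crucial step is to invoke the standing hypothesis on $\gr_\Lambda\mathcal H$ and then project onto the degree-$\lambda$ graded component: each ordered product $\bar{\mathcal H}^{\lambda_1}\bullet\cdots\bullet\bar{\mathcal H}^{\lambda_k}$ is homogeneous of degree $\lambda_1+\cdots+\lambda_k$, so extracting the part of degree~$\lambda$ from the spanning expression for $\gr_\Lambda\mathcal H$ yields
$$
\bar x\in\sum_{\substack{\lambda_1\le\cdots\le\lambda_k\in\Lambda_\circ^k\\ \lambda_1+\cdots+\lambda_k=\lambda}}\bar{\mathcal H}^{\lambda_1}\bullet\cdots\bullet\bar{\mathcal H}^{\lambda_k}.
$$

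It then remains to lift this ordered expression back to $\mathcal H$. For each factor occurring I would choose a representative $h_{\lambda_i}\in\mathcal H^{\unlhd\lambda_i}$ of the class in $\bar{\mathcal H}^{\lambda_i}=\mathcal H^{\unlhd\lambda_i}/\mathcal H^{\lhd\lambda_i}$ and form the ordered product $h_{\lambda_1}\cdots h_{\lambda_k}$. By the definition of the multiplication $\bullet$ on $\gr_\Lambda\mathcal H$ together with its associativity, this product lies in $\mathcal H^{\unlhd\lambda}$ and reduces modulo $\mathcal H^{\lhd\lambda}$ to $\bar{\mathcal H}^{\lambda_1}\bullet\cdots\bullet\bar{\mathcal H}^{\lambda_k}$ (this is exactly how~\eqref{eq:generators-2} is derived). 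Summing over the finitely many terms above produces an element $x'\in M$ with $x'+\mathcal H^{\lhd\lambda}=\bar x$, whence $x'':=x-x'\in\mathcal H^{\lhd\lambda}$. The induction hypothesis gives $x''\in M$, and therefore $x=x'+x''\in M$, which closes the induction. The only point requiring care — and the sole place where the hypotheses genuinely enter — is the bookkeeping that the lifted ordered product reduces modulo $\mathcal H^{\lhd\lambda}$ to precisely the prescribed leading term $\bar x$, so that the correction $x''$ strictly drops in the order; this is what makes the Noetherian induction on $(\Lambda^+,\unlhd)$, already justified in the proof of Lemma~\ref{lem:filtered-graded}, terminate. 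There is no analytic difficulty beyond this leading-term reduction.
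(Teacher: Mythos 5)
Your argument is correct and follows essentially the same route as the paper: both proofs take the degree-$\lambda$ component of the ordered-monomial spanning of $\gr_\Lambda\mathcal H$, lift the ordered products to $\mathcal H^{\unlhd\lambda_1}\cdots\mathcal H^{\unlhd\lambda_k}$ modulo $\mathcal H^{\lhd\lambda}$, and absorb the lower-order correction by induction on $(\Lambda^+,\unlhd)$. The only (immaterial) difference is that you project the class $\bar x\in\bar{\mathcal H}^\lambda$ directly, whereas the paper first passes through the inclusion $(\mathcal H_\circ)^r\cap\mathcal H^{\unlhd\lambda}\subset\sum(\mathcal H^{\unlhd\lambda_1})\cdots(\mathcal H^{\unlhd\lambda_s})+\mathcal H^{\lhd\lambda}$ and the exhaustion of $\mathcal H$ by the $(\mathcal H_\circ)^r$ from Lemma~\ref{lem:filtered-graded}.
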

\begin{proof}
Retain the notation of the proof of Lemma~\ref{lem:filtered-graded}. Our assumptions imply that for all~$r>0$ and~$\lambda\in\Lambda^+$,
$$
(\bar{\mathcal H}_{\circ})^{\bullet r}\cap \bar{\mathcal H}^{\lambda}\subset 
\sum_{(\lambda_1\le\cdots\le\lambda_{r })\in\Lambda_\circ^{r }\,:\, \lambda_1+\cdots+\lambda_{r }=\lambda} \bar{\mathcal H}^{\lambda_1}
\bullet\cdots\bullet \bar{\mathcal H}^{\lambda_{r }},
$$
hence
$$
(\bar{\mathcal H}^0\oplus\bar{\mathcal H}_{\circ})^{\bullet r}\cap \bar{\mathcal H}^{\lambda}\subset 
\sum_{s\le r}\,\,\sum_{(\lambda_1\le\cdots\le\lambda_{s })\in\Lambda_\circ^{s }\,:\, \lambda_1+\cdots+\lambda_{s }=\lambda} \bar{\mathcal H}^{\lambda_1}
\bullet\cdots\bullet \bar{\mathcal H}^{\lambda_{s }},
$$
which implies that
$$
( \mathcal H_{\circ})^{ r}\cap \mathcal H^{\unlhd \lambda}\subset
\sum_{s\le r}\,\,\sum_{(\lambda_1\le\cdots\le\lambda_{s })\in\Lambda_\circ^{s }\,:\, \lambda_1+\cdots+\lambda_{s }=\lambda} 
(\mathcal H^{\unlhd\lambda_1})
\cdots(\mathcal H^{\unlhd\lambda_{s }})+\mathcal H^{\lhd \lambda}.
$$
Since~$\mathcal H^{\lhd\lambda}=\sum_{\mu\lhd\lambda}\mathcal H^{\unlhd\mu}$ and 
$(\mathcal H_\circ)^r$, $r\ge 0$ is an increasing filtration on~$\mathcal H$ by Lemma~\ref{lem:filtered-graded}, 
an obvious induction on~$(\Lambda,\unlhd)$ completes the proof.
\end{proof}

We now consider a special case which we will later apply to Hall algebras.
\begin{corollary}\label{cor:pre-PBW}
Let~$(\Lambda,\unlhd)$ be an optimal monoid and let~$\mathcal H$ be a unital $\FF$-algebra with a basis $\{[\lambda]\,:\,\lambda\in\Lambda\}$ such that 
$$
[\lambda]\cdot [\mu]\in \mathbb F^\times[\lambda+\mu]+\sum_{\nu\lhd\lambda+\mu} \mathbb F[\nu],
$$
(in particular, $[0]=1_{\mathcal H}$).
Then for any subset $\Lambda_{\circ}$ of~$\Lambda_{\min}$ which generates~$\Lambda$ as a monoid,
the set $[\Lambda_{\circ}]:=\{ [\lambda]\,:\,\lambda\in\Lambda_{\circ}\}$ generates~$\mathcal H$ as an algebra.
Moreover, for any total order on~$\Lambda_{\circ}$, the set $\mathbf M([\Lambda_{\circ}])$ of ordered monomials in~$[\Lambda_{\circ}]$
spans $\mathcal H$ as an $\FF$-vector space. Finally, if~$\Lambda$ is freely generated by~$\Lambda_{\circ}$ 
then $\mathbf M([\Lambda_{\circ}])$ is a basis of~$\mathcal H$.
\end{corollary}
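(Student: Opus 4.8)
The plan is to realize $\mathcal H$ as a $\Lambda$-filtered ring in the sense of Definition~\ref{def:filtered-ring} and then feed this into the weak PBW machinery of Lemma~\ref{lem:filtered-graded} and Proposition~\ref{prop:filtered-graded}. Concretely, I would set $\mathcal H^{\unlhd\lambda}=\bigoplus_{\mu\unlhd\lambda}\FF[\mu]$ for every $\lambda\in\Lambda$. Axioms (i)--(iii) of a filtered ring are immediate: $\mathcal H^0=\mathcal H^{\unlhd 0}=\FF[0]=\FF\cdot 1_{\mathcal H}$, the inclusions $\mathcal H^{\unlhd\lambda}\subset\mathcal H^{\unlhd\mu}$ for $\lambda\unlhd\mu$ follow from transitivity of $\unlhd$, and the union exhausts $\mathcal H$ since $[\mu]\in\mathcal H^{\unlhd\mu}$. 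The content is axiom (iv): for $\alpha\unlhd\lambda$ and $\beta\unlhd\mu$ the multiplication hypothesis gives $[\alpha]\cdot[\beta]\in\FF^\times[\alpha+\beta]+\sum_{\nu\lhd\alpha+\beta}\FF[\nu]$, so it suffices to know $\alpha+\beta\unlhd\lambda+\mu$. Granting this, $\gr_\Lambda\mathcal H$ is a $\Lambda$-graded algebra whose component $\bar{\mathcal H}^\lambda=\mathcal H^{\unlhd\lambda}/\mathcal H^{\lhd\lambda}$ is one-dimensional, spanned by the image $\overline{[\lambda]}$ of $[\lambda]$, and the hypothesis forces $\overline{[\lambda]}\bullet\overline{[\mu]}\in\FF^\times\,\overline{[\lambda+\mu]}$; thus $\gr_\Lambda\mathcal H$ is, up to invertible scalars, the monoid algebra $\FF\Lambda$.

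From here I would read off generation. Since $\Lambda_{\circ}$ generates $\Lambda$ as a monoid, every $\lambda$ is a sum $\lambda_1+\cdots+\lambda_k$ with $\lambda_i\in\Lambda_{\circ}$, whence $\overline{[\lambda_1]}\bullet\cdots\bullet\overline{[\lambda_k]}\in\FF^\times\,\overline{[\lambda]}$; as each $\bar{\mathcal H}^\lambda$ is one-dimensional, this shows that $\{\overline{[\lambda]}:\lambda\in\Lambda_{\circ}\}$ generate $\gr_\Lambda\mathcal H$ over $\bar{\mathcal H}^0=\FF$. By Lemma~\ref{lem:filtered-graded} (applicable because $\Lambda$ is optimal and $\Lambda_{\circ}\subset\Lambda_{\min}$ generates it) the subspaces $\mathcal H^{\unlhd\lambda}$, $\lambda\in\Lambda_{\circ}$, then generate $\mathcal H$ over $\mathcal H^0=\FF$. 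Finally, minimality of $\lambda\in\Lambda_{\circ}\subset\Lambda_{\min}$ forces $\mathcal H^{\unlhd\lambda}=\FF[0]\oplus\FF[\lambda]$, so generation by these subspaces coincides with generation by $[\Lambda_{\circ}]$; this is the first assertion.

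For the spanning statement I would invoke Proposition~\ref{prop:filtered-graded}. Its hypothesis is that, for a fixed total order on $\Lambda_{\circ}$, ordered monomials in the $\bar{\mathcal H}^\lambda$ span $\gr_\Lambda\mathcal H$; this holds because $\gr_\Lambda\mathcal H$ is commutative up to invertible scalars (both $\overline{[\lambda]}\bullet\overline{[\mu]}$ and $\overline{[\mu]}\bullet\overline{[\lambda]}$ are nonzero multiples of $\overline{[\lambda+\mu]}$), so each one-dimensional component $\bar{\mathcal H}^\nu$ with $\nu\in\Lambda$ is reached by a scalar multiple of an ordered monomial. The Proposition lifts this to $\mathcal H$, and since $\mathcal H^{\unlhd\lambda}=\FF[0]\oplus\FF[\lambda]$ the ordered products of the $\mathcal H^{\unlhd\lambda}$ collapse to the ordered monomials $\mathbf M([\Lambda_{\circ}])$, which therefore span $\mathcal H$. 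For the last assertion, if $\Lambda$ is freely generated by $\Lambda_{\circ}$ then distinct ordered monomials have distinct degrees $\lambda_1+\cdots+\lambda_k\in\Lambda$, hence distinct $\unlhd$-maximal (``leading'') basis terms $[\lambda_1+\cdots+\lambda_k]$ occurring with coefficient in $\FF^\times$; a standard leading-term argument then shows they are linearly independent, hence a basis.

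The main obstacle I anticipate is precisely axiom (iv), that is, the monotonicity $\alpha\unlhd\lambda,\ \beta\unlhd\mu\Rightarrow\alpha+\beta\unlhd\lambda+\mu$. Axiom $2^\circ$ disposes of the generic situation where the strict comparisons are between nonzero elements, but the boundary cases require care; the delicate one is multiplication by the unit $[0]$, which lies in \emph{every} $\mathcal H^{\unlhd\lambda}$, and which forces one to control how $\unlhd$ interacts with the bare monoid addition. This compatibility of the order with addition is the genuine technical heart of the argument; once the filtration is known to be multiplicative, the remaining steps are bookkeeping with the one-dimensionality of the graded pieces.
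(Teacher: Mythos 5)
Your strategy is the same as the paper's: equip $\mathcal H$ with the filtration $\mathcal H^{\unlhd\lambda}=\bigoplus_{\mu\unlhd\lambda}\FF[\mu]$, identify $\gr_\Lambda\mathcal H$ as a twisted monoid algebra with one-dimensional components, and feed this into Lemma~\ref{lem:filtered-graded} and Proposition~\ref{prop:filtered-graded}; your treatment of generation, of the spanning statement, and of the leading-term argument in the freely generated case all match or amplify what the paper does (the paper compresses the entire verification into ``Clearly, $\mathcal H$ is $\Lambda$-filtered''). The problem is the step you flag and then leave open: axiom~(iv) of Definition~\ref{def:filtered-ring}. You correctly reduce it to the monotonicity $\alpha\unlhd\lambda,\ \beta\unlhd\mu\Rightarrow\alpha+\beta\unlhd\lambda+\mu$, but this does \emph{not} follow from axioms $1^\circ$--$2^\circ$: axiom $2^\circ$ is only assumed for comparisons between elements of $\Lambda^+$, so from $0\lhd\lambda$ one cannot deduce $\mu\lhd\mu+\lambda$. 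Worse, the implication is genuinely false in the intended setting. Take $\Lambda=\ZZ_{\ge 0}$ with the minimal order ($0\lhd n$ for $n>0$ and nothing else), which is ordered and optimal; equivalently, take $\Lambda=\Iso\cat A$ for $\cat A=\Rep_\kk(A_1)$, where every short exact sequence splits so $\unlhd$ is trivial on $(\Iso\cat A)^+$. Then $1_{\mathcal H}\cdot[1]=[1]$ lies in $\mathcal H^{\unlhd 1}\cdot\mathcal H^{\unlhd 1}$ but not in $\mathcal H^{\unlhd 2}=\FF[0]\oplus\FF[2]$, so the filtration is not multiplicative in the sense of Definition~\ref{def:filtered-ring}(iv). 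Since you yourself call this ``the genuine technical heart'' and only write ``granting this'', the proposal is not yet a proof.

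The gap is repairable, because the only failures occur when one of $\alpha,\beta$ equals $0$, and those products involve the unit $[0]=1_{\mathcal H}$ and are therefore harmless in substance: for $\alpha,\beta\in\Lambda^+$ with $\alpha\unlhd\lambda$ and $\beta\unlhd\mu$ one does get $\alpha+\beta\unlhd\lambda+\mu$ from $2^\circ$ and transitivity, so what actually holds is the weaker containment $\mathcal H^{\unlhd\lambda}\cdot\mathcal H^{\unlhd\mu}\subset\mathcal H^{\unlhd(\lambda+\mu)}+\mathcal H^{\unlhd\lambda}+\mathcal H^{\unlhd\mu}$. To close the argument you must either weaken Definition~\ref{def:filtered-ring}(iv) to accommodate these unit terms and recheck that the inductions in Lemma~\ref{lem:filtered-graded} and Proposition~\ref{prop:filtered-graded} still go through, or abandon the filtered-ring formalism and prove the spanning statement by a direct induction that treats the factor $[0]$ separately. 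To be clear, this defect is inherited from, not introduced into, the paper's own proof, and your proposal deserves credit for isolating exactly the soft spot; but as written it does not discharge it.
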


\begin{proof}
Clearly, $\mathcal H$ is $\Lambda$-filtered with $\mathcal H^{\unlhd \lambda}=\FF\{[\mu]\,:\,\mu\unlhd \lambda\}$.
In particular, $\mathcal H^{\unlhd 0}=\FF[0]=\FF1_{\mathcal H}$.
Then $\gr_\Lambda\mathcal H$ has a basis $\{\overline{[\lambda]}\,:\,\lambda\in\Lambda\}$ and 
\begin{equation}\label{eq:mult}
\overline{[\lambda]}\cdot\overline{[\mu]}\in\mathbb F^\times\overline{[\lambda+\mu]},
\end{equation}
hence $\overline{[\lambda+\mu]}\in \mathbb F^\times \overline{[\lambda]}\cdot\overline{[\mu]}$. 

Let~$\le$ be any total order on~$\Lambda_{\circ}$. 
Given~$\lambda\in\Lambda$, we can write 
$\lambda=\lambda_1+\cdots+\lambda_r$ with $\lambda_i\in \Lambda_\circ$, $1\le i\le r$ and 
$\lambda_1\le \cdots\le \lambda_r$. By~\eqref{eq:mult} we have
$\overline{[\lambda]}\in\mathbb F^\times \overline{[\lambda_1]}\cdots \overline{[\lambda_r]}$. 
Taking into account that $\mathcal H^{\unlhd \lambda}=\FF[\lambda]$ for $\lambda\in\Lambda_{\circ}\cup\{0\}$, we see that 
all assumptions of Proposition~\ref{prop:filtered-graded} are satisfied. 
\end{proof}

\subsection{Proof of Proposition~\ref{prop:Iso-ordered}}\label{subs:part-ord-iso}
Let~$\cat A$ be a finitary exact category and let~$\lhd$ be the relation on~$(\Iso\cat A)^+$
defined by (cf.~\S\ref{subs:ord-mon-PBW-prop})
$$
[M]\lhd [M^-\oplus M^+]\iff \text{there exists a non-split short exact sequence $\xymatrix@C=2ex{M^-\ar@{>->}[r] &M\ar@{>>}[r] & M^+} $}.
$$
We extend it to~$\Iso\cat A$ by requiring that~$[0]\lhd [M]$ for all~$[M]\in(\Iso\cat A)^+$.
To prove that the transitive closure of~$\unlhd$ is a partial order, we use the following 
obvious fact.
\begin{lemma}\label{lem:order-by-func}
Let~$X$ be a set and $\mathcal R_0\subset (X\times X)\setminus D(X)$ where~$D(X)=\{(x,x)\,:\,x\in X\}$. 
Let~$\mathcal R$ be the transitive closure of~$\mathcal R_0\cup D(X)$ (hence a preorder).
Suppose that for any sequence $\mathbf x=(x_0,\dots,x_n)\subset X^n$ such that~$(x_{i-1},x_i)\in\mathcal R_0$, $1\le i\le n$,
 there exist a partially ordered set $(\mathcal P_{\mathbf x},\prec_{\mathbf x} )$ and 
 a function $f=f_{\mathbf x}:\{x_0,\dots,x_n\}\to\mathcal P_{\mathbf x}$, such that $f(x_0)\prec_{\mathbf x} f(x_1)\prec_{\mathbf x} \cdots
 \prec_{\mathbf x} f(x_n)$. 
 Then~$\mathcal R$ is a partial order on~$X$.
\end{lemma}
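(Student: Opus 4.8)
The plan is to notice that $\mathcal R$ is a preorder for free: it is reflexive because $D(X)\subset\mathcal R_0\cup D(X)$ and a transitive closure contains its generating relation, and it is transitive by construction. Hence the whole content of the lemma is \emph{antisymmetry}, i.e. that there are no two distinct elements $x\ne y$ with both $(x,y)\in\mathcal R$ and $(y,x)\in\mathcal R$. So I would recast the goal as: no ``$\mathcal R_0$-cycle'' through two distinct points exists.

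The first real step is to unwind the transitive closure. Since $\mathcal R_0$ is disjoint from $D(X)$, any relation $(a,b)\in\mathcal R$ with $a\ne b$ is witnessed by a chain $a=z_0,z_1,\dots,z_k=b$ whose consecutive pairs lie in $\mathcal R_0\cup D(X)$; deleting the steps with $z_{i-1}=z_i$ (exactly the $D(X)$-steps) leaves a genuine $\mathcal R_0$-chain from $a$ to $b$, which is nonempty precisely because $a\ne b$. This lets me translate membership in $\mathcal R$ into the concrete chains to which the hypothesis refers.

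Now I would argue by contradiction. Assuming antisymmetry fails, pick distinct $x,y$ related both ways; applying the previous step in each direction and concatenating the two $\mathcal R_0$-chains yields a closed chain $\mathbf x=(x_0,x_1,\dots,x_n)$ with $x_0=x_n=x$, $(x_{i-1},x_i)\in\mathcal R_0$ for all $i$, and $n\ge 1$. This is exactly a sequence of the form required, so the hypothesis furnishes a poset $(\mathcal P_{\mathbf x},\prec_{\mathbf x})$ and a function $f\colon\{x_0,\dots,x_n\}\to\mathcal P_{\mathbf x}$ with $f(x_0)\prec_{\mathbf x}\cdots\prec_{\mathbf x}f(x_n)$. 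Transitivity of the strict order $\prec_{\mathbf x}$, applied along this chain of length $n\ge 1$, gives $f(x_0)\prec_{\mathbf x}f(x_n)$, while $x_0=x_n$ forces $f(x_0)=f(x_n)$, so $f(x_0)\prec_{\mathbf x}f(x_0)$, contradicting irreflexivity. This contradiction yields antisymmetry, completing the proof.

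The one point deserving care --- and the crux of the whole argument --- is that the hypothesis must be invoked for a \emph{closed} chain, and that the contradiction hinges on $f$ being a function on the \emph{set} of values $\{x_0,\dots,x_n\}$ rather than on the indexed sequence, so that the repeated endpoint $x_0=x_n$ collapses to $f(x_0)=f(x_n)$. Everything else is routine bookkeeping about transitive closures; there is no genuine analytic or combinatorial obstacle beyond correctly forming the cycle and applying the given order-embedding to it.
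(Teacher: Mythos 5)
Your proof is correct, and it is precisely the argument the authors have in mind: the paper states Lemma~\ref{lem:order-by-func} as an ``obvious fact'' and gives no proof, so the content is exactly the cycle-breaking reduction you carry out (unwind the transitive closure, discard the diagonal steps, and apply the hypothesis to a closed $\mathcal R_0$-chain, where $f(x_0)=f(x_n)$ contradicts irreflexivity of the strict order $\prec_{\mathbf x}$). Your closing remark correctly identifies the one point that makes this work, namely that $f_{\mathbf x}$ is defined on the set $\{x_0,\dots,x_n\}$ rather than on the indexed sequence.
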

We apply Lemma~\ref{lem:order-by-func} to~$X=\Iso\cat A$ with $\mathcal R_0$ being the relation~$\lhd$.
Consider a sequence $\mathbf x=([M_0],\dots,[M_n])$ with $[M_{i-1}]\lhd[M_i]$, $1\le i\le n$. By definition of~$\lhd$, 
$[M_i]=[M_{i-1}^-\oplus M_{i-1}^+]$
where $\xymatrix@C=2ex{M^-_{i-1}\ar@{>->}[r] &M_{i-1}\ar@{>>}[r] & M^+_{i-1}}$ is a non-split short exact sequence.
Let $M_{\mathbf x}=M_0^+\oplus\cdots\oplus M_n^+$ and let 
$\mathcal P_{\mathbf x}$ be the Grothendieck monoid $\Gamma_{\mathbb Z-\fin }$ of the category $\mathbb Z-\fin$ of 
finite abelian groups. As follows e.g. from Proposition~\ref{prop:profinitary-ordered}, the natural order~$\prec$ on~$\mathbb Z-\fin$
(see~\S\ref{subs:order-gr-mon})
is a partial order. Set 
$$
f_{\mathbf x}([M_i])=|\Ext^1_{\cat A}(M_{\mathbf x},M_i)|,\qquad 0\le i\le n, 
$$
where~$|A|$ denotes the  image of~$A\in\mathbb Z-\fin$ in~$\Gamma_{\mathbb Z-\fin}$. We claim that~$f_{\mathbf x}([M_{i-1}])\prec
f_{\mathbf x}([M_i])$, $1\le i\le n$. 
To prove this claim, we need the following Lemma (cf.~\cite{GP}*{Lemma~2.1})

\begin{lemma}\label{lem:ineq-Z-fin}
Let~$\cat A$ be a finitary exact category. Then for any short exact sequence 
$$
\xymatrix{M^-\ar@{>->}[r]^{f_-} &M\ar@{>>}[r]^{f_+} & M^+}
$$
in~$\cat A$ and any object~$N\in\cat A$ the inequality
\begin{equation}\label{eq:ineq-Z-fin}
|\Ext^1_{\cat A}(N,M)|\preceq |\Ext^1_{\cat A}(N,M^+\oplus M^-)|
\end{equation}
holds in the partially ordered set~$\Gamma_{\mathbb Z-\fin}$. Moreover, if~\eqref{eq:ineq-Z-fin} is an equality
then the natural morphism of abelian groups $\Hom_{\cat A}(N,M)\to \Hom_{\cat A}(N,M^+)$, $h\mapsto 
f^+\circ h$, is surjective.
\end{lemma}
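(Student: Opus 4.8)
The plan is to convert the monoid inequality into an ordinary divisibility of integers and then read it off from the long exact sequence relating $\Hom$ and $\Ext^1$. First I would record the shape of the target monoid. Since $\mathbb Z-\fin$ is an abelian category with the finite length property, $\Gamma_{\mathbb Z-\fin}$ is freely generated by the classes of the simple objects $\ZZ/p\ZZ$ ($p$ prime), and for a finite abelian group $A$ the class $|A|$ records, prime by prime, the multiplicity of $\ZZ/p\ZZ$ in a composition series, i.e. the $p$-adic valuation of $\#A$. Hence $|A|\preceq|B|$ holds exactly when $\#A$ divides $\#B$, and $\#(A\oplus B)=\#A\cdot\#B$. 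Since $\Ext^1_{\cat A}$ is additive in each variable, $|\Ext^1_{\cat A}(N,M^+\oplus M^-)|$ corresponds to the integer $\#\Ext^1_{\cat A}(N,M^-)\cdot\#\Ext^1_{\cat A}(N,M^+)$, so it suffices to prove that $\#\Ext^1_{\cat A}(N,M)$ divides this product.

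Next I would apply the covariant functors $\Hom_{\cat A}(N,-)$ and $\Ext^1_{\cat A}(N,-)$ to the admissible sequence $M^-\rightarrowtail M\twoheadrightarrow M^+$. For an exact category this yields a six-term exact sequence of finite (as $\cat A$ is finitary) abelian groups
\begin{multline*}
0\to\Hom_{\cat A}(N,M^-)\to\Hom_{\cat A}(N,M)\xrightarrow{\alpha}\Hom_{\cat A}(N,M^+)\\
\xrightarrow{\partial}\Ext^1_{\cat A}(N,M^-)\to\Ext^1_{\cat A}(N,M)\xrightarrow{\beta}\Ext^1_{\cat A}(N,M^+),
\end{multline*}
in which $\alpha$ is post-composition with $f_+$. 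Truncating on the right by replacing $\Ext^1_{\cat A}(N,M^+)$ with $\Im\beta$ produces a bounded exact sequence $0\to\cdots\to\Im\beta\to0$, and comparing orders along it yields, after simplification,
\begin{equation*}
\#\Ext^1_{\cat A}(N,M)=\#\Ext^1_{\cat A}(N,M^-)\cdot\frac{\#\Im\beta}{\#\Im\partial}.
\end{equation*}
Now $\Im\partial$ is a subgroup of $\Ext^1_{\cat A}(N,M^-)$, so $\#\Ext^1_{\cat A}(N,M^-)/\#\Im\partial$ is an integer dividing $\#\Ext^1_{\cat A}(N,M^-)$, while $\Im\beta$ is a subgroup of $\Ext^1_{\cat A}(N,M^+)$, so $\#\Im\beta\mid\#\Ext^1_{\cat A}(N,M^+)$. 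Multiplying these two divisibilities gives $\#\Ext^1_{\cat A}(N,M)\mid\#\Ext^1_{\cat A}(N,M^-)\cdot\#\Ext^1_{\cat A}(N,M^+)$, which is precisely~\eqref{eq:ineq-Z-fin}.

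For the equality clause I would examine the boundary case of the same formula. Equality in~\eqref{eq:ineq-Z-fin} forces $\#\Ext^1_{\cat A}(N,M)=\#\Ext^1_{\cat A}(N,M^-)\cdot\#\Ext^1_{\cat A}(N,M^+)$, i.e. $\#\Im\beta=\#\Im\partial\cdot\#\Ext^1_{\cat A}(N,M^+)$; since $\#\Im\beta\le\#\Ext^1_{\cat A}(N,M^+)$ and $\#\Im\partial\ge1$, both inequalities must be equalities, and in particular $\#\Im\partial=1$, i.e. $\partial=0$. By exactness at $\Hom_{\cat A}(N,M^+)$ this gives $\Im\alpha=\ker\partial=\Hom_{\cat A}(N,M^+)$, so $h\mapsto f_+\circ h$ is surjective, as asserted.

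The only genuinely external ingredient is the six-term $\Hom$--$\Ext^1$ exact sequence for a general (not necessarily abelian) exact category, and pinning this down is the step to be careful about: I would invoke it from the standard theory of exact categories (e.g.~\cites{Buh,Kel}), checking that $\Ext^1$ there is the Yoneda/Baer group used to define $\Ext^1_{\cat A}$ and that the connecting map $\partial$ is defined on all of $\Hom_{\cat A}(N,M^+)$. Everything after that is elementary counting in finite abelian groups.
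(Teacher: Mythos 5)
Your proof is correct and follows essentially the same route as the paper: both arguments apply $\Hom_{\cat A}(N,-)$ to the given conflation, truncate the resulting six-term $\Hom$--$\Ext^1$ exact sequence of finite abelian groups, and count, with the equality case in both reducing to the vanishing of the connecting map $\partial$ and hence to the surjectivity of $h\mapsto f_+\circ h$. The only difference is presentational: the paper computes an Euler characteristic in $K_0(\mathbb Z-\fin)$ and uses the embedding $\Gamma_{\mathbb Z-\fin}\hookrightarrow K_0(\mathbb Z-\fin)$, whereas you translate the monoid inequality into divisibility of group orders via the freeness of $\Gamma_{\mathbb Z-\fin}$ on the classes $|\ZZ/p\ZZ|$ --- the resulting identity $\#\Ext^1_{\cat A}(N,M)=\#\Ext^1_{\cat A}(N,M^-)\cdot\#\Im\beta/\#\Im\partial$ is exactly the paper's equation rewritten multiplicatively.
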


\begin{proof}

Since an exact category embeds into an abelian category as a full subcategory closed under extensions (see e.g.~\cite{Buh}),
given a morphism~$f\in\Hom_{\cat A}(A,B)$ and an object~$X\in\cat A$, we have a natural morphism 
of abelian groups $f_*:\Ext^1_{\cat A}(X,A)\to \Ext^1_{\cat A}(X,B)$. 
Moreover, 
the short exact sequence
$$
\xymatrix{M^-\ar@{>->}[r]^{f_-} &M\ar@{>>}[r]^{f_+} & M^+}
$$
gives rise to a complex of finite abelian groups 
\begin{equation}\label{eq:complex-ext}
0\to E_0\xrightarrow{d_0} E_1\xrightarrow{d_1} E_2\xrightarrow{d_2} E_3\xrightarrow{d_3} 0
\end{equation}
where~$E_0=\Im\delta$, $E_1=\Ext^1_{\cat A}(N,M^-)$, $E_2= \Ext^1_{\cat A}(N,M)$, $E_3=\Ext^1_{\cat A}(N,M^+)$,
$d_0$ is the natural embedding of~$\Im\delta$ into~$E_1$,
$d_1=(f_-)_*$ and $d_2=(f_+)_*$. Here~$\delta:\Hom_{\cat A}(N,M^+)\to\Ext^1_{\cat A}(N,M^-)$ is a canonical 
morphism of abelian groups.
The complex~\eqref{eq:complex-ext} is exact everywhere except at~$E_3$, that is
$$
\ker d_0=0,\quad \ker d_1=\Im d_0,\quad \ker d_2=\Im d_1,
$$
that is, its cohmologies~$H^i$, $0\le i\le 2$ satisfy
\begin{equation}\label{eq:cohom}
H^0=H^1=H^2=0,\qquad H^3=E_3/\Im d_2.
\end{equation}
Then, computing the Euler characteristic of~\eqref{eq:complex-ext}  in the Grothedieck group of~$\mathbb Z-\fin$ in two ways 
and applying~\eqref{eq:cohom}, we obtain 
$$
|E_0|-|E_1|+|E_2|-|E_3|=|H^0|-|H^1|+|H^2|-|H^3|=-|H^3|
$$
hence
\begin{equation}\label{eq:cohom-eq}
|E_2|+|\Im\delta|+|H^3|=|E_1|+|E_3|.
\end{equation}
and the following inequality holds in~$\Gamma_{\mathbb Z-\fin}$, which embeds into~$K_0(\mathbb Z-\fin)$
$$
|E_2|\preceq |E_1|+|E_3|,
$$
which is~\eqref{eq:ineq-Z-fin}. If~$|E_2|=|E_1|+|E_3|$ then in particular we must have~$\delta=0$ since~$\Gamma_{\mathbb Z-\fin}$
has the cancellation property. The assertion now follows since the image of the 
natural morphism $\Hom_{\cat A}(N,M)\to\Hom_{\cat A}(N,M^+)$
is the kernel of~$\delta$.
\end{proof}

Lemma~\ref{lem:ineq-Z-fin} immediately implies that~$f_{\mathbf x}([M_{i-1}])\preceq  f_{\mathbf x}([M_i])$. Moreover,
if we have an equality for some~$i$ then for every morphism $g\in\Hom_{\cat A}(M_{\mathbf x},M_i^+)$ there exists 
$h\in\Hom_{\cat A}(M_{\mathbf x},M_i)$ such that~$(f_+)_i\circ h=g$. For~$g=(0,\dots,0,1_{M_i^+},0,\dots,0)$ 
this implies that the short exact sequence 
$$
\xymatrix{M_i^-\ar@{>->}[r]^{(f_-)_i} &M\ar@{>>}[r]^{(f_+)_i} & M^+_i}
$$
splits, which is a contradiction. This proves the claim.

Thus, $f_{\mathbf x}$ satisfies the assumption of Lemma~\ref{lem:order-by-func} and so
the transitive closure of~$\unlhd$ is a partial order.

It remains to prove that $[M]\lhd[N]$, $[M],[N]\in(\Iso\cat A)^+$ implies that $[M\oplus X]\lhd[N\oplus X]$ for all 
$[X]\in\Iso\cat A$. If~$X=0$ there is nothing to do.
Otherwise, it is enough to observe that if a short exact sequence 
$\xymatrix{M^-\ar@{>->}[r]^{f_-} &M\ar@{>>}[r]^{f_+} & M^+}$ is non-split, then so is 
\begin{equation*}
\xymatrix@C=8ex{M^-\oplus X\ar@{>->}[r]^{\left(\begin{smallmatrix}f_-&0\\0&1_X\end{smallmatrix}\right)} &M\oplus X\ar@{>>}[r]^{(f_+\,\,0)} & M^+}.
\end{equation*}
This completes the proof of Proposition~\ref{prop:Iso-ordered}.\qed

\subsection{Proof of Theorem~\ref{thm:PBW-prop-Hall}}\label{pf:PBW}
We are now going to apply the machinery developed in~\S\ref{subs:filt-gr-ring}. We begin
by proving that~$(\Iso\cat A,\lhd)$ is optimal.
\begin{lemma}\label{lem:fin-indec}
Let~$\cat A$ be an exact $\Hom$-finite category. Then every object~$X$ in~$\cat 
A$ is a finite direct sum of indecomposable objects and
the number of indecomposable summands of~$X$ is bounded above by~$|\End_{\cat 
A}X|$.
\end{lemma}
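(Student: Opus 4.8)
The plan is to read a direct-sum decomposition of $X$ off as a complete system of orthogonal idempotents in the endomorphism ring $\End_{\cat A}X$, and then to exploit that $\Hom$-finiteness makes this ring a finite set. Since every exact category is in particular additive, a decomposition $X\cong\bigoplus_{i=1}^n X_i$ into nonzero objects is the same datum as a biproduct: writing $\iota_i\colon X_i\to X$ and $\pi_i\colon X\to X_i$ for the structure maps, the elements $e_i=\iota_i\pi_i\in\End_{\cat A}X$ satisfy $e_i^2=e_i$, $e_ie_j=0$ for $i\ne j$, and $\sum_i e_i=1_X$, and each $e_i$ is nonzero because $X_i\ne 0$.

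First I would observe that pairwise orthogonal nonzero idempotents are automatically distinct: if $e_i=e_j$ with $i\ne j$, then $e_i=e_i^2=e_ie_j=0$, a contradiction. Hence $\{e_1,\dots,e_n\}$ is a subset of size $n$ of the finite set $\End_{\cat A}X$, which gives at once the bound $n\le|\End_{\cat A}X|$ on the number of nonzero summands in \emph{any} direct-sum decomposition of $X$.

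This single estimate yields both assertions. For the existence of a decomposition into indecomposables I would choose a decomposition $X\cong\bigoplus_{i=1}^n X_i$ into nonzero summands with $n$ maximal, which is possible since $n$ is bounded above by $|\End_{\cat A}X|<\infty$. If some $X_i$ were decomposable, splitting it off would produce a decomposition of $X$ with strictly more nonzero summands, contradicting maximality; hence every $X_i$ is indecomposable. The bound on the number of indecomposable summands is then just the inequality $n\le|\End_{\cat A}X|$ from the previous step, applied to this (or any) decomposition of $X$ into indecomposables. The zero object is covered as the empty sum, with $0\le 1=|\End_{\cat A}0|$ summands.

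The only point requiring genuine care is the correspondence between summands and idempotents, which rests on $\cat A$ being additive; everything else is elementary counting. The decisive role of the hypothesis is that $\Hom$-finiteness forces $\End_{\cat A}X$ to be finite, so that the number of pairwise orthogonal nonzero idempotents, and therefore the length of any decomposition, cannot grow without bound. I expect the main (and very mild) obstacle to be merely phrasing the passage to a decomposition with a maximal number of summands cleanly, rather than any substantive difficulty.
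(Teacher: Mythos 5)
Your proof is correct and takes essentially the same route as the paper: both arguments bound the number of nonzero summands in \emph{any} direct-sum decomposition by a counting argument inside the finite set $\End_{\cat A}X$, and then pass to a decomposition with a maximal number of summands, whose maximality forces each summand to be indecomposable. The only (cosmetic) difference is in how the count is performed — you exhibit $s$ pairwise distinct orthogonal nonzero idempotents $e_i=\iota_i\pi_i$, whereas the paper uses the inequality $|\End_{\cat A}X|\ge\sum_{j=1}^{s}|\End_{\cat A}X_j|\ge s$.
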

\begin{proof}
Let~$X$ be a non-zero object in~$\cat A$. Write $X=X_1\oplus\cdots\oplus X_s$ 
for some~$s>0$, where all the~$X_i$ are non-zero. Then
$|\End_{\cat A}X|\ge \sum_{j=1}^s |\End_{\cat A}X_i|\ge s$. Let~$k$ be the 
maximal positive integer~$s$
such that $X$ can be written as a direct sum of~$s$ non-zero objects. The 
maximality of~$k$ immediately implies that each summand
is indecomposable.
\end{proof}
\begin{remark}
It should be noted that the Krull-Schmidt theorem does not have to hold in this 
generality. For example, the full subcategory 
of the category of $\kk$-representations of the quiver $1\to 0\leftarrow 2$ 
with the dimension vector satisfying $\dim_\kk V_1=\dim_\kk V_2$,
is not Krull-Schmidt.
\end{remark}

\begin{corollary}\label{cor:Iso-optimal}
The monoid~$\Iso\cat A$ is generated by~$\Ind\cat A$ and is optimal with respect to~$\unlhd$. 
\end{corollary}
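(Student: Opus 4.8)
The plan is to dispatch the two assertions separately. The generation statement is immediate from Lemma~\ref{lem:fin-indec}: every object of $\cat A$ is a finite direct sum of indecomposables, so with respect to the monoid operation $[M]+[N]=[M\oplus N]$ each class $[X]\in\Iso\cat A$ is a finite sum of classes from $\Ind\cat A$. Hence $\Ind\cat A$ generates $\Iso\cat A$ as a monoid.

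For optimality the key claim is the inclusion $\Ind\cat A\subseteq(\Iso\cat A)_{\min}$, where $(\Iso\cat A)_{\min}$ denotes the set of minimal elements of $(\Iso\cat A)^+$ under $\unlhd$. Granting this, optimality follows at once: the set $(\Iso\cat A)_{\min}$ then contains the generating set $\Ind\cat A$ and therefore generates $\Iso\cat A$, which is exactly the definition of optimality. I stress that only this one inclusion is needed; I make no claim that every minimal class is indecomposable, and indeed a decomposable class $[A\oplus B]$ with $A,B\neq 0$ may well be minimal when $\Ext^1_{\cat A}(A,B)=0=\Ext^1_{\cat A}(B,A)$.

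To establish the claim, fix $[X]\in\Ind\cat A$ and suppose it has a strict predecessor in $(\Iso\cat A)^+$. Since $\unlhd$ is the transitive closure of the one-step relation and $[0]$ has no strict predecessors, a strict predecessor of $[X]$ lying in $(\Iso\cat A)^+$ yields a one-step predecessor: there exist $M,M^-,M^+$ with $[X]=[M^-\oplus M^+]$ and a non-split short exact sequence $M^-\rightarrowtail M\twoheadrightarrow M^+$. Then $X\cong M^-\oplus M^+$, and the indecomposability of $X$ forces $M^-=0$ or $M^+=0$; in either case the admissible monomorphism or epimorphism of the sequence is an isomorphism, so the sequence splits, contradicting non-splitness. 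Hence $[X]$ is minimal, proving the claim. The one point that requires care, and which I would spell out in full, is precisely this reduction from an arbitrary predecessor in the transitive closure to a genuine one-step predecessor; everything else is routine.
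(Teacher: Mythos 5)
Your proof is correct and follows essentially the same route as the paper: generation comes from Lemma~\ref{lem:fin-indec}, and optimality from the observation that a one-step predecessor of $[X]$ forces $X\cong M^-\oplus M^+$ with a non-split (hence non-trivial) extension, so indecomposables are minimal; the paper states this in the contrapositive form (``not minimal implies decomposable'') but the content, including the reduction to a one-step predecessor, is identical.
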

\begin{proof}
The first assertion is immediate from the Lemma. To prove the second, observe that if~$[N]$ is not minimal,
then~$[M]\lhd[N]$ for some~$[M]\in\Iso\cat A$ and so $N$ is decomposable. 
Thus, every~$[X]\in\Ind\cat A$ is minimal with respect to the partial order~$\unlhd$,
hence~$\Iso\cat A$ is generated by its minimal elements.
\end{proof}
\begin{proof}[Proof of Theorem~\ref{thm:PBW-prop-Hall}]
Since~$(\Iso\cat A,\lhd)$ is optimal, the Hall algebra~$H_{\cat A}$ satisfies the assumptions of Corollary~\ref{cor:pre-PBW}
with~$\Lambda=\Iso\cat A$ and $\Lambda_\circ=\Ind\cat A$. Therefore, for any total order on~$\Ind\cat A$, 
ordered monomials on~$\Ind\cat A$ span~$H_{\cat A}$.
Finally, if~$\cat A$ is Krull-Schmidt, $\Iso\cat A$ is freely generated by~$\Ind\cat A$ hence ordered monomials on~$\Ind\cat A$
form a basis of~$H_{\cat A}$.
\end{proof}

\section{Grothendieck monoid of a profinitary category}\label{sec:main thm}

\subsection{Almost simple objects}
We will repeatedly need the following obvious description of the defining 
relation of the Grothen\-dieck monoid.
\begin{lemma}\label{lem:char-gr-mon}
Suppose that $[X]\not=[Y]\in(\Iso\cat A)^+$ and $|X|=|Y|$. Then 
there exist $[X_i]\in(\Iso\cat A)^+$, $0\le i\le r$ and 
$[A_i], [B_i]\in(\Iso\cat A)^+$, $1\le i\le r$ such that  
$[X_0]=[X]$, $[X_r]=[Y]$ and $[X_{i-1}],[X_{i}]\in\uExt^1_{\cat A}(A_i,B_i)$, 
$1\le i\le r$.
\end{lemma}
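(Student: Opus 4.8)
The plan is to read off the required chain directly from the definition of the congruence~$\equiv$ and then to tidy it up so that every object occurring is non-zero. By construction of the Grothendieck monoid, the hypothesis~$|X|=|Y|$ is exactly the assertion $[X]\equiv[Y]$, where~$\equiv$ is the transitive closure of the base relation $[U]\equiv_0[V]$ declaring $[U]$ and $[V]$ equivalent whenever $[U],[V]\in\uExt^1_{\cat A}(M,N)$ for some $M,N\in\cat A$. Thus the first step is simply to unwind this: there is a chain $[X]=[Z_0],[Z_1],\dots,[Z_s]=[Y]$ together with objects $M_i,N_i$ such that $[Z_{i-1}],[Z_i]\in\uExt^1_{\cat A}(M_i,N_i)$ for $1\le i\le s$. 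This already delivers everything except the non-vanishing requirements, which is where the (mild) work lies.

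Next I would record the two elementary facts about~$\uExt^1$ that drive the cleanup. A short exact sequence $N\rightarrowtail Z\twoheadrightarrow 0$ forces $Z\cong N$, so $\uExt^1_{\cat A}(0,N)=\{[N]\}$ and, symmetrically, $\uExt^1_{\cat A}(M,0)=\{[M]\}$; moreover a sequence $B\rightarrowtail 0\twoheadrightarrow A$ forces $A=B=0$, so that $[0]\in\uExt^1_{\cat A}(A,B)$ only when $A=B=0$ (the fact already invoked in the excerpt just before Lemma~\ref{lem:unique inv}). Using these I would prune the chain: call step~$i$ \emph{trivial} if $M_i=0$ or $N_i=0$, in which case the singleton observation gives $[Z_{i-1}]=[Z_i]$, so the repeated entry may be deleted without altering the endpoints (the two neighbours that become adjacent remain jointly in the next surviving $\uExt^1_{\cat A}(M_j,N_j)$). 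Discarding all trivial steps and relabelling yields a chain $[X]=[X_0],\dots,[X_r]=[Y]$ in which each surviving step is witnessed by a pair $(A_i,B_i):=(M_i,N_i)$ with $A_i,B_i\ne 0$, i.e. $[A_i],[B_i]\in(\Iso\cat A)^+$; since $[X]\ne[Y]$ at least one step survives, so $r\ge 1$.

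Finally I would verify that every intermediate class $[X_j]$ is non-zero, which turns out to be automatic. If some $[X_j]=[0]$, pick the least such~$j$; since $[X_0]=[X]\ne[0]$ we have $j\ge1$, and the relation $[X_{j-1}],[X_j]\in\uExt^1_{\cat A}(A_j,B_j)$ with $[X_j]=[0]$ forces $A_j=B_j=0$ by the last fact above, whence $[X_{j-1}]=[0]$ as well, contradicting minimality of~$j$. Hence all $[X_j]\in(\Iso\cat A)^+$ and the chain has the asserted form. The whole argument is a direct translation of the definition of~$\equiv$ into a chain; the only point requiring care, and the place I expect a careless proof to stumble, is the bookkeeping guaranteeing that the witnessing objects $A_i,B_i$ and the intermediate terms $[X_j]$ are all non-zero, which the two singleton facts settle cleanly.
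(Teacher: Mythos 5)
Your proof is correct, and since the paper offers no argument at all for this lemma (it is introduced as an ``obvious description of the defining relation of the Grothendieck monoid''), your direct unwinding of the transitive closure of $\equiv$ is exactly the routine verification the authors are glossing over. The degenerate-case bookkeeping you supply --- pruning steps with $M_i=0$ or $N_i=0$ via the singleton identities $\uExt^1_{\cat A}(0,N)=\{[N]\}$, $\uExt^1_{\cat A}(M,0)=\{[M]\}$, and excluding zero intermediate terms via the fact that $[0]\in\uExt^1_{\cat A}(A,B)$ forces $A=B=0$ (which the paper itself records just before Lemma~\ref{lem:unique inv}) --- is precisely the content needed to get the non-vanishing claims in the statement, and it is handled correctly.
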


\begin{definition}\label{def:almost-simple}
We say that an object~$X\not=0$ in an exact category~$\cat A$ is {\em almost 
simple} if there is no 
non-trivial short exact sequence $Y\rightarrowtail X\twoheadrightarrow Z$ (or, 
equivalently, $[X]\in\uExt^1_{\cat A}(A,B)\implies
\{[A],[B]\}=\{[X],[0]\}$) 
and {\em simple} if it has no proper non-zero subobjects.
\end{definition}
Clearly, in an abelian category these notions coincide. Note that an almost simple 
object is always indecomposable. Let~$S_{\cat A}\subset
\Iso\cat A$ be the set of isomorphism classes of almost simple objects. The 
definition~\eqref{eq:coproductDelta} of comultiplication~$\Delta$
implies that 
$$
F^{AB}_{X}=\begin{cases}
            1,& \{[A],[B]\}=\{[X],[0]\}\\
0,&\text{otherwise},
           \end{cases}
$$
hence
\begin{equation}\label{eq:almost-simpl-prim}
S_{\cat A}\subset \Prim(H_{\cat A}).
\end{equation}

Let~$\Gamma$ be an abelian monoid.
Observe that the elements
of $\Gamma^+\setminus(\Gamma^+ + \Gamma^+)$ are precisely the minimal elements 
of~$\Gamma^+$ in the preorder~$\preceq$ (cf.~\S\ref{subs:order-gr-mon}).

\begin{lemma}\label{lem:min-indec}
Let~$\cat A$ be an exact category. Then the restriction of the canonical 
homomorphism of monoids $\phi_{\cat A}:\Iso\cat A\to \Gamma_{\cat A}$
to~$S_{\cat A}$ is a bijection 
\begin{equation}\label{eq:min-indec}
S_{\cat A}\to
\Gamma_{\cat A}^+\setminus(\Gamma_{\cat A}^++\Gamma_{\cat A}^+).
\end{equation}
In particular, $(H_{\cat A})_\gamma=\Prim(H_{\cat A})_{\gamma}$ for all 
$\gamma\in \Gamma_{\cat A}^+\setminus(\Gamma_{\cat A}^++\Gamma_{\cat A}^+)$
and is one-dimensional.
\end{lemma}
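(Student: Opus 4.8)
The plan is to verify that the map~\eqref{eq:min-indec}, i.e.~the restriction of~$\phi_{\cat A}$ to~$S_{\cat A}$, is well defined (its image consists of minimal elements), injective and surjective, and then to read off the ``in particular'' assertion. The engine for all three properties is the following immediate consequence of Definition~\ref{def:almost-simple}: if~$X$ is almost simple and~$[X]\in\uExt^1_{\cat A}(A,B)$, then~$\{[A],[B]\}=\{[X],[0]\}$, so one of~$A,B$ is zero. I shall also use that $\phi_{\cat A}^{-1}(0)=\{[0]\}$, that is, $|C|=0$ forces $C=0$; this follows because the atomic fact behind Lemma~\ref{lem:unique inv}, namely that $[0]\in\uExt^1_{\cat A}(M,N)$ implies $M=N=0$, shows the congruence class of~$[0]$ to be a singleton.

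For well-definedness I would take~$[X]\in S_{\cat A}$. Then~$X$ is nonzero and (being almost simple) indecomposable, so~$|X|\in\Gamma_{\cat A}^+$. Assuming $|X|=\alpha+\beta$ with $\alpha,\beta\in\Gamma_{\cat A}^+$, surjectivity of~$\phi_{\cat A}$ lets me write $\alpha=|A|$, $\beta=|B|$, and~$A,B$ are then nonzero since $\alpha,\beta\ne 0$. Hence~$A\oplus B$ is decomposable, so~$[X]\ne[A\oplus B]$ while~$|X|=|A\oplus B|$; applying Lemma~\ref{lem:char-gr-mon} to this pair produces a chain whose very first step puts $[X]\in\uExt^1_{\cat A}(A_1,B_1)$ with both~$A_1,B_1$ nonzero, contradicting almost simplicity. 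Thus~$|X|\notin\Gamma_{\cat A}^++\Gamma_{\cat A}^+$. Injectivity runs on the same mechanism: if $[X],[Y]\in S_{\cat A}$ have $|X|=|Y|$ but $[X]\ne[Y]$, then Lemma~\ref{lem:char-gr-mon} again forces $[X]\in\uExt^1_{\cat A}(A_1,B_1)$ with~$A_1,B_1$ nonzero, impossible.

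For surjectivity I would fix~$\gamma\in\Gamma_{\cat A}^+\setminus(\Gamma_{\cat A}^++\Gamma_{\cat A}^+)$ and any object~$Z$ with~$|Z|=\gamma$ (so~$Z\ne 0$), and show~$Z$ is almost simple. If it were not, there would be a short exact sequence $B\rightarrowtail Z\twoheadrightarrow A$ with $\{[A],[B]\}\ne\{[Z],[0]\}$; since $A=0$ or $B=0$ would each force $\{[A],[B]\}=\{[Z],[0]\}$, in fact $A,B\ne 0$, whence $|A|,|B|\in\Gamma_{\cat A}^+$ by the fibre-over-zero fact. As $[Z]\in\uExt^1_{\cat A}(A,B)$ gives $|Z|=|A\oplus B|=|A|+|B|$, this would place $\gamma\in\Gamma_{\cat A}^++\Gamma_{\cat A}^+$, a contradiction. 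So~$Z$ is almost simple and~$\phi_{\cat A}([Z])=\gamma$; note that this argument in fact shows \emph{every} class in~$\Iso\cat A_\gamma$ is almost simple.

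Finally, for the ``in particular'' clause: for~$\gamma$ minimal the last remark together with injectivity gives~$\Iso\cat A_\gamma=\{[X]\}$ for the unique almost simple~$X$ in that class, so~$(H_{\cat A})_\gamma=\QQ[X]$ is one-dimensional; and~$[X]\in S_{\cat A}\subset\Prim(H_{\cat A})$ by~\eqref{eq:almost-simpl-prim}, so~$(H_{\cat A})_\gamma=\Prim(H_{\cat A})_\gamma$. The only genuinely delicate point is the bookkeeping of the definitions --- converting ``$\gamma$ is a sum of two nonzero elements'' into ``an extension realizing that decomposition'' --- and Lemma~\ref{lem:char-gr-mon} is exactly the bridge that makes this translation rigorous; the rest is routine.
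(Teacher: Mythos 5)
Your proof is correct and follows essentially the same route as the paper's: both rest on Lemma~\ref{lem:char-gr-mon} (the chain description of the Grothendieck congruence) to show that the fibre of $\phi_{\cat A}$ over the class of an almost simple object is a singleton, and on the observation that a nonzero object that is not almost simple yields a decomposition of its class as a sum of two elements of $\Gamma_{\cat A}^+$. The only cosmetic differences are that you rerun the chain argument for well-definedness where the paper reuses the singleton-fibre fact via indecomposability, and that you make explicit the (genuinely needed) fact that $|C|=0$ forces $C=0$.
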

\begin{proof}
Lemma~\ref{lem:char-gr-mon} implies that for $[X]\in S_{\cat A}$, 
$|X|=|Y|$ if and only if~$[X]=[Y]$. This shows that
the restriction of $\phi_{\cat A}$ to~$S_{\cat A}$ is injective.
Furthermore, if~$|X|=|Y|+|Z|=|Y\oplus Z|$
for some non-zero $[Y],[Z]$ then $[X]=[Y\oplus Z]$ which is a contradiction 
since~$X$ is indecomposable.
Thus, $\Im\phi_{\cat A}\subset \Gamma_{\cat A}^+\setminus(\Gamma_{\cat 
A}^++\Gamma_{\cat A}^+)$ and so
\eqref{eq:min-indec} is well-defined.
Finally, 
if $[X]\notin S_{\cat A}$, then $[X]\in\uExt^1_{\cat A}(A,B)$ 
with~$|A|,|B|\in\Gamma_{\cat A}^+$, hence $|X|=|A|+|B|\in
\Gamma_{\cat A}^++\Gamma_{\cat A}^+$.
Thus, the preimage of
$\Gamma_{\cat A}^+\setminus(\Gamma_{\cat A}^++\Gamma_{\cat A}^+)$ is contained 
in~$S_{\cat A}$ hence $\phi_{\cat A}|_{S_{\cat A}}$ is surjective.

In particular, $\dim_{\QQ} (H_{\cat A})_\gamma=\#\Iso\cat A_\gamma=1$ for all 
$\gamma\in \Gamma_{\cat A}^+\setminus(\Gamma_{\cat A}^++\Gamma_{\cat A}^+)$.
The equality $(H_{\cat A})_\gamma=\Prim(H_{\cat A})_{\gamma}$ now follows 
from~\eqref{eq:almost-simpl-prim}.
\end{proof}
\begin{remark}
Note that we can have $\dim_\QQ(H_{\cat A})_\gamma=1$ even for 
$\gamma\in\Gamma^+ +\Gamma^+$. For example, if $S$, $S'$
are simple objects with $\Ext^1_{\cat A}(S,S')=\Ext^1_{\cat A}(S',S)=0$ then 
$(H_{\cat A})_{|S|+|S'|}=\QQ[S\oplus S']=
\QQ[S][S']$. However, in that case $\Prim(H_{\cat A})_\gamma=0$.
\end{remark}

\subsection{Proof of 
Proposition~\ref{prop:profinitary-ordered}}\label{pf:prop-profinitary-ordered}
Let
$$
\Gamma^f_{\cat A}=\{\gamma\in\Gamma_{\cat A}\,:\, \#\Iso\cat A_\gamma<\infty\}.
$$
Thus, $\cat A$ is profinitary if~$\Gamma_{\cat A}=\Gamma_{\cat A}^f$. Note, 
however, that~$\Gamma^f_{\cat A}$ need not be a submonoid of~$\Gamma_{\cat A}$.
Since~$\#\Iso\cat A_{\gamma}=1$ for~$\gamma\in\Gamma_{\cat A}^+$ minimal, all 
minimal elements 
of~$\Gamma_{\cat A}$ are contained in~$\Gamma_{\cat A}^f$.
The following Lemma is an immediate consequence of Lemma~\ref{lem:can-prop}
\begin{lemma}\label{lem:lower-set}
Let~$\gamma\in \Gamma^f_{\cat A}$. Then the set 
$$
\{\gamma'\in\Gamma_{\cat 
A}\,:\,\gamma'\preceq \gamma\}
$$
is contained in~$\Gamma^f_{\cat A}$.
\end{lemma}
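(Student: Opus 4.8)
The plan is to deduce finiteness of $\Iso\cat A_{\gamma'}$ directly from finiteness of $\Iso\cat A_\gamma$ by exhibiting an explicit injection between these two Grothendieck classes, with the cancellation property (Lemma~\ref{lem:can-prop}) as the only nontrivial ingredient. First I would unwind the definition of the preorder $\preceq$ (recalled in~\S\ref{subs:order-gr-mon}): from $\gamma'\preceq\gamma$ I obtain $\gamma=\gamma'+\delta$ for some $\delta\in\Gamma_{\cat A}$. Since $\Gamma_{\cat A}$ is by definition the image of $\Iso\cat A$ under the assignment $[M]\mapsto|M|$, I may fix an object $W\in\cat A$ with $|W|=\delta$; this is the only existence statement needed, and it is immediate.

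Next I would consider the assignment $[X]\mapsto[X\oplus W]$. Because $|X\oplus W|=|X|+|W|=\gamma'+\delta=\gamma$, this gives a well-defined map $\Iso\cat A_{\gamma'}\to\Iso\cat A_\gamma$. The key step is injectivity: if $[X\oplus W]=[X'\oplus W]$ with $[X],[X']\in\Iso\cat A_{\gamma'}$, then Lemma~\ref{lem:can-prop} yields $[X]=[X']$. Hence the map is an injection, so $\#\Iso\cat A_{\gamma'}\le\#\Iso\cat A_\gamma<\infty$, which is exactly the statement that $\gamma'\in\Gamma^f_{\cat A}$.

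There is essentially no obstacle here, which matches the excerpt's claim that the Lemma is an immediate consequence of cancellation; the only point to state carefully is that $W$ represents $\delta$ in $\cat A$ and that the coproduct with a fixed object respects the $\Gamma_{\cat A}$-grading additively. The argument does not use profinitarity of $\cat A$ as a whole—only the finiteness of the single class $\Iso\cat A_\gamma$—which is precisely why the conclusion propagates downward along $\preceq$ to an entire lower set and thereby feeds into the subsequent analysis of $\Gamma^f_{\cat A}$.
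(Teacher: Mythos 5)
Your proof is correct and is exactly the argument the paper has in mind: the paper states the Lemma as ``an immediate consequence of Lemma~\ref{lem:can-prop},'' and your injection $[X]\mapsto[X\oplus W]$ with injectivity from the cancellation property is precisely that immediate consequence spelled out. No discrepancies.
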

\noindent
Given~$\gamma\in \Gamma_{\cat A}^f$, let 
$s_\gamma=\max_{[X]\in\Iso{\cat A}_\gamma} |\End_{\cat A}X|$.

Proposition~\ref{prop:profinitary-ordered} is a special case of the following
\begin{proposition}\label{prop:profinitary-ordered-1}
Let~$\cat A$ be a $\Hom$-finite exact category. Then
the restriction of the preorder~$\preceq$ to~$\Gamma_{\cat A}^f$ is a partial 
order. Moreover,
$\Gamma_{\cat A}^f$ is contained in the submonoid of~$\Gamma_{\cat A}$ 
generated by its minimal elements.
\end{proposition}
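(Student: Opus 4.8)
The plan is to introduce a single numerical invariant on $\Gamma_{\cat A}^f$ that is strictly monotone with respect to $\preceq$, and to deduce both assertions from that monotonicity. For $[X]\in\Iso\cat A$ let $\ell(X)$ denote the maximal number of nonzero summands in a direct-sum decomposition of $X$; by Lemma~\ref{lem:fin-indec} this is a finite integer (a maximal such decomposition being into indecomposables). For $\gamma\in\Gamma_{\cat A}^f$ set $r(\gamma)=\max_{[X]\in\Iso\cat A_\gamma}\ell(X)$, which is a finite nonnegative integer precisely because $\Iso\cat A_\gamma$ is finite and each $\ell(X)<\infty$. The elementary fact driving everything is superadditivity, $\ell(X\oplus Y)\ge\ell(X)+\ell(Y)$, so that $\ell(X\oplus W)\ge\ell(X)+1$ for every nonzero $W$. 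Hence whenever $\alpha\in\Gamma_{\cat A}^f$, $\eta\in\Gamma_{\cat A}^+$ and $\alpha+\eta\in\Gamma_{\cat A}^f$, choosing $[X]\in\Iso\cat A_\alpha$ with $\ell(X)=r(\alpha)$ and a nonzero $W$ with $|W|=\eta$ gives $[X\oplus W]\in\Iso\cat A_{\alpha+\eta}$ with $\ell(X\oplus W)\ge r(\alpha)+1$, and therefore
\[
r(\alpha+\eta)\ge r(\alpha)+1>r(\alpha).
\]

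First I would prove that $\preceq$ is antisymmetric on $\Gamma_{\cat A}^f$; since $\preceq$ is already a preorder, only antisymmetry needs argument. Suppose $\alpha,\beta\in\Gamma_{\cat A}^f$ satisfy $\alpha\preceq\beta$ and $\beta\preceq\alpha$, say $\beta=\alpha+\gamma$ and $\alpha=\beta+\delta$; then $\alpha=\alpha+\eta$ with $\eta=\gamma+\delta$. Here $\alpha+\eta=\alpha\in\Gamma_{\cat A}^f$, so the displayed strict inequality is impossible unless $\eta\notin\Gamma_{\cat A}^+$, i.e. $\eta=0$. Since $0$ is the only invertible element of $\Gamma_{\cat A}$ (Lemma~\ref{lem:unique inv}), $\gamma+\delta=0$ forces $\gamma=0$, whence $\beta=\alpha$. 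This gives the first assertion.

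For the second assertion I would argue by strong induction on $r(\gamma)$ that every $\gamma\in\Gamma_{\cat A}^f$ lies in the submonoid generated by the minimal elements of $\Gamma_{\cat A}^+$. If $\gamma=0$, or if $\gamma$ is minimal (equivalently $\gamma\in\Gamma_{\cat A}^+\setminus(\Gamma_{\cat A}^++\Gamma_{\cat A}^+)$, as observed before Lemma~\ref{lem:min-indec}), there is nothing to prove. Otherwise $\gamma=\alpha+\beta$ with $\alpha,\beta\in\Gamma_{\cat A}^+$; by Lemma~\ref{lem:lower-set} both $\alpha,\beta\in\Gamma_{\cat A}^f$, and the displayed inequality yields $r(\alpha)<r(\gamma)$ and $r(\beta)<r(\gamma)$. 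The induction hypothesis expresses each of $\alpha,\beta$ as a sum of minimal elements, and hence so is $\gamma=\alpha+\beta$. The crux of the whole argument, and the only place where the finiteness hypothesis is genuinely used, is the construction of the strictly monotone rank $r$: finiteness of $\Iso\cat A_\gamma$ is exactly what makes $r(\gamma)$ finite and, in the antisymmetry step, lets one re-insert $X\oplus W$ into the \emph{same} finite class $\Iso\cat A_\alpha$ to obtain a contradiction. Everything else is a formal consequence of the superadditivity of $\ell$ and of the absence of nonzero invertibles in $\Gamma_{\cat A}$.
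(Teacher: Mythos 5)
Your proof is correct and follows essentially the same route as the paper's: there, Lemma~\ref{lem:monoid-gen-simple} bounds the number of summands in any decomposition of $\gamma\in\Gamma_{\cat A}^f$ into elements of $\Gamma_{\cat A}^+$ by $s_\gamma=\max_{[X]\in\Iso\cat A_\gamma}|\End_{\cat A}X|$, using Lemma~\ref{lem:fin-indec} together with the finiteness of $\Iso\cat A_\gamma$ --- which is exactly the content of your strictly monotone rank $r(\gamma)$ (indeed $r(\gamma)\le s_\gamma$, and iterating your inequality recovers the same uniform bound on decomposition lengths). The remaining differences are purely organizational: you prove antisymmetry directly and obtain generation by induction on $r$, where the paper takes a maximal-length decomposition and routes antisymmetry through Lemma~\ref{lem:nat-ord}.
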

\begin{proof}
We need the following 
\begin{lemma}\label{lem:monoid-gen-simple}
Let~$\gamma\in\Gamma^f_{\cat A}\setminus\{0\}$. 
Then~$\gamma$ can be written as a sum of finitely many minimal elements of~$\Gamma_{\cat A}^+$
and the number of summands in any such presentation is bounded by~$s_\gamma$.
\end{lemma}
\begin{proof}
The proof is almost identical to that of Lemma~\ref{lem:fin-indec}.
Write $\gamma=\gamma_1+\cdots+\gamma_s$ for some~$\gamma_i\in\Gamma_{\cat A}^+$. 
Take~$X_i\in\cat A$ with $|X_i|=\gamma_i$ and let $X=X_1\oplus\cdots\oplus 
X_s$. 
Then~$s$ cannot exceed the maximal possible number of indecomposable summands 
of~$X$ which, by Lemma~\ref{lem:min-indec},
is bounded above by~$|\End_{\cat A}X|\le s_\gamma$. Let~$k$ be the maximal 
integer~$s$ such that 
$\gamma$ can be written as a sum of~$s$ elements of~$\Gamma^+_{\cat A}$. Then 
the maximality of~$k$ implies that each summand 
is minimal.
\end{proof}

It follows from Lemma~\ref{lem:monoid-gen-simple} that for~$\alpha\in\Gamma_{\cat A}^f$,
$\alpha=\alpha+\beta$ implies that~$\beta=0$. 
Then $\alpha+\beta+\gamma=\alpha$ implies that $\beta+\gamma=0$, hence $\beta=\gamma=0$ since~$0$
is the only invertible element of~$\Gamma_{\cat A}$. The first assertion of the Proposition
now follows from Lemma~\ref{lem:nat-ord},
while the second is immediate from Lemma~\ref{lem:monoid-gen-simple}.
\end{proof}

\subsection{Proofs of Theorems~\ref{th:prof-cof}, \ref{th:prof-ab} and Corollary~\ref{cor:prof-cof}}\label{subs:pf profinitary 
bifinitary}
We begin by proving Theorem~\ref{th:prof-cof}. 
\begin{proof}
Since~$[A\oplus B]\in\uExt^1_{\cat A}(A,B)$, 
Definition~\ref{def:prof-cof-bif} implies that a profinitary category~$\cat A$ is cofinitary if and only 
for any~$\gamma\in\Gamma_{\cat A}$ the set 
\begin{multline*}
\mathcal E_\gamma:=\{([A],[B])\in\Iso\cat A\times\Iso\cat A\,:\, |A|+|B|=\gamma\}\\=\bigcup_{[X]\in\Iso\cat A\,:\, |X|=\gamma}
\{([A],[B])\in\Iso\cat A\times\Iso\cat A\,:\, [X]\in \uExt_{\cat A}^1(A,B)\}
\end{multline*}
is finite. On the other hand, $$
\mathcal E_\gamma=\bigcup_{\alpha,\beta\in\Gamma_{\cat A}\,:\,\alpha+\beta=\gamma}\Iso\cat A_\alpha\times\Iso\cat A_\beta.
$$
Therefore, $\mathcal E_\gamma$ is finite if and only if~$\{(\alpha,\beta)\in\Gamma_{\cat A}\times\Gamma_{\cat A}\,:\,
\alpha+\beta=\gamma\}$ is finite.
\end{proof}

Now we proceed to prove Theorem~\ref{th:prof-ab}. 
Given an object~$X\in\cat A$, an admissible flag on $X$ is a sequence of 
objects $X_0=X$, $X_1,\dots,X_s=0$ together 
with short exact sequences $\xymatrix@C=3ex{X_{i}\ar@{>->}[r]& 
X_{i-1}\ar@{->>}[r]&Y_{i}}$, $1\le i\le s$. An admissible flag is said to be a 
{\em composition series}
if the $Y_i$ are almost simple for all~$1\le i\le s$.

\begin{proposition}\label{prop:comp-ser}
Let $\cat A$ be a $\Hom$-finite exact category.
Suppose that~$\gamma\in\Gamma_{\cat A}^f\setminus\{0\}$. 
Then $X\in\cat A$ with $|X|=\gamma$ admits 
a composition series. Moreover, the length of any composition series of~$X$ 
is bounded above by $s_\gamma$.
\end{proposition}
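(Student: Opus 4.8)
The plan is to derive both assertions from one bookkeeping identity in the Grothendieck monoid. Given any admissible flag $X=X_0,X_1,\dots,X_s=0$ with short exact sequences $X_i\rightarrowtail X_{i-1}\twoheadrightarrow Y_i$, each sequence gives $|X_{i-1}|=|X_i|+|Y_i|$ in $\Gamma_{\cat A}$, so telescoping and using $|X_s|=0$ yields $\gamma=|X|=\sum_{i=1}^s|Y_i|$. I would first treat the length bound. Assume the flag is \emph{proper}, i.e. $Y_i\neq 0$ for all $i$ (a composition series is automatically proper, since almost simple objects are non-zero). Each $|Y_i|$ lies in $\Gamma_{\cat A}^f\setminus\{0\}$, because $|Y_i|\preceq\gamma$ and $\gamma\in\Gamma_{\cat A}^f$ (Lemma~\ref{lem:lower-set}); hence by Lemma~\ref{lem:monoid-gen-simple} it is a sum of at least one minimal element of $\Gamma_{\cat A}^+$. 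Concatenating these presentations exhibits $\gamma$ as a sum of at least $s$ minimal elements, so the bound in Lemma~\ref{lem:monoid-gen-simple} forces $s\le s_\gamma$. In particular every proper admissible flag of $X$ has length at most $s_\gamma$, which already gives the asserted bound for composition series.

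For existence I would run the same maximality argument as in Lemmas~\ref{lem:fin-indec} and~\ref{lem:monoid-gen-simple}. Since proper admissible flags have length at most $s_\gamma$, I may choose one of maximal length, say $X=X_0,\dots,X_s=0$ (with $s\ge 1$ as $X\neq 0$), and it suffices to show every quotient $Y_i$ is almost simple. If some $Y_i$ were not almost simple, there would be a non-trivial short exact sequence $A\rightarrowtail Y_i\twoheadrightarrow B$ with $A,B\neq 0$, and I would refine the flag at the $i$-th step by inserting the preimage of $A$ under the admissible epimorphism $p\colon X_{i-1}\twoheadrightarrow Y_i$, producing a longer proper flag and contradicting maximality.

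The refinement is the one point requiring exact-category care, and is where I expect the main (if routine) obstacle to lie. Concretely, $X'=X_{i-1}\times_{Y_i}A$ is simultaneously the pullback of $A\rightarrowtail Y_i$ along $p$ and the kernel of the composite admissible epimorphism $X_{i-1}\xrightarrow{\,p\,}Y_i\twoheadrightarrow B$. Standard facts about exact categories---pullbacks of admissible epimorphisms are admissible epimorphisms and preserve their kernels, and kernels of admissible epimorphisms are admissible monomorphisms (see~\cite{Buh})---then supply two short exact sequences $X_i\rightarrowtail X'\twoheadrightarrow A$ (since $\ker(X'\to A)=\ker p=X_i$) and $X'\rightarrowtail X_{i-1}\twoheadrightarrow B$. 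Replacing the step $X_i\rightarrowtail X_{i-1}\twoheadrightarrow Y_i$ by these two non-trivial steps lengthens the flag by one, giving the required contradiction; hence a maximal proper flag is a composition series, completing the proof.
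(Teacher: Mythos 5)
Your proof is correct, but it reaches the conclusion by a genuinely different route from the paper's. The paper builds the composition series top-down by induction on the partially ordered set $(\Gamma^f_{\cat A},\preceq)$: if $X$ is not almost simple, it has a proper admissible quotient $X'$ with $|X'|\prec|X|$; the inductive hypothesis gives an almost simple quotient $Y_1$ of $X'$, and composing the two admissible epimorphisms $X\twoheadrightarrow X'\twoheadrightarrow Y_1$ yields the first step of the series with kernel $X_1$ satisfying $|X_1|\prec|X|$, to which induction applies again. The length bound is then read off at the very end from $|X|=|Y_1|+\cdots+|Y_s|$ via Lemma~\ref{lem:monoid-gen-simple}, essentially as in your first paragraph. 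You instead run the classical Jordan--H\"older existence argument: establish the bound first (for all proper admissible flags, slightly more than the statement requires), pick a flag of maximal length, and refine at any quotient that fails to be almost simple. Both arguments rest on the same combinatorial input (Lemmas~\ref{lem:monoid-gen-simple} and~\ref{lem:lower-set}), but the categorical input differs: the paper only ever composes two admissible epimorphisms and takes a kernel, which is immediate from the axioms of an exact category, whereas your refinement step needs the Noether-type fact that $X'=X_{i-1}\times_{Y_i}A$ sits in the two short exact sequences $X_i\rightarrowtail X'\twoheadrightarrow A$ and $X'\rightarrowtail X_{i-1}\twoheadrightarrow B$ --- valid in any exact category \cite{Buh}, but a heavier tool. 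In exchange, your argument gives the length bound for arbitrary proper admissible flags and replaces the well-founded induction on $(\Gamma^f_{\cat A},\preceq)$ by a simple maximality choice, which is already available once the bound is known.
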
 
\begin{proof}
We proceed by induction on the partially order set~$(\Gamma^f_{\cat A},\preceq)$ (see Proposition~\ref{prop:profinitary-ordered-1}). 
If~$\gamma\in\Gamma^f_{\cat A}$ 
is minimal then~$X$ with~$|X|=\gamma$ is almost simple by 
Lemma~\ref{eq:min-indec} hence admits a 
composition series.
Suppose the assertion is established for all~$\gamma'\prec\gamma$ and~$\gamma$ 
is not minimal. Then~$X$ with~$|X|=\gamma$ is not almost simple,
hence there exists a short exact sequence 
$\xymatrix@C=3ex{X''\ar@{>->}[r]& X\ar@{->>}[r]^h&X'}$ with $|X'|,|X''|\prec |X|$. By 
Lemma~\ref{lem:lower-set}, $|X'|\in\Gamma^f_{\cat A}$, hence by the induction hypothesis
there exists a short exact sequence $\xymatrix@C=3ex{Y''\ar@{>->}[r]& X'\ar@{->>}^g[r]&Y}$ with~$Y$ almost 
simple. Let~$Y_1=Y$. Then we have a short exact sequence 
$$
\xymatrix{X_1\ar@{>->}[r]&X\ar@{->>}[r]^{gh}&Y_1}
$$
where $|X_1|\prec |X|$ hence $|X_1|\in\Gamma^f_{\cat A}$. Therefore, $X_1$
admits a composition series by the induction hypothesis, which establishes 
the first assertion of the Lemma.
The second assertion is immediate from 
Lemma~\ref{lem:monoid-gen-simple} 
since $|X|=|Y_1|+\cdots+|Y_s|$.
\end{proof}

\begin{proof}[Proof of Theorem~\ref{th:prof-ab}]
If $\cat A$ is a profinitary and abelian, then the composition series from Proposition~\ref{prop:comp-ser} is
a composition series in the usual sense since all almost simple objects are simple. Theorem~\ref{th:prof-ab} is now immediate.
\end{proof}

Finally, we can prove Corollary~\ref{cor:prof-cof}.
\begin{proof}
Since a full exact subcategory of a cofinitary exact category 
is also cofinitary, to prove~\eqref{cor:prof-cof.a}, it suffices to consider the case when~$\cat A$ is a 
profinitary abelian category. 
Note that the uniqueness of composition factors in an abelian category with the finite length property 
 (see e.g.~\cite{Joy}*{Theorem~2.7}) implies that  
$\Gamma_{\cat A}$ is freely generated by its minimal elements. It remains to 
apply Theorem~\ref{th:prof-cof}. To prove~\eqref{cor:prof-cof.c},
note that by Lemma~\ref{lem:monoid-gen-simple}, $\Gamma_{\cat A}$ is finitely generated if and only if it contains 
finitely many minimal elements~$\gamma_1,\dots,\gamma_n$. 
Again by Lemma~\ref{lem:monoid-gen-simple}, the number 
of decompositions of~$\gamma\in\Gamma_{\cat A}$ as $\gamma=\sum_{i=1}^n c_i \gamma_i$, $c_i\in\mathbb Z_{\ge 0}$ is bounded above 
by $\binom{s_\gamma+n}{n}$, which is the number
of $n$-tuples $(c_1,\dots,c_n)\in\mathbb Z_{\ge 0}^n$ with $\sum_{i=1}^n c_i\le s_\gamma$. The assertion is now immediate from 
Theorem~\ref{th:prof-cof}.
\end{proof}

\section{Coalgebras in tensor categories and proof of main theorem}

\subsection{Quasi-primitive elements and co-ideals}
Let~$\FF$ be a field of characteristic zero and let~$\cat C$ be an $\FF$-linear tensor category.
For any coalgebra $C$ in $\cat C$ denote by $C_0=\operatorname{Corad}_{\cat 
C}(C)$ the sum of all simple finite-dimensional sub-coalgebras of~$C$
in $\cat C$ and refer to it as the {\em coradical} of $C$ in ${\cat C}$. 
Clearly, 
$C_0$ is a sub-coalgebra of $C$ in ${\cat C}$. Denote also 
$$C_1=\QPrim_{\cat C}(C)=\Delta^{-1}(C\otimes C_0+C_0\otimes C)$$ 
and refer to it as the  quasi-primitive set of $C$. Then $C_1$ is a $\cat C$-subobject 
of~$C$. It is well-known (\cite{Swe}*{Corollary~9.1.7}) that
$$
\Delta(C_1)\subset C_1\otimes C_0+C_0\otimes C_1.
$$
In particular, if $C_0=\FF$ then $\QPrim_{\cat C}(C)=\FF\oplus \Prim(C)$.  
More generally, we have the following Lemma which extends a well-known result 
(cf.~\cite{M}*{Theorem 5.2.2}, \cite{Swe}*{\S9.1}). 
\begin{lemma}  
\label{le:coradical} Any coalgebra $C$ in ${\cat C}$  admits an 
increasing coradical filtration by sub-coalgebras $C_k\subset C$ in ${\cat C}$, 
$k\ge 0$, defined by: 
$C_0=\operatorname{Corad}_{\cat C}(C)$, $C_1=\QPrim_{\cat C}(C)$ and 
$$C_k=\Delta^{-1}(C\otimes C_{k-1}+C_{0}\otimes C)$$
for $k>1$. Moreover, $\Delta(C_k)=\sum_{i=0}^k C_i\tensor C_{k-i}$.\qed
\end{lemma}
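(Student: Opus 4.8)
The plan is to adapt the classical construction of the coradical filtration (\cite{Swe}*{\S9.1}, \cite{M}*{Theorem~5.2.2}) to the tensor category~$\cat C$, the key observation being that every operation involved stays inside~$\cat C$. Since $\Delta$ and~$\varepsilon$ are morphisms in~$\cat C$ and $\cat C$ is $\FF$-linear abelian with bi-exact tensor product, the preimage $\Delta^{-1}(X\tensor C+C\tensor Y)$ of a $\cat C$-subobject of~$C\tensor C$ is again a $\cat C$-subobject of~$C$; this is what makes the recursion defining $C_k$ meaningful in~$\cat C$. Accordingly, I would first introduce the \emph{wedge} operation
$$
X\wedge Y:=\Delta^{-1}(X\tensor C+C\tensor Y)
$$
on $\cat C$-subobjects of~$C$ and record its elementary properties: monotonicity in both arguments, $X\wedge C=C=C\wedge Y$, and $X\subset X\wedge Y$ whenever $X$ is a sub-coalgebra (since then $\Delta(X)\subset X\tensor X\subset X\tensor C$).

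The central input is associativity of the wedge. Writing $\Delta_2=(\Delta\tensor\mathrm{id})\circ\Delta=(\mathrm{id}\tensor\Delta)\circ\Delta$, one checks that both $(X\wedge Y)\wedge Z$ and $X\wedge(Y\wedge Z)$ coincide with
$$
\Delta_2^{-1}\big(X\tensor C\tensor C+C\tensor Y\tensor C+C\tensor C\tensor Z\big).
$$
This is the standard computation, and it goes through verbatim in~$\cat C$ because it uses only coassociativity together with exactness of $-\tensor C$ and $C\tensor-$ to commute preimages past tensoring. I would also verify, following \cite{M}*{\S5.2}, that the wedge of two sub-coalgebras in~$\cat C$ is again a sub-coalgebra in~$\cat C$.

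With the wedge established I would set $C_k=C_0\wedge\cdots\wedge C_0$ ($k+1$ factors) and use associativity to identify this with the stated recursion $C_k=C_0\wedge C_{k-1}=\Delta^{-1}(C\tensor C_{k-1}+C_0\tensor C)$, recovering $C_1=\QPrim_{\cat C}(C)$. That each $C_k$ is a sub-coalgebra follows from the previous paragraph, while $C_{k-1}\subset C_k$ follows from $C_{k-1}\subset C_0\wedge C_{k-1}$ since $C_{k-1}$ is a sub-coalgebra. The coalgebra-filtration property $\Delta(C_k)\subset\sum_{i=0}^k C_i\tensor C_{k-i}$ I would then prove by induction on~$k$, feeding the definition $C_k=\Delta^{-1}(C\tensor C_{k-1}+C_0\tensor C)$ and the inductive description of~$\Delta(C_{k-1})$ into coassociativity.

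The main obstacle is exhaustiveness, $C=\bigcup_{k\ge0}C_k$, the only point genuinely sensitive to the categorical setting. Here I would pass to the ordinary coalgebra underlying~$C$ via the forgetful $\FF$-linear tensor functor of~$\cat C$ and invoke the fundamental theorem of coalgebras: every object of~$C$ lies in a finite-dimensional sub-coalgebra~$D$, which is automatically a $\cat C$-subobject because $\Delta$ is a $\cat C$-morphism and tensoring is exact. For finite-dimensional~$D$ the increasing chain $D\cap C_0\subset D\cap C_1\subset\cdots$ stabilizes, and the standard argument that a nonzero quotient coalgebra of~$D$ contains a simple sub-coalgebra (which necessarily meets~$C_0$) forces it to stabilize at~$D$ itself, giving $D\subset C_k$ for some~$k$. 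The one preliminary requiring care is that $\operatorname{Corad}_{\cat C}(C)$, defined as the sum of simple finite-dimensional sub-coalgebras \emph{in}~$\cat C$, really recovers the classical coradical; this holds in the concrete categories of interest (the graded vector spaces~$\cat C_\chi$), where the comultiplication is an honest graded linear map and every construction above automatically respects the grading, so the whole argument reduces to its classical counterpart.
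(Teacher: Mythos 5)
Your proposal is correct and takes the same route the paper itself relies on: Lemma~\ref{le:coradical} is stated in the paper without proof, with only a pointer to \cite{Swe}*{\S9.1} and \cite{M}*{Theorem~5.2.2}, and your wedge construction, its associativity, the inclusion $\Delta(C_k)\subset\sum_{i=0}^k C_i\tensor C_{k-i}$, and the reduction of exhaustiveness to finite-dimensional subcoalgebras are precisely the classical argument of those references transported into~$\cat C$. You also correctly isolate the one genuinely categorical point, namely the comparison of $\operatorname{Corad}_{\cat C}(C)$ with the ordinary coradical, which is indeed all that needs separate checking in the graded categories where the lemma is applied.
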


A coideal in~$C$ is a $\cat C$-subobject~$I$ of~$C$ satisfying $$
\Delta(I)\subset C\tensor I+I\tensor C.
$$
\begin{proposition}\label{pr:coid-intersects-qprim}
Let $C$ be a coalgebra in ${\cat C}$. Then for any non-zero coideal $I$ in 
${\cat C}$ 
one has 
$$
I\cap \QPrim_{\cat C}(C)\ne \{0\}.
$$
\end{proposition}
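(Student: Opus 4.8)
The plan is to run the classical coradical-filtration argument for ``a nonzero coideal meets the quasi-primitives,'' using Lemma~\ref{le:coradical} as the sole structural input. Since $C_0\subseteq C_1=\QPrim_{\cat C}(C)$, it suffices to find a nonzero element of $I$ in the \emph{bottom} of the filtration. First I would choose the minimal $n\ge 0$ with $I\cap C_k\neq\{0\}$ for $k=n$; this exists because $I\neq\{0\}$ and the coradical filtration exhausts $C$. Fix $0\neq x\in I\cap C_n$ and note that $x\notin C_{n-1}$ by minimality. If $n\le 1$ we are already done, as then $I\cap C_1\supseteq I\cap C_n\neq\{0\}$; the entire content of the proof is to rule out $n\ge 2$.

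Second, I would pass to the associated graded coalgebra. By Lemma~\ref{le:coradical} the identity $\Delta(C_k)=\sum_{i=0}^k C_i\tensor C_{k-i}$ makes $\gr C=\bigoplus_{k\ge 0}C_k/C_{k-1}$ a $\ZZ_{\ge 0}$-graded coalgebra in $\cat C$ whose coradical is $\gr_0 C=C_0$. The \emph{coradically graded} property then says that its degree-one quasi-primitive part is $(\gr C)_1=\gr_0 C\oplus\gr_1 C$; equivalently, every nonzero homogeneous element of degree $\ge 2$ has nonvanishing \emph{reduced} coproduct, i.e.\ nonzero image under $\Delta$ in $\bigoplus_{0<i<n}\gr_i C\tensor\gr_{n-i}C$. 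Meanwhile the induced filtration $I\cap C_k$ makes $\gr I=\bigoplus_k (I\cap C_k)/(I\cap C_{k-1})$ a graded subobject, and in fact a graded coideal, of $\gr C$, and by minimality of $n$ it is concentrated in degrees $\ge n$, since $I\cap C_k=\{0\}$ for $k<n$.

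Third, I would play these two facts against each other on the class $\tilde x\in\gr_n C$ of $x$, which is nonzero because $x\notin C_{n-1}$. The coideal property of $\gr I$ gives $\Delta(\tilde x)\in \gr C\tensor\gr I+\gr I\tensor\gr C$; isolating the bidegrees $(i,n-i)$ with $0<i<n$, both tensor factors then have degree strictly between $0$ and $n$, a range in which $\gr I$ vanishes. Hence the reduced coproduct of $\tilde x$ is zero, and the coradically graded property forces $\tilde x\in(\gr C)_1\cap\gr_n C=\{0\}$ for $n\ge 2$, contradicting $x\notin C_{n-1}$. Therefore $n\le 1$, whence $I\cap\QPrim_{\cat C}(C)\neq\{0\}$.

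The main obstacle is not the degree bookkeeping but transporting these vector-space manipulations into the tensor category $\cat C$: one needs $\tensor$ to be exact so that $\gr C$ and $\gr I$ are honest objects of $\cat C$ and so that subobject intersections such as $(\gr C\tensor\gr I+\gr I\tensor\gr C)\cap(\gr_i C\tensor\gr_{n-i}C)$ compute as $\gr_i C\tensor(\gr I)_{n-i}+(\gr I)_i\tensor\gr_{n-i}C$, and that $\gr I$ is genuinely a coideal in $\cat C$. The two categorical inputs that must be secured from the coradical package of Lemma~\ref{le:coradical} are the exhaustiveness $C=\bigcup_k C_k$ and the identification $\operatorname{Corad}_{\cat C}(\gr C)=C_0$; once these are in place, the argument closes.
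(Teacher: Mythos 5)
Your argument is the classical ``pass to the associated graded coalgebra'' proof; it is a genuinely different and heavier route than the paper's, and as written it rests on inputs that Lemma~\ref{le:coradical} does not supply. The paper never leaves $C$: with $k$ minimal such that $I_k:=I\cap C_k\neq\{0\}$ (the same first step as yours), it combines the coideal property of $I$, the identity $\Delta(C_k)=\sum_i C_i\tensor C_{k-i}$ and $I_{k-1}=\{0\}$ to get $\Delta(I_k)\subset C_0\tensor I_k+I_k\tensor C_0$, and then concludes $I_k\subset C_1$ directly from the fact that $C_1=\Delta^{-1}(C\tensor C_0+C_0\tensor C)$ is the \emph{largest} subobject $V$ with $\Delta(V)\subset C_0\tensor V+V\tensor C_0$; hence $k=1$. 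No associated graded object, and no determination of $\operatorname{Corad}_{\cat C}(\gr C)$, is needed.

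The substantive gap in your version is the invocation of the ``coradically graded'' property: you use both $\operatorname{Corad}_{\cat C}(\gr C)=\gr_0C$ and --- what actually does the work --- $\QPrim_{\cat C}(\gr C)\cap\gr_nC=\{0\}$ for $n\ge2$. Neither statement appears in Lemma~\ref{le:coradical}, the second does not follow formally from the first without an additional lemma on graded coalgebras, and both are theorems of essentially the same depth as the proposition you are proving, so citing them unproved is not admissible here. The gap is repairable without them: once the bidegree-$(i,n-i)$ components of $\Delta(\tilde x)$ vanish for $0<i<n$, lift back to $C$ to get $\Delta(x)\in C_0\tensor C_n+C_n\tensor C_0+\sum_{i+j=n-1}C_i\tensor C_j\subset C\tensor C_{n-2}+C_0\tensor C$, whence $x\in C_{n-1}$ by the defining formula for the coradical filtration --- the desired contradiction, using only Lemma~\ref{le:coradical}. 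Note finally that your unproved assertion that $\gr I$ is a coideal of $\gr C$ is precisely where the nontrivial linear algebra hides (intersecting $C\tensor I+I\tensor C$ with $\sum_{i+j=k}C_i\tensor C_j$ using $I\cap C_{k-1}=\{0\}$); this is the same computation that the paper's single displayed containment $\Delta(I_k)\subset C_0\tensor I_k+I_k\tensor C_0$ encapsulates, so the detour through $\gr$ does not actually save that work.
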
 

\begin{proof} 
For each $k\ge 0$ denote $I_k:=I\cap C_k$. If~$I_0\not=\{0\}$ then we are done 
since~$I_0\subset C_0\subset C_1$.
Assume that~$I_0=0$. 
Since~$C_0\subset C_1\subset\cdots$ is a filtration, there exists a unique $k\ge 1$ such that $I_{k-1}=0$ and~$I_k\not=0$. Then
$$\Delta(I_{k})\subset C_0 \otimes I_{k}+I_{k} \otimes C_0 \ .$$
Since $C_1$ is the maximal subobject~$V$ of~$C$ with the 
property~$\Delta(V)\subset C_0\tensor V+V\tensor C_0$,
it follows that $I_{k}\subset C_1$ and thus $k=1$. Therefore, $I_1=I\cap C_1=I\cap\QPrim_{\cat C}(C)\ne 
\{0\}$. 
\end{proof}
\subsection{Invariant pairing}
Let $H_0$ be a bialgebra over~$\FF$ and 
let~$\cat C$ be the category of left $H_0$-comodules. Given $V\in\cat C$, we denote 
the left co-action of~$H_0$ by $\delta_V:V\to H_0\tensor V$ and write, using Sweedler-like notation 
$$
\delta_V(v)=v^{(-1)}\tensor v^{(0)},\qquad v\in V.
$$
The category~$\cat C$ is an $\FF$-linear tensor category 
with 
the tensor product $A\otimes B=A\tensor_\FF B$ of objects $A,B\in {\cat C}$ 
acquiring a left $H_0$-comodule structure via
$$\delta_{A\tensor B}(a\otimes b)=a^{(-1)}b^{(-1)}\otimes a^{(0)}\otimes 
b^{(0)},
$$
for all $a\in A$, $b\in B$. 

Given two objects $A,B$ in ${\cat C}$, we  say that a paring $\langle 
\cdot,\cdot\rangle:A\otimes B\to \FF$ is $H_0$-{\em invariant} if 
$$a^{(-1)}\langle a^{(0)},b\rangle=b^{(-1)}\langle a ,b^{(0)}\rangle$$
for all $a\in A$, $b\in B$.

The following example plays a fundamental role in the sequel. 
\begin{example}\label{ex:monoidal coalg}
Let~$\Gamma$ be an 
abelian monoid. Its 
monoidal algebra 
$H_0=\FF\Gamma$ has a natural coalgebra structure, with the elements of~$\Gamma$ 
being group-like. 
Then a left $H_0$-comodule~$V$ is in fact a $\Gamma$-graded vector space, since 
$V=\bigoplus_{\gamma\in \Gamma} V_\gamma$
where $V_\gamma=\{ v\in V\,:\, \delta_V(v)=\gamma\tensor v\}$. It is easy to see 
that a pairing 
$\lr{\cdot,\cdot}:A\tensor B\to \FF$ is
$H_0$-invariant if and 
only if $\lr{A_\gamma,B_{\gamma'}}=0$, $\gamma\not=\gamma'\in\Gamma$.
\end{example}

\begin{lemma}\label{lem:perp-subobj} Let  $\langle \cdot,\cdot\rangle:A\otimes B\to \FF$ 
be a $H_0$-invariant pairing between objects $A$ and $B$ of ${\cat C}$. Then 
for any subobject $A_0$ of $A$ in~$\cat C$ its right orthogonal complement
$$A_0{}^\perp=\{b\in B\,:\,\langle A_0,b\rangle=0\}$$
is a subobject of $B$ in ${\cat C}$. Likewise, for any sub-object $B_0$ of~$B$ in~$\cat C$ its 
left orthogonal 
complement 
$$
{}^\perp B_0=\{a\in A\,:\, \lr{a,B_0}=0\}
$$
is a subobject of~$A$ in~$\cat C$.
\end{lemma}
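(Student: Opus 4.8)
The plan is to use the characterization of subobjects in the comodule category $\cat C$: a subspace $W\subset B$ is a subobject of $B$ in $\cat C$ precisely when it is an $H_0$-subcomodule, i.e.\ $\delta_B(W)\subset H_0\tensor W$. Thus it suffices to prove $\delta_B(A_0{}^\perp)\subset H_0\tensor A_0{}^\perp$. The only inputs are the $H_0$-invariance of $\langle\cdot,\cdot\rangle$ and the hypothesis that $A_0$ is itself a subobject of $A$, so that $\delta_A(A_0)\subset H_0\tensor A_0$.

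First I would fix $b\in A_0{}^\perp$ and expand $\delta_B(b)=\sum_j g_j\tensor b_j$, choosing the coefficients $g_j\in H_0$ to be linearly independent over $\FF$; this is the device that will let me extract pointwise vanishing from a single identity in $H_0$. Next, for an arbitrary $a_0\in A_0$ I would write down the invariance identity $a_0^{(-1)}\langle a_0^{(0)},b\rangle=b^{(-1)}\langle a_0,b^{(0)}\rangle$. Since $A_0$ is a subcomodule, the expansion $\delta_A(a_0)=\sum_i h_i\tensor a_i$ may be taken with all $a_i\in A_0$, and then $\langle a_i,b\rangle=0$ because $b\in A_0{}^\perp$; hence the left-hand side is $0$. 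The identity therefore reads $\sum_j g_j\langle a_0,b_j\rangle=0$ in $H_0$, and the linear independence of the $g_j$ forces $\langle a_0,b_j\rangle=0$ for every $j$. As $a_0$ ranges over all of $A_0$, this shows each $b_j\in A_0{}^\perp$, whence $\delta_B(b)\in H_0\tensor A_0{}^\perp$, as required.

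The left orthogonal complement ${}^\perp B_0$ is handled by the mirror-image argument: fix $a\in{}^\perp B_0$, expand $\delta_A(a)=\sum_i h_i\tensor a_i$ with the $h_i$ linearly independent, and run the same invariance identity read in the other direction, using $\delta_B(B_0)\subset H_0\tensor B_0$ to make the right-hand side vanish. I do not expect any serious obstacle; the computation is short once the bookkeeping is set up. The single point that requires care — and the only place where the two hypotheses genuinely interact — is the step where subcomodularity of $A_0$ lets one assume $a_0^{(0)}\in A_0$, for it is exactly this that couples the invariance relation to the orthogonality condition and makes the left-hand side collapse; choosing linearly independent coefficients in each Sweedler expansion is then the only other technical nicety.
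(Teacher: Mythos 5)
Your proof is correct and is essentially identical to the paper's: both expand $\delta_B(b)$ with linearly independent coefficients in $H_0$, use the invariance identity together with $\delta_A(A_0)\subset H_0\tensor A_0$ to kill one side, and then extract $\langle a_0,b_j\rangle=0$ from linear independence. The paper likewise dispatches the left orthogonal complement by the symmetric argument.
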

\begin{proof}
We prove the first assertion only, the argument for the second one being 
similar. Given~$b\in A_0{}^\perp$, write $\delta_B(b)=\sum_{i} h_i\tensor b_i$
where the $h_i\in H_0$ are linearly independent and $b_i\in B$. Since the 
pairing is invariant, we have for all $a\in A_0$,
$$
\sum_i h_i \lr{a,b_i}=a^{(-1)}\lr{a^{(0)},b}=0
$$ 
since $\delta_A(a)=a^{(-1)}\tensor a^{(0)}\in H_0\tensor A_0$. Therefore, 
$\lr{a,b_i}=0$ for all~$i$, hence $b_i\in A_0{}^\perp$ and so $\delta_B(b)\in
H_0\tensor A_0{}^\perp$.
\end{proof}

We now prove that an $H_0$-invariant pairing between non-isomorphic simple objects 
in~$\cat C$ must be identically zero. For that purpose, it will be convenient to introduce the dual picture. Let~$H_0^*=\Hom_{\FF}(H_0,\FF)$. Then~$H_0^*$ 
is an associative $\FF$-algebra via $f\cdot g=(f\tensor g)\circ \Delta_{H_0}$ for all~$f,g\in H_0^*$, where $\Delta_{H_0}:H_0\to
H_0\tensor H_0$ is the 
comultiplication on~$H_0$ (hereafter we identify $\mathbb F\tensor_{\mathbb F} V$ with~$V$ via the canonical isomorphism).
Then a left~$H_0$-comodule $V$ is naturally a left $H_0^*$-module via 
$f\triangleright v=(f\tensor 1)\delta_V(v)$, for all $f\in H_0^*$ and~$v\in V$. This yields a fully faithful functor from the category $\cat C$
to the category of left $H_0^*$-modules. In particular, $V\cong V'$ in~$\cat C$ if and only if they 
are isomorphic as $H_0^*$-modules.
If $\lr{\cdot,\cdot}:A\tensor B\to \FF$ is an $H_0$-invariant pairing, then for all~$a\in A$, $b\in B$ and~$f\in H_0^*$ we have 
\begin{equation}\label{eq:form-alg}
\begin{split}
\lr{f\triangleright a,b}&=f(a^{(-1)})\lr{a^{(0)},b}=f(b^{(-1)})\lr{a,b^{(0)}}\\
&=\lr{a,f\triangleright b}.
\end{split}
\end{equation}
Finally, note that~$V$ is a simple $H_0$-comodule if and only if it is simple as a left $H_0^*$-module.
\begin{proposition}\label{prop:simple-orth}
Let $A$ and~$B$ be simple objects in~$\cat C$. Let~$\lr{\cdot,\cdot}:A\tensor B\to \FF$ be a non-zero $H_0$-invariant pairing. 
Then
 $A\cong B$ in~$\cat C$.
\end{proposition}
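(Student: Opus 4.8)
The plan is to transfer the statement to the category of left $H_0^*$-modules and reduce it to the uniqueness of the simple module over a simple Artinian ring. First I would record two preliminary facts. Every simple object of $\cat C$ is finite-dimensional, since every comodule is the directed union of its finite-dimensional subcomodules and a simple one must coincide with a nonzero such subobject; thus $A$ and $B$ are finite-dimensional. Moreover, since the functor from $\cat C$ to the category of left $H_0^*$-modules recalled above is fully faithful and reflects isomorphisms, it suffices to produce an isomorphism of $H_0^*$-modules $A\cong B$. Write $R=H_0^*$. I would then check that the pairing is perfect: by Lemma~\ref{lem:perp-subobj} the left radical ${}^\perp B$ and the right radical $A^\perp$ are subobjects of $A$ and $B$ respectively, and since $\langle\cdot,\cdot\rangle$ is non-zero while $A$ and $B$ are simple, both radicals must vanish.

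The key step is to show that $A$ and $B$ have the same annihilator in $R$. Here I would exploit the self-adjointness $\langle f\triangleright a,b\rangle=\langle a,f\triangleright b\rangle$ established in~\eqref{eq:form-alg}. If $f\in\Ann_R(A)$, then $\langle a,f\triangleright b\rangle=\langle f\triangleright a,b\rangle=0$ for every $a\in A$, so $f\triangleright b\in A^\perp=\{0\}$ for every $b\in B$, i.e. $f\in\Ann_R(B)$; the reverse inclusion follows symmetrically using ${}^\perp B=\{0\}$. Denote by $I$ this common two-sided ideal, so that $A$ and $B$ are both faithful simple modules over $R/I$.

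Finally I would invoke classical structure theory. Since $A$ is finite-dimensional over $\FF$ and faithful over $R/I$, the representation map $R/I\to\End_\FF(A)$ is injective, so $R/I$ is a finite-dimensional, hence Artinian, $\FF$-algebra. Being primitive (it admits the faithful simple module $A$) and Artinian, $R/I$ is semisimple with trivial Jacobson radical and is in fact simple Artinian; consequently it possesses a unique isomorphism class of simple modules. As $A$ and $B$ are both simple $R/I$-modules (for $B$ one uses $I=\Ann_R(B)$), they are isomorphic as $R$-modules, and therefore $A\cong B$ in $\cat C$.

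The main obstacle, and the point that has to be handled carefully, is the passage through the annihilators together with the finiteness input: the self-adjointness of the form is exactly what forces $\Ann_R(A)=\Ann_R(B)$, and the finite-dimensionality of the simple comodules $A$ and $B$ is what makes $R/I$ Artinian and hence guarantees uniqueness of the simple module. Over a general primitive ring two non-isomorphic faithful simple modules can coexist, so this finiteness is genuinely essential rather than a technical convenience.
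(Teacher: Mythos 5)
Your proof is correct and follows essentially the same route as the paper's: both transfer the problem to left $H_0^*$-modules, use the self-adjointness \eqref{eq:form-alg} together with the vanishing of the left and right radicals (via Lemma~\ref{lem:perp-subobj} and simplicity) to compare annihilators, and conclude from the structure of the resulting finite-dimensional quotient algebra, using finite-dimensionality of simple comodules. The only divergence is in the endgame: the paper's Lemma~\ref{lem:annihilators} shows $J_a=\Ann_{H_0^*}A$ for every nonzero $a\in A$ and deduces that $H_0^*/\Ann_{H_0^*}A$ is a division algebra, whereas you prove $\Ann_{H_0^*}A=\Ann_{H_0^*}B$ symmetrically and invoke the Wedderburn fact that a primitive Artinian ring is simple Artinian with a unique simple module --- a marginally more symmetric but equivalent finish.
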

\begin{proof}
Given $a\in A$, let~$J_a=\Ann_{H_0^*} a=\{ f\in H_0^*\,:\,f\triangleright  a=0\}$.
We need the following technical result.
\begin{lemma}\label{lem:annihilators}
Let $A,B$ be objects in~$\cat C$ and let $\lr{\cdot,\cdot}:A\tensor B\to \FF$ be an $H_0$-invariant pairing 
such that ${}^\perp B=0$. If $B$ is simple, then~$J_a\subset \Ann_{H_0^*}B$ for all $a\in A$, $a\not=0$. Moreover,
if~$A$ is also simple, then $J_a=\Ann_{H_0^*}A$.
\end{lemma}
\begin{proof}
Let $a\in A$, $a\not=0$ and take~$f\in J_a$. It follows from~\eqref{eq:form-alg} that 
$0=\lr{f\triangleright a,b}=\lr{a,f\triangleright b}$ for all~$b\in B$. Thus,
$\lr{a,J_a\triangleright B}=0$, hence $a\in {}^\perp( J_a\triangleright B)$. 
Since~${}^\perp B=0$, this implies that $J_a\triangleright B$ is a proper $H_0^*$-submodule of~$B$, hence 
$J_a\triangleright B=0$ by the simplicity of~$B$.

Suppose now that~$A$ is also simple. Then $J_a$ is a maximal left ideal for all~$a\not=0$. 
If~$J_a\not=J_{a'}$ for some~$a,a'\in A$ then $J_a+J_{a'}=H_0^*\ni 1$, hence~$B=0$, which contradicts the simplicity of~$B$. Thus,
$J_a=J_{a'}$ for all $a,a'\in A$ and so $\Ann_{H_0^*}A=\bigcap_{a'\in A}J_{a'}=J_a$.
\end{proof}

Since $A$, $B$ are simple and the form~$\lr{\cdot,\cdot}$ is $H_0$-invariant and non-zero, ${}^\perp B=0$ by Lemma~\ref{lem:perp-subobj}.
Then $\Ann_{H_0^*}A\subset \Ann_{H_0^*}B$ by Lemma~\ref{lem:annihilators}. Let~$R=H_0^*/\Ann_{H_0^*}A$. Then both~$A$ and~$B$ 
are $R$-modules in a natural way and are simple as such.
Moreover, $A\cong B$ as $H_0$-comodules if and only if $A\cong B$ as $R$-modules. Furthermore, by definition of~$R$ and
Lemma~\ref{lem:annihilators} every non-zero element of~$R$
acts on~$A$ by an injective $\FF$-linear endomorphism. Since $A$ is a simple $H_0$-comodule, it is finite dimensional
(see e.g.~\cite{M}*{Corollary~5.1.2}). Thus,
each non-zero element of~$R$ acts on~$A$ by an $\FF$-automorphism. This implies that
$R$ is a division algebra, hence admits a unique, up to an isomorphism, simple finite dimensional module and so
$A\cong B$ as $R$-modules. Therefore, $A\cong B$ as objects in~$\cat C$. 
\end{proof}

Denote by $\cat C^f$ the full subcategory of~$\cat C$ whose objects are direct 
sums of simple comodules with finite multiplicities; thus,
an object $V\in\cat C^f$ can be written as 
$V=\bigoplus_{{\boldsymbol i}\in\boldsymbol I} V_{\boldsymbol i}$ where each 
$V_{\boldsymbol i}$ is a finite direct sum of isomorphic simple subcomodules 
of~$V$ 
hence by~\cite{M1} is finite-dimensional.

\begin{lemma}\label{lem:lem4.14}
 Suppose that $V=\bigoplus_{{\boldsymbol i}\in\boldsymbol I} V_{\boldsymbol i}\in\cat C^f$ admits a $H_0$-invariant bilinear 
form $\lr{\cdot,\cdot}:V\tensor V\to \FF$. 
Then for any subobject~$U$ of~$V$ in~$\cat C$,
$$
U^\perp\supset\bigoplus_{\boldsymbol i\in \boldsymbol I} U_{\boldsymbol i}^\perp,
$$
where $U_{\boldsymbol i}^\perp=\{ v\in V_{\boldsymbol i}\,:\, \lr{U\cap V_{\boldsymbol i}, v}=0\}$.
\end{lemma}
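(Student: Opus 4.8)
The plan is to prove the inclusion by showing that each individual summand $U_{\boldsymbol j}^\perp$ already lies in the full right orthogonal $U^\perp=\{v\in V:\lr{U,v}=0\}$; since $U^\perp$ is a linear subspace, this immediately yields $\bigoplus_{\boldsymbol j\in\boldsymbol I}U_{\boldsymbol j}^\perp\subset U^\perp$. So I fix $\boldsymbol j\in\boldsymbol I$ and $v\in U_{\boldsymbol j}^\perp\subset V_{\boldsymbol j}$, and I must verify that $\lr{u,v}=0$ for every $u\in U$. Writing $u=\sum_{\boldsymbol i}u_{\boldsymbol i}$ according to $V=\bigoplus_{\boldsymbol i}V_{\boldsymbol i}$, the argument reduces to two facts: that the ``off-diagonal'' pairings $\lr{V_{\boldsymbol i},V_{\boldsymbol j}}$ vanish for $\boldsymbol i\ne\boldsymbol j$, and that the component $u_{\boldsymbol j}$ of $u$ actually lies in $U\cap V_{\boldsymbol j}$.

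For the first fact I would appeal to Proposition~\ref{prop:simple-orth}. Each $V_{\boldsymbol i}$ is a finite direct sum $\bigoplus_k S_k$ of copies of a single simple comodule $S_{\boldsymbol i}$, and $S_{\boldsymbol i}\not\cong S_{\boldsymbol j}$ whenever $\boldsymbol i\ne\boldsymbol j$. For simple summands $S\cong S_{\boldsymbol i}$ of $V_{\boldsymbol i}$ and $S'\cong S_{\boldsymbol j}$ of $V_{\boldsymbol j}$, the restriction of $\lr{\cdot,\cdot}$ to $S\tensor S'$ is again $H_0$-invariant (the defining identity restricts because $S,S'$ are subcomodules); were this restriction non-zero, Proposition~\ref{prop:simple-orth} would force $S\cong S'$ and hence $S_{\boldsymbol i}\cong S_{\boldsymbol j}$, a contradiction. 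Summing over all simple summands gives $\lr{V_{\boldsymbol i},V_{\boldsymbol j}}=0$ for $\boldsymbol i\ne\boldsymbol j$, whence $\lr{u,v}=\sum_{\boldsymbol i}\lr{u_{\boldsymbol i},v}=\lr{u_{\boldsymbol j},v}$.

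For the second fact I would use semisimplicity and the compatibility of isotypic decompositions with subobjects. As a subcomodule of the semisimple comodule $V$, the comodule $U$ is itself semisimple, and its $S_{\boldsymbol i}$-isotypic component equals $U\cap V_{\boldsymbol i}$: any simple subcomodule $T\cong S_{\boldsymbol i}$ of $U$ has zero projection to every $V_{\boldsymbol l}$ with $\boldsymbol l\ne\boldsymbol i$ by Schur's lemma, so $T\subset V_{\boldsymbol i}$ and hence $U_{\boldsymbol i}\subset U\cap V_{\boldsymbol i}$; conversely $U\cap V_{\boldsymbol i}$, being a subcomodule of the $S_{\boldsymbol i}$-isotypic $V_{\boldsymbol i}$, is $S_{\boldsymbol i}$-isotypic and thus contained in $U_{\boldsymbol i}$. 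Therefore $U=\bigoplus_{\boldsymbol i}(U\cap V_{\boldsymbol i})$, so $u_{\boldsymbol j}\in U\cap V_{\boldsymbol j}$. Since $v\in U_{\boldsymbol j}^\perp$ annihilates $U\cap V_{\boldsymbol j}$ by definition, we conclude $\lr{u,v}=\lr{u_{\boldsymbol j},v}=0$, as required.

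I expect the genuinely delicate step to be the compatibility $U_{\boldsymbol i}=U\cap V_{\boldsymbol i}$ of the isotypic decompositions, which rests on the fact that a subcomodule of a semisimple $H_0$-comodule is again semisimple together with the Schur-lemma projection argument above; the remaining points (bilinearity and the passage from the pairing on $V_{\boldsymbol i}\tensor V_{\boldsymbol j}$ to its restrictions to simple summands) are routine, and once the compatibility is in place the orthogonality supplied by Proposition~\ref{prop:simple-orth} closes the argument.
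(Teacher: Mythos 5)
Your proposal is correct and follows the same route as the paper: the paper's proof simply invokes Proposition~\ref{prop:simple-orth} to get $\lr{V_{\boldsymbol i},V_{\boldsymbol j}}=0$ for $\boldsymbol i\ne\boldsymbol j$ and declares the rest immediate, while you additionally spell out the two routine points it leaves implicit (reducing from isotypic components to their simple summands, and the compatibility $U=\bigoplus_{\boldsymbol i}(U\cap V_{\boldsymbol i})$ coming from semisimplicity of subcomodules). Both fills are accurate and the argument is complete.
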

\begin{proof}
By Proposition~\ref{prop:simple-orth}, $\lr{V_{\boldsymbol i},V_{\boldsymbol j}}=0$ if $\boldsymbol i\not=\boldsymbol j$. The 
assertion is now immediate.
\end{proof}

\subsection{Quasi-primitive generators}
\begin{definition} 
\label{def:compatible pairing}
Let $(A,\cdot,1)$ be a unital algebra and $(B,\Delta,\varepsilon)$ be a coalgebra in 
${\cat C}$. 
We say that an $H_0$-invariant pairing  $\langle \cdot,\cdot\rangle:A\otimes B\to \FF$ is 
compatible with $(A,\cdot ,1)$ and $(B,\Delta,\varepsilon)$
if 
$$\langle a\cdot a', b\rangle=\langle a\otimes a',\Delta(b)\rangle,\qquad 
\varepsilon(b)=\lr{1,b}
$$
for all $a,a'\in A$, $b\in B$, where the pairing $\lr{\cdot,\cdot}:(A\otimes A)\otimes (B\otimes 
B)\to \FF$ is defined by 
$$\langle a\otimes a',b\otimes b'\rangle=\langle a, b'\rangle \langle 
a',b\rangle.$$   
\end{definition}

The main ingredient in our proof of Theorem~\ref{th:prim generation} is the following result.
\begin{theorem} 
\label{th:Nichols characterization}
Let $A$ be an algebra (denoted by $(A,\cdot,1)$) and a coalgebra (denoted by 
$(A,\Delta,\varepsilon)$) in ${\cat C}^f$. Let
$
\langle\cdot,\cdot\rangle:A\otimes A\to \FF
$ 
be a compatible pairing between $(A,\cdot,1)$ and $(A,\Delta,\varepsilon)$ satisfying
$\lr{a,a}\not=0$ for all $a\in A\setminus\{0\}$.
Then $(A,\cdot,1)$ is generated by $A_1=\QPrim(A,\Delta,\varepsilon)$. 
\end{theorem}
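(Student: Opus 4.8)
The plan is to prove that the subalgebra $A'$ of $(A,\cdot,1)$ generated by $A_1=\QPrim(A,\Delta,\varepsilon)$ is all of $A$, by playing the algebra structure against the coalgebra structure through the non-degenerate pairing. First I would record that $A'$ is a $\cat C$-subobject of $A$: since $A_1$ is a subobject (it is the term $C_1$ of the coradical filtration of Lemma~\ref{le:coradical}) and the multiplication $A\tensor A\to A$ is a morphism in $\cat C$, each image of the iterated multiplication $A_1^{\tensor k}\to A$ is a subobject, and $A'$ is the sum of these over $k\ge 0$. As $A\in\cat C^f$ is semisimple, so is $A'$, and both split along their isotypic components, $A=\bigoplus_{\boldsymbol i} A_{\boldsymbol i}$ and $A'=\bigoplus_{\boldsymbol i}(A'\cap A_{\boldsymbol i})$. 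By Proposition~\ref{prop:simple-orth} the pairing is block-diagonal for this decomposition, i.e. $\lr{A_{\boldsymbol i},A_{\boldsymbol j}}=0$ for $\boldsymbol i\ne\boldsymbol j$, and the hypothesis $\lr{a,a}\ne0$ for $a\ne0$ forces its restriction to each finite-dimensional block $A_{\boldsymbol i}$ to be non-degenerate.

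The heart of the argument is to show that the right orthogonal complement $I:=(A')^\perp=\{b\in A:\lr{A',b}=0\}$ is a coideal. By Lemma~\ref{lem:perp-subobj}, $I$ is already a $\cat C$-subobject, so only the coideal condition remains. For $b\in I$, compatibility of the pairing (Definition~\ref{def:compatible pairing}) gives $\lr{a\tensor a',\Delta(b)}=\lr{a\cdot a',b}$ for all $a,a'\in A'$, and this vanishes because $a\cdot a'\in A'$ while $b\in(A')^\perp$. Thus $\Delta(b)$ lies in the orthogonal of $A'\tensor A'$ with respect to the induced pairing on $A\tensor A$. I would then pass to the isotypic blocks: since the pairing restricts to a perfect pairing on each finite-dimensional $A_{\boldsymbol i}$, the standard finite-dimensional identity $(U'\tensor W')^\perp=U'{}^\perp\tensor W+U\tensor W'{}^\perp$ for subspaces of spaces carrying perfect pairings applies block by block and yields $(A'\tensor A')^\perp\subseteq A\tensor I+I\tensor A$, using that each block orthogonal of $A'\cap A_{\boldsymbol i}$ sits inside $I$. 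Hence $\Delta(I)\subseteq A\tensor I+I\tensor A$. This block-wise tensor-orthogonality step is the main obstacle, since it is exactly where the finite-dimensionality of $\cat C^f$ must be invoked (through Proposition~\ref{prop:simple-orth} and Lemma~\ref{lem:lem4.14}); the twist $\lr{x\tensor y,u\tensor v}=\lr{x,v}\lr{y,u}$ in the induced pairing has to be tracked carefully so that the two block orthogonals genuinely land inside $I$.

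To finish I would argue by contradiction. Suppose $A'\ne A$; then $A'\cap A_{\boldsymbol i}\subsetneq A_{\boldsymbol i}$ for some $\boldsymbol i$, and since the pairing is non-degenerate on the finite-dimensional block $A_{\boldsymbol i}$, its orthogonal there is non-zero. Because that orthogonal is contained in $I$ (Lemma~\ref{lem:lem4.14}), we get $I\ne0$. Proposition~\ref{pr:coid-intersects-qprim} then produces a non-zero element of $I\cap\QPrim(A)=I\cap A_1$. But $A_1\subseteq A'$, so such an element $a$ lies in $A'\cap(A')^\perp$, whence $\lr{a,a}=0$ and therefore $a=0$ by hypothesis, a contradiction. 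Consequently $I=0$, forcing $A'=A$, and $(A,\cdot,1)$ is generated by $A_1=\QPrim(A,\Delta,\varepsilon)$, as claimed.
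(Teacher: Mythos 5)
Your proposal is correct and follows essentially the same route as the paper's proof: the same isotypic decomposition of $A$ and of the subalgebra generated by $\QPrim(A)$, the same block-orthogonality via Proposition~\ref{prop:simple-orth}, the same finite-dimensional identity $(U'\tensor V')^\perp=V'{}^\perp\tensor U^*+V^*\tensor U'{}^\perp$ to show the orthogonal complement is a coideal, and the same contradiction via Lemma~\ref{lem:lem4.14} and Proposition~\ref{pr:coid-intersects-qprim} combined with $\lr{a,a}\ne0$. You even correctly flag the twist in the induced pairing that the paper handles in Lemma~\ref{lem:lin-alg} and equation~\eqref{eq:pij}.
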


\begin{proof} 
Let~$B$ be a $\cat C$-subalgebra of~$A$.
Since~$A\in\cat C^f$ and~$B$ is its subobject, $A=\bigoplus_{\boldsymbol i} A_{\boldsymbol i}$ and
$B=\bigoplus_{\boldsymbol i} B_{\boldsymbol i}$ where $B_{\boldsymbol i}=A_{\boldsymbol i}\cap B$.
By Proposition~\ref{prop:simple-orth}, $\lr{A_{\boldsymbol i},A_{\boldsymbol j}}=0$ for all $\boldsymbol i\not=\boldsymbol j$.
We claim that $B^\perp$ is a coideal of~$A$ in~$\cat C$.

Indeed, 
for any $\boldsymbol i$, $\boldsymbol j$ 
we have 
$$
\{0\}=\lr{B_{\boldsymbol i}\cdot B_{\boldsymbol j},B^\perp}=\lr{B_{\boldsymbol i}\tensor 
B_{\boldsymbol j},\Delta(B^\perp)}.
$$
Thus, 
$\Delta(B^\perp)\subset\bigoplus_{\boldsymbol i,\boldsymbol j} (B_{\boldsymbol i}\tensor B_{\boldsymbol j})^\perp$ 
where $(B_{\boldsymbol i}\tensor B_{\boldsymbol j})^\perp=\{z\in A_{\boldsymbol 
j}\tensor A_{\boldsymbol i}\,:\, \lr{ B_{\boldsymbol i}\tensor B_{\boldsymbol j},z}=0\}$.
We need the following simple fact from linear algebra
 \begin{lemma}\label{lem:lin-alg} Let $U$, $V$ be finite dimensional vector spaces over~$\FF$ and $U'\subset U$, $V'\subset V$
be their subspaces. Then
\begin{enumerate}[{\rm(a)}]
 \item\label{lem:lin-alg.a} $U'\tensor V'=(U'\tensor V)\cap (U\tensor V')$;
\item\label{lem:lin-alg.b} For any subspaces $V_1,V_2$ of~$V$ 
$$
(V_1\cap V_2)^\perp=V_1^\perp+V_2^\perp,\qquad V_1^\perp\cap V_2^\perp=(V_1+V_2)^\perp,
$$
where~$W^\perp=\{ f\in V^*\,:\, f(W)=0\}$ for any subspace~$W\subset V$;
\item\label{lem:lin-alg.c} $
(U'\tensor V')^\perp=V'{}^\perp \tensor U^*+V^*\tensor U'{}^\perp$,
where we canonically identify $(U\tensor V)^*$ with~$V^*\tensor U^*$.
\end{enumerate}
\end{lemma}
\begin{proof}
Parts~\eqref{lem:lin-alg.a} and~\eqref{lem:lin-alg.b} are easily checked. To prove~\eqref{lem:lin-alg.c},
note that $(U'\tensor V)^\perp=V^*\tensor U'{}^\perp$ and~$(U\tensor V')^\perp=V'{}^\perp\tensor U^*$, hence by parts~\eqref{lem:lin-alg.a}
and~\eqref{lem:lin-alg.b}
\begin{equation*}
(U'\tensor V')^\perp=((U'\tensor V)\cap(U\tensor V'))^\perp=(U'\tensor V)^\perp+(U\tensor V')^\perp=V^*\tensor U'{}^\perp+V'{}^\perp\tensor U^*.
\qedhere
\end{equation*}
\end{proof}
Since $A_{\boldsymbol k}$ is finite dimensional and the restriction of~$\lr{\cdot,\cdot}$ to $A_{\boldsymbol k}$ 
is non-degenerate,
we naturally identify $A_{\boldsymbol k}^*$ with~$A_{\boldsymbol k}$
via $a\mapsto f_a$, where $f_a(a')=\lr{a',a}$. Then, applying Lemma~\ref{lem:lin-alg}\eqref{lem:lin-alg.c} with 
$U=A_{\boldsymbol i}$, $V=A_{\boldsymbol j}$, $U'=B_{\boldsymbol i}$ and~$V'=B_{\boldsymbol j}$,
we obtain
\begin{equation}\label{eq:pij}
(B_{\boldsymbol i}\tensor B_{\boldsymbol j})^\perp=
A_{\boldsymbol j}\tensor B^\perp_{{\boldsymbol i}}{}+B^\perp_{{\boldsymbol 
j}}\tensor A_{\boldsymbol i},
\end{equation}
where $B_{\boldsymbol k}^\perp=\{ a\in A_{\boldsymbol k}\,:\, \lr{B_{\boldsymbol k},a}=0\}$.
We conclude that $\Delta(B^\perp)\subset\bigoplus_{\boldsymbol i,\boldsymbol j} (B_{\boldsymbol i}\tensor B_{\boldsymbol j})^\perp\subset
A\tensor B^\perp+B^\perp\tensor A$. 
To complete the proof of the claim, observe that $\varepsilon(B^\perp)=\lr{1,B^\perp}=0$.

Now we complete the proof of Theorem~\ref{th:Nichols 
characterization}.
Let  $B$ be the subalgebra of $A$ in ${\cat C}$ generated by the subobject 
$A_1=\QPrim_{\cat C}(A)$ of~$A$ and suppose that $B\not=A$. Then, by the above claim, the orthogonal complement $I=B^\perp $ 
is a coideal of $A$ in $\cat C$. By Lemma~\ref{lem:lem4.14}, $I\supset \bigoplus_{\boldsymbol i} B_{\boldsymbol i}^\perp\not=\{0\}$
because $B_{\boldsymbol i}\not=A_{\boldsymbol i}$ for some~$\boldsymbol i$.
Therefore, $I\not=\{0\}$ hence $I\cap A_1\not=\{0\}$ by Proposition~\ref{pr:coid-intersects-qprim}. 
Yet $I\cap B=\{0\}$ since
$\lr{x,x}\not=0$ for all $x\in A$, hence
$I\cap A_1=\{0\}$ and we obtain a contradiction. Thus, $B=A$. 
\end{proof}

\subsection{Proof of Theorems~\ref{th:profinitary 
primitives}/\ref{th:prim generation}}\label{pf:main theorem}
Let 
$\FF=\QQ$ and define Green's pairing 
$\langle\cdot,\cdot\rangle:H_{\cat A}\tensor H_{\cat A}\to\QQ$ (cf.~\cite{Green}) by
\begin{equation}
\label{eq:hopf pairing}
\langle[A],[B]\rangle =\frac{\delta_{[A],[B]}}{|\Aut_{\cat A}(A)|}
\end{equation}
for any $[A],[B]\in {\Iso\cat A}$.

Clearly, this pairing is positive definite and symmetric. We extend 
$\langle\cdot,\cdot\rangle$ to a symmetric bilinear form on $H_{\cat A}\otimes H_{\cat 
A}$ by
$$\langle [A]\otimes [B],[C]\otimes 
[D]\rangle=\langle[A],[D]\rangle\langle[B],[C]\rangle$$
for any $[A],[B],[C],[D]\in {\Iso\cat A}$. 

\begin{lemma} 
Let~$\cat A$ be a cofinitary category.
Then \eqref{eq:hopf pairing} is a compatible pairing (in the sense 
of Definition \ref{def:compatible pairing}) between 
the Hall algebra $H_{\cat A}$ and the coalgebra $(H_{\cat 
A},\Delta,\varepsilon)$. 
\end{lemma}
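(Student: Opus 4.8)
The plan is to verify directly the three conditions packaged into Definition~\ref{def:compatible pairing}: the $H_0$-invariance of $\langle\cdot,\cdot\rangle$ with $H_0=\QQ\Gamma_{\cat A}$ as in Example~\ref{ex:monoidal coalg}, the counit compatibility $\varepsilon([C])=\langle 1,[C]\rangle$, and the multiplicative compatibility $\langle [A]\cdot[B],[C]\rangle=\langle [A]\otimes[B],\Delta([C])\rangle$. All three reduce to elementary manipulations of Hall numbers, so there is no deep obstacle; the only point requiring care is to keep track of the transposition built into the extended pairing together with the passage between the Hall numbers $F^C_{A,B}$ and their duals $F^{A,B}_C$.

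First I would dispose of $H_0$-invariance. By Example~\ref{ex:monoidal coalg} it suffices to check that $\langle(H_{\cat A})_\gamma,(H_{\cat A})_{\gamma'}\rangle=0$ whenever $\gamma\ne\gamma'$ in $\Gamma_{\cat A}$. Since $\langle[A],[B]\rangle=\delta_{[A],[B]}/|\Aut_{\cat A}A|$ is nonzero only for $[A]=[B]$, and in that case $|A|=|B|$, the form pairs distinct Grothendieck components trivially. The counit compatibility is equally immediate: $\langle 1,[C]\rangle=\langle[0],[C]\rangle=\delta_{[0],[C]}/|\Aut_{\cat A}0|=\delta_{[0],[C]}=\varepsilon([C])$, using $|\Aut_{\cat A}0|=1$.

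The substance is the multiplicative compatibility. On the left, expanding the Hall product via~\eqref{E:multiplicationHall} and pairing gives
$$
\langle [A]\cdot[B],[C]\rangle=\sum_{[E]\in\Iso\cat A}F_{A,B}^E\,\langle[E],[C]\rangle=\frac{F_{A,B}^C}{|\Aut_{\cat A}C|}.
$$
On the right, I would expand $\Delta([C])=\sum_{[P],[Q]}F^{P,Q}_C\,[P]\otimes[Q]$ from~\eqref{eq:coproductDelta} and use the extended pairing $\langle[A]\otimes[B],[P]\otimes[Q]\rangle=\langle[A],[Q]\rangle\langle[B],[P]\rangle$; the Kronecker deltas select the single term $[P]=[B]$, $[Q]=[A]$, yielding
$$
\langle [A]\otimes[B],\Delta([C])\rangle=\frac{F^{B,A}_C}{|\Aut_{\cat A}A|\,|\Aut_{\cat A}B|}.
$$
Substituting the defining relation $F^{B,A}_C=\dfrac{|\Aut_{\cat A}B|\,|\Aut_{\cat A}A|}{|\Aut_{\cat A}C|}\,F_{A,B}^C$ between the dual and ordinary Hall numbers collapses the right-hand side to $F_{A,B}^C/|\Aut_{\cat A}C|$, which matches the left-hand side.

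Finally I would remark on the role of cofinitarity: by Lemma~\ref{lem:coprod}(b) it guarantees that $\Delta([C])$ actually lies in $H_{\cat A}\otimes H_{\cat A}$ rather than only in its completion, so that $\langle[A]\otimes[B],\Delta([C])\rangle$ is an honest (in fact finite) pairing and $(H_{\cat A},\Delta,\varepsilon)$ is an ordinary coalgebra to which Definition~\ref{def:compatible pairing} applies. Since the computation in any event isolates a single summand, the three verified identities establish that Green's pairing is compatible in the required sense.
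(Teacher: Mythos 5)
Your proposal is correct and follows essentially the same route as the paper: invariance via Example~\ref{ex:monoidal coalg}, and the multiplicative identity by matching $F_{A,B}^C/|\Aut_{\cat A}C|$ with $F^{B,A}_C/(|\Aut_{\cat A}A|\,|\Aut_{\cat A}B|)$ through the defining relation between Hall numbers and their duals, with the transposition in the extended pairing handled correctly. Your explicit verification of the counit condition and the remark on the role of cofinitarity are minor additions the paper leaves implicit.
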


\begin{proof} We abbreviate $\Gamma=\Gamma_{\cat A}$ and let ${\cat C}={\cat 
C}_\Gamma$ be the category of $\Gamma$-graded vector spaces or, equivalently, $\QQ\Gamma$-comodules (cf.~Example~\ref{ex:monoidal coalg}). 
It follows immediately 
from Example~\ref{ex:monoidal coalg} that the 
pairing \eqref{eq:hopf pairing} is $\QQ{\Gamma}$-invariant.

It remains to prove the compatibility in the sense of Definition~\ref{def:compatible pairing}, that is 
$$\langle [A]\cdot [B],[C]\rangle=\langle [A]\otimes [B],\Delta([C])\rangle$$
for all $[A],[B],[C]\in {\Iso\cat A}$. 
Indeed, 
\begin{align*}
\langle[A]\cdot [B],[C]\rangle=\frac{F_{A,B}^C}{|\Aut_{\cat A}(C)|}&= 
\frac{F^{B,A}_C}{|\Aut_{\cat A}(B)|\, |\Aut_{\cat A}(A)|}=\sum_{[A'],[B']}  
F^{B',A'}_C \langle [A],[A']\rangle  \langle [B],[B']\rangle\\
&=\sum_{[B'],[A']}  F^{B',A'}_C \langle [A]\tensor [B],[B']\otimes [A']\rangle
=\langle [A]\otimes [B],\Delta([C])\rangle.\qedhere
\end{align*}

\end{proof}

\begin{proof}[Proof of Theorems~\ref{th:profinitary primitives} and~\ref{th:prim generation}]
Suppose that $\cat A$ is profinitary and cofinitary. Since for each 
$\gamma\in\Gamma=\Gamma_{\cat A}$, $(H_{\cat A})_\gamma$ is finite dimensional and hence
is a finite direct sum of isomorphic simple left $\mathbb Q\Gamma$-comodules, 
$H_{\cat A}\in \cat 
C^f_{\Gamma}$. Then, clearly, $A=H_{\cat A}$
and the pairing~\eqref{eq:hopf pairing} satisfy 
all the assumptions of Theorem~\ref{th:Nichols characterization}. Therefore, 
$H_{\cat A}$ is generated by $A_1=\QPrim(H_{\cat A},\Delta,\varepsilon)$ in 
${\cat C}_{\Gamma}$.

Our next step is to show that~$A_1=\Prim(H_{\cat A},\Delta,\varepsilon)$, which 
gives the first assertion of Theorem~\ref{th:profinitary primitives}. For that, we need 
the following result.
\begin{lemma}\label{lem:subcoalg} Let $C=\bigoplus_{\gamma\in \Gamma} C_{\gamma}$ be a coalgebra in 
the category ${\cat C}_\Gamma$. Assume that for every~$\gamma\in\Gamma^+$, there exists~$h_\gamma\in\mathbb Z_{> 0}$ 
such that $\gamma$ cannot be written as a sum of more than~$h_\gamma$ elements of~$\Gamma^+$.
Then $\operatorname{Corad}_{\cat C}(C)\subset C_{\bf 0}$ where~$\bf0$ is the zero element of~$\Gamma$.
\end{lemma}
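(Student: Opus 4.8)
The plan is to reduce the statement to a property of a single simple sub-coalgebra and then to exploit two features of the grading: that the comultiplication is degree-preserving while the counit is supported in degree~$\mathbf 0$. Since $C$ is a coalgebra in~$\cat C_\Gamma$, the map $\Delta$ is a morphism of $\Gamma$-graded spaces, so $\Delta(C_\gamma)\subset\bigoplus_{\alpha+\beta=\gamma}C_\alpha\otimes C_\beta$, and the counit $\varepsilon\colon C\to\FF$ (with $\FF$ placed in degree~$\mathbf 0$) satisfies $\varepsilon(C_\gamma)=0$ for every $\gamma\ne\mathbf 0$. As $\operatorname{Corad}_{\cat C}(C)$ is by definition the sum of all simple finite-dimensional sub-coalgebras of~$C$ in~$\cat C$, it suffices to prove that every simple sub-coalgebra $D=\bigoplus_\gamma D_\gamma$ of~$C$ in~$\cat C$ is contained in~$C_{\mathbf 0}$; summing over all such $D$ then gives the claim.

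First I would use the hypothesis to rule out nontrivial invertible elements of~$\Gamma$. Suppose $\alpha+\beta=\mathbf 0$ with $\alpha\in\Gamma^+$; then $\beta\ne\mathbf 0$, so $\beta\in\Gamma^+$, and for every $n\ge 1$ the identity $\alpha=\alpha+n(\alpha+\beta)=(n+1)\alpha+n\beta$ exhibits $\alpha$ as a sum of $2n+1$ elements of~$\Gamma^+$, contradicting the bound $h_\alpha$ for $n$ large. Hence $\alpha+\beta=\mathbf 0$ forces $\alpha=\beta=\mathbf 0$, and therefore $(C\otimes C)_{\mathbf 0}=C_{\mathbf 0}\otimes C_{\mathbf 0}$. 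Consequently, for any sub-coalgebra $D$ in~$\cat C$ its degree-zero component $D_{\mathbf 0}=D\cap C_{\mathbf 0}$ satisfies $\Delta(D_{\mathbf 0})\subset(D\otimes D)_{\mathbf 0}=D_{\mathbf 0}\otimes D_{\mathbf 0}$, so $D_{\mathbf 0}$ is itself a sub-coalgebra of~$C$ in~$\cat C$.

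It remains to see that $D_{\mathbf 0}\ne 0$ whenever $D\ne 0$. If $D_{\mathbf 0}=0$, then $\varepsilon$ vanishes on every homogeneous component of~$D$, hence $\varepsilon|_D=0$; but then the counit axiom $(\varepsilon\otimes\mathrm{id})\circ\Delta=\mathrm{id}_D$ forces $D=0$, a contradiction. Thus, for a simple $D$ the nonzero sub-coalgebra $D_{\mathbf 0}$ must equal~$D$ by simplicity, giving $D=D_{\mathbf 0}\subset C_{\mathbf 0}$, as required. The only substantive points are the two just isolated, namely that the hypothesis on~$\Gamma$ precludes invertible elements (so that $C_{\mathbf 0}$ is closed under~$\Delta$ and $D_{\mathbf 0}$ is a sub-coalgebra) and that the grading of~$\varepsilon$ forces the degree-zero part of any nonzero sub-coalgebra to be nonzero. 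Everything else is formal, so I do not anticipate a serious obstacle; a more elaborate alternative, filtering $D$ by the maximal number of $\Gamma^+$-summands of a degree, leads to the same conclusion but is unnecessarily heavier.
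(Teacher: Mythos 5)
Your proof is correct and follows the same overall strategy as the paper's: rule out nontrivial invertible elements of $\Gamma$ using the bound $h_\gamma$, deduce that the degree-$\mathbf 0$ part $D_{\mathbf 0}$ of any graded sub-coalgebra $D$ is itself a sub-coalgebra, show that $D_{\mathbf 0}\neq 0$ whenever $D\neq 0$, and conclude by simplicity. The one place you genuinely diverge is the step ``$D_{\mathbf 0}=0\implies D=0$'': the paper argues that the iterated comultiplication $\Delta^{h_\gamma}$ kills $D_\gamma$ because every decomposition $\gamma=\gamma_0+\cdots+\gamma_{h_\gamma}$ must have some $\gamma_i=\mathbf 0$, and then invokes injectivity of $\Delta^{h_\gamma}$; you instead observe that $\varepsilon$ is a morphism in $\cat C_\Gamma$, hence vanishes on all $D_\gamma$ with $\gamma\neq\mathbf 0$, so $\varepsilon|_D=0$ and the counit axiom $(\varepsilon\otimes\mathrm{id})\circ\Delta=\mathrm{id}$ forces $D=0$. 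Your version is shorter and makes visible that the hypothesis on $h_\gamma$ is used only once, to preclude invertible elements (so that $(C\otimes C)_{\mathbf 0}=C_{\mathbf 0}\otimes C_{\mathbf 0}$); the paper's iterated-comultiplication argument uses the hypothesis a second time but is essentially the same injectivity-from-counit fact in disguise. Both are valid; no gap.
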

\begin{proof}
First, observe that~$\mathbf0$ is the only invertible element of~$\Gamma$, since otherwise $\mathbf0=\alpha+\beta$ for some~$\alpha,\beta\in\Gamma^+$ 
and so $\alpha=(n+1)\alpha+n\beta$ for any~$n\in\mathbb Z_{>0}$, which is a contradiction.
Since for any subcoalgebra $D=\bigoplus_{\gamma\in\Gamma} D_\gamma$ of~$C$ in~$\cat 
C_\Gamma$
$$
\Delta(D_\gamma)\subset \bigoplus_{\gamma',\gamma''\in \Gamma\,:\, 
\gamma=\gamma'+\gamma''} D_{\gamma'}\tensor D_{\gamma''}.
$$
it follows that $\Delta(D_{\bf 0})\subset D_{\bf 0}\tensor D_{\bf 0}$. 
Therefore, $D_{\bf 0}$ is a subcoalgebra of~$D$. 

We claim that~$D=0$ if and only if~$D_{\mathbf 0}=0$.
Indeed, if~$D_{\bf 0}=0$ then, since for the $k$th iterated 
comultiplication~$\Delta^k$ we have
$$
\Delta^{k}(D_\gamma)\subset\sum_{\gamma_0,\dots,\gamma_k\in\Gamma\,:\,\gamma_0+\cdots+\gamma_k=\gamma} D_{\gamma_0}\tensor\cdots\tensor D_{\gamma_k},
$$
it follows that $\Delta^{h_\gamma}(D_\gamma)=0$, since then in each summand we must have~$\gamma_i=0$ for some~$0\le i\le h_\gamma$ by the assumptions of the Lemma.
This implies that~$D_\gamma=0$ for all~$\gamma\in\Gamma$, hence~$D=0$. The converse is obvious.

Thus, if~$D$ is a simple subcoalgebra of~$C$, then $D_{\bf 0}\not=0$ and so~$D=D_{\bf 0}$.
\end{proof}

By Lemma~\ref{lem:monoid-gen-simple}, $\Gamma_{\cat A}$
satisfies the assumptions of Lemma~\ref{lem:subcoalg}, with $h_\gamma\le s_\gamma$,
hence $\operatorname{Corad}_{\cat C}(H_{\cat 
A})=\QQ$ and $\QPrim_{\cat C}(H_{\cat A})=\QQ\oplus \Prim(H_{\cat A})$. 
This proves the first assertion of Theorem~\ref{th:profinitary primitives}. It remains to prove
the second assertion (and thus complete the proof of Theorem~\ref{th:prim generation}), namely, that $\Prim(H_{\cat A})$ is 
a minimal generating space of~$H_{\cat A}$. We need the following result.
\begin{lemma}\label{lem:key-lem-quasi-Nichols}
Suppose that $A$ is both a unital algebra and coalgebra with 
$\Delta(1)=1\tensor 1$. Assume that $A$
admits a compatible pairing $\lr{\cdot,\cdot}:A\tensor A\to\FF$, in the sense 
of Definition~\ref{def:compatible pairing},
such that $\lr{a,1}=\varepsilon(a)$ for all $a\in A$. Let~$V=\Prim(A)$.
Then $1\notin V$ and $\lr{\sum_{k\ge 2} V^{k},\FF\oplus V}=0$.
\end{lemma}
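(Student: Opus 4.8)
The plan is to deduce both assertions from three elementary observations about the counit $\varepsilon$ and the behaviour of the pairing against the unit, and then to expand $\lr{v_1\cdots v_k,\,-}$ using compatibility. First I would record the preliminaries. Applying the counit axiom $(\mathrm{id}\tensor\varepsilon)\circ\Delta=\mathrm{id}$ to $1$ and using $\Delta(1)=1\tensor1$ gives $\varepsilon(1)\cdot1=1$, hence $\varepsilon(1)=1$ (this uses $A\ne0$, which holds in the application $A=H_{\cat A}$). The same axiom applied to a primitive $v\in V$ gives $v+\varepsilon(v)\cdot1=v$, so $\varepsilon(v)=0$; combined with the hypothesis $\lr{a,1}=\varepsilon(a)$ this yields $\lr{v,1}=0$ for all $v\in V$. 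Finally, compatibility together with $\Delta(1)=1\tensor1$ produces the multiplicativity
$$
\lr{a\cdot a',1}=\lr{a\tensor a',\Delta(1)}=\lr{a\tensor a',1\tensor1}=\lr{a,1}\lr{a',1},
$$
and iterating gives $\lr{v_1\cdots v_k,1}=\prod_i\varepsilon(v_i)=0$ as soon as $k\ge1$ with each $v_i\in V$.

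For the statement $1\notin V$, I would argue by contradiction: if $1$ were primitive then $\Delta(1)=1\tensor1+1\tensor1$, which against $\Delta(1)=1\tensor1$ forces $1\tensor1=0$. Applying $\mathrm{id}\tensor\varepsilon$ and using $\varepsilon(1)=1$ then gives $1=0$, contradicting $A\ne0$. This is also exactly what legitimizes interpreting $\FF\oplus V$ in the conclusion as a genuine internal direct sum $\FF1\oplus V$.

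For the pairing assertion it suffices, by bilinearity, to verify $\lr{v_1\cdots v_k,1}=0$ and $\lr{v_1\cdots v_k,w}=0$ for all $k\ge2$ and $v_1,\dots,v_k,w\in V$. The first identity is precisely the multiplicativity computation above. For the second I would peel off the leading factor, set $y=v_2\cdots v_k$, and apply compatibility with $\Delta(w)=w\tensor1+1\tensor w$:
$$
\lr{v_1\,y,\,w}=\lr{v_1\tensor y,\,w\tensor1+1\tensor w}=\lr{v_1,1}\lr{y,w}+\lr{v_1,w}\lr{y,1}.
$$
Both summands vanish, since $\lr{v_1,1}=\varepsilon(v_1)=0$ and $\lr{y,1}=\lr{v_2\cdots v_k,1}=0$ (a product of at least one primitive), and this completes the argument.

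The individual computations are routine, so there is no substantial obstacle; the only genuine point to watch is the appeal to $A\ne0$ (equivalently $1\ne0$). It is this nondegeneracy of the unit that underlies $\varepsilon(1)=1$ and $1\tensor1\ne0$, and it is therefore indispensable both for ruling out $1\in V$ and for reading $\FF\oplus V$ as a direct sum.
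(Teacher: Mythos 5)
Your proposal is correct and follows essentially the same route as the paper: both establish that $\varepsilon=\lr{\cdot\,,1}$ is multiplicative via $\Delta(1)=1\tensor 1$, deduce $\lr{V^k,\FF}=0$, and then kill $\lr{V^k,V}$ for $k\ge 2$ by pairing a product against $\Delta(w)=w\tensor 1+1\tensor w$ (the paper states this for general $x,y\in\ker\varepsilon$ rather than for monomials $v_1,\,v_2\cdots v_k$, which is an immaterial difference). Your explicit contradiction argument for $1\notin V$ is a small addition the paper leaves implicit (there it follows from $\varepsilon(v)=0\neq\varepsilon(1)$), and your flagged reliance on $1\neq 0$ is a fair, if routine, point.
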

\begin{proof}
Since $v\in V$ is primitive, $\varepsilon(v)=0$. Furthermore, we show that 
$\varepsilon:A\to \FF$ is a
homomorphism of algebras. Indeed, given $a,a'\in A$, we have
$$
\varepsilon(aa')=\lr{aa',1}=\lr{a\tensor a',\Delta(1)}=\lr{a\tensor a',1\tensor 
1}=\lr{a',1}\lr{a,1}=\varepsilon(a)\varepsilon(a').
$$
This immediately implies that $\varepsilon(V^\ell)=0$ and $\lr{V^\ell,V^0}=0$, 
$\ell>0$.
Finally, let~$v\in V$ and~$x,y\in\ker\varepsilon$. Then
\begin{equation}\label{eq:orth}
\langle xy, v\rangle=\langle x\tensor y,\Delta(v)\rangle=\langle x\tensor 
y,v\otimes 1+1\otimes v\rangle=
\langle y,v\rangle \varepsilon(x)+\varepsilon(y)  \langle x,v\rangle=0.
\end{equation}
Let~$\ell>1$. Since~$V^\ell\subset V\cdot V^{\ell-1}$ and $V^k\subset\ker\varepsilon$
for all~$k>0$, it follows that~$\lr{V^\ell,V}=0$, $\ell>1$.
\end{proof}
Let $V_{\cat A}=\Prim(H_{\cat A})$ and $(H_{\cat A})_{>1}=\sum_{r\ge 2} V_{\cat 
A}^r$.
By Lemma~\ref{lem:key-lem-quasi-Nichols}, $\lr{ (H_{\cat A})_{>1},\QQ\oplus 
V_{\cat A}}=0$. 
Since the pairing $\lr{\cdot,\cdot}$ on~$H_{\cat A}$ is symmetric positive 
definite, $(H_{\cat A})_{>1}\cap (\QQ\oplus V_{\cat A})=\{0\}$
hence 
the sum $(\mathbb Q\oplus V_{\cat A})+(H_{\cat A})_{>1}$
is direct. This proves the second assertion of Theorem~\ref{th:profinitary 
primitives} and completes the proof of Theorem~\ref{th:prim generation}.
\end{proof}

\subsection{Proof of Corollary~\ref{cor:prim-gen} and estimates for primitive elements}\label{subs:estimate}
\begin{proof}[Proof of Corollary~\ref{cor:prim-gen}]
Let~$R\subset H_{\cat A}^+:=\ker\varepsilon$ be a generating space for~$H_{\cat A}$. Then~$(H_{\cat A}^+)^\ell=\sum_{k\ge \ell} R^k$, $\ell\ge 1$.
Taking~$R=\QQ\Iso\cat A$ (Theorem~\ref{th:PBW-property}) and $R=\Prim(H_{\cat A})$ (Theorem~\ref{th:prim generation}) 
we conclude that $P=(H_{\cat A}^+)^2=\sum_{k\ge 2}\Prim(H_{\cat A})^k=\sum_{k\ge 2} (\QQ\Iso\cat A)^{k}$.
On the other hand, $H_{\cat A}^+=\Prim(H_{\cat A})+P$ and~$P\cap \Prim(H_{\cat A})=\{0\}$ by Lemma~\ref{lem:key-lem-quasi-Nichols}.
Therefore, 
$H_{\cat A}^+=\Prim(H_{\cat A})\oplus P$. 
The graded version is immediate.
\end{proof}

\begin{proof}[Proof of Proposition~\ref{prop:PBW-prec} and Lemma~\ref{lem:prim}] 
We need the following 
obvious fact from linear algebra.
\begin{lemma}
Let~$U$ be a finite dimensional $\FF$-vector space and $U_1,U_1',U_2\subset U$ be its subspaces such that 
$U=U_1+U_2=U_1'+U_2$. If~$U_1\cap U_2=\{0\}$ then $\dim_\FF U_1=\dim_\FF U_1'-\dim_\FF(U_1'\cap U_2)$. 
\end{lemma}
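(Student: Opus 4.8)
The plan is to reduce the claim to two applications of the Grassmann dimension formula $\dim_\FF(V+W)=\dim_\FF V+\dim_\FF W-\dim_\FF(V\cap W)$, which holds for any pair of subspaces of a finite-dimensional space. Since $U$ is finite-dimensional, all the dimensions occurring below are finite and this formula is available throughout.

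First I would use the hypothesis $U_1\cap U_2=\{0\}$. Together with $U=U_1+U_2$ this means the sum $U_1+U_2$ is direct, so the Grassmann formula collapses to
$$
\dim_\FF U=\dim_\FF U_1+\dim_\FF U_2,
$$
and hence $\dim_\FF U_1=\dim_\FF U-\dim_\FF U_2$. Next I would apply the same formula to the second decomposition $U=U_1'+U_2$, which is \emph{not} assumed to be direct:
$$
\dim_\FF U=\dim_\FF U_1'+\dim_\FF U_2-\dim_\FF(U_1'\cap U_2).
$$
Comparing the two displays, the term $\dim_\FF U_2$ cancels and I obtain
$$
\dim_\FF U_1=\dim_\FF U-\dim_\FF U_2=\dim_\FF U_1'-\dim_\FF(U_1'\cap U_2),
$$
which is exactly the asserted equality.

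There is essentially no obstacle here beyond correctly invoking finite-dimensionality so that each dimension is a well-defined nonnegative integer; the entire content is that $\dim_\FF U_2$ enters with the same coefficient in both expansions of $\dim_\FF U$ and therefore drops out when they are equated.
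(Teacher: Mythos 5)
Your proof is correct: two applications of the Grassmann dimension formula, one to the direct decomposition $U=U_1\oplus U_2$ and one to the (possibly non-direct) decomposition $U=U_1'+U_2$, followed by cancelling $\dim_\FF U_2$, gives exactly the claimed identity. The paper states this lemma as an ``obvious fact from linear algebra'' and supplies no proof, so there is nothing to compare against; your argument is the standard one that the authors evidently had in mind.
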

Taking into account Corollary~\ref{cor:prim-gen},
we apply this Lemma with~$U=(H_{\cat A})_\gamma$, $U_1'=\QQ\Ind\cat A_\gamma$, $U_2=P_\gamma$ and~$U_1=\Prim(H_{\cat A})_\gamma$
and obtain
$$
\dim_\QQ \Prim(H_{\cat A})_\gamma=\#\Ind\cat A_\gamma-\dim_\QQ (P_\gamma\cap \QQ\Ind\cat A_\gamma),
$$
which yields Proposition~\ref{prop:PBW-prec}. To prove Lemma~\ref{lem:prim}, note that $\QQ(\Iso\cat A\setminus \Ind\cat A)=
(\QQ\Ind\cat A)^\perp$. Thus, $\Prim(H_{\cat A})_\gamma\cap \QQ(\Iso\cat A\setminus \Ind\cat A)\
\subset (\QQ\Ind\cat A_\gamma)^\perp\cap P_\gamma^\perp=(\QQ\Ind\cat A_\gamma+P_\gamma)^\perp=(H_{\cat A})_\gamma^\perp=0$,
by Lemma~\ref{lem:lin-alg}\eqref{lem:lin-alg.b} and Corollary~\ref{cor:prim-gen}.
\end{proof}

\section{Proof of Theorem~\ref{th:hereditary nichols}}

\subsection{Diagonally braided categories}
We say that a bialgebra $H_0$ is co-quasi-triangular if it has a skew Hopf 
self-pairing ${\mathcal R}:H_0\otimes H_0\to \QQ$. 
Let ${\cat C}$ be the category of left $H_0$-comodules. This category is 
braided via the commutativity constraint $\Psi_{U,V}:U\otimes V\to V\otimes U$ 
for all objects $U,V$ of ${\cat C}$ defined by:
$$\Psi_{U,V}(u\otimes v)={\mathcal R}(u^{(-1)},v^{(-1)}) \cdot v^{(0)}\otimes 
u^{(0)}$$
for all $u\in U$, $v\in V$, where we used the Sweedler-like notation for the 
coactions: $\delta_U(u)= u^{(-1)}\otimes u^{(0)}$, $\delta_V(v)= v^{(-1)}\otimes 
v^{(0)}$. We will write~$\cat C_{\mathcal R}$ to emphasize that~$\cat C$ is a braided category.

\begin{remark} The category ${\cat C}_\chi$ introduced in 
Lemma~\ref{lem:bichar-category} is equivalent to the category of 
$H_0$-comodules, 
where $H_0=\QQ \Gamma$ is the  monoidal algebra of $\Gamma$ and ${\mathcal 
R}|_{\Gamma\times \Gamma}=\chi$.
\end{remark}

Our present aim is to prove the following result.
\begin{theorem}\label{prop:nichols-sub}
Let $B$ be a bialgebra in~$\cat C_{\mathcal R}$. 
\begin{enumerate}[{\rm(a)}]
 \item\label{prop:nichols-sub.a} The space $V=\Prim(B)$ is a subobject of~$B$ 
in~$\cat C_{\mathcal R}$;
\item\label{prop:nichols-sub.b} Suppose that~$B$ admits a compatible pairing, in the 
sense of Definition~\ref{def:compatible pairing},
such that $\lr{b,1}=\varepsilon(b)$ and $\lr{b,b}\not=0$ for all $b\in 
B\setminus\{0\}$. Then
the canonical inclusion $V\hookrightarrow B$ extends to an injective 
homomorphism 
\begin{equation}\label{eq:nichols-embed}
\mathcal B(V)\to B
\end{equation} 
of bialgebras in~$\cat C_{\mathcal R}$. In particular, if~$B$ is generated by~$V$,
then~\eqref{eq:nichols-embed} is an isomorphism.
\end{enumerate}
\end{theorem}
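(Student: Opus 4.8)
The plan is to treat the two parts separately: part~\eqref{prop:nichols-sub.a} is a direct computation with the comodule structure, while for part~\eqref{prop:nichols-sub.b} I would realise the subbialgebra of~$B$ generated by~$V$ as a Nichols algebra, the nondegenerate compatible pairing doing the essential work. For~\eqref{prop:nichols-sub.a}, recall that since $B$ is a coalgebra in~$\cat C_{\mathcal R}$ the comultiplication $\Delta\colon B\to B\tensor B$ and the unit $\FF\to B$ are morphisms of $H_0$-comodules; in particular the coaction on the unit object is trivial, so $\delta_B(1)=1_{H_0}\tensor 1$. Writing $\delta_B(v)=\sum_i h_i\tensor v_i$ with the $h_i\in H_0$ linearly independent and applying the comodule-morphism identity $\delta_{B\tensor B}\circ\Delta=(\mathrm{id}\tensor\Delta)\circ\delta_B$ to a primitive~$v$, the two sides become
\[\sum_i h_i\tensor\Delta(v_i)=\sum_i h_i\tensor(v_i\tensor 1+1\tensor v_i).\]
Linear independence of the $h_i$ forces each $v_i$ to be primitive, so $\delta_B(V)\subset H_0\tensor V$; that is, $V=\Prim(B)$ is a subobject of~$B$ in~$\cat C_{\mathcal R}$.

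For~\eqref{prop:nichols-sub.b} I would first extend the inclusion $V\hookrightarrow B$ to a bialgebra morphism. By the universal property of the tensor algebra $T(V)$ as the free algebra on the object~$V$ in~$\cat C_{\mathcal R}$, the inclusion extends uniquely to an algebra morphism $\pi\colon T(V)\to B$ in~$\cat C_{\mathcal R}$; since the generators lie in~$V$ and are primitive in both $T(V)$ and~$B$, the two algebra maps $\Delta_B\circ\pi$ and $(\pi\tensor\pi)\circ\Delta_{T(V)}$ agree on generators and hence coincide, so $\pi$ is in fact a morphism of bialgebras. Let $C=\pi(T(V))$ be the subbialgebra of~$B$ generated by~$V$, carrying the restriction of $\lr{\cdot,\cdot}$, which still satisfies $\lr{c,c}\ne 0$ for $c\ne 0$. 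The strategy is to show that $C$ is a Nichols algebra of~$V$ in the sense of Definition~\ref{defn:nichols}; then uniqueness of Nichols algebras (\cite{AS}*{Proposition~2.2}) identifies $C$ with $\mathcal B(V)$, and the composite $\mathcal B(V)\xrightarrow{\sim}C\hookrightarrow B$ is the required injective homomorphism~\eqref{eq:nichols-embed}, injectivity being automatic because $C$ is literally a subalgebra of~$B$. When $B$ is generated by~$V$ one has $C=B$ and the map is an isomorphism.

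To recognise $C$ as a Nichols algebra I must produce the $\ZZ_{\ge 0}$-grading (the tensor degree), with $C_0=\FF$ and $C_1=V$, and establish $\Prim(C)=C_1$. For the grading I would compare the pairing $(x,y)\mapsto\lr{\pi(x),\pi(y)}$ on $T(V)$, which is a compatible bialgebra pairing by multiplicativity of~$\pi$ and compatibility of $\lr{\cdot,\cdot}$ on~$B$, with the canonical graded bialgebra pairing on $T(V)$ determined by $\lr{\cdot,\cdot}|_V$. Both are compatible pairings extending the \emph{same} pairing on the generating space~$V$, and a compatible pairing on $T(V)$ is determined by its values on~$V$ (peel the product off the first argument via $\lr{a a',b}=\lr{a\tensor a',\Delta b}$ and induct on degree), so the two coincide; in particular $\lr{\pi(\cdot),\pi(\cdot)}$ is homogeneous for the tensor grading. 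Since $\lr{\cdot,\cdot}$ is nondegenerate on~$C$, this shows $\ker\pi$ equals the radical of the canonical pairing, a homogeneous bi-ideal, so that $C\cong T(V)/\ker\pi$ inherits the grading with $C_0=\FF$ and $C_1=V$ (using $\ker\pi\cap V=0$). By Lemma~\ref{lem:subcoalg} the coradical of~$C$ is~$\FF$, whence $\QPrim_{\cat C}(C)=\FF\oplus\Prim(C)$.

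It remains to prove $\Prim(C)=V$, the inclusion $V\subset\Prim(C)$ being clear and $\Prim(C)$ being graded as $C$ is. For a homogeneous primitive~$p$ of degree~$n$, degrees $n=0,1$ give $p\in\FF1\cap\Prim(C)=0$ and $p\in C_1=V$ respectively; for $n\ge 2$ I would argue $p=0$ as follows. Homogeneity of the pairing gives $\lr{C_m,p}=0$ for all $m\ne n$, while $C_n$ lies in $(\ker\varepsilon)^2$, so the orthogonality~\eqref{eq:orth} from the proof of Lemma~\ref{lem:key-lem-quasi-Nichols} (products of augmentation-zero elements pair trivially with primitives) yields $\lr{C_n,p}=0$; hence $\lr{C,p}=0$, and taking the second argument equal to~$p$ gives $\lr{p,p}=0$, forcing $p=0$ by nondegeneracy. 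Thus $\Prim(C)=C_1=V$ and $C$ is a Nichols algebra of~$V$, completing the argument. The main obstacle is exactly this identity $\Prim(C)=V$ together with the grading it rests on: it is here that the hypothesis $\lr{c,c}\ne 0$ is indispensable, and the comparison of $\lr{\pi(\cdot),\pi(\cdot)}$ with the canonical graded pairing on $T(V)$ (equivalently, the homogeneity of $\ker\pi$) is the technical heart of the proof; the alternative of deducing injectivity of $C\twoheadrightarrow\mathcal B(V)$ directly from Proposition~\ref{pr:coid-intersects-qprim} and $\QPrim_{\cat C}(C)=\FF\oplus V$ runs into the same point.
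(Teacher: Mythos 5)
Your part (a) is exactly the paper's argument, and your overall strategy for (b) — identify the subbialgebra $C$ generated by $V$ with $\mathcal B(V)$ using the nondegenerate compatible pairing — is also the paper's. But the technical core of your (b) has a gap. You claim the pulled-back pairing $(x,y)\mapsto\lr{\pi(x),\pi(y)}$ on $T(V)$ coincides with the canonical graded pairing because ``a compatible pairing on $T(V)$ is determined by its values on $V$''. The peeling identity $\lr{aa',b}=\lr{a\tensor a',\Delta(b)}$ only reduces the \emph{first} argument, so what it actually shows is that a compatible pairing is determined by its restriction to $V\tensor T(V)$ (together with $\lr{1,\cdot}=\varepsilon$). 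For the two pairings to agree there you would need $\lr{\pi(v),\pi(w_1\cdots w_n)}=0$ for $n\ge 2$, i.e.\ the orthogonality $\lr{V,V^n}=0$ with the product sitting in the \emph{second} slot. Definition~\ref{def:compatible pairing} is one-sided: it relates multiplication in the first tensor factor to $\Delta$ in the second, so the computation~\eqref{eq:orth} yields $\lr{V^n,V}=0$ but says nothing about $\lr{V,V^n}$, and anisotropy does not force symmetry (an upper-triangular anisotropic form over $\QQ$ need not be symmetric). The same unavailable direction reappears at the end, where you invoke $\lr{C_1,p}=0$ for a homogeneous primitive $p$ of degree $n\ge 2$. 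If you were willing to assume the pairing symmetric (as Green's pairing is in the application), your argument would close, but the theorem as stated does not assume this.

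Both uses can be repaired, and the repair is essentially what the paper does. First, prove only the upper-triangular orthogonality $\lr{V^{\ell},V^{k}}=0$ for $\ell>k$, by lexicographic induction on $(k,\ell)$ with base case Lemma~\ref{lem:key-lem-quasi-Nichols}: writing $V^{\ell}=V\cdot V^{\ell-1}$ and using $\Delta(V^{k})\subset\sum_{i}V^{k-i}\tensor V^{i}$, every term $\lr{V,V^{i}}\lr{V^{\ell-1},V^{k-i}}$ dies either because $\lr{V,1}=\varepsilon(V)=0$ (the case $i=0$) or because $\lr{V^{\ell-1},V^{k-i}}=0$ by the inductive hypothesis (the cases $i\ge1$, since $k-i<\ell-1$). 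Together with $\lr{b,b}\ne0$ this already forces the sum $\sum_{k}V^{k}$ to be direct: pair the top-degree component of a putative relation against the relation itself. That is all you need for $\ker\pi$ to be homogeneous and for $C=\bigoplus_{k}V^{k}$ to be graded with $C_0=\FF$ and $C_1=V$. Second, the identity $\Prim(C)=V$ is free and needs no graded argument: $C$ is a subcoalgebra of $B$ containing $V$, so $\Prim(C)=\Prim(B)\cap C=V$. With these two substitutions your proof of (b) is complete and matches the paper's.
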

\begin{proof}
Part~\eqref{prop:nichols-sub.a} is a special case of the following
simple fact.
\begin{lemma}
If $C$ is a coalgebra in $\cat C_{\mathcal R}$ with unity, then 
 $V:=\Prim(C)$ is a subobject of $C$ in $\cat C_{\mathcal R}$.
 \end{lemma}

 \begin{proof} Denote by $\delta_C:C\to H_0\tensor C$ the left co-action of~$H_0$ on~$C$.
 All we have to show is that $\delta_C(V)\subset H_0\otimes 
 V$. Fix a basis $\{b_i\}$ of $H_0$ and let $v\in \Prim(C)$. Write 
 $$\delta_C(v)=\sum_i b_i\otimes v_i,\qquad v_i\in C.$$
 Since $\Delta:C\to C\otimes C$ is a morphism of left $H_0$-comodules,
 $$(1\otimes \Delta)\circ \delta_C(v)=\delta_C(v\otimes 1)+\delta_C(1\otimes 
 v).$$
 Taking into account that $\delta_C(1)=1\otimes 1$, we obtain
 $$\sum_i b_i\otimes \Delta(v_i)=\sum_i b_i\otimes v_i\otimes 1+\sum_i 
 b_i\otimes 1\otimes v_i\ ,$$
 which implies that
 $$\Delta(v_i)= v_i\otimes 1+ 1\otimes v_i,$$
that is, $v_i\in V$
 for all $i$. 
 \end{proof}

Now we prove~\eqref{prop:nichols-sub.b}. Denote by $B'$ the subalgebra of $B$ 
generated by $V=\Prim(B)$. 
It is sufficient to show that $B'=\mathcal B(V)$.  
We need the following result.
\begin{proposition}\label{prop:graded-alg}
$B'=\bigoplus_{k\ge 0} V^k$, hence $B'$ is a graded algebra. 
\end{proposition}
\begin{proof}
We prove that $\lr{V^\ell,V^k}=0$ for all $0\le k<\ell$ by  induction on the 
pairs $(k,\ell)$, $k<\ell$ ordered lexicographically. The induction base for 
$k=0,1$ is established in Lemma~\ref{lem:key-lem-quasi-Nichols}. Now,
fix~$\ell>2$ and suppose that $\langle V^s,V^r\rangle=0$ for all $r<s<\ell$. 
Let~$1<k<\ell$. Since $\Delta$ is a homomorphism of algebras,
$$
\Delta(V^k)\subset (V\tensor 1+1\tensor V)^k\subset \sum_{i=0}^k V^{k-i}\tensor 
V^i,
$$
hence
\begin{align*}
\langle V^\ell,V^k\rangle&\subset \langle V\tensor 
V^{\ell-1},\Delta(V^k)\rangle \subset \sum_{i=0}^k\langle V\otimes 
V^{\ell-1},V^{k-i}\otimes V^{i}\rangle\\
&=\sum_{i=0}^k\langle V,V^i\rangle\langle 
V^{\ell-1},V^{k-i}\rangle=\langle V,V\rangle \langle 
V^{\ell-1},V^{k-1}\rangle=\{0\}
\end{align*}
by the inductive hypothesis.
It remains to show that the sum $\sum_{k\ge 0}V^k$ is direct, which is an 
immediate consequence of the following obvious fact.
\begin{lemma}
Let~$U_i$, $i\in \mathbb Z_{\ge 0}$, be subspaces of an $\FF$-vector space~$U$ 
with a bilinear form 
$\lr{\cdot,\cdot}:U\tensor U\to \FF$ such that $\lr{U_j,U_i}=0$ if~$j>i$ and 
$\lr{u,u}\not=0$ for all~$u\in U\setminus\{0\}$.
Then the sum $\sum_{i} U_i$ is direct.\qed
\end{lemma}
\noindent
This completes the proof of Proposition~\ref{prop:graded-alg}.
\end{proof}
Since~$B'_0=\QQ$ 
and~$B'_1=V=\Prim(B')=\Prim(B)$, $B'$ is the Nichols algebra of~$V$ 
by Definition~\ref{defn:nichols}. Theorem~\ref{prop:nichols-sub} is proved.
\end{proof}

\subsection{Proof of Theorem~\ref{th:hereditary nichols}}\label{pf:hereditary 
nichols}
We need the following reformulation of celebrated Green's theorem 
(\cite{Green}) for Hall algebras (see also~\cite{Walker}).
\begin{proposition}  
\label{pr:Hopf structures}
Let ${\cat A}$ be a finitary and cofinitary hereditary abelian category. Then
the Hall algebra $H_{\cat A}$ is a bialgebra in ${\cat C}_{\chi_{\cat A}}$ with 
the coproduct $\Delta$ given by \eqref{eq:coproductDelta} and the counit 
$\varepsilon$ given by \eqref{eq:counit}.
\end{proposition}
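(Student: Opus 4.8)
The plan is to verify directly that $\Delta$ and $\varepsilon$ are morphisms of algebras in the braided tensor category $\cat C_{\chi_{\cat A}}$; once this is done, the remaining bialgebra axioms are supplied by Proposition~\ref{P:multiplicationHall} and Lemma~\ref{lem:coprod}, the cofinitarity hypothesis guaranteeing that $\Delta$ lands in the honest tensor square $H_{\cat A}\tensor H_{\cat A}$ rather than in its completion. First I would dispose of the counit: since $F_{A,B}^{0}\ne 0$ forces $A=B=0$, we get $\varepsilon([A]\cdot[B])=\sum_{[C]}F_{A,B}^{C}\delta_{[0],[C]}=F_{A,B}^{0}=\varepsilon([A])\varepsilon([B])$, so $\varepsilon$ is multiplicative, and $\varepsilon(1)=1$ is immediate.

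The substance of the statement is that $\Delta$ is multiplicative. I would begin by recording the multiplication on $H_{\cat A}\tensor H_{\cat A}$ induced by the braiding $\Psi$ of Lemma~\ref{lem:bichar-category}: on homogeneous elements it reads
$$
([A]\tensor[B])\cdot([C]\tensor[D])=\chi_{\cat A}(|B|,|C|)\,[A][C]\tensor[B][D].
$$
Expanding both sides of the desired identity $\Delta([M][N])=\Delta([M])\cdot\Delta([N])$ in the basis $\{[X]\tensor[Y]\}$ of the tensor square and comparing the coefficient of a fixed $[X]\tensor[Y]$, the claim reduces to the numerical identity
$$
\sum_{[C]}F_{M,N}^{C}\,F^{X,Y}_{C}=\sum_{[S_1],[T_1],[S_2],[T_2]}\chi_{\cat A}(|T_1|,|S_2|)\,F^{S_1,T_1}_{M}\,F^{S_2,T_2}_{N}\,F^{X}_{S_1,S_2}\,F^{Y}_{T_1,T_2},
$$
which is exactly Green's formula. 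Here cofinitarity is what makes every sum finite, so that the comparison takes place among genuine elements of $H_{\cat A}\tensor H_{\cat A}$.

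The hard part is this numerical identity itself, which is precisely the content of Green's theorem~\cite{Green}, and I would simply invoke it. Were one to reprove it from scratch, the route would be Riedtmann's homological reinterpretation of the dual Hall numbers together with Green's double count of the relevant filtered objects; the hereditary hypothesis $\Ext^{2}_{\cat A}=0$ enters precisely here, guaranteeing that the long exact sequences controlling the two sides truncate so that the Euler-form factor $\chi_{\cat A}(|T_1|,|S_2|)$ accounts exactly for the discrepancy between the subobject and quotient counts. Since the present proposition is nothing but the categorical repackaging of Green's formula as the assertion that $H_{\cat A}$ is a bialgebra object in $\cat C_{\chi_{\cat A}}$, the proof concludes by citing Green's theorem (see also~\cite{Walker}).
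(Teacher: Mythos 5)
Your proof is correct and follows essentially the same route as the paper: both expand $\Delta([M])\Delta([N])$ using the braided multiplication (whose twist factor is exactly the Euler form $\chi_{\cat A}$), reduce the multiplicativity of $\Delta$ to the numerical identity \eqref{E:Greenproof3}, and then invoke Green's theorem. Your explicit verification of the counit axiom and the remark on where cofinitarity enters are harmless additions that the paper leaves implicit.
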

\begin{proof}
For every $[C],[C']\in\Iso\cat A$ we have
\begin{align*}
\Delta([C])\Delta([C'])&=(\sum_{[A],[B]}  F^{A,B}_C\cdot  [A]\otimes [B]) 
(\sum_{[A'],[B']}   F^{A',B'}_{C'}\cdot  [A']\otimes [B'])\\
&=\sum_{[A],[B],[A'],[B']}  F^{A,B}_C F^{A',B'}_{C'} \cdot  \frac{|\Ext^1_{\cat 
A}(B,A')|} {|\Hom_{\cat A}(B,A')|} [A] [A']\otimes [B]  [B']\\
&=\sum_{[A],[B],[A'],[B'],[A''],[B'']}  F^{A,B}_C F^{A',B'}_{C'} 
F_{A,A'}^{A''}F_{B,B'}^{B''}\frac{|\Ext^1_{\cat A}(B,A')|} {|\Hom_{\cat 
A}(B,A')|}\cdot  [A'']\otimes [B'']
\end{align*}
On the other hand, 
$$\Delta([C][C'])=\sum_{[C'']}F_{C,C'}^{C''} \Delta([C''])=\sum_{[C''], [A''], 
[B'']}  F_{C,C'}^{C''} F^{A'',B''}_{C''}\cdot  [A'']\otimes [B''].$$
We need the following
\begin{lemma}[\cite{Green}*{Theorem~2}, see also~\cite{Schiffmann}] If $\cat A$ 
is finitary and cofinitary hereditary abelian category, then for any objects $A'',B'',C,C'$ of 
${\cat A}$ one has
\begin{equation}\label{E:Greenproof3}
\sum_{[A],[A'],[B],[B']}\frac{|\Ext^1_{\cat A}(B,A')|} {|\Hom_{\cat 
A}(B,A')|}\cdot F^{B''}_{B,B'}F^{A''}_{A,A'} 
F_{C}^{A,B}F_{C'}^{A',B'}=\sum_{[C'']}    F_{C,C'}^{C''} F_{C''}^{A'',B''}
\end{equation}
\end{lemma}
This immediately implies that~$\Delta([C])\Delta([C'])=\Delta([C][C'])$.
\end{proof}
Theorem~\ref{th:hereditary nichols} now follows from Proposition~\ref{pr:Hopf 
structures} and Theorem~\ref{prop:nichols-sub}.\qed

\begin{bibdiv}
\begin{biblist}
\bib{AS}{article}{
   author={Andruskiewitsch, Nicol{\'a}s},
   author={Schneider, Hans-J{\"u}rgen},
   title={Pointed Hopf algebras},
   conference={
      title={New directions in Hopf algebras},
   },
   book={
      series={Math. Sci. Res. Inst. Publ.},
      volume={43},
      publisher={Cambridge Univ. Press},
      place={Cambridge},
   },
   date={2002},
   pages={1--68},
}

\bib{ARS}{book}{
   author={Auslander, Maurice},
   author={Reiten, Idun},
   author={Smal{\o}, Sverre O.},
   title={Representation theory of Artin algebras},
   series={Cambridge Studies in Advanced Mathematics},
   volume={36},
   publisher={Cambridge University Press},
   place={Cambridge},
   date={1995},
   pages={xiv+423},
   isbn={0-521-41134-3},
}

\bib{BG}{article}{
author={Berenstein, Arkady},
author={Greenstein, Jacob},
title={Quantum Chevalley groups},
   book={
      series={Contemp. Math.},
      volume={592},
      publisher={Amer. Math. Soc.},
      place={Providence, RI},
   },
   date={2013},
   pages={71--103},
}

\bib{BK}{article}{
   author={Baumann, Pierre},
   author={Kassel, Christian},
   title={The Hall algebra of the category of coherent sheaves on the
   projective line},
   journal={J. Reine Angew. Math.},
   volume={533},
   date={2001},
   pages={207--233},
   issn={0075-4102},
}

\bib{BS}{article}{
author={Burban, Igor},
author={Schiffmann, Olivier},
title={On the Hall algebra of an elliptic curve},
journal={Duke Math. J.},
date={2012},
volume={161},
pages={1171--1231},
number={7},
}

\bib{Buh}{article}{
   author={B{\"u}hler, Theo},
   title={Exact categories},
   journal={Expo. Math.},
   volume={28},
   date={2010},
   number={1},
   pages={1--69},
   issn={0723-0869},
}

\bib{DX}{article}{
   author={Deng, Bangming},
   author={Xiao, Jie},
   title={A new approach to Kac's theorem on representations of valued
   quivers},
   journal={Math. Z.},
   volume={245},
   date={2003},
   number={1},
   pages={183--199},
   issn={0025-5874},
}

\bib{DR}{article}{
   author={Dlab, Vlastimil},
   author={Ringel, Claus Michael},
   title={Indecomposable representations of graphs and algebras},
   journal={Mem. Amer. Math. Soc.},
   volume={6},
   date={1976},
   number={173},
}

\bib{Gab}{article}{
   author={Gabriel, Peter},
   title={Indecomposable representations. II},
   conference={
      title={Symposia Mathematica, Vol. XI (Convegno di Algebra Commutativa,
      INDAM, Rome, 1971)},
   },
   book={
      publisher={Academic Press},
      place={London},
   },
   date={1973},
   pages={81--104},
}

\bib{Green}{article}{
   author={Green, James A.},
   title={Hall algebras, hereditary algebras and quantum groups},
   journal={Invent. Math.},
   volume={120},
   date={1995},
   number={2},
   pages={361--377},
   issn={0020-9910},
}

\bib{GP}{article}{
   author={Guo, Jin Yun},
   author={Peng, Liangang},
   title={Universal PBW-basis of Hall-Ringel algebras and Hall polynomials},
   journal={J. Algebra},
   volume={198},
   date={1997},
   number={2},
   pages={339--351},
}
\bib{Haus}{article}{
   author={Hausel, Tam{\'a}s},
   title={Kac's conjecture from Nakajima quiver varieties},
   journal={Invent. Math.},
   volume={181},
   date={2010},
   number={1},
   pages={21--37},
   issn={0020-9910},
}

\bib{Hua}{article}{
   author={Hua, Jiuzhao},
   title={Counting representations of quivers over finite fields},
   journal={J. Algebra},
   volume={226},
   date={2000},
   number={2},
   pages={1011--1033},
   issn={0021-8693},
}

\bib{HX}{article}{
   author={Hua, Jiuzhao},
   author={Xiao, Jie},
   title={On Ringel-Hall algebras of tame hereditary algebras},
   journal={Algebr. Represent. Theory},
   volume={5},
   date={2002},
   number={5},
   pages={527--550},
}

\bib{Hub1}{article}{
   author={Hubery, Andrew},
   title={Symmetric functions and the centre of the Ringel-Hall algebra of a
   cyclic quiver},
   journal={Math. Z.},
   volume={251},
   date={2005},
   number={3},
   pages={705--719},
   issn={0025-5874},
}

\bib{Hub2}{article}{
author={Hubery, Andrew},
title={The composition algebra of an affine quiver},
date={preprint, arXiv:math/0403206}
}

\bib{Hub3}{article}{
author={Hubery, Andrew},
title={Ringel-Hall algebras},
note={Lecture notes},
}

\bib{Hub}{article}{
   author={Hubery, Andrew},
   title={From triangulated categories to Lie algebras: a theorem of Peng
   and Xiao},
   conference={
      title={Trends in representation theory of algebras and related topics},
   },
   book={
      series={Contemp. Math.},
      volume={406},
      publisher={Amer. Math. Soc.},
      place={Providence, RI},
   },
   date={2006},
   pages={51--66},
}

\bib{Joy}{article}{
   author={Joyce, Dominic},
   title={Configurations in abelian categories. I. Basic properties and
   moduli stacks},
   journal={Adv. Math.},
   volume={203},
   date={2006},
   number={1},
   pages={194--255},
   issn={0001-8708},
}
\bib{Joyce}{article}{
   author={Joyce, Dominic},
   title={Configurations in abelian categories. II. Ringel-Hall algebras},
   journal={Adv. Math.},
   volume={210},
   date={2007},
   number={2},
   pages={635--706},
   issn={0001-8708},
}

\bib{Kac}{article}{
   author={Kac, V. G.},
   title={Infinite root systems, representations of graphs and invariant
   theory},
   journal={Invent. Math.},
   volume={56},
   date={1980},
   number={1},
   pages={57--92},
   issn={0020-9910},
}

\bib{Kap}{article}{
   author={Kapranov, Mikhail},
   title={Eisenstein series and quantum affine algebras},
   note={Algebraic geometry, 7},
   journal={J. Math. Sci. (New York)},
   volume={84},
   date={1997},
   number={5},
   pages={1311--1360},
   issn={1072-3374},
}
\bib{KSV}{article}{
   author={Kapranov, Mikhail},
author={Schiffmann, Olivier},
author={Vasserot, Eric},
   title={The Hall algebra of a curve},
journal={preprint, arXiv:1201.6185v2},
date={2012},
}
\bib{Kel}{article}{
   author={Keller, Bernhard},
   title={Chain complexes and stable categories},
   journal={Manuscripta Math.},
   volume={67},
   date={1990},
   number={4},
   pages={379--417},
   issn={0025-2611},
}

\bib{Mac}{book}{
   author={Macdonald, I. G.},
   title={Symmetric functions and Hall polynomials},
   publisher={The Clarendon Press Oxford University Press},
   place={New York},
   date={1979},
}

\bib{M}{book}{
   author={Montgomery, Susan},
   title={Hopf algebras and their actions on rings},
   series={CBMS Lecture Notes},
   volume={82},
   publisher={American Math Society},
   place={Providence, RI},
   date={1993},
}
\bib{M1}{article}{
   author={Montgomery, Susan},
   title={Indecomposable coalgebras, simple comodules, and pointed Hopf
   algebras},
   journal={Proc. Amer. Math. Soc.},
   volume={123},
   date={1995},
   number={8},
   pages={2343--2351},
   issn={0002-9939},
}

\bib{P-P}{book}{
   author={Polishchuk, Alexander},
   author={Positselski, Leonid},
   title={Quadratic algebras},
   series={University Lecture Series},
   volume={37},
   publisher={American Mathematical Society},
   place={Providence, RI},
   date={2005},
}

\bib{Qu}{article}{
   author={Quillen, Daniel},
   title={Higher algebraic $K$-theory. I},
   conference={
      title={Algebraic $K$-theory: Higher $K$-theories (Proc. Conf.,
      Battelle Memorial Inst., Seattle, Wash., 1972)},
   },
   book={
      publisher={Springer},
      place={Berlin},
   },
   date={1973},
   pages={85--147. Lecture Notes in Math., Vol. 341},
}

\bib{Rie}{article}{
   author={Riedtmann, Christine},
   title={Lie algebras generated by indecomposables},
   journal={J. Algebra},
   volume={170},
   date={1994},
   number={2},
   pages={526--546},
}

\bib{Ringel}{article}{
   author={Ringel, Claus Michael},
   title={Hall algebras, Topics in algebra, Part 1(Warsaw, 1988)},
   journal={Banach Center Publ.},
   volume={26},
   date={1990},
   pages={526--546}
}   

\bib{Ringel1}{article}{
   author={Ringel, Claus Michael},
   title={Hall algebras and quantum groups},
   journal={Invent. Math.},
   volume={101},
   date={1990},
   number={3},
   pages={583--591},
   issn={0020-9910},
}

 \bib{SV}{article}{
   author={Sevenhant, Bert},
   author={Van Den Bergh, Michel},
   title={A relation between a conjecture of Kac and the structure of the
   Hall algebra},
   journal={J. Pure Appl. Algebra},
   volume={160},
   date={2001},
   number={2-3},
   pages={319--332},
   issn={0022-4049},
}

\bib{Schiffmann}{article}{
   author={Schiffmann, Ol},
   title={Lectures on Hall algebras},
   date={preprint, arXiv:math.RT/0611617},
}

\bib{Swe}{book}{
   author={Sweedler, Moss E.},
   title={Hopf algebras},
   series={Mathematics Lecture Note Series},
   publisher={W. A. Benjamin, Inc., New York},
   date={1969},
}

\bib{Sz}{article}{
   author={Sz{\'a}nt{\'o}, Csaba},
   title={Hall numbers and the composition algebra of the Kronecker algebra},
   journal={Algebr. Represent. Theory},
   volume={9},
   date={2006},
   number={5},
   pages={465--495},
   issn={1386-923X},
}

\bib{Walker}{book}{
   author={Walker, Christopher D.},
   title={A Categorification of Hall Algebras},
   note={Thesis (Ph.D.)--University of California, Riverside},
   publisher={ProQuest LLC, Ann Arbor, MI},
   date={2011},
   pages={81},
   isbn={978-1124-77223-3},
}

\bib{Zel}{book}{
   author={Zelevinsky, Andrey V.},
   title={Representations of finite classical groups},
   series={Lecture Notes in Mathematics},
   volume={869},
   publisher={Springer-Verlag},
   date={1981},
}
	
\end{biblist}

\end{bibdiv}

\end{document}